\documentclass[aos,preprint]{imsart}

\RequirePackage[OT1]{fontenc}
\RequirePackage{amsthm,amsmath}
\RequirePackage[numbers]{natbib}
\RequirePackage[colorlinks,citecolor=blue,urlcolor=blue]{hyperref}

\arxiv{arXiv:1712.00771}

\usepackage{xr}
\externaldocument{incomplete_ustat_aos_supp_v8}

\startlocaldefs
\numberwithin{equation}{section}
\theoremstyle{plain}
\newtheorem{thm}{Theorem}[section]
\newtheorem{lem}[thm]{Lemma}
\newtheorem{prop}[thm]{Proposition}
\newtheorem{cor}[thm]{Corollary}

\theoremstyle{definition}

\newtheorem{exmp}{Example}[section]
\newtheorem{rmk}{Remark}[section]
\endlocaldefs

\usepackage{amssymb}
\usepackage{amsthm}
\usepackage{mathrsfs, color}
\usepackage{graphicx}
\usepackage{graphics}
\usepackage{multirow}
\usepackage{amsmath}
\usepackage{enumitem}
\usepackage[normalem]{ulem}
\usepackage{bm}
\usepackage{natbib}

\newcommand{\vone}[0]{\mathbf 1}

\newcommand{\Cov}[0]{\mathrm{Cov}}
\newcommand{\Var}[0]{\mathrm{Var}}

\newcommand{\diag}[0]{\mathrm{diag}}

\newcommand{\calD}[0]{\mathcal{D}}

\newcommand{\calI}[0]{\mathcal{I}}

\newcommand{\calR}[0]{\mathcal{R}}
\newcommand{\calS}[0]{\mathcal{S}}

\newcommand{\calX}[0]{\mathcal{X}}
\newcommand{\calA}[0]{\mathcal{A}}

\newcommand{\E}[0]{\mathbb{E}}

\newcommand{\R}[0]{\mathbb{R}}

\newcommand{\sign}[0]{\text{sign}}

\newcommand{\Bern}[0]{\mathsf{Ber}}
\newcommand{\Bin}[0]{\mathsf{Bin}}

\newcommand{\Prob}[0]{\mathbb{P}}

\renewcommand{\tilde}{\widetilde}
\renewcommand{\hat}{\widehat}
\renewcommand{\le}{\leqslant}
\renewcommand{\ge}{\geqslant}

\newcommand{\vertiii}[1]{{\left\vert\kern-0.25ex\left\vert\kern-0.25ex\left\vert #1 
    \right\vert\kern-0.25ex\right\vert\kern-0.25ex\right\vert}}

\begin{document}

\begin{frontmatter}
\title{Randomized incomplete $U$-statistics in high dimensions\thanksref{T1}}
\runtitle{Randomized incomplete $U$-statistics in high dimensions}
\thankstext{T1}{First arXiv version: December 3, 2017. This version: \today.}

\begin{aug}
\author{\fnms{Xiaohui} \snm{Chen}\thanksref{t1,m1}\ead[label=e1]{xhchen@illinois.edu}},
\and
\author{\fnms{Kengo} \snm{Kato}\thanksref{m2}\ead[label=e2]{kk976@cornell.edu}}

\thankstext{t1}{Supported in part by NSF DMS-1404891, NSF CAREER Award DMS-1752614, and UIUC Research Board Awards (RB17092,  RB18099).}
\thankstext{T1}{This work is completed in part with the high-performance computing resource provided by the Illinois Campus Cluster Program at UIUC.}
\runauthor{Chen and Kato}

\affiliation{University of Illinois at Urbana-Champaign\thanksmark{m1} and Cornell University\thanksmark{m2}}

\address{Department of Statistics\\
University of Illinois at Urbana-Champaign\\
725 S. Wright Street\\
Champaign, IL 61874 USA\\
\printead{e1}}

\address{Department of Statistical Science\\
Cornell University\\
1194 Comstock Hall \\
Ithaca, NY 14853 USA\\
\printead{e2}}
\end{aug}

\begin{abstract}
This paper studies inference for the mean vector of a high-dimensional $U$-statistic. In the era of Big Data, the dimension $d$ of the $U$-statistic and the sample size $n$ of the observations tend to be both large, and the computation of the $U$-statistic is prohibitively demanding. Data-dependent inferential procedures such as the empirical bootstrap for  $U$-statistics is even more computationally expensive. To overcome such computational bottleneck, incomplete $U$-statistics obtained by sampling fewer terms of the $U$-statistic are attractive alternatives. In this paper, we introduce randomized incomplete $U$-statistics with sparse weights whose computational cost can be made independent of the order of the $U$-statistic. We derive  non-asymptotic Gaussian approximation error bounds for the randomized incomplete $U$-statistics in high dimensions, namely in cases where the dimension $d$ is possibly much larger than the sample size $n$,  for both non-degenerate and degenerate kernels. In addition, we propose generic bootstrap methods for the incomplete $U$-statistics that are computationally much less-demanding than existing bootstrap methods, and establish finite sample validity of the proposed bootstrap methods. Our methods are illustrated on the application to nonparametric testing for the pairwise independence of a high-dimensional random vector under weaker assumptions than those appearing in the literature. 
\end{abstract}

\begin{keyword}[class=MSC]
\kwd[Primary ]{62E17}
\kwd{62F40}
\kwd[; secondary ]{62H15}
\end{keyword}

\begin{keyword}
\kwd{Incomplete $U$-statistics}
\kwd{randomized inference}
\kwd{Gaussian approximation}
\kwd{bootstrap}
\kwd{divide and conquer}
\kwd{Bernoulli sampling}
\kwd{sampling with replacement}
\end{keyword}

\end{frontmatter}

\section{Introduction}

Let $X_1,\dots,X_n$ be independent and identically distributed (i.i.d.) random variables taking values in a measurable space $(S,\calS) $ with common distribution $P$. Let $r \ge 2$ and $d \ge 1$ be given positive integers, and let $h = (h_{1},\dots,h_{d})^{T}: S^r \to \R^d$ be a fixed and jointly measurable function that is symmetric in its arguments, i.e., $h(x_{1},\dots,x_{r}) = h(x_{i_1},\dots,x_{i_r})$ for every permutation $i_1,\dots,i_r$ of $1,\dots,r$. Suppose that $\E[|h_{j}(X_{1},\dots,X_{r})| ] < \infty$ for all $j=1,\dots,d$, and consider inference on the mean vector $\theta = (\theta_{1},\dots,\theta_{d})^{T} = \E[h(X_{1},\dots,X_{r})]$. 
To this end, a commonly used statistic is the $U$-statistic with kernel $h$, i.e., the sample average of $h(X_{i_{1}},\dots,X_{i_{r}})$ over all distinct $r$-tuples $(i_{1},\dots,i_{r})$ from $\{1,\dots, n \}$
\begin{equation}
\label{eqn:complete_ustat}
U_n := U_n^{(r)}(h) :=  {1 \over |I_{n,r}|} \sum_{(i_1,\dots,i_r) \in I_{n,r}} h(X_{i_1},\dots,X_{i_r}),
\end{equation}
where $I_{n,r} = \{(i_1,\dots,i_r): 1 \le i_1 < \dots < i_r \le n \}$ and $|I_{n,r}| =n!/\{ r!(n-r)! \}$ denotes the cardinality of  $I_{n,r}$.

$U$-statistics are an important and general class of statistics, and applied in a wide variety of statistical problems; we refer to \cite{lee1990} as an excellent monograph on $U$-statistics. 
For univariate $U$-statistics ($d=1$), the asymptotic distributions are derived in the seminal  paper \cite{hoeffding1948} for the non-degenerate case and in \cite{rubinvitale1980} for the degenerate case.
There is also a large literature on bootstrap methods for univariate $U$-statistics \citep{bickelfreedman1981,bretagnolle1983,arconesgine1992,huskovajanssen1993,huskovajanssen1993b,dehlingmikosch1994,wangjing2004}.
A more recent interest lies in the high-dimensional case where $d$ is much larger than $n$. \cite{chen2017a} develops Gaussian and bootstrap approximations for non-degenerate $U$-statistics of order two in high dimensions, which extends the work \cite{cck2013,cck2017_AoP} from sample averages to $U$-statistics; see also \cite{gucaoningliu2015}.

However, a major obstacle of inference using the complete $U$-statistic (\ref{eqn:complete_ustat}) is its computational intractability. 
Namely, the computation of the complete $U$-statistic (\ref{eqn:complete_ustat}) requires $O(n^{r}d)$ operations, and its computational cost can be prohibitively demanding even when $n$ and $d$ are moderately large, especially when the order of the $U$-statistic $r \ge 3$.
For instance, the computation of a complete $U$-statistic with order $3$ and dimension $d = 5000$ when the sample size is $n=1000$  requires $\binom{n}{3} \times d \approx 0.8 \cdot 10^{12}$ (0.8 trillion) operations. 
In addition, the naive application of the empirical bootstrap for the $U$-statistic (\ref{eqn:complete_ustat}) requires even more operations, namely, $O(Bn^{r}d$) operations, where $B$ is the number of bootstrap repetitions. 

This motivates us to study inference using \textit{randomized incomplete $U$-statistics} with sparse weights instead of complete $U$-statistics. Specifically, we consider the Bernoulli sampling and sampling with replacement to construct random weights in Section \ref{sec:randomized_incomplete_ustat}. For a pre-specified \textit{computational budget parameter} $N \le |I_{n,r}|$, these sampling schemes randomly choose (on average) $N$ indices from $I_{n,r}$, and the resulting incomplete $U$-statistics $U_{n,N}'$ are defined as the sample averages of $h(X_{i_1},\dots,X_{i_r})$ taken over the subset of chosen indices $(i_{1},\dots,i_{r})$. Hence the computational cost of the incomplete $U$-statistics is reduced to $O(Nd)$, which can be much smaller than $n^{r}d$ as long as $N \ll n^{r}$ and can be made independent of the order of the $U$-statistic provided that $N$ does not depend on $r$.

The goal of this paper is to develop computationally scalable and statistically correct inferential methods for the incomplete $U$-statistics with high-dimensional kernels and massive data, where $d$ is possibly much larger than $n$ but $n$ can be also large. 
Specifically, we study distributional approximations to the randomized incomplete $U$-statistics in high dimensions. 
Our first main contribution is to derive Gaussian approximation error bounds for the incomplete $U$-statistics on the hyperrectangles in $\R^{d}$ for both non-degenerate and degenerate kernels. In Section \ref{sec:gaussian_approximations}, we show that the derived Gaussian approximation results display an interesting computational and statistical trade-off for non-degenerate kernels (see Remark \ref{rmk:computation_statistics_tradeoff}), and reveal a fundamental difference between complete and randomized incomplete $U$-statistics for degenerate kernels (see Remark \ref{rmk:gauss_appro_degenerate}). The mathematical insight of introducing the random weights is to create the (conditional) independence for the terms in the $U$-statistic sum in order to obtain a Gaussian limit. The Gaussian approximation results are, however, often not directly applicable since the covariance matrices of the approximating Gaussian distributions depend on the underlying  distribution $P$ that is unknown in practice. Our second contribution is to propose fully data-dependent bootstrap methods for incomplete $U$-statistics that are computationally (much) less demanding than existing bootstrap methods for $U$-statistics \cite{arconesgine1992,chen2017a,chenkato2017a}. Specifically, we introduce generic bootstraps for incomplete $U$-statistics in Section \ref{sec:generic_bootstrap}. Our generic bootstrap constructions are flexible enough to cover both non-degenerate and degenerate kernels, and meanwhile they take the computational concern into account for estimating the associated (and unobserved) H\'ajek projection in the non-degenerate case. In particular, we propose two concrete estimation procedures for the H\'ajek projection: one is a deterministic construction based on the divide and conquer algorithm (Section \ref{subsec:divide_and_conquer}), and another is a random construction based on a second randomization independent of everything else (Section \ref{subsec:incomplete_ustat}). For both constructions, the overall computational complexity of the bootstrap methods can be made independent of the $U$-statistic order $r$. 

As a leading example to illustrate the usefulness of the inferential methods developed in the present paper, we consider testing for the pairwise independence of a high-dimensional random vector $X = (X^{(1)},\dots,X^{(p)})^T$, i.e., testing for the hypothesis that 
\begin{equation}
\label{eqn:independence_test}
H_{0}: X^{(1)},\dots,X^{(p)} \; \text{are pairwise independent}.
\end{equation}
Let $X_1,\dots,X_n$ be i.i.d. copies of $X$. Several dependence measures are proposed in the literature, including: Kendall's $\tau$, Spearman's $\rho$, Hoeffding's $D$ \cite{Hoeffding1948b_AoMS}, Bergsma and Dassios' $t^*$ \cite{BergsmaDassios2014_Bernoulli}, and the distance covariance \cite{SzekelyRizzoBakirov2007_AoS,YaoZhangShao2017_JRSSB}, all of which can be estimated by $U$-statistics. So various nonparametric tests for $H_{0}$ can be constructed based on those $U$-statistics. To compute the test statistics, we have to compute $U$-statistics with dimension $d=p(p-1)/2$, which corresponds to the number of upper triangular entries in the $p \times p$ dependence matrix and can be quite large. In addition, the orders of the $U$-statistics are at least $3$ (except for Kendall's $\tau$ which is of order $2$). So the computation of the test statistics is prohibitively demanding, not to mention the empirical bootstrap or subsampling for those $U$-statistics. It should be noted that there are efficient algorithms to reduce the computational costs for computing some of those $U$-statistics \citep[cf.][Section 6.1]{leungdrton2017}, but such computational simplifications are case-by-case and not generically applicable, and more importantly they do not yield computationally tractable methods to approximate or estimate the sampling distributions of the $U$-statistics. The Gaussian and bootstrap approximation theorems developed in the present paper can be  applicable to calibrating critical values for  test statistics based upon incomplete versions of those $U$-statistics.
 Detailed comparisons and discussions of nonparametric pairwise independence test statistics are presented in Section \ref{sec:numerics}.  In addition to pairwise independence testing, values of the dependence measures are also interesting \textit{per se} in some applications. For instance, Spearman's $\rho$ is related to the copula correlation if the marginal distributions are continuous \cite[Chapter 8]{EmbrechtsLindskogMcneil2003}, and our bootstrap methods can be used to construct simultaneous confidence intervals for the copula correlations uniformly over many pairs of variables. 

To verify the finite sample performance of the proposed bootstrap methods for randomized incomplete $U$-statistics, we conduct simulation experiments in Section \ref{sec:numerics} on the leading example for nonparametric testing for the pairwise independence hypothesis in (\ref{eqn:independence_test}). Specifically, we consider to approximate the null distributions of the incomplete versions of the (leading term of) Spearman $\rho$ and Bergsma-Dassios' $t^{*}$ test statistics, and examine the cases where $n=300,500,1000$ and $p=30,50,100$ (and hence $d=p(p-1)/2=435,1225,4950$). Statistically, we observe that the Gaussian approximation of the test statistics is quite accurate and the empirical rejection probability of the null hypothesis with the critical values calibrated by our bootstrap methods is very close to the nominal size for (almost) all setups. Computationally, we find that the (log-)running time for our bootstrap methods scales linearly with the (log-)sample size, and in addition, the slope coefficient matches very well with the computational complexity of the bootstrap methods. Therefore, the simulation results demonstrate a promising agreement between the empirical evidences and our theoretical analysis. 

\subsection{Existing literature}
Incomplete $U$-statistics are first considered in \cite{blom1976}, and the asymptotic distributions of incomplete $U$-statistics (for fixed $d$) are derived in \cite{brownkildea1978} and \cite{Janson1984_PTRF}; see also Section 4.3 in \cite{lee1990} for a review on incomplete $U$-statistics. Closely related to the present paper is \cite{Janson1984_PTRF}, which establishes the asymptotic properties of univariate incomplete $U$-statistics based on sampling with and without replacement and Bernoulli sampling. To the best of our knowledge, the present paper is the first paper that establishes approximation theorems for the distributions of randomized incomplete $U$-statistics in high dimensions. See also Remark \ref{rmk:compare_Janson} for more detailed comparisons with \cite{Janson1984_PTRF}. 
Incomplete $U$-statistics can be viewed as a special case of weighted $U$-statistics, and there is a large literature on limit theorems for weighted $U$-statistics; see \cite{shapirohubert1979,oneilredner1993,major1994,rifiutzet2000,hsingwu2004,hanqian2016} and references therein. 
These references focus on the univariate case and do not cover the high-dimensional case. 
There are few references that study data-dependent inferential procedures for incomplete $U$-statistics that take computational considerations into account. An exception is \cite{BertailTressou2006_Biometrics}, which proposes several inferential methods for univariate (generalized) incomplete $U$-statistics, but do not develop formal asymptotic justifications for these methods. 
It is also interesting to note that incomplete $U$-statistics have gained renewed interests in the recent statistics and machine learning literatures \cite{clemencon2016, mentchhooker2016}, although the focuses of these references are substantially different from ours. 

From a technical point of view, this paper builds on recent development of  Gaussian and  bootstrap approximation theorems for averages of independent high-dimensional random vectors \cite{cck2013,cck2017_AoP} and for high-dimensional $U$-statistics of order two \cite{chen2017a}. 
Importantly, however, developing Gaussian approximations for the randomized incomplete $U$-statistics in high dimensions requires a novel proof-strategy that combines iterative conditioning arguments and  applications of Berry-Esseen type bounds, and extends some of results in \cite{chen2017a} to cover general order incomplete $U$-statistics. In addition, these references do not consider bootstrap methods for incomplete $U$-statistics that take computational considerations into account.

\subsection{Organization}
The rest of the paper is organized as follows. In Section \ref{sec:randomized_incomplete_ustat}, we introduce randomized incomplete $U$-statistics with sparse weights generated from the Bernoulli sampling and sampling with replacement. In Section \ref{sec:gaussian_approximations}, we derive  non-asymptotic Gaussian approximation error bounds for the randomized incomplete $U$-statistics in high dimensions for both non-degenerate and degenerate kernels. In Section \ref{sec:bootstrap}, we first propose generic bootstrap methods for the incomplete $U$-statistics and then incorporate the computational budget constraint by two concrete estimates of the H\'ajek projection: one deterministic estimate by the divide and conquer, and one randomized estimate by incomplete $U$-statistics of a lower order. Simulation examples are provided in Section \ref{sec:numerics} and in the Supplementary Material (SM). 
All the technical proofs are gathered in Appendix \ref{sec:proofs} in the SM. We conclude the paper in Section \ref{sec:discussions} with a brief discussion on some extensions. 

\subsection{Notation}

For a hyperrectangle $R=\prod_{j=1}^{d} [a_{j},b_{j}]$ in $\R^{d}$, a constant $c > 0$, and a vector $y = (y_{1},\dots,y_{d})^{T} \in \R^{d}$, we use the notation $[cR+y] = \prod_{j=1}^{d}[ca_{j}+y_{j},cb_{j}+y_{j}]$. 
For vectors $y = (y_{1},\dots,y_{d})^{T}, z=(z_{1},\dots,z_{d})^{T} \in \R^{d}$, the notation $y \le z$ means that $y_{j} \le z_{j}$ for all $j=1,\dots,d$. 
For $a,b \in \R$, let $a \vee b = \max \{ a,b \}$ and $a \wedge b = \min \{ a,b \}$. For a finite set $J$, $|J|$ denotes the cardinality of $J$. Let $| \cdot |_{\infty}$ denote the max-norm for vectors and matrices, i.e., for a matrix $A = (a_{ij})$, $| A |_{\infty} = \max_{i,j}|a_{ij}|$. ``Constants" refer to finite, positive, and non-random
numbers.

For $0 < \beta < \infty$, let $\psi_{\beta}$ be the function on $[0,\infty)$ defined by $\psi_{\beta} (x) = e^{x^{\beta}}-1$, and for a real-valued random variable $\xi$,  define $\| \xi \|_{\psi_\beta}=\inf \{ C>0: \E[ \psi_{\beta}( | \xi | /C)] \leq 1\}$.
For $\beta \in [1,\infty)$, $\|\cdot\|_{\psi_{\beta}}$ is an Orlicz norm, while for $\beta \in (0,1)$, $\| \cdot \|_{\psi_{\beta}}$ is not a norm but a quasi-norm, i.e., there exists a constant $C_{\beta}$ depending only on $\beta$ such that $\| \xi_{1} + \xi_{2} \|_{\psi_{\beta}} \leq C_{\beta} ( \| \xi_{1} \|_{\psi_{\beta}} + \| \xi_{2} \|_{\psi_{\beta}})$. (Indeed, there is a norm equivalent to $\| \cdot \|_{\psi_{\beta}}$ obtained by linearizing $\psi_{\beta}$ in a neighborhood of the origin; cf. Lemma \ref{lem: quasi-norm} in the SM.) 

For a generic random variable $Y$, let $\Prob_{\mid Y}(\cdot)$ and $\E_{\mid Y}[\cdot]$ denote the conditional probability and expectation given $Y$, respectively.  
For a given probability space $(\calX,\calA,Q)$ and a measurable function  $f$ on $\calX$, we use the notation $Qf = \int f dQ$ whenever the latter integral is well-defined. 
For a jointly measurable symmetric function $f$ on $S^{r}$ and $k=1,\dots,r$, let $P^{r-k}f$ denote the function on $S^{k}$ defined by 
\[P^{r-k}f(x_{1},\dots,x_{k}) = \int \cdots \int f(x_{1},\dots,x_{k},x_{k+1},\dots,x_{r}) dP(x_{k+1}) \cdots dP(x_{r})\]
whenever the integral exists and is finite for every $(x_{1},\dots,x_{k}) \in S^{k}$. 
For given $1 \le k \le \ell \le n$, we use the notation $X_{k}^{\ell} = (X_{k},\dots,X_{\ell})$. 
Throughout the paper, we assume that $n \ge 4 \vee r$ and $d \ge 3$.

\section{Randomized incomplete $U$-statistics}
\label{sec:randomized_incomplete_ustat}

In this paper, to construct sparsely weighted $U$-statistics, we shall use random sparse  weights. 
For $\iota = (i_{1},\dots,i_{r}) \in I_{n,r}$, let us write $X_{\iota} = (X_{i_{1}},\dots,X_{i_{r}})$, and observe that the complete $U$-statistic (\ref{eqn:complete_ustat}) can be written as 
\[
U_{n} = \frac{1}{|I_{n,r}|} \sum_{\iota \in I_{n,r}} h(X_{\iota}). 
\]
Now, let $N := N_{n}$ be an integer such that $0 < N \le |I_{n,r}|$, and let $p_{n}=N/|I_{n,r}|$. 
Instead of taking the average over all possible $\iota$ in $I_{n,r}$, we will take the average over a subset of about $N$ indices chosen randomly from $I_{n,r}$. 
In the present paper, we study Bernoulli sampling and sampling with replacement. 

\subsection{Bernoulli sampling}
\label{subsec:bernoulli_sampling}
Generate i.i.d. $\Bern (p_{n})$ random variables $\{ Z_{\iota} : \iota \in I_{n,r} \}$ with success probability $p_{n}$, i.e., $Z_{\iota}, \iota \in I_{n,r}$ are i.i.d. with  $\Prob(Z_{\iota} = 1) = 1 - \Prob(Z_{\iota} = 0) = p_n$. Consider the following weighted $U$-statistic with random weights
\begin{equation}
\label{eqn:incomplete_ustat}
U_{n,N}' = \frac{1}{\hat{N}}\sum_{\iota \in I_{n,r}} Z_{\iota} h(X_{\iota}),
\end{equation}
where $\hat{N} = \sum_{\iota \in I_{n,r}} Z_{\iota}$ is the number of non-zero weights. We call $U_{n,N}'$ the randomized incomplete $U$-statistic based on the Bernoulli sampling.
The variable $\hat{N}$ follows $\Bin (|I_{n,r}|, p_{n})$, the binomial distribution with parameters $(|I_{n,r}|,p_{n})$. Hence $\E[\hat{N}] = |I_{n,r}| p_{n} = N$ and the computation of the incomplete $U$-statistic (\ref{eqn:incomplete_ustat}) only requires $O(Nd)$ operations on average. 
In addition, by Bernstein's inequality (cf. Lemma 2.2.9 in \cite{vandervaartwellner1996}),
\begin{equation}
\label{eqn:concentration}
\Prob \left(|\hat{N}/N - 1| > \sqrt{2t/N} + 2t/(3N)  \right ) \le 2e^{-t}
\end{equation}
for every $t > 0$, and hence $\hat{N}$ concentrates around its mean $N$. 
Therefore, we can view $N$ as a {\it computational budget parameter} and $p_n$ as a {\it sparsity design parameter} for the incomplete $U$-statistic.

The reader may wonder that generating $|I_{n,r}| \approx n^{r}$ Bernoulli random variables is computationally demanding, but there is no need to do so. In fact,  we can equivalently compute the randomized incomplete $U$-statistic in (\ref{eqn:incomplete_ustat}) as follows.
\begin{enumerate}
\item[1.] Generate $\hat{N} \sim \Bin (|I_{n,r}|,p_{n})$.
\item[2.] Choose indices $\iota_{1},\dots,\iota_{\hat{N}}$ randomly without replacement from $I_{n,r}$. 
\item[3.] Compute $U_{n,N}' = \hat{N}^{-1}\sum_{j=1}^{\hat{N}} h(X_{\iota_{j}})$.
\end{enumerate}
In fact, define $Z_{\iota} = 1$ if $\iota$ is one of $\iota_{1},\dots,\iota_{\hat{N}}$, and $Z_{\iota} = 0$ otherwise; then, it is not difficult to see that $\{ Z_{\iota} : \iota \in I_{n,r} \}$ are i.i.d. $\Bern (p_{n})$ random variables. So, we can think of the Bernoulli sampling as a sampling without replacement with a random sample size.

\begin{rmk}[Comments on the random normalization]
\label{rmk:random_normalization_bernoulli_sampling}
Interestingly, changing the normalization  in (\ref{eqn:incomplete_ustat})  \textit{does} affect  approximating distributions to the resulting incomplete $U$-statistic. Namely, if we change $\hat{N}$ to $N$ in (\ref{eqn:incomplete_ustat}), i.e., $\breve{U}_{n,N}' = N^{-1} \sum_{\iota \in I_{n,r}} Z_{\iota} h(X_{\iota})$, then we have  different approximating distributions unless $\theta=0$. In general, changing $\hat{N}$ to $N$ in (\ref{eqn:incomplete_ustat}) results in the approximating Gaussian distributions with larger covariance matrices, and hence it is recommended to use $U_{n,N}'$ rather than $\breve{U}_{n,N}'$.
See also Remark \ref{rem:deterministic_normalization} ahead. 
\end{rmk}

\subsection{Sampling with replacement}
\label{subsec:sampling_with_replacement}


Conditionally on $X_{1}^{n} =(X_{1},\dots,X_{n})$, let $X_{\iota_{j}}^*, j = 1,\dots,N$ be  i.i.d. draws from the empirical distribution $|I_{n,r}|^{-1} \sum_{\iota \in I_{n,r}} \delta_{X_{\iota}}$ ($\delta_{X_{\iota}}$ denotes the point mass at $X_{\iota}$). Let  
\begin{equation}
\label{eqn:sampling_with_replacement_1}
U_{n,N}' = {1 \over N} \sum_{j=1}^N h(X_{\iota_j}^*)
\end{equation}
be the incomplete $U$-statistic obtained by sampling with replacement. We call $U_{n,N}'$ the randomized incomplete $U$-statistic based on sampling with replacement. Observe that $U_{n,N}'$ in (\ref{eqn:sampling_with_replacement_1}) can be efficiently computed by sampling $r$ distinct terms from $\{X_{1},\dots,X_{n}\}$ independently for $N$ times. The statistic $U_{n,N}'$ can be written as a weighted $U$-statistic. Indeed, for each $\iota \in I_{n,r}$, let $Z_{\iota}$ denote the number of times that $X_{\iota}$ is redrawn in the sample $\{ X_{\iota_{1}}^{*},\dots,X_{\iota_{N}}^{*} \}$. Then the vector $Z=(Z_{\iota})_{\iota \in I_{n,r}}$ (ordered in an arbitrary way) follows a multinomial distribution with parameters $N$ and probabilities $1/|I_{n,r}|,\dots,1/|I_{n,r}|$ independent of $X_{1}^{n}$, and $U_{n,N}'$ can be written as 
\begin{equation}
\label{eqn:sampling_with_replacement_2}
U_{n,N}' = \frac{1}{N} \sum_{\iota \in I_{n,r}} Z_{\iota} h(X_{\iota}).
\end{equation}
Hence we can think of $U_{n,N}'$ as a statistic of $X_{1},\dots,X_{n}$ and $Z_{\iota}, \iota \in I_{n,r}$, but we will use both representations (\ref{eqn:sampling_with_replacement_1}) and (\ref{eqn:sampling_with_replacement_2}) interchangeably in the subsequent analysis.


\begin{rmk}
All the theoretical results presented below apply to incomplete $U$-statistics based on either the Bernoulli sampling or sampling with replacement. Both sampling schemes will be covered in a unified way. 
\end{rmk}

\section{Gaussian approximations}
\label{sec:gaussian_approximations}

In this section, we will derive Gaussian approximation results for the incomplete $U$-statistics (\ref{eqn:incomplete_ustat}) and (\ref{eqn:sampling_with_replacement_1}) on the hyperrectangles in $\R^{d}$.
Let $\calR$ denote the class of (closed) hyperrectangles in $\R^{d}$, i.e., $\calR$ consists sets of the form $\prod_{j=1}^{d} [a_{j},b_{j}]$ where $-\infty \le a_{j} \le b_{j} \le \infty$ for $j=1,\dots,d$ with the convention that $[a_{j},b_{j}] = (-\infty,b_{j}]$ for $a_{j}=-\infty$ and $[a_{j},b_{j}] = [a_{j},\infty)$ for $b_{j}=\infty$. 
For the expository purpose, we mainly focus on the non-degenerate case where $\min_{1 \le j \le d} \Var (\E[ h_{j}(X_{1},\dots,X_{r}) \mid X_{1}])$ is bounded away from zero in the following discussion. However, our Gaussian approximation results also cover the degenerate case (cf. Theorem \ref{thm:gaussian_approx_bern_random_design_degenerate}). The intuition behind and the proof sketch for the Gaussian approximation results are given in Section \ref{subsec:proof_sketch_gauss_approx_non-degenerate} in the SM.

To state the formal Gaussian approximation results, we assume the following conditions. Let $\underline{\sigma} > 0$ and $D_n \ge 1$ be given constants, and define $g := (g_{1},\dots,g_{d})^{T} :=  P^{r-1} h$. Suppose that 
\begin{enumerate}
\item[(C1)] $P^{r}|h_{j}|^{2+k} \le D_n^k$ for all $j = 1,\dots,d$ and $k=1,2$.
\item[(C2)] $\|h_j(X_1^r)\|_{\psi_1} \le D_n$ for all $j = 1,\dots,d$.
\end{enumerate}
In addition, suppose that either one of the following conditions holds:
\begin{enumerate}
\item[(C3-ND)] $P(g_{j}-\theta_{j})^{2} \ge \underline{\sigma}^{2}$ for all $j = 1,\dots,d$.
\item[(C3-D)] $P^{r} (h_{j} - \theta_{j})^{2} \ge \underline{\sigma}^{2}$ for all $j=1,\dots,d$. 
\end{enumerate}

Conditions (C1) and (C2) are adapted from \cite{cck2017_AoP} and \cite{chen2017a}. 
Condition (C2) assumes the kernel $h$ to be sub-exponential, which in particular covers bounded kernels. 
In principle, it is possible to extend our analysis under milder moment conditions on the kernel $h$, but this would result in more involved error bounds. For the sake of clear presentation, we mainly work with Condition (C2) and point out the differences when the kernel satisfies a polynomial moment condition in Remark \ref{rmk:relaxation_moment_condition}. 
By Jensen's inequality, Conditions (C1) and (C2) imply that $P | g_{j} |^{2+k} \le D_{n}^{k}$ for all $j$ and for $k=1,2$, and $\| g_{j}(X_{1}) \|_{\psi_{1}} \le D_{n}$ for all $j$. Here we allow the exponential moment bound $D_{n}$ to depend on $n$ since the distribution $P$ may depend on $n$ in the high-dimensional setting. In addition, Condition (C1) implies that $P^{r} h_{j}^{2} \le 1 + P|h_{j}|^{3} \le 1+D_{n}$ for all $j$. 
Condition (C3-ND) implies that the kernel $h$ is non-degenerate. 
In the degenerate case, we will require Condition (C3-D) to derive Gaussian approximations. 


In all what follows, we assume that 
\[
p_{n}  = N/|I_{n,r}| \le 1/2
\]
without further mentioning.
The value $1/2$ has no special meaning; we can allow $p_{n} \le c$ for any constant $c \in (0,1)$, and in that case, the constants appearing in the following theorems depend in addition on $c$. 
Since we are using randomization for the purpose of computational reduction, we are mainly interested in the case where $N \ll |I_{n,r}|$, and the assumption that $p_{n}$ is bounded away from 1 is immaterial. 

The following theorem derives bounds on the Gaussian approximation to the randomized  incomplete $U$-statistics on the hyperrectangles in the case where the kernel $h$ is non-degenerate. Recall that $\alpha_{n}=n/N, p_n = N / |I_{n,r}|, \theta = P^{r} h = Pg, \Gamma_{g} = P(g-\theta) (g-\theta)^{T}$, and $\Gamma_{h} = P^{r} (h-\theta)(h-\theta)^{T}$.

\begin{thm}[Gaussian approximation under non-degeneracy]
\label{thm:gaussian_approx_bern_random_design}
Suppose that Conditions (C1), (C2), and (C3-ND) hold.
Then there exists a constant $C$ depending only on $\underline{\sigma}$ and $r$ such that 
\begin{equation}
\label{eqn:gaussian_approx_bern_random_design}
\begin{split}
& \sup_{R \in \calR} \left |\Prob \left \{ \sqrt{n} (U_{n,N}'-\theta) \in R \right \} -\Prob (Y \in R)  \right| \\
& \quad = \sup_{R \in \calR} \left|\Prob \{ \sqrt{N} (U_{n,N}'-\theta) \in R \} - \Prob(\alpha_{n}^{-1/2} Y \in R) \right| \le C \left( D_n^2 \log^7(dn) \over n \wedge N \right)^{1/6},
\end{split}
\end{equation}
where $Y \sim N(0,r^{2}\Gamma_{g}+\alpha_{n}\Gamma_{h})$.
\end{thm}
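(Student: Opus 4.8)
The plan is to split $\sqrt n(U'_{n,N}-\theta)$ into a ``sampling fluctuation'' term and the complete $U$-statistic, extract the H\'ajek projection from the latter, and then chain together conditional high-dimensional Berry--Esseen bounds. Since $\sqrt n(U'_{n,N}-\theta)=\sqrt{\alpha_n}\,\sqrt N(U'_{n,N}-\theta)$ and hyperrectangles are closed under coordinatewise scaling, the two suprema in \eqref{eqn:gaussian_approx_bern_random_design} coincide (replace $R$ by $\sqrt{\alpha_n}\,R$ and $Y$ by $\alpha_n^{-1/2}Y$), so it suffices to bound the first. Writing $\bar h_\iota=h(X_\iota)-\theta$ and using $\sum_\iota Z_\iota\in\{\hat N,N\}$ together with $p_n|I_{n,r}|=N$, one verifies
\[
\sqrt n(U'_{n,N}-\theta)=(T_1+T_2)+\rho_n,\qquad T_1:=\frac{\sqrt n}{N}\sum_{\iota\in I_{n,r}}(Z_\iota-p_n)\bar h_\iota,\quad T_2:=\sqrt n(U_n-\theta),
\]
with $\rho_n\equiv0$ for sampling with replacement and $\rho_n=(N/\hat N-1)(T_1+T_2)$ for Bernoulli sampling; by \eqref{eqn:concentration} and a Gaussian anti-concentration inequality over $\calR$, $\rho_n$ is absorbed into the error term. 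Intuitively $T_2$ carries the H\'ajek-projection covariance $r^2\Gamma_g$ of the complete $U$-statistic, while $T_1$ --- centered given $X_1^n$, with conditional covariance $\approx\alpha_n\Gamma_h$ --- carries the extra $\alpha_n\Gamma_h$ from averaging only $N$ of the $|I_{n,r}|$ terms.

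Next I would extract the linear part of $T_2$. By the Hoeffding decomposition $U_n-\theta=\tfrac rn\sum_{i=1}^n(g(X_i)-\theta)+R_n$, where $R_n$ collects the completely degenerate components of orders $2,\dots,r$; maximal inequalities for degenerate $U$-statistics (one per order), together with (C1)--(C2), give $\E[|\sqrt n\,R_n|_\infty]$ negligible against the target, and a further anti-concentration step replaces $T_2$ by $L:=\tfrac r{\sqrt n}\sum_{i=1}^n(g(X_i)-\theta)$. It then remains to compare $T_1+L$ on $\calR$ with $Y=Y_1+Y_2$, $Y_1\sim N(0,r^2\Gamma_g)$ and $Y_2\sim N(0,\alpha_n\Gamma_h)$ independent.

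The core is an iterated conditioning. (i) Condition on $X_1^n$: $L$ is then frozen and, for Bernoulli sampling, $T_1$ is a sum of $|I_{n,r}|$ independent mean-zero vectors with conditional covariance $\Sigma:=\alpha_n(1-p_n)\hat\Gamma_h$, where $\hat\Gamma_h:=|I_{n,r}|^{-1}\sum_\iota\bar h_\iota\bar h_\iota^{T}$ is the empirical version of $\Gamma_h$ (for sampling with replacement the multinomial weights add a rank-one correction of size $O(\alpha_n|U_n-\theta|_\infty^2)$, handled by first comparing with the independent case). On the event where $|\hat\Gamma_h-\Gamma_h|_\infty$ is small and $\max_{\iota,j}|\bar h_{\iota,j}|\lesssim D_n\log(dn)$ --- of probability $1-o(1)$, by concentration of the matrix-valued $U$-statistic $\hat\Gamma_h$ (whose leading H\'ajek component has per-entry variance $\lesssim D_n^2$, using that (C1) forces $P^r|h_j|^4\le D_n^2$) and a maximal inequality --- the Gaussian approximation for sums of independent vectors of \cite{cck2017_AoP} applied conditionally gives
\[
\sup_{R\in\calR}\bigl|\Prob_{\mid X_1^n}\{T_1+L\in R\}-\Prob_{\mid X_1^n}\{N(L,\Sigma)\in R\}\bigr|\lesssim\bigl(D_n^2\log^7(dn)/N\bigr)^{1/6}
\]
(and $\le1$ off that event, contributing $o(1)$). (ii) Still conditionally on $X_1^n$, the Gaussian comparison inequality of \cite{cck2017_AoP} replaces $\Sigma$ by $\alpha_n(1-p_n)\Gamma_h$: after cancelling the common scalar $\alpha_n(1-p_n)$ and the shift $L$ (both harmless under $\sup_R$), this is a comparison of $N(0,\hat\Gamma_h)$ with $N(0,\Gamma_h)$, of cost $\lesssim|\hat\Gamma_h-\Gamma_h|_\infty^{1/3}\,\mathrm{polylog}(dn)\lesssim(D_n^2\log^7(dn)/n)^{1/6}$ on the good event --- the point being that the (possibly large) factor $\alpha_n$ is shared by both Gaussians, so no $\alpha_n$ blow-up occurs. (iii) Taking $\E[\cdot]$ over $X_1^n$, the mixture $N(L,\alpha_n(1-p_n)\Gamma_h)$ becomes $L+\tilde Y_2$ with $\tilde Y_2\sim N(0,\alpha_n(1-p_n)\Gamma_h)$ independent of $X_1^n$; conditioning on $\tilde Y_2$ and applying the Gaussian approximation of \cite{cck2017_AoP} to the i.i.d.\ sum $L$ (against $Y_1$, uniformly over translates, at cost $(D_n^2\log^7(dn)/n)^{1/6}$) gives $Y_1+\tilde Y_2\sim N(0,r^2\Gamma_g+\alpha_n(1-p_n)\Gamma_h)$. (iv) A final Gaussian comparison replaces $\alpha_n(1-p_n)\Gamma_h$ by $\alpha_n\Gamma_h$; the covariance gap is $\alpha_n p_n\Gamma_h$ with entries $\lesssim D_n/n^{r-1}$, so this costs $\lesssim(D_n/n)^{1/3}\,\mathrm{polylog}(dn)\lesssim(D_n^2\log^7(dn)/n)^{1/6}$ as $r\ge2$. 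Summing all the error contributions above gives the bound $C(D_n^2\log^7(dn)/(n\wedge N))^{1/6}$, where $C$ depends only on $r$ and $\underline\sigma$ --- the latter through the diagonal lower bounds $\Gamma_{g,jj}\ge\underline\sigma^2$ (which is (C3-ND)) and $\Gamma_{h,jj}\ge\underline\sigma^2$ (from (C3-ND) by Jensen's inequality).

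The hard part is the bookkeeping of this iterated conditioning: ordering the conditionings so that each Berry--Esseen step sees effective sample size $N$ (for $T_1$) or $n$ (for $L$), and --- most delicately --- so that the covariance-replacement step (ii) enters only at the true rate $|\hat\Gamma_h-\Gamma_h|_\infty\lesssim D_n\sqrt{\log(dn)/n}$ rather than a cruder one, which hinges on the moment normalization in (C1). Additional technical work is needed to extend the degenerate-$U$-statistic maximal inequalities (for $R_n$ and for $\hat\Gamma_h$) from order two, as in \cite{chen2017a}, to arbitrary order $r$, and to control the multinomial dependence of the sampling-with-replacement weights by a comparison with the Bernoulli/independent case.
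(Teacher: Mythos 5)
Your proposal follows essentially the same route as the paper's own proof: the decomposition into the complete $U$-statistic plus a conditionally centered sampling fluctuation (your $T_1+T_2+\rho_n$ is exactly the paper's $A_n+\sqrt{1-p_n}B_n$ premultiplied by $N/\hat N$ in the Bernoulli case), the Hoeffding decomposition and maximal inequalities for the degenerate residual $R_n$, the iterated-conditioning argument --- condition on $X_1^n$, apply the high-dimensional CLT of \cite{cck2017_AoP} to $T_1$, then a Gaussian comparison to replace $\hat\Gamma_h$ by $\Gamma_h$, then condition on the auxiliary Gaussian and apply the CLT to the H\'ajek part, then one last comparison to remove the $(1-p_n)$ factor --- and finally Bernstein plus Nazarov to absorb the random normalization $N/\hat N$. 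The only bookkeeping differences (performing the $\hat\Gamma_h\!\to\!\Gamma_h$ comparison after scaling by $\alpha_n(1-p_n)$ rather than before, and for sampling with replacement keeping the multinomial-weight representation with a rank-one covariance correction rather than switching to the conditionally i.i.d.-draw representation $N^{-1}\sum_{j}h(X^*_{\iota_j})$ as the paper does) are innocuous and lead to the same estimates.
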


Theorem \ref{thm:gaussian_approx_bern_random_design} shows that the distribution of $\sqrt{n}(U_{n,N}'-\theta)$ can be approximated by the Gaussian distribution $N(0,r^{2}\Gamma_{g}+\alpha_{n} \Gamma_{h})$ on the hyperrectangles  provided that $D_{n}^{2} \log^{7}(dn) \ll n \wedge N$, from which we deduce that the Gaussian approximation on the hyperrectangles holds for $U_{n,N}'$ even when $d \gg n$.  Asymptotically, if e.g. $D_{n}$ is bounded in $n$ and $N \ge n$, then as $n \to \infty$, 
\[
\sup_{R \in \calR} \left |\Prob \left \{ \sqrt{n} (U_{n,N}'-\theta) \in R \right \} -\Prob (Y \in R)  \right| \to 0
\]
whenever $d = d_{n}$ satisfies that $\log d = o(n^{1/7})$, so that the high-dimensional CLT on the hyperrectangles holds for the incomplete $U$-statistics even in ultra-high dimensional cases where $d$ is much larger than $n$. Similar comments apply to all the other results we will derive.

For complete and non-degenerate $U$-statistics (a special case of incomplete $U$-statistics with the complete design and $N = |I_{n,r}|$), it has been argued in \cite{cck2017_AoP} ($r=1$) and \cite{chen2017a} ($r=2$) that the rate of convergence in Theorem \ref{thm:gaussian_approx_bern_random_design} is nearly optimal in the regime where $d$ grows sub-exponentially fast in $n$. On the other hand, the rate of convergence can be improved to $n^{-1/4}$ (up to logarithmic factors) if $d = O(n^{1/7})$, namely if the dimension increases at most polynomially fast with the sample size. 

In the cases where $N \gg n$ (i.e., $\alpha_{n} \ll 1$) and $N \ll n$ (i.e, $\alpha_{n} \gg 1$), the approximating distribution can be simplified to $N(0,r^{2}\Gamma_{g})$ and $N(0,\Gamma_{h})$, respectively. 

\begin{cor}
\label{cor:Gaussian_approximation}
Suppose that Conditions (C1), (C2), and (C3-ND) hold.
Then there exists a constant $C$ depending only on $\underline{\sigma}$ and $r$ such that 
\[
\begin{split}
&\sup_{R \in \calR} \left |\Prob \left \{ \sqrt{n} (U_{n,N}'-\theta) \in R \right \} -\gamma_{A}(R)  \right| \\
&\quad \le C \left \{ \left (\frac{n D_{n} \log^{2} d}{N} \right )^{1/3} + \left( D_n^2 \log^7(dn) \over n \wedge N \right)^{1/6} \right \},
\end{split}
\]
where $\gamma_{A} = N(0,r^{2}\Gamma_{g})$, and 
\[
\begin{split}
&\sup_{R \in \calR} \left |\Prob \left \{ \sqrt{N} (U_{n,N}'-\theta) \in R \right \} -\gamma_{B}(R)  \right| \\
&\quad \le C \left \{ \left (\frac{N D_{n} \log^{2} d}{n} \right )^{1/3} + \left( D_n^2 \log^7{d} \over n \wedge N \right)^{1/6} \right \},
\end{split}
\]
where $\gamma_{B} = N(0,\Gamma_{h})$. 
\end{cor}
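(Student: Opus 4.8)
The plan is to deduce the corollary directly from Theorem \ref{thm:gaussian_approx_bern_random_design}: one replaces the approximating Gaussian law $N(0,r^{2}\Gamma_{g}+\alpha_{n}\Gamma_{h})$ by the simpler laws $\gamma_{A}$ and $\gamma_{B}$ and controls the cost of this replacement by a Gaussian-to-Gaussian comparison bound on the class $\calR$ of hyperrectangles. Write $Y\sim N(0,r^{2}\Gamma_{g}+\alpha_{n}\Gamma_{h})$ as in Theorem \ref{thm:gaussian_approx_bern_random_design}. By the triangle inequality, $\sup_{R\in\calR}|\Prob\{\sqrt{n}(U_{n,N}'-\theta)\in R\}-\gamma_{A}(R)|$ is at most $\sup_{R\in\calR}|\Prob\{\sqrt{n}(U_{n,N}'-\theta)\in R\}-\Prob(Y\in R)|$ (controlled by Theorem \ref{thm:gaussian_approx_bern_random_design}) plus $\sup_{R\in\calR}|\Prob(Y\in R)-\gamma_{A}(R)|$. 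For the latter, $\gamma_{A}=N(0,r^{2}\Gamma_{g})$ and $Y$ are centered Gaussians whose covariance matrices differ in the max-norm by $\alpha_{n}|\Gamma_{h}|_{\infty}$; by Cauchy--Schwarz and Condition (C1), $|\Gamma_{h}|_{\infty}\le\max_{j}P^{r}(h_{j}-\theta_{j})^{2}\le\max_{j}P^{r}h_{j}^{2}\le 1+D_{n}\le 2D_{n}$, so this discrepancy is at most $2\alpha_{n}D_{n}=2nD_{n}/N$. Since both covariance matrices have diagonal entries at least $r^{2}\underline{\sigma}^{2}$ (use $(\Gamma_{h})_{jj}\ge 0$ and Condition (C3-ND)), a standard Gaussian comparison inequality on hyperrectangles --- of Slepian-interpolation/anti-concentration type, with constant depending only on a lower bound for the coordinate variances --- bounds the latter supremum by a constant, depending only on $\underline{\sigma}$ and $r$, times $(nD_{n}\log^{2}d/N)^{1/3}$. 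This proves the first assertion.

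For the second assertion I would argue with $\sqrt{N}(U_{n,N}'-\theta)$, which by Theorem \ref{thm:gaussian_approx_bern_random_design} is approximated on $\calR$, up to the error bound of Theorem \ref{thm:gaussian_approx_bern_random_design}, by $\alpha_{n}^{-1/2}Y\sim N(0,(N/n)r^{2}\Gamma_{g}+\Gamma_{h})$. Comparing this law with $\gamma_{B}=N(0,\Gamma_{h})$, the covariance matrices differ in the max-norm by $(N/n)r^{2}|\Gamma_{g}|_{\infty}$, and by Jensen's inequality $|\Gamma_{g}|_{\infty}\le\max_{j}Pg_{j}^{2}\le\max_{j}P^{r}h_{j}^{2}\le 1+D_{n}\le 2D_{n}$, so the discrepancy is at most $2r^{2}ND_{n}/n$. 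For the nondegeneracy required by the comparison lemma, the law of total variance gives $(\Gamma_{h})_{jj}=\Var(h_{j}(X_{1}^{r}))\ge\Var(\E[h_{j}(X_{1}^{r})\mid X_{1}])=P(g_{j}-\theta_{j})^{2}\ge\underline{\sigma}^{2}$ under Condition (C3-ND), and the same lower bound holds for $(N/n)r^{2}\Gamma_{g}+\Gamma_{h}$. The same comparison inequality then bounds the residual supremum by a constant (depending only on $\underline{\sigma}$ and $r$) times $(ND_{n}\log^{2}d/n)^{1/3}$, completing the proof.

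The computations are routine; the only point that needs care is to apply the Gaussian comparison lemma so that its multiplicative constant does not pick up the (possibly $n$-dependent) upper bounds of order $D_{n}$ on the coordinate variances of $Y$ and $\alpha_{n}^{-1/2}Y$. This is ensured by invoking a comparison bound whose constant depends only on a \emph{lower} bound for those variances ($r^{2}\underline{\sigma}^{2}$ and $\underline{\sigma}^{2}$ respectively, here); alternatively, after a truncation at level $t\asymp\sqrt{D_{n}\log d}$ one can reach the same bound by conditioning on an independent $N(0,\Gamma_{h})$ copy of the dropped component and applying Nazarov's anti-concentration inequality to $N(0,r^{2}\Gamma_{g})$ on hyperrectangles. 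The precise powers of $\log d$ in the displayed bounds are inherited from the comparison lemma used in the SM.
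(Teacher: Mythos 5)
Your proof is correct and follows essentially the same route as the paper: invoke Theorem \ref{thm:gaussian_approx_bern_random_design} and then absorb the discarded covariance contribution (either $\alpha_{n}\Gamma_{h}$ or $\alpha_{n}^{-1}r^2\Gamma_{g}$) via the Gaussian comparison inequality of Lemma \ref{lem:Gaussian_comparison}, using $|\Gamma_{g}|_{\infty}\le|\Gamma_{h}|_{\infty}\le CD_{n}$. Your additional checks that the comparison lemma's constant only needs the lower variance bound, supplied by (C3-ND) directly for $\gamma_{A}$ and via the law of total variance for $\gamma_{B}$, are exactly the (implicit) details the paper's one-line proof relies on.
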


\begin{rmk}[Comments on the computational and statistical trade-off for the randomized incomplete $U$-statistics with non-degenerate kernels]
\label{rmk:computation_statistics_tradeoff}
Theorem \ref{thm:gaussian_approx_bern_random_design} and Corollary \ref{cor:Gaussian_approximation} reveal  an interesting phase transition phenomenon between the computational complexity and the statistical efficiency for the randomized incomplete $U$-statistics. Suppose that $n \wedge N \gg D_{n}^{2} \log^{7} (dn)$ and $\underline{\sigma}$ is bounded away from zero. First, if the computational budget parameter $N$ is {\it superlinear} in the sample size $n$ (i.e., $N \gg nD_{n} \log^{2} d$), then both the incomplete $U$-statistic $\sqrt{n}(U_{n,N}'-\theta)$ and its complete version $\sqrt{n}(U_{n}-\theta)$ can be approximated by the same Gaussian distribution $\gamma_{A} = N(0, r^2 \Gamma_{g})$ (cf. \cite{chen2017a} for $r = 2$ case). Second, if $N$ is of the same order as $n$, then the scaling factor of $U_{n,N}'$ remains the same as for $U_{n}$, namely, $\sqrt{n}$. However, the approximating Gaussian distribution for $\sqrt{n}(U_{n,N}'-\theta)$ has covariance matrix $r^{2} \Gamma_{g} + \alpha_{n} \Gamma_{h}$, which is larger than the the corresponding  covariance matrix $r^{2} \Gamma_{g}$ for $\sqrt{n}(U_{n}-\theta)$ in the sense that their difference $\alpha_{n} \Gamma_{h}$ is positive semi-definite. In this case, we sacrifice the statistical efficiency for the sake of keeping the computational cost linear in $n$. Third, if we further reduce the computational budget parameter $N$ to be {\it sublinear} in $n$ (i.e., $N \ll n/(D_{n} \log^{2}d)$), then the scaling factor of $U_{n,N}'$ changes from $\sqrt{n}$ to $\sqrt{N}$, and the distribution of $U_{n,N}'$ is approximated by $N(\theta,N^{-1}\Gamma_{h})$ on the hyperrectangles. Hence, the decay rate of the covariance matrix of the approximating Gaussian distribution  is now $N^{-1}$, which is slower than the $n^{-1}$ rate for the previous two cases. 
\end{rmk}

Next, we consider the case where the kernel $h$ is degenerate, i.e., $P(g_{j}-\theta_{j})^2 = 0$ for all $j = 1,\dots,d$. 
We consider the case where the kernel $h$ is degenerate of order $k-1$ for some $k=2,\dots,r$,  i.e., $P^{r-k+1}h(x_{1},\dots,x_{k-1})=P^{r}h$ for all $(x_{1},\dots,x_{k-1}) \in S^{k-1}$. Even in such cases, a Gaussian approximation holds for $\sqrt{N}(U_{n,N}'-\theta)$ on the hyperrectangles provided that $N \ll n^{k}$ up to logarithmic factors. More precisely, we obtain the following theorem. 

\begin{thm}[Gaussian approximation under degeneracy]
\label{thm:gaussian_approx_bern_random_design_degenerate}
Suppose the kernel $h$ is degenerate of order $k-1$ for some $k=2,\dots,r$. In addition, suppose that Conditions (C1), (C2), and (C3-D) hold.
Then there exists a constant $C$ depending only on $\underline{\sigma}$ and $r$ such that 
\begin{equation}
\label{eqn:gaussian_approx_bern_random_design_degenerate}
\begin{split}
&\sup_{R \in \calR} \left |\Prob \left \{ \sqrt{N} (U_{n,N}'-\theta) \in R \right \}- \gamma_{B}(R) \right| \\
&\le  C \left \{ \left( {N D_n^2 \log^{k+3} d \over n^k} \right)^{1/4} 
+\left({D_n^2 (\log n) \log^5(dn) \over n}\right)^{1/6}+ \left({D_n^2 \log^7 (dn)\over N}\right)^{1/6} \right \},
\end{split}
\end{equation}
where $\gamma_{B} = N(0,\Gamma_{h})$. 
\end{thm}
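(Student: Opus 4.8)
The plan is to condition on the data $X_{1}^{n}=(X_{1},\dots,X_{n})$ and exploit the (conditional) independence created by the sampling, reducing the claim to a high-dimensional CLT for a normalized sum of independent random vectors plus a perturbation analysis of the ``bias'' carried by the complete $U$-statistic. Using the weighted representations $(\ref{eqn:incomplete_ustat})$ and $(\ref{eqn:sampling_with_replacement_2})$, I would first write
\[
\sqrt{N}(U_{n,N}'-\theta)=W_{N}+\sqrt{N}(U_{n}-\theta),\qquad W_{N}:=\sqrt{N}(U_{n,N}'-U_{n}),
\]
and use the identity $\sum_{\iota\in I_{n,r}}Z_{\iota}(h(X_{\iota})-U_{n})=\sum_{\iota}Z_{\iota}h(X_{\iota})-\bigl(\sum_{\iota}Z_{\iota}\bigr)U_{n}$: in the sampling-with-replacement case this gives $W_{N}=N^{-1/2}\sum_{j=1}^{N}(h(X_{\iota_{j}}^{*})-U_{n})$, while in the Bernoulli case $W_{N}=(N/\hat{N})\,N^{-1/2}\sum_{\iota}Z_{\iota}(h(X_{\iota})-U_{n})$ with $N/\hat{N}=1+O_{\Prob}(\sqrt{(\log n)/N})$ by $(\ref{eqn:concentration})$. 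In both cases, conditionally on $X_{1}^{n}$ (and on $\hat{N}$ in the Bernoulli case), $W_{N}$ is, up to the scalar $N/\hat{N}$, a normalized sum of i.i.d.\ (resp.\ independent) mean-zero vectors whose conditional covariance is $\widehat{\Gamma}_{n}:=|I_{n,r}|^{-1}\sum_{\iota}(h(X_{\iota})-U_{n})(h(X_{\iota})-U_{n})^{T}$ (times $1-p_{n}$ in the Bernoulli case, harmless since $p_{n}\le1/2$).

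The second step is a conditional high-dimensional CLT for $W_{N}$ (paralleling the non-degenerate case of Theorem \ref{thm:gaussian_approx_bern_random_design}). On a high-probability event $\mathcal{E}_{n}$ on which (i) $\min_{j}\widehat{\Gamma}_{n,jj}\ge\underline{\sigma}^{2}/2$, (ii) $|I_{n,r}|^{-1}\sum_{\iota}|h_{j}(X_{\iota})-U_{n,j}|^{2+m}$ is at most a constant times $D_{n}^{m}$ for $m=1,2$ and all $j$, and (iii) $\max_{\iota}|h(X_{\iota})-U_{n}|_{\infty}$ is of order $D_{n}\log(dn)$ (a union bound over the $\le n^{r}$ tuples using (C2)) --- all consequences of (C1)--(C3-D) together with $U$-statistic concentration --- I would truncate the summands at level of order $D_{n}\log(dn)$ and apply the Gaussian approximation theorem of \cite{cck2017_AoP} conditionally on $X_{1}^{n}$ to obtain, uniformly over $\calR$ and on $\mathcal{E}_{n}$,
\[
\sup_{R\in\calR}\bigl|\Prob_{\mid X_{1}^{n}}(W_{N}\in R)-\Prob\{N(0,\widehat{\Gamma}_{n})\in R\}\bigr|\;\lesssim\;\Bigl(\tfrac{D_{n}^{2}\log^{7}(dn)}{N}\Bigr)^{1/6},
\]
which is the third term in $(\ref{eqn:gaussian_approx_bern_random_design_degenerate})$; the fluctuation of $N/\hat{N}$ and the truncation error contribute only lower-order terms. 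Next I would replace $\widehat{\Gamma}_{n}$ by $\Gamma_{h}$ and handle the bias. Writing $\widehat{\Gamma}_{n}=|I_{n,r}|^{-1}\sum_{\iota}h(X_{\iota})h(X_{\iota})^{T}-U_{n}U_{n}^{T}$, the first summand is a $U$-statistic of order $r$ with entrywise kernel $h_{j}h_{l}$, so a Bernstein-type inequality for $U$-statistics with a union bound over the $d^{2}$ pairs, together with $|U_{n}U_{n}^{T}-\theta\theta^{T}|_{\infty}\lesssim|U_{n}-\theta|_{\infty}(|\theta|_{\infty}+|U_{n}-\theta|_{\infty})$, bounds $|\widehat{\Gamma}_{n}-\Gamma_{h}|_{\infty}$ with high probability; combined with the Gaussian comparison inequality of \cite{cck2017_AoP} (applicable since $\min_{j}\Gamma_{h,jj}\ge\underline{\sigma}^{2}$ by (C3-D)) and interpolating through intermediate covariance matrices so as to keep the exponent of $D_{n}$ sharp, this produces the middle term $(D_{n}^{2}(\log n)\log^{5}(dn)/n)^{1/6}$. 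For the bias, order-$(k-1)$ degeneracy makes the first $k-1$ Hoeffding projections of $h$ vanish, so $U_{n}-\theta=\sum_{c=k}^{r}\binom{r}{c}\,U_{n}^{(c)}(h^{(c)})$ with $h^{(c)}$ the ($c$-th, completely degenerate) Hoeffding projection and the $c=k$ term dominating at scale $n^{-k/2}$; a moment/tail inequality for completely degenerate $U$-statistics with sub-exponential kernels gives $|U_{n}-\theta|_{\infty}\lesssim D_{n}n^{-k/2}\,\mathrm{polylog}(d)$ with high probability, and Nazarov's anti-concentration inequality for $\gamma_{B}=N(0,\Gamma_{h})$ applied to $\delta$-enlargements of hyperrectangles, after optimizing over the enlargement/truncation level $\delta$, yields the first term $(ND_{n}^{2}\log^{k+3}d/n^{k})^{1/4}$.

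Finally I would assemble the pieces: for $R\in\calR$ and its $\delta$-enlargement $R^{\delta}$,
\[
\Prob\{\sqrt{N}(U_{n,N}'-\theta)\in R\}\le\E\bigl[\I_{\mathcal{E}_{n}}\,\Prob_{\mid X_{1}^{n}}(W_{N}\in R^{\delta})\bigr]+\Prob(\mathcal{E}_{n}^{c})+\Prob(\sqrt{N}|U_{n}-\theta|_{\infty}>\delta),
\]
and each term is bounded uniformly in $R$ by the three preceding steps (the conditional Gaussian probability on $R^{\delta}$ being passed back to $R$ via anti-concentration); the reverse inequality is analogous, which gives $(\ref{eqn:gaussian_approx_bern_random_design_degenerate})$. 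I expect the main obstacle to be the interplay of the conditional CLT with the covariance-replacement step: because the kernel is only sub-exponential, $\max_{\iota}|h(X_{\iota})|_{\infty}$ is of order $D_{n}\log(dn)$, so a naive use of the multivariate CLT and of the Gaussian comparison bound introduces spurious powers of $D_{n}$ and of $\log(dn)$; extracting the stated dependence on $D_{n}$ needs a carefully tuned truncation coupled with a chained covariance comparison --- the core of the ``iterative conditioning plus Berry--Esseen'' strategy mentioned in the introduction --- while simultaneously ensuring that the fluctuations of $\hat{N}$ and of $\widehat{\Gamma}_{n}$ around $\Gamma_{h}$, and the lower-order Hoeffding terms in the bias, all enter only at subleading order.
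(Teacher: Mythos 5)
Your proposal is correct and follows essentially the same route as the paper's proof: decompose $\sqrt{N}(U_{n,N}'-\theta)$ into the randomized part and the complete-$U$-statistic bias, apply a conditional high-dimensional CLT (à la \cite{cck2017_AoP}) to the former given $X_1^n$, replace the empirical covariance by $\Gamma_h$ via the Gaussian comparison inequality, bound $\E[|U_n-\theta|_\infty]$ using the Hoeffding decomposition and degeneracy together with the maximal inequality of \cite{chenkato2017a}, and absorb the bias by combining Markov with Nazarov's anti-concentration before deferring the $N/\hat N$ fluctuation to a final Bernstein step. One small caveat: conditioning on $\hat N$ to fix the prefactor $N/\hat N$ would destroy the conditional independence of the $Z_{\iota}$'s (it turns the design into sampling without replacement), so the conditional CLT must be applied given $X_1^n$ alone, with $N/\hat N$ handled separately at the end — exactly as the paper does.
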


\begin{rmk}[Comments on the Gaussian approximation under degeneracy]
\label{rmk:gauss_appro_degenerate}
In the degenerate case, for the Gaussian approximation to hold, we must have $N \ll n^k$ (more precisely, $N \ll n^{k}/(D_{n}^{2}\log^{k+3}d)$), which is an indispensable condition even for the $d=1$ case. To see this, consider the Bernoulli sampling case (similar arguments apply to the sampling with replacement case) and  observe that $\sqrt{N} (U_{n,N}'-\theta) = (N/\hat{N}) \cdot \sqrt{N} W_{n} = (N/\hat{N}) (\sqrt{N} A_{n} + \sqrt{N(1-p_{n})} B_{n})$, where $A_{n} = U_{n} - \theta$ and $B_n = U'_{n,N} - U_{n}$. According to Theorem 12.10 in \cite{vandervaart1998}, $n^{k/2} A_{n}$ converges in distribution to a Gaussian chaos of order $k$. Hence, in order to approximate $\sqrt{N}(U_{n,N}' - \theta) \approx \sqrt{N} W_{n}$ by a Gaussian distribution, it is necessary that $\sqrt{N} A_{n}$ is stochastically vanishing, which leads to the condition $N \ll n^{k}$. 

It is worth noting that Theorem \ref{thm:gaussian_approx_bern_random_design_degenerate} reveals a fundamental difference between  complete and randomized incomplete $U$-statistics with the degenerate kernel. Namely, in the degenerate case, the complete $U$-statistic $n^{k/2}(U_{n} - \theta)$ is known to have a non-Gaussian limiting distribution when $d$ is fixed, while thanks to the randomizations, our incomplete $U$-statistics $\sqrt{N}(U_{n,N}'-\theta)$ can be approximated by the Gaussian distribution, and in addition the Gaussian approximation can hold even when $d \gg n$. On one hand, the rate of convergence of the incomplete $U$-statistics is $N^{-1/2}$ and is slower than that of the complete $U$-statistic, namely, $n^{-k/2}$. So in that sense we are sacrificing the rate of convergence by using the incomplete $U$-statistics instead of the complete $U$-statistic, although the rate $N^{-1/2}$ can be arbitrarily close to $n^{-k/2}$ up to logarithmic factors. On the other hand, the approximating Gaussian distribution for the incomplete $U$-statistics is easy to estimate by using a multiplier bootstrap developed in Section \ref{sec:bootstrap}. The multiplier bootstrap developed in Section \ref{sec:bootstrap} is computationally much less demanding than e.g., the empirical bootstraps for complete (degenerate) $U$-statistics \citep[cf.][]{bretagnolle1983,arconesgine1992}, and can consistently estimate the approximating Gaussian distribution $\gamma_{B}$ on the hyperrectangles even when $d \gg n$; see Theorem \ref{thm:bootstrap_validity_A}. To the best of our knowledge, there is no existing work that formally derives Gaussian chaos approximations to degenerate $U$-statistics in high dimensions where $d \gg n$, and in addition such non-Gaussian approximating distributions appear to be more difficult to estimate in high dimensions. Hence, in the degenerate case, the randomizations not only reduce the computational cost but also provide more tractable alternatives to make statistical inference on $\theta$ in high dimensions.

\end{rmk}

\begin{rmk}[Effect of deterministic normalization in the Bernoulli sampling case]
\label{rem:deterministic_normalization}
In the Bernoulli sampling case, consider the deterministic normalization, i.e., $\breve{U}_{n,N}' = N^{-1} \sum_{\iota \in I_{n,r}} Z_{\iota} h(X_{\iota})$, instead of the random one, i.e., $U_{n,N}' = \hat{N}^{-1} \sum_{\iota \in I_{n,r}}Z_{\iota} h(X_{\iota})$. Then, in the non-degenerate case, the distribution of $\sqrt{n}(\breve{U}_{n,N}'-\theta)$ can be approximated by $N(0,r^{2}\Gamma_{g}+\alpha_{n} P^{r}hh^{T})$, and in the degenerate case, $\sqrt{N} (\breve{U}_{n,N}' - \theta)$ can be approximated by $N(0,P^{r}hh^{T})$ (provided that $N \ll n^{k}$ for the degenerate case). To see this, observe that $\breve{U}_{n,N}' - \theta = (U_{n} - \theta) + N^{-1} \sum_{\iota \in I_{n,r}} (Z_{\iota} - p_{n}) h(X_{\iota})$, and the distribution of $N^{-1} \sum_{\iota \in I_{n,r}} (Z_{\iota} - p_{n}) h(X_{\iota})$ can be approximated by $N(0,(1-p_{n})P^{r}hh^{T})$. 
Since $P^{r}hh^{T}$ is larger than $\Gamma_{h}$ unless $\theta=0$ (in the sense that $P^{r}hh^{T} - \Gamma_{h} = \theta \theta^{T}$ is positive semi-definite), the approximating Gaussian distributions have larger covariance matrices for $\breve{U}_{n,N}'$ than those for $U_{n,N}'$, and hence it is in general recommended to use the random normalization rather than the deterministic one. A numerical comparison between these normalizations can be found in Section \ref{sec:normalization_effect} of the SM.
\end{rmk}

\begin{rmk}[Comparisons with \cite{Janson1984_PTRF} for $d = 1$]
\label{rmk:compare_Janson}
The Gaussian approximation results established in Theorems \ref{thm:gaussian_approx_bern_random_design}, \ref{thm:gaussian_approx_bern_random_design_degenerate}, and Corollary \ref{cor:Gaussian_approximation} can be considered as (partial) extensions of Theorem 1 and Corollary 1 in \cite{Janson1984_PTRF} to high dimensions. 
\cite{Janson1984_PTRF} focuses on the univariate case ($d=1$) and derives the asymptotic distributions of randomized incomplete $U$-statistics based on sampling without replacement, sampling with replacement, and Bernoulli sampling (\cite{Janson1984_PTRF} considers the deterministic normalization for the Bernoulli sampling case). For the illustrative purpose, consider sampling with replacement. Suppose that $p_n \to p \in [0,1]$ and the kernel $h$ is degenerate of order $k-1$ for some $k=1,\dots,r$ (the $k = 1$ case corresponds to a non-degenerate kernel).  Then Theorem 1 in \cite{Janson1984_PTRF} shows that  $(n^{k/2}(U_{n}-\theta), N^{1/2}(U_{n,N}'-U_{n})) \stackrel{d}{\to} (V,W)$, where $V$ is a  Gaussian chaos of order $k$ (in particular, $V \sim N(0,r^{2} P(g-\theta)^{2})$ if $k=1$) and $W \sim N(0,  P^{r}(h - \theta)^{2})$ such that $V$ and $W$ are independent. Hence, provided that $n^{k}/N \to \alpha \in [0,\infty]$, $n^{k/2} (U_{n,N}'-\theta) \stackrel{d}{\to} V+\alpha W$ if $\alpha < \infty$ and $\sqrt{N} (U_{n,N}'-\theta) \stackrel{d}{\to} W$ if $\alpha=\infty$. The present paper focuses on the cases where the approximating distributions are Gaussian (i.e., the cases where $k=1$ and $\alpha$ is finite, or $k \ge 2$ and $\alpha=\infty$), but covers high-dimensional kernels and derives explicit and non-asymptotic Gaussian approximation error bounds that are not obtained in \cite{Janson1984_PTRF}. In addition, the proof strategy of our Gaussian approximation results differs substantially from that of \cite{Janson1984_PTRF}. \cite{Janson1984_PTRF} shows the convergence of the joint characteristic function of $(n^{k/2}(U_{n}-\theta), N^{1/2}(U_{n,N}'-U_{n}))$ to obtain his Theorem 1, but 
the characteristic function approach is not very useful to derive explicit error  bounds on distributional approximations in high dimensions. Instead, our proofs iteratively use conditioning arguments combined with Berry-Esseen type bounds.

Finally, we expect that the results of the present paper can be extended to the case where $k \ge 2$ and $\alpha$ is finite; in that case, the approximating distribution to $n^{k/2}(U_{n,N}'-\theta)$ will be non-Gaussian and the technical analysis will be more involved in high dimensions. 
We leave the analysis of this case as a future research topic. 
\end{rmk}

\begin{rmk}[Relaxation of sub-exponential moment Condition (C2)]
\label{rmk:relaxation_moment_condition}
It is possible to relax the sub-exponential moment Condition (C2) to a polynomial moment condition. Suppose that 
\begin{enumerate}
\item[(C2')] $(P^{r}|h|_{\infty}^{q})^{1/q} \le D_{n}$ for some $q \in [4,\infty)$.
\end{enumerate}
Condition (C2') covers a kernel with bounded polynomial moment of a finite degree $q$. 

\begin{thm}[Gaussian approximation under polynomial moment condition]
\label{thm:gaussian_approx_bern_random_design_polymom}
(i) If Conditions (C1), (C2'), and (C3-ND) hold, then there exists a constant $C$ depending only on $\underline{\sigma},r$, and $q$ such that 
\begin{equation}
\label{eqn:gaussian_approx_bern_random_design_polymom}
\begin{split}
& \sup_{R \in \calR} \left |\Prob \left \{ \sqrt{n} (U_{n,N}'-\theta) \in R \right \} -\Prob (Y \in R)  \right| \\
& \qquad \le C \left\{ \left( D_n^2 \log^7(dn) \over n \wedge N \right)^{1/6} +  \left( {D_{n}^{2} n^{2r/q} \log^{3}(dn) \over (n \wedge N)^{1-2/q} } \right)^{1/3} \right\},
\end{split}
\end{equation}
where $Y \sim N(0,r^{2}\Gamma_{g}+\alpha_{n}\Gamma_{h})$. \\
(ii) Suppose the kernel $h$ is degenerate of order $k-1$ for some $k=2,\dots,r$. If Conditions (C1), (C2'), and (C3-D) hold, then there exists a constant $C$ depending only on $\underline{\sigma},r$, and $q$ such that 
\begin{equation}
\label{eqn:gaussian_approx_degenerate_bern_random_design_polymom}
\begin{split}
&\sup_{R \in \calR} \left |\Prob \left \{ \sqrt{N} (U_{n,N}'-\theta) \in R \right \}- \gamma_{B}(R) \right| \\
&\le  C \left\{ \left( {N D_n^2 \log^{k+3} d \over n^k} \right)^{1/4} +\left({D_n^2 \log^5(dn) \over n}\right)^{1/6}  \right. \\
& \qquad \quad \quad \left. + \left({D_n^2 \log^7 {d} \over N}\right)^{1/6} + \left( {D_{n}^{2} n^{2r/q} \log^{3}{d} \over (n \wedge N)^{1-2/q} } \right)^{1/3} \right\},
\end{split}
\end{equation}
where $\gamma_{B} = N(0, \Gamma_{h})$. 
\end{thm}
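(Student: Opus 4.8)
The plan is to derive Theorem \ref{thm:gaussian_approx_bern_random_design_polymom} from Theorems \ref{thm:gaussian_approx_bern_random_design} and \ref{thm:gaussian_approx_bern_random_design_degenerate} by a truncation argument, mirroring the passage from sub-exponential to polynomial moments in the sample-mean setting of \cite{cck2017_AoP}. The starting observation is that Condition (C2) enters the proofs of Theorems \ref{thm:gaussian_approx_bern_random_design} and \ref{thm:gaussian_approx_bern_random_design_degenerate} (in the Supplementary Material) only to guarantee that the ``spread'' statistics $\max_{\iota \in I_{n,r}}|h(X_\iota)|_\infty$ and $\max_{1 \le i \le n}|g(X_i)|_\infty$ are of order $D_n \log(dn)$ with overwhelming probability; the third-moment quantities driving the Berry--Esseen and Lindeberg-type estimates after the iterative conditioning are controlled by (C1) alone. (Recall that, by Jensen's inequality, (C1) also yields $P|g_j|^{2+k} \le D_n^k$, and (C2') likewise yields $(P|g|_\infty^q)^{1/q} \le (P^r|h|_\infty^q)^{1/q} \le D_n$.) Consequently the term $(D_n^2\log^7(dn)/(n\wedge N))^{1/6}$ in both theorems is unaffected by the moment condition on the kernel and survives unchanged; only the treatment of the spread statistics must be modified.

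Concretely, I would fix a truncation level $\tau > 0$ and decompose $h = h^{(1)} + h^{(2)}$ with $h^{(1)} = h\,\mathbf{1}\{|h|_\infty \le \tau\}$, inducing $U_{n,N}' = U_{n,N}'^{(1)} + U_{n,N}'^{(2)}$ and $\theta = \theta^{(1)} + \theta^{(2)}$. By a union bound and Markov's inequality, $\Prob(\max_{\iota}|h(X_\iota)|_\infty > \tau) \le |I_{n,r}| D_n^q/\tau^q \le n^r D_n^q/\tau^q$, so on an event of probability at least $1 - n^r D_n^q/\tau^q$ one has $U_{n,N}' = U_{n,N}'^{(1)}$; moreover $|\theta^{(2)}|_\infty \le D_n^q/\tau^{q-1}$ and the analogous truncation errors for $g$ and for the covariances $\Gamma_g,\Gamma_h$ are polynomially small in $\tau$ (e.g.\ $\|g_j - P^{r-1}h^{(1)}_j\|_{L^2(P)}^2 \le D_n^q/\tau^{q-2}$). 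The kernel $h^{(1)}$ is bounded by $\tau$, still satisfies (C1), and, for $\tau$ large enough (of the order $D_n^{q/(q-2)}$, which is harmless in the nontrivial regime), also (C3-ND)/(C3-D) with $\underline\sigma/2$. Running the proof of Theorem \ref{thm:gaussian_approx_bern_random_design} (resp.\ \ref{thm:gaussian_approx_bern_random_design_degenerate}) for $h^{(1)}$, but using $\max_{\iota}|h^{(1)}(X_\iota)|_\infty \le \tau$ wherever the sub-exponential spread bound was used, the Gaussian approximation error for $U_{n,N}'^{(1)}$ reduces to $(D_n^2\log^7(dn)/(n\wedge N))^{1/6}$ plus a term depending on the spread only through $\tau$. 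Combining this with the Gaussian anti-concentration (Nazarov's) inequality to absorb $\theta^{(2)}$ and $U_{n,N}'^{(2)}$ (bounding $|U_{n,N}'^{(2)}|_\infty \le \widehat N^{-1}\sum_\iota Z_\iota |h(X_\iota)|_\infty\mathbf{1}\{|h(X_\iota)|_\infty>\tau\}$ via Markov's inequality and $\E[\,\cdot\,] \le N D_n^q/\tau^{q-1}$), with the Gaussian comparison lemma to absorb the perturbation of the limiting covariance, and finally choosing $\tau \asymp n^{r/q}D_n$ to balance the spread-dependent contributions against $n^r D_n^q/\tau^q$, I would obtain the extra term $(D_n^2 n^{2r/q}\log^3(dn)/(n\wedge N)^{1-2/q})^{1/3}$ in (\ref{eqn:gaussian_approx_bern_random_design_polymom}). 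For part (ii) the same argument is applied to the proof of Theorem \ref{thm:gaussian_approx_bern_random_design_degenerate}; one subtlety is that truncating the kernel globally would destroy its degeneracy, so instead the truncation is carried out inside the conditional Berry--Esseen step over the sampling variables $Z_\iota$ (treating $h(X_\iota)$ as fixed numbers), which does not require degeneracy, while the bound $\sqrt{N}\,|U_n - \theta|_\infty = o_{\Prob}(1)$ for the complete degenerate $U$-statistic continues to follow from (C1) and $N \ll n^k$ exactly as in the proof of Theorem \ref{thm:gaussian_approx_bern_random_design_degenerate}.

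The main obstacle is the bookkeeping: one must re-trace each use of (C2) in the (already intricate) conditioning proofs of Theorems \ref{thm:gaussian_approx_bern_random_design} and \ref{thm:gaussian_approx_bern_random_design_degenerate} --- in the concentration of empirical moments around their population versions, in the truncation of individual summands inside the high-dimensional Berry--Esseen bounds, and in the maximal inequalities for the H\'ajek-projection average --- and verify that in every instance the exponential tail bound can be replaced by its Markov-based counterpart at the cost of a polynomial factor $n^{r/q}$ (or $n^{1/q}$ for the projection part) plus an additive probability that can be made negligible by the same choice of $\tau$, without inflating the leading $(D_n^2\log^7(dn)/(n\wedge N))^{1/6}$ term. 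No new conceptual ingredient beyond this careful truncation should be required; the argument is the $U$-statistic version of the sub-exponential-to-polynomial reduction in \cite{cck2017_AoP}.
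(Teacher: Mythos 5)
Your high-level plan---rerun the proofs of Theorems \ref{thm:gaussian_approx_bern_random_design} and \ref{thm:gaussian_approx_bern_random_design_degenerate} and replace every use of the sub-exponential spread bound by a polynomial-moment counterpart---is indeed how the paper proceeds, but your concrete truncation implementation has a quantitative gap. First, with $\tau \asymp D_n n^{r/q}$ the union bound you invoke gives $\Prob(\max_{\iota}|h(X_\iota)|_\infty > \tau) \le n^r D_n^q/\tau^q \asymp 1$, so the additive ``exceptional event'' cost is of constant order and the resulting bound is vacuous; to make that probability negligible you are forced to take $\tau$ at least of order $D_n n^{(r+1)/q}$ (the paper's effective threshold, with exceptional probability $n^{-1} \lesssim \varpi_n$). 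Second, and more seriously, once $\tau$ is that large, feeding the deterministic envelope $\tau$ into the sub-exponential machinery (e.g.\ into the concentration of $|\hat{\Gamma}_h - \Gamma_h|_\infty$, where (C2) produced the term $n^{-1}D_n^2(\log n)\log^3(dn)$) yields, after the Gaussian comparison, a term of order $\bigl(\tau^2 (\log n)\log^{5}(dn)/n\bigr)^{1/3}$, which carries $n^{2(r+1)/q}$ and extra logarithmic factors and does \emph{not} reduce to the stated Nagaev term $\bigl(D_n^2 n^{2r/q}\log^3(dn)/(n\wedge N)^{1-2/q}\bigr)^{1/3}$ in the regime $D_n^2\log^7(dn) \le n\wedge N$ when $q$ is large. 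The paper avoids this by never truncating the kernel: it keeps the $q$-th moment of the envelope in play through Fuk--Nagaev-type inequalities (Lemmas \ref{lem:talagrand_ineq_ustat_polymom} and E.4 of \cite{chen2017a}) and moment maximal inequalities, so the empirical-moment and covariance concentration steps produce $m^{2/q}D_n^2$-type quantities rather than $\tau^2$, the event $\max_\iota\max_j|h_j(X_\iota)| \le D_n n^{(r+1)/q}$ is used only where a high-probability spread bound is needed (Steps 1.3--1.4), and the extra $\varphi_n$ term then comes from these Nagaev tails together with Proposition 2.1 of \cite{cck2017_AoP} for the H\'ajek projection. So either you must abandon the global truncation and argue directly with $q$-th-moment tail bounds as the paper does, or you must accept a strictly weaker rate than the one stated.

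Two smaller inaccuracies: the claim that the third-moment quantities after conditioning are ``controlled by (C1) alone'' is not right---the concentration of $\hat{L}_n$ and of the empirical covariance around their population versions requires envelope moments (this is exactly where $\overline{L}_n$ acquires the extra factor $1 + D_n^2 n^{3r/q}\log d/(n\wedge N)^{1-3/q}$ under (C2'), which then propagates into $\phi_n$); and in part (ii) the bound $\E[|A_n|_\infty] \lesssim D_n n^{-k/2}\log^{k/2+1}d$ for the degenerate complete $U$-statistic needs $P^r(\max_j h_j^2) \lesssim D_n^2$, which follows from (C2') (since $q \ge 4$) but not from the coordinatewise condition (C1) alone, as you assert. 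Your observation that a global truncation would destroy degeneracy, and the fix of handling the randomization part conditionally on $X_1^n$, is correct and consistent with the paper's treatment.
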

Comparing Theorem \ref{thm:gaussian_approx_bern_random_design_polymom} with Theorems \ref{thm:gaussian_approx_bern_random_design} and \ref{thm:gaussian_approx_bern_random_design_degenerate}, we see that the same approximating Gaussian distributions under the sub-exponential moment condition (C2) are valid under the polynomial moment condition (C2') as well. The rates of convergence to the Gaussian distributions under (C2') involve an extra Nagaev-type term similarly to the sample average and  complete $U$-statistic cases (cf. \cite{cck2017_AoP,chen2017a}), and so the rates may be slower than those obtained under the sub-exponential moment condition (C2). In particular, the rates in (\ref{eqn:gaussian_approx_bern_random_design_polymom}) and (\ref{eqn:gaussian_approx_degenerate_bern_random_design_polymom}) now depend on the order $r$ through the term $n^{2r/q}$. Still, the leading orders in (\ref{eqn:gaussian_approx_bern_random_design_polymom}) and (\ref{eqn:gaussian_approx_degenerate_bern_random_design_polymom}) coincide with those under the sub-exponential moment condition (C2) as long as $q$ is sufficiently large compared with $r$. 
For example, if $D_n$ is bounded in $n$, $N \ge n$, and $q \ge 4(r+1)$, then the leading order of (\ref{eqn:gaussian_approx_bern_random_design_polymom}) is $(n^{-1} \log^{7}(dn))^{1/6}$, which coincides with that in the sub-exponential case. 


\end{rmk}

\section{Bootstrap approximations}
\label{sec:bootstrap}

The Gaussian approximation results developed in the previous section are often not directly applicable in statistical applications since the covariance matrix of the approximating Gaussian distribution, $r^{2} \Gamma_{g} + \alpha_{n} \Gamma_{h}$ (or $\Gamma_{h}$ in the degenerate case), is unknown to us. In this section, we develop data-dependent procedures to further approximate or estimate the $N(0,r^{2} \Gamma_{g} + \alpha_{n}\Gamma_{h})$ distribution (or the $N(0,\Gamma_{h})$ distribution in the degenerate case) that are computationally (much) less-demanding than existing bootstrap methods for $U$-statistics such as the empirical bootstrap. 

\subsection{Generic bootstraps for incomplete $U$-statistics}
\label{sec:generic_bootstrap}

Let $\calD_{n} = \{ X_{1},\dots,X_{n}\} \cup \{ Z_{\iota} : \iota \in I_{n,r} \}$. For the illustrative purpose, consider to estimate the $N(0,r^{2}\Gamma + \alpha_{n} \Gamma_{h})$ distribution and let $Y \sim N(0,r^{2}\Gamma_{g}+\alpha_{n}\Gamma_{h})$.  The basic idea of our approach is as follows. Since $Y \stackrel{d}{=} Y_{A} + \alpha_{n}^{1/2} Y_{B}$ where $Y_{A} \sim N(0,r^{2}\Gamma_{g})$ and $Y_{B} \sim N(0,\Gamma_{h})$ are independent, to approximate the distribution of $Y$, it is enough to construct data-dependent random vectors $U_{n,A}^{\sharp}$ and $U_{n,B}^{\sharp}$ such that, conditionally on $\calD_{n}$, (i) $U_{n,A}^{\sharp}$ and $U_{n,B}^{\sharp}$ are independent, and (ii) the conditional distributions of $U_{n,A}^{\sharp}$ and $U_{n,B}^{\sharp}$ are computable and ``close'' enough to $N(0,r^{2}\Gamma_{g})$ and $N(0,\Gamma_{h})$, respectively. Then, the conditional distribution of $U_{n}^{\sharp} = U_{n,A}^{\sharp} +\alpha_{n}^{1/2} U_{n,B}^{\sharp}$ should be close to $N(0,r^{2} \Gamma_{g} + \alpha_{n}  \Gamma_{h})$ and hence to the distribution of $\sqrt{n}(U_{n,N}'-\theta)$. Of course, if the target distribution is $N(0,r^{2}\Gamma_{g})$ or $N(0,\Gamma_{h})$, then it is enough to simulate the conditional distribution of $U_{n,A}^{\sharp}$ or $U_{n,B}^{\sharp}$ alone, respectively.

Construction of $U_{n,B}^{\sharp}$ is straightforward; in fact, it is enough to apply the (Gaussian) multiplier bootstrap to 
$\sqrt{Z_{\iota}} h(X_{\iota}), \iota \in I_{n,r}$.

\uline{Construction of $U_{n,B}^{\sharp}$}.
\begin{enumerate}
\item[1.] Generate i.i.d. $N(0,1)$ variables $\{ \xi_{\iota}' : \iota \in I_{n,r} \}$ independent of the data $\calD_{n}$.
\item[2.] Construct 
\[
U_{n,B}^{\sharp}  =
 \frac{1}{\sqrt{\hat{N}}} \sum_{\iota \in I_{n,r}} \xi_{\iota}' \sqrt{Z_{\iota}} \{ h(X_{\iota}) - U_{n,N}' \},
\]
where $\hat{N}$ is replaced by $N$ for the sampling with replacement case.
\end{enumerate}

In the Bernoulli sampling case, $U_{n,B}^{\sharp}$ reduces to $U_{n,B}^{\sharp} =\hat{N}^{-1/2} \sum_{j=1}^{\hat{N}} \xi_{\iota_{j}}' \{ h(X_{\iota_{j}}) - U_{n,N}' \}$, while in the sampling with replacement case, simulating $U_{n,B}^{\sharp}$ can be equivalently implemented by simulating $U_{n,B}^{\sharp} = N^{-1/2}\sum_{j=1}^{N} \eta_{j} \{ h(X_{\iota_{j}}^{*}) - U_{n,N}' \}$ for $\eta_{1},\dots,\eta_{N} \sim N(0,1)$ i.i.d. independent of $X_{\iota_{1}}^{*},\dots,X_{\iota_{N}}^{*}$; in fact the distribution of $U_{n,B}^{\sharp}$ in the latter definition (conditionally on $X_{\iota_{1}}^{*},\dots.X_{\iota_{N}}^{*}$) is Gaussian with mean zero and covariance matrix $N^{-1} \sum_{j=1}^{N} \{ h(X_{\iota_{j}}^{*}) - U_{n,N}' \}\{ h(X_{\iota_{j}}^{*}) - U_{n,N}' \}^{T}$, which is identical to the conditional distribution of $U_{n,B}^{\sharp}$ in the original definition.
In either case, in practice, we only need to generate (on average) $N$ multiplier variables.  The following theorem establishes conditions under which the conditional distribution of $U_{n,B}^{\sharp}$ is able to consistently estimate the $N(0,\Gamma_{h}) \ (=\gamma_{B})$ distribution on the hyperrectangles with  polynomial error rates.

\begin{thm}[Validity of $U_{n,B}^{\sharp}$]
\label{thm:bootstrap_validity_A}
Suppose that (C1), (C2), and (C3-D) hold. If  
\begin{equation}
\label{eqn:growth_condition_B}
\frac{D_{n}^{2} (\log^{2}{n}) \log^{5}(dn)}{n \wedge N} \le C_{1}n^{-\zeta}
\end{equation}
for some constants $0 < C_{1} < \infty$ and  $\zeta \in (0,1)$, then there exists a constant $C$ depending only on $\underline{\sigma}, r$, and $C_{1}$ such that 
\begin{equation}
\label{eqn:thm:bootstrap_validity_B}
\sup_{R \in \calR} \left | \Prob_{\mid \calD_{n}} (U_{n,B}^{\sharp} \in R) - \gamma_{B}(R) \right | \le Cn^{-\zeta/6}
\end{equation}
with probability at least $1-Cn^{-1}$.
\end{thm}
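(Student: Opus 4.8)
The plan is to exploit that, conditionally on $\calD_{n}$, the bootstrap statistic $U_{n,B}^{\sharp}$ is \emph{exactly} Gaussian, which reduces the whole problem to comparing two centered normal laws on the class of hyperrectangles. Since $\{\xi_{\iota}'\}$ are i.i.d.\ $N(0,1)$ independent of $\calD_{n}$ and $(\sqrt{Z_{\iota}})^{2}=Z_{\iota}$, the vector $U_{n,B}^{\sharp}$ given $\calD_{n}$ has distribution $N(0,\hat{\Gamma})$ with
\[
\hat{\Gamma} = \frac{1}{\hat{N}} \sum_{\iota \in I_{n,r}} Z_{\iota} \{ h(X_{\iota}) - U_{n,N}' \} \{ h(X_{\iota}) - U_{n,N}' \}^{T}
\]
($\hat{N}$ replaced by $N$ in the sampling-with-replacement case), and $\hat{\Gamma}$ is $\calD_{n}$-measurable. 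Hence $\sup_{R \in \calR} | \Prob_{\mid \calD_{n}}(U_{n,B}^{\sharp} \in R) - \gamma_{B}(R) |$ is a deterministic functional of the realized $\hat{\Gamma}$, and the Gaussian comparison bound on hyperrectangles (as used in \cite{cck2017_AoP,chen2017a}, whose constants are governed by lower and upper bounds on the diagonal entries of the two covariances, which for $\Gamma_{h}$ are $\ge \underline{\sigma}^{2}$ by (C3-D) and $\lesssim D_{n}$ by (C1)) bounds it by
\[
C \, |\hat{\Gamma} - \Gamma_{h}|_{\infty}^{1/3} \bigl\{ 1 \vee \log(d / |\hat{\Gamma} - \Gamma_{h}|_{\infty}) \bigr\}^{2/3}
\]
on the event that the diagonal entries of $\hat{\Gamma}$ are also bounded away from $0$. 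Thus the theorem reduces to showing that $|\hat{\Gamma} - \Gamma_{h}|_{\infty}$ is of a suitable polynomial order with probability at least $1 - C n^{-1}$.

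To control $|\hat{\Gamma} - \Gamma_{h}|_{\infty}$, I would use the identity $\hat{\Gamma} = M_{2} - U_{n,N}' (U_{n,N}')^{T}$, where $M_{2} = \hat{N}^{-1} \sum_{\iota} Z_{\iota} h(X_{\iota}) h(X_{\iota})^{T}$ is the incomplete $U$-statistic of order $r$ with the matrix-valued kernel $h \otimes h$, together with $\Gamma_{h} = P^{r} h h^{T} - \theta \theta^{T}$, so that
\[
\hat{\Gamma} - \Gamma_{h} = \bigl( M_{2} - P^{r} h h^{T} \bigr) - \bigl( U_{n,N}' (U_{n,N}')^{T} - \theta \theta^{T} \bigr).
\]
The second term is handled by $| U_{n,N}' (U_{n,N}')^{T} - \theta \theta^{T} |_{\infty} \le | U_{n,N}' - \theta |_{\infty} ( 2 | \theta |_{\infty} + | U_{n,N}' - \theta |_{\infty} )$ combined with the high-probability bound $| U_{n,N}' - \theta |_{\infty} \lesssim \sqrt{ D_{n} \log(dn) / (n \wedge N) }$ (which follows from the maximal/concentration inequalities for incomplete $U$-statistics already employed in the proof of Theorem \ref{thm:gaussian_approx_bern_random_design}) and $| \theta |_{\infty} \lesssim \sqrt{D_{n}}$ by (C1). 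For the first term I would split $M_{2} - P^{r} h h^{T} = ( M_{2} - U_{n}^{(2)} ) + ( U_{n}^{(2)} - P^{r} h h^{T} )$ with $U_{n}^{(2)} = |I_{n,r}|^{-1} \sum_{\iota} h(X_{\iota}) h(X_{\iota})^{T}$ the complete $U$-statistic: for $U_{n}^{(2)} - P^{r} h h^{T}$ a Hoeffding decomposition plus a maximal inequality over the $d^{2}$ coordinates, using $P^{r}(h_{j}^{2} h_{k}^{2}) \le (P^{r} h_{j}^{4})^{1/2} (P^{r} h_{k}^{4})^{1/2} \le D_{n}^{2}$ from (C1) and $\| h_{j} h_{k} \|_{\psi_{1/2}} \lesssim D_{n}^{2}$ from (C2); for $M_{2} - U_{n}^{(2)}$ I would condition on $X_{1}^{n}$ and apply Bernstein's inequality to the Bernoulli (resp.\ multinomial) sampling randomization, the range being $\max_{j} \max_{\iota} | h_{j}(X_{\iota}) | \lesssim D_{n} \log(dn)$ with high probability after a union bound over the $\binom{n}{r}$ tuples. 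Collecting terms gives $| M_{2} - P^{r} h h^{T} |_{\infty} \lesssim D_{n} \sqrt{ \log(dn) / (n \wedge N) }$ plus strictly lower-order terms, and hence $| \hat{\Gamma} - \Gamma_{h} |_{\infty} \lesssim D_{n} \sqrt{ \log(dn) / (n \wedge N) }$.

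Finally I would assemble: taking the deviation parameter $t \asymp \log n$ in Bernstein's inequality \eqref{eqn:concentration} and in the $U$-statistic maximal inequalities, each of the finitely many events involved has probability at least $1 - C n^{-1}$, so their intersection does too; on it $|\hat{\Gamma} - \Gamma_{h}|_{\infty} \lesssim D_{n} \sqrt{ \log(dn)/(n \wedge N) }$ and, in particular, the diagonal entries of $\hat{\Gamma}$ exceed $\underline{\sigma}^{2}/2$. Plugging this into the Gaussian comparison bound and absorbing logarithmic factors yields a bound of order $( D_{n}^{2} \log^{5}(dn) / (n \wedge N) )^{1/6}$, which by the growth condition \eqref{eqn:growth_condition_B} is at most $C n^{-\zeta / 6}$. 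The main obstacle is the second step: obtaining a sharp max-norm deviation bound for the incomplete $U$-statistic with the product kernel $h \otimes h$, whose entries are only sub-Weibull of order $1/2$ rather than sub-exponential, which forces careful truncation and a delicate bookkeeping of logarithmic factors and of the two independent sources of randomness (the data $X_{i}$ and the sampling weights $Z_{\iota}$), so that the final rate still conforms to the stated growth condition; the random normalization $\hat{N}$ only contributes a negligible factor $1 + O(\sqrt{\log n / N})$ by \eqref{eqn:concentration} and can be absorbed at the end.
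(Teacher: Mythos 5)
Your proposal follows the paper's proof essentially verbatim: condition on $\calD_{n}$ so that $U_{n,B}^{\sharp}$ is exactly Gaussian, invoke the Gaussian comparison inequality on hyperrectangles to reduce to controlling $|\hat{\Gamma}-\Gamma_{h}|_{\infty}$, and then decompose the deviation into the randomization noise of the sampling weights (conditional on $X_{1}^{n}$), the concentration of the complete matrix $U$-statistic $|I_{n,r}|^{-1}\sum_{\iota}h(X_{\iota})h(X_{\iota})^{T}$ about $\Gamma_{h}$, the random-normalization factor $N/\hat{N}$, and the centering term $|U_{n,N}'-\theta|_{\infty}^{2}$, each handled by a concentration or maximal inequality with deviation parameter $t\asymp\log n$. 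Your closing remark that the product kernel is only sub-Weibull of order $1/2$ correctly pinpoints the source of the extra $\log^{2}n$ factor in the growth condition \eqref{eqn:growth_condition_B}, which in the paper enters through the $\psi_{1/2}$-envelope $\|\max_{\iota}\max_{j,k}|h_{j}h_{k}(X_{\iota})|\|_{\psi_{1/2}}$ in the bound on the term you call $M_{2}-U_{n}^{(2)}$.
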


\begin{rmk}[Bootstrap validity under the polynomial moment condition]
Analogous bootstrap validity results for $U_{n,B}^{\sharp}$ in Theorem \ref{thm:bootstrap_validity_A}, as well as those for $U_{n}^{\sharp}$ and $U_{n,A}^{\sharp}$ in Theorem \ref{thm:bootstrap_validity}, \ref{cor:bootstrap_validity} and Proposition \ref{prop:bootstrap_validity_DC}, \ref{prop:bootstrap_validity_incomplete_ustat} ahead, can be obtained under the polynomial moment Condition (C2'). Due to the space concern, detailed results can be found in Section \ref{sec:bootstrap_validity_poly} of the SM. 
\end{rmk}

In the degenerate case, the approximating distribution is $\gamma_{B} = N (0,\Gamma_{h})$.
So, in that case, we can approximate the distribution of $\sqrt{N}(U_{n,N}' - \theta)$ on the hyperrectangles by the conditional distribution of $U_{n,B}^{\sharp}$, which can be simulated by drawing multiplier variables many times. We call the simulation of $U_{n,B}^{\sharp}$ the {\it multiplier bootstrap under degeneracy} (MB-DG). On average, the computational cost of the MB-DG is $O(BNd)$ (where $B$ denotes the number of bootstrap iterations), which can be independent of the order of the $U$-statistic provided that $N$ is so.  In the remainder of this section, we will focus on the non-degenerate case.

In contrast to $U_{n,B}^{\sharp}$, construction of $U_{n,A}^{\sharp}$ is more involved. We might be tempted to apply the multiplier bootstrap to the H\'{a}jek projection, $rn^{-1} \sum_{i_{1}=1}^{n} g(X_{i_{1}})$, but the function $g=P^{r-1}h$ is unknown so the direct application of the multiplier bootstrap to the H\'{a}jek projection  is infeasible. Instead, we shall construct estimates of $g(X_{i_{1}})$ for $i_{1} \in \{ 1,\dots,n \}$ or a subset of $\{ 1,\dots ,n \}$, and then apply the multiplier bootstrap to the estimated H\'{a}jek projection.  Generically, construction of $U_{n,A}^{\sharp}$ is as follows.

\uline{Generic construction of $U_{n,A}^{\sharp}$}.
\begin{enumerate}
\item[1.] Choose a subset $S_{1}$ of $\{ 1,\dots, n \}$ and generate i.i.d. $N(0,1)$ variables $\{ \xi_{i_{1}} : i_{1} \in S_{1} \}$ independent of the data $\calD_{n}$ and $\{ \xi_{\iota}' : \iota \in I_{n,r} \}$. Let $n_{1}=|S_{1}|$. 
\item[2.] For each $i_{1} \in S_{1}$, construct an estimate $\hat{g}^{(i_{1})}$ of $g$ based on $X_{1}^{n}$. 
\item[3.] Construct
\[
U_{n,A}^{\sharp} = \frac{r}{\sqrt{n_{1}}} \sum_{i_{1} \in S_{1}} \xi_{i_{1}} \{ \hat{g}^{(i_{1})}(X_{i_{1}}) - \breve{g}  \},
\]
where $\breve{g} =n_{1}^{-1}\sum_{i_{1} \in S_{1}} \hat{g}^{(i_{1})}(X_{i_{1}})$.
\end{enumerate}

Step 1 chooses a subset $S_{1}$ to reduce the computational cost of the resulting bootstrap. 
Construction of estimates $\hat{g}^{(i_{1})}, i_{1} \in S_{1}$ can be flexible. 
For instance, the estimates $\hat{g}^{(i_{1})}, i_{1} \in S_{1}$ may depend on another randomization independent of everything else.  
In Sections \ref{subsec:divide_and_conquer} and \ref{subsec:incomplete_ustat}, we will consider deterministic and random constructions of $\hat{g}^{(i_{1})}, i_{1} \in S_{1}$, respectively.

It is worth noting that the jackknife multiplier bootstrap (JMB) developed in \cite{chen2017a} (for the $r=2$ case) and \cite{chenkato2017a} (for the general $r$ case) is a special case of $U_{n,A}^{\sharp}$ where $S_{1} = \{1,\dots,n\}$ and $\hat{g}^{(i_{1})}(X_{i_{1}})$ is realized by its jackknife estimate, i.e., by the $U$-statistic with kernel $(x_{2},\dots,x_{r}) \mapsto h(X_{i_{1}},x_{2},\dots,x_{r})$ for the sample without the $i_{1}$-th observation. Nevertheless, the bottleneck is that the computation of the jackknife estimates of $g(X_{i_{1}}), i_{1}=1,\dots,n$ requires $O(n^{r}d)$ operations and hence implementing the JMB can be  computationally demanding.

Now, consider $U_{n}^{\sharp} = U_{n,A}^{\sharp} + \alpha_{n}^{1/2} U_{n,B}^{\sharp}$. We call the simulation of  $U_{n}^{\sharp}$ the {\it multiplier bootstrap under non-degeneracy} (MB-NDG).  The following theorem establishes conditions under which the conditional distribution of $U_{n}^{\sharp}$ is able to consistently estimate the $N(0,r^{2}\Gamma_{g} + \alpha_{n} \Gamma_{h})$ distribution on the hyperrectangles with polynomial error rates. 
Define 
\[
\hat{\Delta}_{A,1}:= \max_{1 \le j \le d} {1 \over n_{1}} \sum_{i_{1} \in S_{1}}  \{ \hat{g}_{j}^{(i_{1})}(X_{i_{1}}) - g_{j}(X_{i_{1}}) \}^{2},
\]
which quantifies the errors of the estimates $\hat{g}^{(i_{1})}, i_{1} \in S_{1}$.  In addition, let $\overline{\sigma}_{g} := \max_{1 \le j \le d} \sqrt{P(g_{j}-\theta_{j})^{2}}$. 

\begin{thm}[Generic bootstrap validity under non-degeneracy]
\label{thm:bootstrap_validity}
Let $U_{n}^{\sharp} = U_{n,A}^{\sharp} + \alpha_{n}^{1/2} U_{n,B}^{\sharp}$. 
Suppose that Conditions (C1), (C2), and (C3-ND) hold. In addition, suppose  that 
\begin{equation}
\label{eqn:growth_condition}
\begin{split}
&\frac{D_{n}^{2}(\log^{2} {n})\log^{5}(dn)}{n_{1} \wedge N} \le C_{1}n^{-\zeta_{1}} \quad  \text{and} \\
&\Prob \left ( \overline{\sigma}_{g}^{2} \hat{\Delta}_{A,1}\log^{4} d > C_{1}n^{-\zeta_{2}} \right ) \le C_{1}n^{-1}\end{split}
\end{equation}
for some constants $0 < C_{1} < \infty$ and $\zeta_{1},\zeta_{2} \in (0,1)$. Then there exists a constant $C$ depending only on $\underline{\sigma}, r$, and $C_{1}$ such that 
\begin{equation}
\label{eqn:bootstrap_validity}
\sup_{R \in \calR} \left | \Prob_{\mid \calD_{n}} (U_{n}^{\sharp} \in R) - \Prob (Y \in R) \right | \le Cn^{-(\zeta_{1} \wedge \zeta_{2})/6}
\end{equation}
with probability at least $1-Cn^{-1}$, where $Y \sim N(0,r^{2}\Gamma_{g}+\alpha_{n}\Gamma_{h})$. If the estimates $g^{(i_{1})}, i_{1} \in S_{1}$ depend on an additional randomization independent of $\calD_{n}, \{ \xi_{i_{1}} : i_{1} \in S_{1} \}$, and $\{ \xi_{\iota}': \iota \in I_{n,r} \}$, then the result (\ref{eqn:bootstrap_validity}), with $\calD_{n}$ replaced by the augmentation of $\calD_{n}$ with variables used in the additional randomization,  holds with probability at least $1-Cn^{-1}$. 
\end{thm}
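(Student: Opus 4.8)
The plan is to decompose the target Gaussian $Y \sim N(0, r^2 \Gamma_g + \alpha_n \Gamma_h)$ as $Y_A + \alpha_n^{1/2} Y_B$ with $Y_A \sim N(0, r^2 \Gamma_g)$, $Y_B \sim N(0, \Gamma_h)$ independent, and to show that, conditionally on $\calD_n$, the bootstrap statistic $U_n^\sharp = U_{n,A}^\sharp + \alpha_n^{1/2} U_{n,B}^\sharp$ is close in Kolmogorov distance over hyperrectangles to this sum. Since $\xi_{i_1}, i_1 \in S_1$ and $\xi_\iota', \iota \in I_{n,r}$ are mutually independent and independent of $\calD_n$, the vectors $U_{n,A}^\sharp$ and $U_{n,B}^\sharp$ are conditionally independent given $\calD_n$; hence conditionally on $\calD_n$, $U_n^\sharp$ is Gaussian with mean zero and covariance $\hat{\Sigma}_A + \alpha_n \hat{\Sigma}_B$, where $\hat{\Sigma}_A = r^2 n_1^{-1}\sum_{i_1 \in S_1}\{\hat{g}^{(i_1)}(X_{i_1}) - \breve{g}\}\{\hat{g}^{(i_1)}(X_{i_1}) - \breve{g}\}^T$ and $\hat{\Sigma}_B = \hat{N}^{-1}\sum_{\iota} Z_\iota \{h(X_\iota) - U_{n,N}'\}\{h(X_\iota) - U_{n,N}'\}^T$ (with $\hat N$ replaced by $N$ in the sampling-with-replacement case). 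So the whole problem reduces to (a) controlling the max-norm deviation $|\hat{\Sigma}_A + \alpha_n \hat{\Sigma}_B - (r^2\Gamma_g + \alpha_n \Gamma_h)|_\infty$ and then (b) invoking a Gaussian comparison bound (the Gaussian-to-Gaussian comparison lemma of \cite{cck2017_AoP}, which converts a covariance-max-norm bound into a Kolmogorov-distance bound over $\calR$, using that the diagonal entries of $r^2\Gamma_g + \alpha_n\Gamma_h$ are bounded below by $r^2\underline{\sigma}^2$ via (C3-ND)).

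For step (a), I would bound the two pieces separately. The term $|\hat{\Sigma}_B - \Gamma_h|_\infty$ is exactly the quantity already controlled in the proof of Theorem \ref{thm:bootstrap_validity_A}: under Condition (C1)--(C2) and the growth condition on $N$ (and $n$), standard concentration for the randomized empirical second-moment matrix (Bernstein/Nagaev bounds over the $d^2$ entries, plus the concentration of $\hat N$ around $N$ from \eqref{eqn:concentration}, plus replacing $U_{n,N}'$ by $\theta$ using Theorem \ref{thm:gaussian_approx_bern_random_design}) gives $|\hat{\Sigma}_B - \Gamma_h|_\infty \lesssim D_n^2 (\log^2 n)\log^2(dn)/(n\wedge N)$ up to logarithmic factors, on an event of probability $1 - Cn^{-1}$. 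Since $\alpha_n = n/N$, the contribution $\alpha_n |\hat{\Sigma}_B - \Gamma_h|_\infty$ is still controlled by the first half of \eqref{eqn:growth_condition} after multiplying through (the factor $\alpha_n$ is absorbed because the bound on $|\hat\Sigma_B - \Gamma_h|_\infty$ carries a $1/N$ and $\alpha_n/N \le \alpha_n/(n\wedge N) \cdot (n\wedge N)/N$, and one uses $\alpha_n \le 1 \vee (n/N)$ together with the fact that when $\alpha_n$ is large the scaling makes $\alpha_n\Gamma_h$ the dominant block anyway — more cleanly, one bounds $|\alpha_n\hat\Sigma_B - \alpha_n\Gamma_h|_\infty$ directly and checks the hypothesis \eqref{eqn:growth_condition} suffices).

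For the H\'ajek block, write $\hat{\Sigma}_A - r^2\Gamma_g = r^2\big(n_1^{-1}\sum_{i_1 \in S_1}(\hat{g}^{(i_1)}(X_{i_1}) - \breve g)(\hat{g}^{(i_1)}(X_{i_1}) - \breve g)^T - \Gamma_g\big)$ and split as $[\hat\Sigma_A - r^2\tilde\Sigma_g] + [r^2\tilde\Sigma_g - r^2\Gamma_g]$, where $\tilde\Sigma_g = n_1^{-1}\sum_{i_1\in S_1}(g(X_{i_1}) - \bar g)(g(X_{i_1}) - \bar g)^T$ is the oracle empirical covariance of the true H\'ajek terms over $S_1$. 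The second bracket is a standard empirical-process concentration for i.i.d. sub-exponential vectors of dimension $d$ over $n_1$ samples, giving $\lesssim D_n^2 \log^2 d/\sqrt{n_1}$ with probability $1 - Cn^{-1}$ (here I use the Jensen-inequality consequences of (C1)--(C2) recorded after the statement of the conditions: $P|g_j|^{2+k}\le D_n^k$ and $\|g_j(X_1)\|_{\psi_1}\le D_n$). The first bracket is the genuinely new term and is the main obstacle: one expands the difference of outer products using $ab^T - cd^T = (a-c)b^T + c(b-d)^T$ with $a = \hat g^{(i_1)}(X_{i_1})$, $c = g(X_{i_1})$, and applies Cauchy--Schwarz entrywise, so that $|\hat\Sigma_A - r^2\tilde\Sigma_g|_\infty \lesssim \hat\Delta_{A,1} + \sqrt{\hat\Delta_{A,1}}\cdot \bar\sigma_g$ (plus lower-order centering corrections from $\breve g$ vs $\bar g$, each controlled by $\sqrt{\hat\Delta_{A,1}}$ and concentration of $\bar g$). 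The second term on the hypothesis \eqref{eqn:growth_condition}, namely $\Prob(\bar\sigma_g^2 \hat\Delta_{A,1}\log^4 d > C_1 n^{-\zeta_2}) \le C_1 n^{-1}$, is exactly what is needed to conclude $\bar\sigma_g\sqrt{\hat\Delta_{A,1}}\log^2 d \lesssim n^{-\zeta_2/2}$ (hence also $\hat\Delta_{A,1}\log^4 d$ is at most of that order since $\bar\sigma_g \gtrsim \underline\sigma$ is bounded below) on an event of probability $1-Cn^{-1}$. Collecting the pieces, $|\hat{\Sigma}_A + \alpha_n\hat\Sigma_B - (r^2\Gamma_g + \alpha_n\Gamma_h)|_\infty \lesssim n^{-(\zeta_1\wedge\zeta_2)}$ up to log factors on an event of probability $1 - Cn^{-1}$; feeding this into the Gaussian comparison bound yields the rate $n^{-(\zeta_1\wedge\zeta_2)/6}$ in \eqref{eqn:bootstrap_validity}. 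Finally, if $\hat g^{(i_1)}$ uses an extra independent randomization, every step above still holds after enlarging the conditioning $\sigma$-field to include those auxiliary variables, because the multiplier variables $\xi_{i_1}, \xi_\iota'$ remain independent of the enlarged $\calD_n$ and all the concentration events in step (a) are measurable with respect to it; this gives the last sentence of the theorem. I expect the estimate of the first bracket above — propagating the entrywise error $\hat\Delta_{A,1}$ of the H\'ajek-projection estimates into a max-norm covariance error without losing a factor of $d$ — to be the delicate part, and it is precisely where the cross-term bound $\sqrt{\hat\Delta_{A,1}}\,\bar\sigma_g$ (rather than a cruder $\sqrt{\hat\Delta_{A,1}}\cdot\max_{i_1}|g(X_{i_1})|_\infty$) must be used together with the union bound over the $d^2$ entries.
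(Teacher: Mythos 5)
Your overall plan is sound, and the covariance‐error bounds you propose — $|\hat\Sigma_B - \Gamma_h|_\infty$ controlled as in the proof of Theorem~\ref{thm:bootstrap_validity_A}, and $|\hat\Sigma_A - r^2\Gamma_g|_\infty$ via the $\hat\Delta_{A,1}$--Cauchy--Schwarz cross-term decomposition driven by the second line of \eqref{eqn:growth_condition} — are exactly the bounds established in Steps~1--2 of the paper's proof. Where you deviate is in how the two pieces are combined, and the difference is instructive. The paper does \emph{not} form $U_n^\sharp$ and compare a single pair of Gaussians; it first proves $\rho_{\mid\calD_n}^{\calR}(U_{n,A}^\sharp,Y_A)\lesssim n^{-(\zeta_1\wedge\zeta_2)/6}$ and $\rho_{\mid\calD_n}^{\calR}(U_{n,B}^\sharp,Y_B)\lesssim n^{-\zeta_1/6}$ separately (each a Gaussian comparison at a \emph{fixed} variance floor $r^2\underline\sigma^2$ or $\underline\sigma^2$), and then combines them with an iterated conditioning/Fubini argument: condition on $(\calD_n,\Xi)$ to replace $U_{n,B}^\sharp$ by $\breve Y_B$ inside a shifted hyperrectangle, then condition on $\breve Y_B$ to replace $U_{n,A}^\sharp$ by $Y_A$. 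That route is slightly longer but entirely scale-free, and would survive a non-Gaussian choice of multipliers. Your route — observe that $U_n^\sharp\mid\calD_n\sim N(0,\hat\Sigma_A+\alpha_n\hat\Sigma_B)$ and apply Lemma~\ref{lem:Gaussian_comparison} once — is perfectly valid here because the multipliers are Gaussian, and is a bit more direct; but your justification for absorbing the $\alpha_n$ in front of $|\hat\Sigma_B-\Gamma_h|_\infty$ is not correct as written. The manipulation ``$\alpha_n/N\le \alpha_n/(n\wedge N)\cdot(n\wedge N)/N$'' is a tautology and gives nothing when $N\ll n$ (so $\alpha_n\gg1$); the version ``just bound $|\alpha_n\hat\Sigma_B-\alpha_n\Gamma_h|_\infty$ directly'' also fails, since the hypothesis only controls $|\hat\Sigma_B-\Gamma_h|_\infty$, not $\alpha_n$ times it. What actually rescues your argument is scale invariance of Lemma~\ref{lem:Gaussian_comparison}: the target covariance has diagonal entries $\ge(r^2+\alpha_n)\underline\sigma^2$, so you should rescale both Gaussians by $(r^2+\alpha_n)^{-1/2}$ before invoking the lemma; after rescaling, the covariance-error contribution from the $B$ block is $\alpha_n|\hat\Sigma_B-\Gamma_h|_\infty/(r^2+\alpha_n)\le|\hat\Sigma_B-\Gamma_h|_\infty$, and the $\alpha_n$ disappears. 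You implicitly recognize this (``when $\alpha_n$ is large the scaling makes $\alpha_n\Gamma_h$ the dominant block anyway''), but the rescaling step needs to be made explicit for the estimate to go through; as stated, Lemma~\ref{lem:Gaussian_comparison} has a constant depending on a fixed $\underline\sigma$, and without normalizing you would be applying it with an $n$-dependent variance floor and an $n$-dependent error. The paper's iterated conditioning sidesteps this issue entirely, which is likely why it was chosen.
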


The second part of Condition (\ref{eqn:growth_condition}) is a high-level condition on the estimation accuracy of $\hat{g}^{(i_{1})}, i_{1} \in S_{1}$. In Sections \ref{subsec:divide_and_conquer} and \ref{subsec:incomplete_ustat}, we will verify the second part of Condition (\ref{eqn:growth_condition}) for deterministic and random constructions of $\hat{g}^{(i_{1})}, i_{1} \in S_{1}$.
The bootstrap distribution is taken with respect to the multiplier variables $\{ \xi_{i_{1}} : i_{1} \in S_{1} \}$ and $\{ \xi_{\iota}': \iota \in I_{n,r} \}$, and so if the estimation step for $g$ depends on an additional randomization, then the variables used in the additional randomization have to be generated outside the bootstrap iterations. 

When the approximating distribution can be simplified to $\gamma_{A} = N(0,r^{2} \Gamma_{g})$, then it suffices  to estimate $N(0,r^{2}\Gamma_{g})$ by the conditional distribution of $U_{n,A}^{\sharp}$.

\begin{cor}[Validity of $U_{n,A}^{\sharp}$]
\label{cor:bootstrap_validity}
Suppose that all the conditions in Theorem \ref{thm:bootstrap_validity} hold. Then there exists a constant $C$ depending only on $\underline{\sigma}, r$, and $C_{1}$ such that 
\begin{equation}
\label{eqn:bootstrap_validity_2}
\sup_{R \in \calR} \left | \Prob_{\mid \calD_{n}} (U_{n,A}^{\sharp} \in R) - \gamma_{A}(R) \right | \le Cn^{-(\zeta_1 \wedge \zeta_2)/6}
\end{equation}
with probability at least $1-Cn^{-1}$.  If the estimates $g^{(i_{1})}, i_{1} \in S_{1}$ depend on an additional randomization independent of $\calD_{n}, \{ \xi_{i_{1}} : i_{1} \in S_{1} \}$, and $\{ \xi_{\iota}': \iota \in I_{n,r} \}$, then the result (\ref{eqn:bootstrap_validity_2}), with $\calD_{n}$ replaced by the augmentation of $\calD_{n}$ with variables used in the additional randomization,  holds with probability at least $1-Cn^{-1}$.
\end{cor}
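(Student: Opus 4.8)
The plan is to obtain Corollary \ref{cor:bootstrap_validity} directly from the ingredients already assembled for Theorem \ref{thm:bootstrap_validity}; informally, it is the ``$\alpha_{n}=0$ slice'' of that theorem, obtained by discarding the $U_{n,B}^{\sharp}$ component. First I would condition on the full data $\calD_{n}$ (augmented, in the randomized-$\hat{g}$ case, by the extra randomization used to build the $\hat{g}^{(i_{1})}$) and note that $U_{n,A}^{\sharp}$ is then a mean-zero Gaussian vector in the multiplier variables $\{\xi_{i_{1}}:i_{1}\in S_{1}\}$ with covariance matrix
\[
\hat{\Sigma}_{A}=\frac{r^{2}}{n_{1}}\sum_{i_{1}\in S_{1}}\{\hat{g}^{(i_{1})}(X_{i_{1}})-\breve{g}\}\{\hat{g}^{(i_{1})}(X_{i_{1}})-\breve{g}\}^{T}.
\]
Thus, conditionally on $\calD_{n}$, $U_{n,A}^{\sharp}\sim N(0,\hat{\Sigma}_{A})$, and it remains only to compare this conditional law with $\gamma_{A}=N(0,r^{2}\Gamma_{g})$ on $\calR$.

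The second step is to bound $|\hat{\Sigma}_{A}-r^{2}\Gamma_{g}|_{\infty}$ on an event of probability at least $1-Cn^{-1}$. I would split the entrywise error into three pieces: (i) replacing $\hat{g}^{(i_{1})}$ by $g$, which by Cauchy--Schwarz is controlled by a multiple of $\overline{\sigma}_{g}\hat{\Delta}_{A,1}^{1/2}+\hat{\Delta}_{A,1}$; (ii) replacing the empirical centering $\breve{g}$ by $\theta$, handled analogously; and (iii) the fluctuation of the empirical matrix $n_{1}^{-1}\sum_{i_{1}\in S_{1}}(g(X_{i_{1}})-\theta)(g(X_{i_{1}})-\theta)^{T}$ around $\Gamma_{g}$, which is a maximum over $d^{2}$ averages of sub-exponential variables (the squared coordinates of $g-\theta$, using (C1) and (C2), so that $\|g_{j}\|_{\psi_{1}}\le D_{n}$) and is controlled by a Bernstein-type maximal inequality on the event that $n_{1}\wedge N$ is as large as the first part of Condition (\ref{eqn:growth_condition}) requires. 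Feeding in both parts of (\ref{eqn:growth_condition}) then yields $|\hat{\Sigma}_{A}-r^{2}\Gamma_{g}|_{\infty}\log^{2}d\le Cn^{-(\zeta_{1}\wedge\zeta_{2})/2}$ with probability at least $1-Cn^{-1}$.

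The third step is a Gaussian comparison on hyperrectangles. Since Condition (C3-ND) forces the diagonal entries of $r^{2}\Gamma_{g}$ to be at least $r^{2}\underline{\sigma}^{2}>0$, the comparison inequality for Gaussian measures on $\calR$ of the type used in \cite{cck2013,cck2017_AoP} gives, on the event of the second step,
\[
\sup_{R\in\calR}\bigl|\Prob_{\mid\calD_{n}}(U_{n,A}^{\sharp}\in R)-\gamma_{A}(R)\bigr|\le C\bigl(|\hat{\Sigma}_{A}-r^{2}\Gamma_{g}|_{\infty}\log^{2}d\bigr)^{1/3}\le Cn^{-(\zeta_{1}\wedge\zeta_{2})/6},
\]
which is exactly (\ref{eqn:bootstrap_validity_2}). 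The additional-randomization statement is then immediate: nothing above changes except the $\sigma$-field on which we condition, and the event in the second step is measurable with respect to $X_{1}^{n}$ together with the extra randomization, so the same bound holds with $\calD_{n}$ replaced by its augmentation.

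There is no genuine obstacle here beyond bookkeeping, since every analytic tool is already in place for Theorem \ref{thm:bootstrap_validity}; the only point requiring a little care is the covariance estimate of the second step, specifically tracking how the two requirements in (\ref{eqn:growth_condition}) combine to produce the exponent $(\zeta_{1}\wedge\zeta_{2})/2$ prior to the cube-root loss from the Gaussian comparison.
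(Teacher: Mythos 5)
Your proposal is correct and matches the paper's route exactly: the paper proves this corollary by citing Step~2 of the proof of Theorem~\ref{thm:bootstrap_validity}, which is precisely your argument of (a) identifying the conditional law of $U_{n,A}^{\sharp}$ as $N(0,\hat{\Sigma}_{A})$, (b) decomposing $|\hat{\Sigma}_{A}-r^{2}\Gamma_{g}|_{\infty}$ into the estimation-error term $\overline{\sigma}_{g}\hat{\Delta}_{A,1}^{1/2}+\hat{\Delta}_{A,1}$, the empirical-covariance fluctuation, and the centering piece, and (c) closing with the hyperrectangle Gaussian comparison under (C3-ND). The only small imprecision in your sketch is calling the products $g_{j}g_{\ell}$ ``sub-exponential''; since $\|g_{j}\|_{\psi_{1}}\le D_{n}$, these products are actually $\psi_{1/2}$-tailed, which is the tail regime the paper's maximal inequality is set up for, but this does not change the argument.
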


\begin{rmk}[Comments on the partial bootstrap simplification under non-degeneracy]
When the approximating distribution of $\sqrt{N} (U_{n,N}'-\theta)$ can be simplified to $\gamma_{B} = N(0, \Gamma_{B})$, it is also possible to use the partial bootstrap $U_{n,B}^{\sharp}$ to estimate $N(0, \Gamma_{B})$. In this case, we must take $N$ to be sublinear in $n$ (i.e., $N \ll n/(D_{n} \log^{2}d)$) to ensure the Gaussian approximation validity (cf. Remark \ref{rmk:computation_statistics_tradeoff}). However, we do not recommend this simplification because the decay rate of the covariance matrix of the approximating Gaussian distribution $N(\theta,N^{-1}\Gamma_{B})$ to $U_{n,N}'$ is $N^{-1}$, which is slower than the $n^{-1}$ rate for the linear and superlinear cases. In particular, this implies a power loss in the testing problems if the critical values are calibrated by $U_{n,B}^{\sharp}$. 
\end{rmk}

The rest of this section is devoted to concrete constructions of the estimates $\hat{g}^{(i_{1})}, i_{1} \in S_{1}$.

\subsection{Divide and conquer estimation}
\label{subsec:divide_and_conquer}

We first propose a deterministic construction of $\hat{g}^{(i_{1})}, i_{1} \in S_{1}$ via the divide and conquer (DC) algorithm (cf. \cite{zhangduchiwainwright2015_jmlr}).

\begin{enumerate}
\item[1.]
For each $i_{1} \in S_{1}$, choose $K$ {\it disjoint} subsets $S_{2,k}^{(i_{1})}, k=1,\dots,K$ with common size $L \geq r-1$ from $\{ 1,\dots, n \} \setminus \{ i_{1} \}$. 
\item[2.] For each $i_{1} \in S_{1}$, estimate $g$ by computing $U$-statistics with kernel $(x_{2},\dots,x_{r}) \mapsto h(x,x_{2},\dots,x_{r})$ applied to the subsamples $\{ X_{i} : i \in S_{2,k}^{(i_{1})} \}, k=1,\dots,K$, and taking the average of those $U$-statistics of order $r-1$, i.e., 
\[
\hat{g}^{(i_{1})} (x) = \frac{1}{K} \sum_{k=1}^{K} \frac{1}{|I_{L,r-1}|} \sum_{\substack{i_{2},\dots,i_{r} \in S_{2,k}^{(i_{1})} \\ i_{2} < \cdots < i_{r}}} h(x,X_{i_{2}},\dots,X_{i_{r}}).
\]
\end{enumerate}

The DC algorithm can be viewed as an estimation procedure for $g$ via incomplete $U$-statistics of order $r-1$ with a {\it block diagonal} sampling scheme (up to a permutation on the indices). We call the simulation of  $U_{n}^{\sharp}$ with the DC algorithm  the \textit{MB-NDG-DC}. In Section \ref{subsec:incomplete_ustat}, we will propose a different estimation procedure for $g$ via  randomized incomplete $U$-statistics of order $r-1$ based on an additional Bernoulli sampling. As a practical guidance to implement the DC algorithm, we suggest to choose $S_{1}=\{ 1,\dots,n \}, L = r-1$, and $K = \lfloor (n-1) / L \rfloor$ consecutive blocks, which are the parameter values used in our simulation examples in Section \ref{sec:numerics}. In this case, the DC algorithm turns out to be calculating Hoeffding's averages of the $U$-statistics of order $r-1$, which requires $O(nd)$ operations for each $i_{1}$. In contrast, the JMB constructs $\hat{g}^{(i_{1})}$ by complete $U$-statistics of order $r-1$, which requires $O(n^{r-1}d)$ operations for each $i_{1}$. Since the estimation step for $g$  can be done outside the bootstrap iterations,  the overall computational cost of the MB-NDG-DC is $O((B N + n_{1} K L + B n_{1}) d) = O(n^{2} d + B (N+n) d)$ (where $B$ denotes the number of bootstrap iterations), which is independent of the order of the $U$-statistic. In addition, if we choose to only simulate $U_{n,A}^{\sharp}$, then the computational cost is $O(n^{2}d+Bnd)$, since the $O(BNd)$ computations come from simulating $U_{n,B}^{\sharp}$. We can certainly make the computational cost even smaller by taking $n_{1}$ and $K$ smaller than $n$. For instance, if we choose $n_{1}$ and $K$ in such a way that $n_{1}K = O(n)$ and $L = r -1$, then the overall computational cost is reduced to $O(nd + B(N+n)d) = O(B(N+n)d)$ (or $O(Bnd)$ if we only simulate $U_{n,A}^{\sharp}$). In general, choosing smaller $n_{1}$ and $K$ would  sacrifice the statistical accuracy of the resulting bootstrap, but if $O(n^{2}d)$ computations are difficult to implement,  then choosing smaller $n_{1}$ and $K$ would be a reasonable option. 

Our MB-NDG-DC substantially differs from the the Bag of Little Bootstraps (BLB) proposed in \cite{KTSJ2014_JRSSB}, which is another generically scalable bootstrap method for large datasets based on the DC algorithm.
Due to the space concern, we defer the comparison of our MB-NDG-DC with the BLB in Section \ref{sec:BLB} of the SM.  

The following proposition provides conditions for the validity of the multiplier bootstrap equipped with the DC estimation (MB-NDG-DC).

\begin{prop}[Validity of bootstrap with DC estimation]
\label{prop:bootstrap_validity_DC}
Suppose that Conditions (C1), (C2), and (C3-ND) hold.  In addition, suppose that 
\begin{equation}
\label{eqn:growth_condition_DC}
\frac{D_{n}^{2}(\log^{2}n) \log^{5}(dn)}{n_{1} \wedge N}  \bigvee \left \{ \frac{\overline{\sigma}_{g}^{2}D_{n}^{2} \log^{7}d}{KL} \left ( 1+\frac{\log^{2} d}{K^{1-1/\nu}} \right ) \right \}  \le C_{1}n^{-\zeta}
\end{equation}
for some constants $0 < C_{1} < \infty, \zeta \in (0,1)$, and $\nu \in (1/\zeta,\infty)$. Then, there exists a constant $C$ depending only on $\underline{\sigma},r, \nu$, and $C_{1}$ such that each of the results (\ref{eqn:bootstrap_validity}) and (\ref{eqn:bootstrap_validity_2}) with $(\zeta_1,\zeta_2) = (\zeta, \zeta-1/\nu)$ holds
with probability at least $1-Cn^{-1}$.
\end{prop}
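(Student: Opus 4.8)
The plan is to obtain Proposition~\ref{prop:bootstrap_validity_DC} as a corollary of Theorem~\ref{thm:bootstrap_validity} (and, for~(\ref{eqn:bootstrap_validity_2}), of Corollary~\ref{cor:bootstrap_validity}) by checking its two growth conditions in~(\ref{eqn:growth_condition}) with $(\zeta_1,\zeta_2)=(\zeta,\zeta-1/\nu)$. Since the DC construction of $\hat g^{(i_1)}$ is deterministic given $X_1^n$, only the non-randomized part of Theorem~\ref{thm:bootstrap_validity} is needed. The first clause of~(\ref{eqn:growth_condition}), namely $D_n^2(\log^2 n)\log^5(dn)/(n_1\wedge N)\le C_1 n^{-\zeta_1}$, is exactly the first clause of~(\ref{eqn:growth_condition_DC}) with $\zeta_1=\zeta$, so the whole content of the proof is to bound the estimation-error functional $\hat\Delta_{A,1}=\max_{1\le j\le d}n_1^{-1}\sum_{i_1\in S_1}\{\hat g_j^{(i_1)}(X_{i_1})-g_j(X_{i_1})\}^2$ and to show that $\overline{\sigma}_{g}^2\hat\Delta_{A,1}\log^4 d\le C_1' n^{-(\zeta-1/\nu)}$ off an event of probability at most $C_1' n^{-1}$.

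First I would unpack the DC estimator. For fixed $i_1$ and $j$, write $\hat g_j^{(i_1)}(X_{i_1})-g_j(X_{i_1})=K^{-1}\sum_{k=1}^K\eta_{j,k}^{(i_1)}$, where $\eta_{j,k}^{(i_1)}$ is the centered $U$-statistic of order $r-1$ with kernel $(x_2,\dots,x_r)\mapsto h_j(X_{i_1},x_2,\dots,x_r)-g_j(X_{i_1})$ applied to the block $\{X_i:i\in S_{2,k}^{(i_1)}\}$ of size $L$. Because the blocks $S_{2,k}^{(i_1)}$, $k=1,\dots,K$, are disjoint and disjoint from $\{i_1\}$, the variables $\eta_{j,1}^{(i_1)},\dots,\eta_{j,K}^{(i_1)}$ are, conditionally on $X_{i_1}$, i.i.d.\ with mean zero, so $\E[\{\hat g_j^{(i_1)}(X_{i_1})-g_j(X_{i_1})\}^2]=K^{-1}\E[\Var(\eta_{j,1}^{(i_1)}\mid X_{i_1})]$. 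The variance formula for $U$-statistics together with Conditions~(C1)--(C2) bounds the right-hand side by $C(r)(1+D_n)/(KL)$, which identifies $(KL)^{-1}$ times the moment bound as the ``mean level'' of $\hat\Delta_{A,1}$.

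The main step is to control the fluctuation of $\hat\Delta_{A,1}$ around this mean level uniformly in the $d$ coordinates and in $i_1$ (for instance after the crude bound $\hat\Delta_{A,1}\le\max_{1\le j\le d}\max_{i_1\in S_1}\{\hat g_j^{(i_1)}(X_{i_1})-g_j(X_{i_1})\}^2$, a maximum over at most $dn$ terms). I would Hoeffding-decompose each $\eta_{j,k}^{(i_1)}$ into its dominant linear part --- a clean average of $L$ conditionally i.i.d.\ centered sub-exponential summands with $\psi_1$-norm $\le C(r)D_n$ by~(C2) --- and higher-degeneracy remainders. Averaging the linear part over the $K$ disjoint blocks gives an average of $KL$ conditionally i.i.d.\ sub-exponentials, and a sub-exponential maximal inequality over the $\le dn$ pairs $(j,i_1)$ yields, after squaring, a contribution of order $D_n^2(KL)^{-1}$ times a power of $\log(dn)$. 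The higher-degeneracy remainders are smaller but concentrate less sharply; they are handled by a Rosenthal-type moment inequality of order $\nu$ for sums of $K$ independent block contributions, which produces the $K^{-(1-1/\nu)}$ factor inside the $\nu$-th moment bound. Combining these with a union bound over the $\le dn$ pairs and Markov's inequality at the $\nu$-th moment --- the step responsible for the passage from $\zeta$ to $\zeta-1/\nu$, since forcing the exceedance probability below $n^{-1}$ inflates the threshold by a factor $n^{1/\nu}$, admissible precisely because $\nu>1/\zeta$ --- gives, with probability at least $1-Cn^{-1}$, $\overline{\sigma}_{g}^2\hat\Delta_{A,1}\log^4 d\le C\,n^{1/\nu}\,\overline{\sigma}_{g}^2 D_n^2\log^7 d\,(KL)^{-1}(1+\log^2 d\,K^{-(1-1/\nu)})$. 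By~(\ref{eqn:growth_condition_DC}) the right-hand side is at most $CC_1 n^{-(\zeta-1/\nu)}$, which is the second clause of~(\ref{eqn:growth_condition}) with $\zeta_2=\zeta-1/\nu$; Theorem~\ref{thm:bootstrap_validity} and Corollary~\ref{cor:bootstrap_validity} then give~(\ref{eqn:bootstrap_validity}) and~(\ref{eqn:bootstrap_validity_2}).

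I expect the main obstacle to be exactly this uniform-in-$(j,i_1)$ moment and tail analysis of $\hat\Delta_{A,1}$: each summand is itself a $U$-statistic of order $r-1$, so one needs $U$-statistic moment inequalities (decoupling, hypercontractivity, or an explicit Hoeffding decomposition) rather than plain Rosenthal; the averages over distinct $i_1$ are dependent because the blocks $S_{2,k}^{(i_1)}$ may overlap across $i_1$; and one must balance the sub-exponential contribution (logarithmic in $d$) against the polynomial-moment contribution carrying the $K^{-(1-1/\nu)}$ term so that the final bound matches the right-hand side of~(\ref{eqn:growth_condition_DC}) with exactly the logarithmic powers stated there.
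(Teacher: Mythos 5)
Your high-level strategy is correct, and so is your identification of the mechanism behind $(\zeta_1,\zeta_2)=(\zeta,\zeta-1/\nu)$: reduce to Theorem~\ref{thm:bootstrap_validity} / Corollary~\ref{cor:bootstrap_validity} by verifying~(\ref{eqn:growth_condition}), observe the first clause is literal, and for the second clause bound the $\nu$-th moment $\E[\hat\Delta_{A,1}^{\nu}]$ and apply Markov, with the $1/\nu$ loss paying for the $n^{-1}$ exceedance probability. Your ``mean level'' computation via the $U$-statistic variance formula and the disjointness of the $K$ blocks is also the right starting point.

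The gap is in the step you flag as problematic yourself, namely your proposed crude reduction $\hat\Delta_{A,1}\le\max_{1\le j\le d}\max_{i_1\in S_1}\{\hat g_j^{(i_1)}(X_{i_1})-g_j(X_{i_1})\}^2$ followed by a union bound over $\le dn$ pairs. That route necessarily produces $\log(dn)$ powers, whereas the second clause of~(\ref{eqn:growth_condition_DC}) is stated with pure $\log d$ powers (specifically $\log^7 d$ and $\log^2 d$); so proving the required moment bound under that crude reduction would establish the proposition only under a strictly stronger hypothesis than the one written. The paper avoids any maximum over $i_1$: starting from $\hat\Delta_{A,1}\le n_1^{-1}\sum_{i_1\in S_1}\max_j\{\hat g_j^{(i_1)}(X_{i_1})-g_j(X_{i_1})\}^2$, it applies Jensen's inequality in $i_1$ to obtain $\E[\hat\Delta_{A,1}^\nu]\le n_1^{-1}\sum_{i_1\in S_1}\E[\max_j|\hat g_j^{(i_1)}(X_{i_1})-g_j(X_{i_1})|^{2\nu}]$, and then works \emph{conditionally on $X_{i_1}$} with the Hoffmann--J{\o}rgensen inequality (Proposition~A.1.6 of van der Vaart and Wellner), which splits the conditional $2\nu$-th moment of the sup into $(\E_{\mid X_{i_1}}[\max_j|\cdot|])^{2\nu}$ plus $K^{-2\nu}\E_{\mid X_{i_1}}[\max_{k}\max_j|\cdot|^{2\nu}]$; the first piece is handled by the maximal inequality of Lemma~8 in \cite{cck2015_anticoncentration}, and the second piece and the $U$-statistic sup-moments of the single-block terms are handled by Hoeffding averaging together with Theorem~2.14.1 of \cite{vandervaartwellner1996}. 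The $K^{-(1-1/\nu)}$ factor you attribute to a Rosenthal inequality in fact arises from the $L^\nu$-comparison $\E_{\mid X_{i_1}}[\max_{1\le k\le K}\xi_k]\le(\sum_{k=1}^K\E_{\mid X_{i_1}}[\xi_k^\nu])^{1/\nu}$ applied to the $K$ conditionally independent block contributions, which gives the $K^{1/\nu}$ factor without a $\log K\asymp\log n$ union-bound penalty. Since one never unions over $i_1$ or over $k$, the block-overlap dependence across distinct $i_1$ that you list as an obstacle simply does not enter.
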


For instance, consider to take  $N = n, S_1 = \{1,\dots,n\}, L = r-1$, and $K = \lfloor (n-1) / L \rfloor$, and suppose that $D_n^2 \log^7(dn) \le n^{1-\zeta}$ for some $\zeta \in (0,1)$.
Then, by Theorem \ref{thm:gaussian_approx_bern_random_design} and Proposition \ref{prop:bootstrap_validity_DC}, for arbitrarily large $\nu \in (1/\zeta,\infty)$, there exists a constant $C$ depending only on $\overline{\sigma}_{g}, \underline{\sigma}, r$, and $\nu$ such that 
\begin{equation}
\label{eqn:practical_guidance_dc_bootstrap}
\sup_{R \in \calR} \left | \Prob(\sqrt{n}(U_{n,N}'-\theta) \in R) - \Prob_{\mid \calD_{n}}(U_{n}^{\sharp} \in R) \right | \le C n^{-(\zeta-1/\nu)/6}
\end{equation}
with probability at least $1-Cn^{-1}$. Hence, the conditional distribution of the MB-NDG-DC approaches uniformly on the hyperrectangles in $\R^d$ to the distribution of the randomized incomplete $U$-statistic at a polynomial rate in the sample size.

\subsection{Random sampling estimation}
\label{subsec:incomplete_ustat}

Next, we propose a random construction of $\hat{g}^{(i_{1})}, i_{1} \in S_{1}$ based on an additional Bernoulli sampling. 
For each $i_{1} =1,\dots,n$, let $I_{n-1,r-1} (i_{1}) = \{ (i_{2},\dots,i_{r}) : 1 \le i_{2} < \cdots < i_{r} \le n, i_{j} \ne i_{1} \ \forall j \neq 1 \}$.
In addition, define $\sigma_{i_{1}} :  \{ 1,\dots,n-1 \} \to \{ 1,\dots,n \} \setminus \{ i_{1} \}$ as follows: if $\{ 1,\dots,n \} \setminus \{ i_{1} \} = \{ j_{1},\dots,j_{n-1} \}$ with $j_{1} <\cdots < j_{n-1}$, then $\sigma_{i_{1}}(\ell) = j_{\ell}$ for $\ell=1,\dots,n-1$. 
For the notational convenience, for  $\iota' = (i_{2},\dots,i_{r}) \in I_{n-1,r-1}$, we write $\sigma_{i_{1}}(\iota') = (\sigma_{i_{1}}(i_{2}),\dots,\sigma_{i_{1}}(i_{r})) \in I_{n-1,r-1}(i_{1})$.

Now, consider the following randomized procedure to construct $\hat{g}^{(i_{1})}, i_{1} \in S_{1}$. 
 
\begin{enumerate}
\item[1.] Let $0 < M = M_{n}  \le |I_{n-1,r-1}|$ be a positive integer, and generate i.i.d. $\Bern (\vartheta_{n})$ random variables $\{ Z_{\iota'}' : \iota' = (i_{2},\dots,i_{r}) \in I_{n-1,r-1} \}$ independent of $\calD_{n}, \{ \xi_{i_{1}} : i_{1} \in S_{1} \}$, and $\{ \xi_{\iota}' : \iota \in I_{n,r} \}$, where $\vartheta_{n} = M/|I_{n-1,r-1}|$.
\item[2.] For each $i_{1} \in S_{1}$, construct $\hat{g}^{(i_{1})} (x) =M^{-1} \sum_{\iota' \in I_{n-1,r-1}} Z'_{\iota'} h(x,X_{\sigma_{i_{1}}(\iota')})$. 
\end{enumerate}
The resulting bootstrap method is called the {\it multiplier bootstrap under non-degeneracy with random sampling} (MB-NDG-RS). Equivalently, the above procedure can be implemented as follows:
\begin{enumerate}
\item[1.] Generate $\hat{M} \sim \Bin (|I_{n-1,r-1}|,\vartheta_{n})$.
\item[2.] Sample $\iota_{1}',\dots,\iota_{\hat{M}}'$ randomly without replacement from $I_{n-1,r-1}$.
\item[3.] Construct 
 $\hat{g}^{(i_{1})}(x) = M^{-1}\sum_{j=1}^{\hat{M}} h(x,X_{\sigma_{i_{1}}(\iota_{j}')})$ for each $i_{1} \in S_{1}$.
\end{enumerate}
So, on average, the computational cost to construct $\hat{g}^{(i_{1})}, i_{1} \in S_{1}$ by the random sampling estimation is $O(n_{1}Md)$, and the overall computational cost of the MB-NDG-RS is $O(n_{1}Md + B(N+n_{1}) d)$ (or $O(n_{1}Md + B n_{1} d)$ if we only simulate $U_{n,A}^{\sharp}$). As a practical guidance to implement the random sampling estimation, we suggest to choose $S_{1} = \{1,\dots,n\}$ and $M$ proportional to $n-1$, which are the parameter values used in our simulation examples in Section \ref{sec:numerics}. Then the overall computational cost of the MB-NDG-RS is $O(n^{2}d+B(N+n)d)$ (or $O(n^{2}d+Bnd)$ if we only simulate $U_{n,A}^{\sharp}$), which is independent of the order of the $U$-statistic. In addition, the computational cost can be made even smaller, e.g., can be reduced to $O(B(N+n)d)$ by choosing $n_{1}$ and $M$ in such a way that $n_{1}M = O(n)$ (or $O(Bnd)$ if we only simulate $U_{n,A}^{\sharp}$), which would be a reasonable option if $O(n^2 d)$ computations are difficult to implement.

\begin{prop}[Validity of bootstrap with Bernoulli sampling estimation]
\label{prop:bootstrap_validity_incomplete_ustat}
Suppose that Conditions (C1), (C2), and (C3-ND) hold.  In addition, suppose that 
\begin{equation}
\label{eqn:growth_condition_incomplete_ustat}
\frac{D_{n}^{2}(\log^{2} n) \log^{5}(dn)}{n_{1} \wedge N} \bigvee \frac{\overline{\sigma}_{g}^{2}D_{n}^{2} \log^{7}(dn) }{n \wedge M}  \le C_{1}n^{-\zeta}
\end{equation}
for some constants $0 < C_{1} < \infty$ and  $\zeta \in (0,1)$. Then, for arbitrarily large $\nu \in (1/\zeta,\infty)$, there exists a constant $C$ depending only on $\underline{\sigma},r,\nu$, and $C_{1}$ such that each of the results (\ref{eqn:bootstrap_validity}) and (\ref{eqn:bootstrap_validity_2}), with $\calD_{n}$ replaced by $\calD_{n}' = \calD_{n} \cup \{ Z_{\iota'}' : \iota' \in I_{n-1,r-1} \}$ and with $(\zeta_1,\zeta_2) = (\zeta,\zeta-1/\nu)$, holds
with probability at least $1-Cn^{-1}$.
\end{prop}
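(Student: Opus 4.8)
The plan is to deduce Proposition \ref{prop:bootstrap_validity_incomplete_ustat} from the generic bootstrap validity result Theorem \ref{thm:bootstrap_validity}, invoking its ``additional randomization'' clause with $\calD_{n}$ augmented by $\{ Z_{\iota'}' : \iota' \in I_{n-1,r-1} \}$. It then suffices to verify the growth condition \eqref{eqn:growth_condition} with $(\zeta_{1},\zeta_{2}) = (\zeta,\zeta-1/\nu)$. The first part of \eqref{eqn:growth_condition}, namely $D_{n}^{2}(\log^{2} n)\log^{5}(dn)/(n_{1}\wedge N) \le C_{1}n^{-\zeta}$, is exactly the first term of the hypothesis \eqref{eqn:growth_condition_incomplete_ustat}, so $\zeta_{1}=\zeta$. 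Everything else is the analysis of $\hat{\Delta}_{A,1} = \max_{1\le j\le d} n_{1}^{-1}\sum_{i_{1}\in S_{1}}\{\hat g_{j}^{(i_{1})}(X_{i_{1}}) - g_{j}(X_{i_{1}})\}^{2}$.

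To bound $\hat{\Delta}_{A,1}$ I would introduce the leave-one-out complete $U$-statistic of order $r-1$, $\tilde g^{(i_{1})}(x) = |I_{n-1,r-1}|^{-1}\sum_{\iota'\in I_{n-1,r-1}} h(x,X_{\sigma_{i_{1}}(\iota')})$, which (because $\hat g^{(i_{1})}$ is normalized by the deterministic $M$, not by $\hat M$) is exactly the conditional mean of $\hat g^{(i_{1})}(x)$ given $X_{1}^{n}$. Writing $(\hat g_{j}^{(i_{1})} - g_{j})^{2} \le 2(\hat g_{j}^{(i_{1})} - \tilde g_{j}^{(i_{1})})^{2} + 2(\tilde g_{j}^{(i_{1})} - g_{j})^{2}$ splits $\hat{\Delta}_{A,1} \le 2\hat{\Delta}^{(1)} + 2\hat{\Delta}^{(2)}$, where $\hat{\Delta}^{(1)}$ is the incompleteness error (driven by the Bernoulli sampling, hence by $M$) and $\hat{\Delta}^{(2)}$ is the jackknife estimation error of $g$ (driven by $n$). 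For $\hat{\Delta}^{(2)}$, conditionally on $X_{i_{1}}$ the quantity $\tilde g^{(i_{1})}(X_{i_{1}})$ is a complete $U$-statistic of order $r-1$ in the other $n-1$ observations centered at $g(X_{i_{1}})$, and the bound $\max_{j} n_{1}^{-1}\sum_{i_{1}}(\tilde g_{j}^{(i_{1})}(X_{i_{1}}) - g_{j}(X_{i_{1}}))^{2} \lesssim D_{n}^{2}\log^{7}(dn)/n$ with probability at least $1 - Cn^{-1}$ follows exactly as in the jackknife-multiplier-bootstrap analyses of \cite{chen2017a,chenkato2017a} and in the proof of Proposition \ref{prop:bootstrap_validity_DC}; since $n\wedge M \le n$, it is absorbed into $C_{1}n^{-\zeta}$ from \eqref{eqn:growth_condition_incomplete_ustat}. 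For $\hat{\Delta}^{(1)}$, conditionally on $X_{1}^{n}$ one has $\hat g_{j}^{(i_{1})}(X_{i_{1}}) - \tilde g_{j}^{(i_{1})}(X_{i_{1}}) = M^{-1}\sum_{\iota'}(Z_{\iota'}' - \vartheta_{n})h_{j}(X_{i_{1}},X_{\sigma_{i_{1}}(\iota')})$, a centered weighted sum of independent centered Bernoulli variables with conditional variance at most $M^{-1}|I_{n-1,r-1}|^{-1}\sum_{\iota'} h_{j}(X_{i_{1}},X_{\sigma_{i_{1}}(\iota')})^{2}$; averaged over $i_{1}$ this concentrates near $M^{-1}P^{r}h_{j}^{2} \le M^{-1}(1+D_{n})$ by Condition (C1). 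A Bernstein-type bound for the quadratic form in the $Z_{\iota'}'$, a union bound over $j$ and over $i_{1}\in S_{1}$, and the sub-exponential tail from Condition (C2) then give $\hat{\Delta}^{(1)} \lesssim D_{n}^{2}\,\text{polylog}(dn)/M$ up to a factor that a moment-truncation argument controls by $n^{1/\nu}$ for arbitrary $\nu$ — exactly the mechanism producing the $K^{1-1/\nu}$ factor in the DC growth condition. Combining the two pieces with $\overline{\sigma}_{g}^{2}\log^{4} d$ and invoking the second term of \eqref{eqn:growth_condition_incomplete_ustat} yields $\Prob(\overline{\sigma}_{g}^{2}\hat{\Delta}_{A,1}\log^{4} d > C_{1}n^{-(\zeta-1/\nu)}) \le C_{1}n^{-1}$, i.e.\ the second part of \eqref{eqn:growth_condition} with $\zeta_{2} = \zeta - 1/\nu$; Theorem \ref{thm:bootstrap_validity} then delivers \eqref{eqn:bootstrap_validity} and \eqref{eqn:bootstrap_validity_2} with $\calD_{n}$ replaced by $\calD_{n}'$.

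The main obstacle is the uniform-in-$(j,i_{1})$ control of the incompleteness-error quadratic form $n_{1}^{-1}\sum_{i_{1}}(\hat g_{j}^{(i_{1})} - \tilde g_{j}^{(i_{1})})^{2}(X_{i_{1}})$ inside $\hat{\Delta}^{(1)}$: for a fixed $(j,i_{1})$ this is a routine quadratic-form concentration in the i.i.d.\ Bernoulli weights $Z_{\iota'}'$, but the same weights are reused for all $i_{1}$ through the relabelings $\sigma_{i_{1}}$, so the summands over $i_{1}$ are dependent. Since only the $\ell^{2}$-type functional $\hat{\Delta}_{A,1}$ — not a distributional statement — is needed, this can be handled by dominating the sum by its conditional mean plus a fluctuation and estimating the fluctuation via moment bounds; making that estimate uniform over the $d$ coordinates under only the $\psi_{1}$-tail Condition (C2) is what forces the truncation step and the resulting $\nu$-dependent rate, in complete parallel with the proof of Proposition \ref{prop:bootstrap_validity_DC}.
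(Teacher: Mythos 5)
Your high-level plan is the paper's plan: split $\hat g^{(i_{1})} - g$ into the incompleteness error (governed by $M$) and the jackknife/complete-$U$-statistic error (governed by $n$), note that $\tilde g^{(i_{1})}$ is the conditional mean of $\hat g^{(i_{1})}$ given $X_{1}^{n}$, verify the second part of \eqref{eqn:growth_condition} with $\zeta_{2}=\zeta-1/\nu$, and invoke the ``additional randomization'' clause of Theorem~\ref{thm:bootstrap_validity}. Two of your technical claims, however, do not reflect how this actually plays out. First, the dependence across $i_{1}$ you flag as the ``main obstacle'' never arises: the paper applies Jensen's inequality immediately, $\E[\hat\Delta_{A,1}^{\nu}] \le n_{1}^{-1}\sum_{i_{1}}\E[\max_{j}|\hat g^{(i_{1})}_{j}(X_{i_{1}})-g_{j}(X_{i_{1}})|^{2\nu}]$, after which each $i_{1}$ is handled separately and the reuse of the weights $Z'_{\iota'}$ across $i_{1}$ is irrelevant. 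Second, the $\nu$-dependent rate is not produced by a truncation step or a Bernstein-plus-union-bound argument on the quadratic form; it comes from computing a $\nu$-th moment bound ---  conditionally on $X_{1}^{n}$, via the Hoffmann--J{\o}rgensen inequality for the incompleteness piece, and via Hoeffding averaging together with Theorem~2.14.1 of van der Vaart--Wellner for the $U$-statistic piece --- and then applying Markov's inequality, so that $\Prob(\overline{\sigma}_{g}^{2}\hat\Delta_{A,1}\log^{4}d > n^{-(\zeta-1/\nu)}) \le n^{\zeta\nu-1}\,\E[(\overline\sigma_g^2\hat\Delta_{A,1}\log^4 d)^\nu]\cdot n^{-\zeta\nu+1}\lesssim n^{-1}$; this is exactly the same mechanism as in Proposition~\ref{prop:bootstrap_validity_DC}, where the $K^{1-1/\nu}$ factor also appears in the \emph{hypothesis} (as a moment-growth condition) rather than as a consequence of truncation. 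Your sketch isn't wrong in destination, but if you pursued the path you describe --- Bernstein concentration, a union bound over $d\times n_{1}$ pairs with only $\psi_{1}$-tails, then a separate truncation argument --- you would face precisely the dependence and integrability difficulties you identify; routing everything through Jensen and then $\nu$-th moments avoids them entirely and is the intended argument.
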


For instance, consider to take $N = n, S_1 = \{1,\dots,n\}$, and $M $ proportional to $n-1$,
and suppose that $D_n^2 \log^7(dn) \le n^{1-\zeta}$ for some $\zeta \in (0,1)$. 
Then, by Theorem \ref{thm:gaussian_approx_bern_random_design} and Proposition \ref{prop:bootstrap_validity_incomplete_ustat},  for arbitrarily large $\nu \in (1/\zeta,\infty)$, there exists a constant $C$ depending only on $\overline{\sigma}_{g},\underline{\sigma}, r$, and $\nu$ such that the result (\ref{eqn:practical_guidance_dc_bootstrap}) holds  with probability at least $1-Cn^{-1}$. 

\begin{rmk}[Alternative options for random sampling estimation]
In construction of $\hat{g}^{(i_{1})}$, instead of normalization by $M$, we may use normalization by $\hat{M}$, namely, $\hat{M}^{-1}\sum_{j=1}^{\hat{M}} h(x,X_{\sigma_{i_{1}}(\iota_{j}')})$ for $\hat{g}^{(i_{1})}(x)$. 
In view of the concentration inequality for $\hat{M}$ (cf. equation (\ref{eqn:concentration})), it is not difficult to see that the same conclusion of Proposition \ref{prop:bootstrap_validity_incomplete_ustat} holds for $\hat{g}^{(i_{1})} (x) = \hat{M}^{-1}\sum_{j=1}^{\hat{M}} h(x,X_{\sigma_{i_{1}}(\iota_{j}')})$.

Next, alternatively to the Bernoulli sampling, we may use sampling with replacement to construct $\hat{g}^{(i_{1})}$, which can be implemented as follows: 1) sample $\iota_{1}',\dots,\iota_{M}'$ randomly with replacement from $I_{n-1,r-1}$ (independently of everything else); and 2) construct $\hat{g}^{(i_{1})} (x) = M^{-1} \sum_{j=1}^{M}h(x,X_{\sigma_{i_{1}}(\iota_{j}')})$ for $i_{1} \in S_{1}$. For each $i_{1} \in S_{1}$, conditionally on $X_{1}^{n}$, $X_{\sigma_{i_{1}}(\iota_{j}')}, j=1,\dots,M$ are i.i.d. draws from the empirical distribution $|I_{n-1,r-1}|^{-1} \sum_{\iota' \in I_{n-1,r-1}(i_{1})} \delta_{X_{\iota'}}$. Mimicking the proof of Proposition \ref{prop:bootstrap_validity_incomplete_ustat}, it is not difficult to see that the conclusion of the proposition holds for the estimation of $g$ via sampling with replacement under the condition (\ref{eqn:growth_condition_incomplete_ustat}) (here $Z_{\iota'}'$ is the number of times that $\iota'$ is redrawn in the sample $\{ \iota_{1}',\dots,\iota_{M}' \}$, for which $\hat{g}^{(i_{1})} (x)$ can be expressed as $\hat{g}^{(i_{1})} (x) = M^{-1} \sum_{\iota' \in I_{n-1,r-1}} Z_{\iota'}' h(x,X_{\sigma_{i_{1}}(\iota')})$). 
\end{rmk}

\section{Numerical examples}
\label{sec:numerics}

In this section, we provide some numerical examples to verify the validity of our Gaussian approximation results and the proposed bootstrap algorithms (i.e., MB-DG, MB-NDG-DC, MB-NDG-RS) for approximating the distributions of incomplete $U$-statistics. In particular, we examine the statistical accuracy and computational running time of the Gaussian approximation and bootstrap algorithms in the leading example of testing for the pairwise independence of a high-dimensional vector. 

\subsection{Test statistics}
\label{subsec:test_stat_pairwise_independence}

In this section, we discuss several nonparametric statistics in the literature for the testing problem of the pairwise independence.

\begin{exmp}[Spearman's $\rho$]
\label{exmp:spearman_rho}
Let $\Pi_r$ be the collection of all possible permutations on $\{1,\dots,r\}$. \cite{hoeffding1948} shows that Spearman's rank correlation coefficient matrix $\rho$  can be written as 
\[
\rho = {n-2 \over n+1} \hat{\rho} + {3 \over n+1} \tau,
\]
where $\hat{\rho} = U_{n}^{(3)} (h^{S})$ is the $p \times p$ matrix-valued $U$-statistic associated with the kernel  
\[
\begin{split}
h^{S}(X_1, X_2, X_3) &= \left ( h_{j,k}^{S}(X_{1},X_{2},X_{3}) \right)_{1 \le j,k \le p} \\
&=  {1 \over 2} \sum_{\pi \in \Pi_3} \sign \left\{ (X_{\pi(1)} - X_{\pi(2)}) (X_{\pi(1)} - X_{\pi(3)})^T \right\},
\end{split}
\]
and $\tau = (\tau_{j,k})_{1 \le j,k \le p} = U_{n}^{(2)} (h^{K})$ is the $p \times p$ Kendall $\tau$ matrix with the kernel 
\[
h^{K}(X_1, X_2) = \sign \left\{ (X_1-X_2) (X_1-X_2)^T \right\}.
\]
Here, for a matrix $A=(a_{j,k})_{1 \le j,k \le p}$, $\sign\{A\}$ is the matrix of the same size as $A$ whose $(j,k)$-th element is $\sign (a_{j,k}) = \vone (a_{j,k} > 0) - \vone (a_{j,k} < 0)$.
It is seen that the leading term in Spearman's $\rho$ is $\hat{\rho}$, and so it is reasonable to reject the null hypothesis (\ref{eqn:independence_test}) if $\max_{1 \le j < k \le p} |\hat{\rho}_{j,k}|$ is large. Precisely speaking, this test is testing for a weaker  hypothesis that 
\[
H_{0}': \E[\sign (X_{1}^{(j)} - X_{2}^{(j)}) \sign (X_{1}^{(k)} - X_{3}^{(k)}) ] = 0 \ \text{for all} \ 1 \le j < k \le p.
\]
\end{exmp}

\begin{exmp}[Bergsma and Dassios' $t^{*}$]
\label{exmp:bergsma-dassios_t}
\cite{BergsmaDassios2014_Bernoulli} propose a $U$-statistic $t^{*} = (t_{j,k}^{*})_{1 \le j,k \le p} = U_{n}^{(4)}(h^{BD})$ of order 4  with the kernel 
\[
h^{BD} (X_{1},\dots,X_{4})= {1 \over 24} \sum_{\pi \in \Pi_4} \phi( X_{\pi(1)}, \dots,  X_{\pi(4)} ) \phi( X_{\pi(1)}, \dots,  X_{\pi(4)} )^T,
\]
where $\phi( X_{1}, \dots,  X_{4}) = \left (\phi_{j}(X_{1},\dots,X_{4}) \right)_{j=1}^{p}$ and 
\[
\begin{split}
&\phi_{j}(X_{1},\dots,X_{4}) \\
&= 
\vone(X_1^{(j)} \vee X_3^{(j)} < X_2^{(j)} \wedge X_4^{(j)}) + \vone(X_1^{(j)} \wedge X_3^{(j)} > X_2^{(j)} \vee X_4^{(j)}) \\
&\quad  -  \vone(X_1^{(j)} \vee X_2^{(j)} < X_3^{(j)} \wedge X_4^{(j)}) - \vone(X_1^{(j)} \wedge X_2^{(j)} > X_3^{(j)} \vee X_4^{(j)}).
\end{split}
\]
Under the assumption that $(X^{(j)},X^{(k)})$ has a bivariate distribution that
is discrete or (absolutely) continuous, or a mixture of both,
\cite{BergsmaDassios2014_Bernoulli} show that $\E[t_{j,k}^*] = 0$ if and only if $X^{(j)}$ and $X^{(k)}$ are independent, and so it is reasonable to reject the null hypothesis (\ref{eqn:independence_test}) if $\max_{1 \le j < k \le p} |t_{j,k}^{*}|$ is large (or $\max_{1 \le j < k \le p} t_{j,k}^{*}$ is large, since in general $\E[t_{j,k}^{*}] \ge 0$). 
\end{exmp}

\begin{exmp}[Hoeffding's $D$]
\label{exmp:hoeffding_D}
\cite{Hoeffding1948b_AoMS} proposes a $U$-statistic $D=(D_{j,k})_{1 \le j,k \le p} = U_{n}^{(5)}(h^{D})$ of order $5$ with the kernel 
\[
h^{D} (X_1,\dots,X_5) = {1 \over 120} \sum_{\pi \in \Pi_5} \phi( X_{\pi(1)}, \dots,  X_{\pi(5)}) \phi( X_{\pi(1)}, \dots,  X_{\pi(5)})^T,
\]
where $\phi( X_{1},\dots,X_{5}) = (\phi_{j}(X_{1},\dots,X_{5}))_{j=1}^{p}$ and $\phi_{j}(X_{1},\dots,X_{5}) 
=[ \vone(X_1^{(j)} \ge X_2^{(j)}) - \vone(X_1^{(j)} \ge X_3^{(j)}) ] [ \vone(X_1^{(j)} \ge X_4^{(j)}) - \vone(X_1^{(j)} \ge X_5^{(j)}) ]/4$.
Under the assumption that the joint distribution of
$(X^{(j)},X^{(k)})$ has continuous joint and marginal densities,
\cite{Hoeffding1948b_AoMS} shows that $\E[ D_{j,k}] = 0$ if and only if $X^{(j)}$ and $X^{(k)}$ are independent, and so it is reasonable to reject the null hypothesis (\ref{eqn:independence_test}) if $\max_{1 \le j < k \le p} |D_{j,k}|$ is large (or $\max_{1 \le j < k \le p} D_{j,k}$ is large, since in general $\E[D_{j,k}] \ge 0$). It is worth noting that Bergsma and Dassios' $t^{*}$ is an improvement on Hoeffding's $D$ since the former can characterize the pairwise independence under weaker assumptions on the distribution of $X$ than the latter. 
\end{exmp}

Here $h^{S}$ is non-degenerate, while $h^{BD}$ and $h^{D}$ are degenerate of order $1$ under $H_{0}$. The above testing problem is motivated from recent papers by \cite{leungdrton2017} and  \cite{hanchenliu2017}, which study testing for the null hypothesis
\[
H_{0}'': X^{(1)},\dots,X^{(p)} \ \text{are mutually independent},
\]
and develop tests based on functions of the $U$-statistics appearing in Examples \ref{exmp:spearman_rho}--\ref{exmp:hoeffding_D}. Note that $H_{0}''$ is a stronger hypothesis than $H_{0}$. Specifically, \cite{leungdrton2017} consider tests statistics such as, e.g., $S_{\hat{\rho}} = \sum_{1 \le j < k \le p}\hat{\rho}_{j,k}^{2} - 3\mu_{\hat{\rho}}$ with $\mu_{\hat{\rho}} = \E[\hat{\rho}_{1,2}^{2}]$ under $H_{0}''$ and show that $nS_{\hat{\rho}}/(9p\zeta_{1}^{\hat{\rho}}) \stackrel{d}{\to} N(0,1)$ under $H_{0}''$ as $(n,p) \to \infty$ where $\zeta_{1}^{\hat{\rho}} = \Var (\E[h_{1,2}^{S}(X_{1},X_{2},X_{3}] \mid X_{1}])$. 
On the other hand, \cite{hanchenliu2017} consider test statistics such as, e.g., $L_{n} = \max_{1 \le j < k \le p} | \hat{\rho}_{j,k} |$ and show that $L_{n}^{2}/\Var (\hat{\rho}_{1,2}) - 4\log p + \log \log p$ converges in distribution to a Gumbel distribution as $n \to \infty$ and $p=p_{n} \to \infty$ under $H_{0}''$ provided that $\log p = o(n^{1/3})$ (precisely speaking, \cite{hanchenliu2017} rule out degenerate kernels). Importantly, compared with the tests developed in \cite{leungdrton2017} and \cite{hanchenliu2017} based on analytical critical values, our bootstrap-based tests can directly detect the pairwise dependence for some pair of coordinates (or $\E[\sign (X_{1}^{(j)} - X_{2}^{(j)}) \sign (X_{1}^{(k)} - X_{3}^{(k)}) ] \neq 0$ for some $1 \le j < k \le p$ for Spearman's $\rho$) rather than the non-mutual-independence and also work for non-continuous random vectors (see, e.g., \cite{geissermantel1962} for interesting examples of pairwise independent but jointly dependent random variables; in particular, their examples include continuous random variables). In contrast, the derivations of the asymptotic null distributions in \cite{leungdrton2017} and \cite{hanchenliu2017} critically depend on the mutual independence between the coordinates of $X$. In addition, they both assume that $X$ is continuously distributed so that there are no ties in $X_{1}^{(j)},\dots,X_{n}^{(j)}$ for each coordinate $j$, thereby ruling out discrete components. It is worth noting that the $U$-statistics appearing Examples \ref{exmp:spearman_rho}--\ref{exmp:hoeffding_D} are rank-based,  and so if $X$ is continuous and $H_{0}''$ is true, then those $U$-statistics are pivotal, i.e., they have known (but difficult-to-compute) distributions, which is also a critical factor in their analysis; however, that is not the case under the weaker hypothesis of pairwise independence and without the continuity assumption on $X$ \cite{NandyWeihsDrton2016_EJS}.


In our simulation studies, we consider two test statistics: Spearman's $\rho$ and Bergsma-Dassios' $t^{*}$. Under $H_0$ in (\ref{eqn:independence_test}), the leading term $\hat{\rho}$ of  Spearman's $\rho$ is non-degenerate while Bergsma-Dassios' $t^{*}$ is degenerate of order 1, both having zero mean. Slightly abusing notation, we will use $\hat{\rho}$ as Spearman's $\rho$ statistic throughout this section. We consider tests of the forms 
\[
\max_{1 \le j < k \le p} | \hat{\rho}_{j,k}' | > c \Rightarrow \text{reject $H_{0}$} \quad \text{and} \quad \max_{1 \le j < k \le p} | {t_{j,k}^{*'}} | > c \Rightarrow  \text{reject $H_{0}$},
\]
where $\hat{\rho}_{j,k}'$ and $t_{j,k}^{*'}$ are incomplete versions of $\hat{\rho}_{j,k}$ and $t_{j,k}^{*}$, respectively, and their critical values are calibrated by the bootstrap methods. In particular, for any nominal size $\alpha \in (0,1)$, the value of $c := c(\alpha)$ can be chosen as the $(1-\alpha)$-th quantile of an appropriate bootstrap conditional distribution given $\calD_{n}$. For Spearman's $\rho$, we use $U_{n}^{\sharp}$ for MB-NDG-DC and MB-NDG-RS. For Bergsma-Dassios' $t^{*}$, we use $U_{n,B}^{\sharp}$ for MB-DG. In addition, we also test the performance of the {\it partial} versions of MB-NDG-DC and MB-NDG-RS (i.e., $U_{n,A}^{\sharp}$; cf. Corollary \ref{cor:bootstrap_validity}) for Spearman's $\rho$ statistic when its distribution can be approximated by $\gamma_{A} = N(0, r^2 \Gamma_{g})$ (cf. Corollary \ref{cor:Gaussian_approximation}).


\subsection{Simulation setup}

We simulate i.i.d. data from the non-central $t$-distribution with $3$ degrees of freedom and non-centrality parameter $2$. This data generating process implies $H_{0}$. We consider $n = 300, 500, 1000$ and $p = 30, 50, 100$ (so the number of the free parameters is $d=p(p-1)/2 = 435, 1225, 4950$).  For each setup $(n, p)$, we fix the bootstrap sample size $B = 200$ and report the empirical rejection probabilities of the bootstrap tests averaged over 2,000 simulations. For Spearman's $\rho$, we apply the MB-NDG-DC and MB-NDG-RS (full version $U_{n}^{\sharp}$) and set the computational budget parameter value $N = 2n$. In addition, we implement the MB-NDG-DC with the parameter values suggested in Section \ref{subsec:divide_and_conquer} (i.e., $S_{1}=\{ 1,\dots,n \}, L = r-1$, and $K = \lfloor (n-1) / L \rfloor$), and the MB-NDG-RS with the parameter values suggested in Section \ref{subsec:incomplete_ustat} (i.e., $S_{1}=\{ 1,\dots,n \}$ and $M = 2(n-1)$). For Bergsma-Dassios' $t^{*}$, we apply the MB-DG $U_{n,B}^{\sharp}$ with $N = n^{4/3}$. Moreover, we also apply the partial versions of MB-NDG-DC and MB-NDG-RS $U_{n,A}^{\sharp}$ with $N = 4 n^{3/2}$. These computational budget parameter values are chosen to minimize the rate in the error bounds of the corresponding Gaussian and bootstrap approximations. 
We only report the simulation results for the randomized incomplete $U$-statistic with the Bernoulli sampling since the simulation results for the sampling with replacement case are qualitatively similar. 

\subsection{Simulation results}

We first examine the statistical accuracy of the bootstrap tests in terms of size for $U_{n}^{\sharp}$ for the incomplete versions of Spearman's $\rho$ and $U_{n,B}^{\sharp}$ for Bergsma-Dassios' $t^*$. For each nominal size $\alpha \in (0,1)$, we denote by $\hat{R}(\alpha)$ the empirical rejection probability of the null hypothesis, where the critical values are calibrated by our bootstrap methods. The uniform errors-in-size on $\alpha \in [0.01, 0.10]$ of our bootstrap tests are summarized in Table \ref{tab:bootstrap_size_error_right_tail}. We observe that the bootstrap approximations become more accurate as $n$ increases, and they work quite well for small values of $\alpha$, which are relevant in the testing application. Due to the space concern, we defer the empirical size graph $\{(\alpha, \hat{R}(\alpha)) : \alpha \in (0,1)\}$ of the bootstrap tests for MB-NDG-DC (Spearman's $\rho$), MB-NDG-RS (Spearman's $\rho$), and MB-DG (Bergsma-Dassios' $t^*$) to Appendix \ref{sec:additional_simulation} in the SM. In addition, we also report the simulation results of the partial bootstrap $U_{n,A}^{\sharp}$ for Spearman's $\rho$ in Appendix \ref{sec:additional_simulation} in the SM. 

\begin{table}[h]
\begin{center}
\begin{tabular}{ c | c c c }
\hline
\multirow{2}{*}{Setup} & Spearman's $\rho$  & Spearman's $\rho$ & Bergsma-Dassios' $t^{*}$  \\
& (MB-NDG-DC) &  (MB-NDG-RS) & (MB-DG) \\
\hline
$p = 30, n = 300$ & 0.0080 & 0.0110 & 0.0280 \\
$p = 30, n = 500$ & 0.0065 & 0.0130 & 0.0225 \\
$p = 30, n = 1000$ & 0.0060 & 0.0055 & 0.0095 \\
$p = 50, n = 300$ & 0.0250 & 0.0135 & 0.0385 \\
$p = 50, n = 500$ & 0.0105 & 0.0035 & 0.0260 \\
$p = 50, n = 1000$ & 0.0145 & 0.0095 & 0.0235 \\
$p = 100, n = 300$ & 0.0180 & 0.0125 & 0.0660 \\
$p = 100, n = 500$ & 0.0135 & 0.0100 & 0.0290 \\
$p = 100, n = 1000$ & 0.0075 & 0.0020 & 0.0170 \\
\hline
\end{tabular}
\end{center}
\caption{Uniform error-in-size $\sup_{\alpha \in [0.01.0.10]} |\hat{R}(\alpha)-\alpha|$ of the bootstrap tests, where $\alpha$ is the nominal size.}
\label{tab:bootstrap_size_error_right_tail}
\end{table}

We also report the empirical performance of the Gaussian approximation for the test statistics. The P-P plots for Spearman's $\rho$ (i.e., $\sqrt{n} U_{n,N}'$ versus $N(0, r^{2} \Gamma_{g} + \alpha_{n} \Gamma_{h})$) and Bergsma-Dassios' $t^{*}$ (i.e., $\sqrt{N} U_{n,N}'$ versus $N(0, \Gamma_{h})$) are shown in Figure \ref{fig:gauss_approx_spearman_rho} and \ref{fig:gauss_approx_BD_t}, respectively. Similarly as the bootstrap approximations, Gaussian approximations become more accurate as $n$ increases. 

\begin{figure}[t!] 
   \centering
       \includegraphics[scale=0.74]{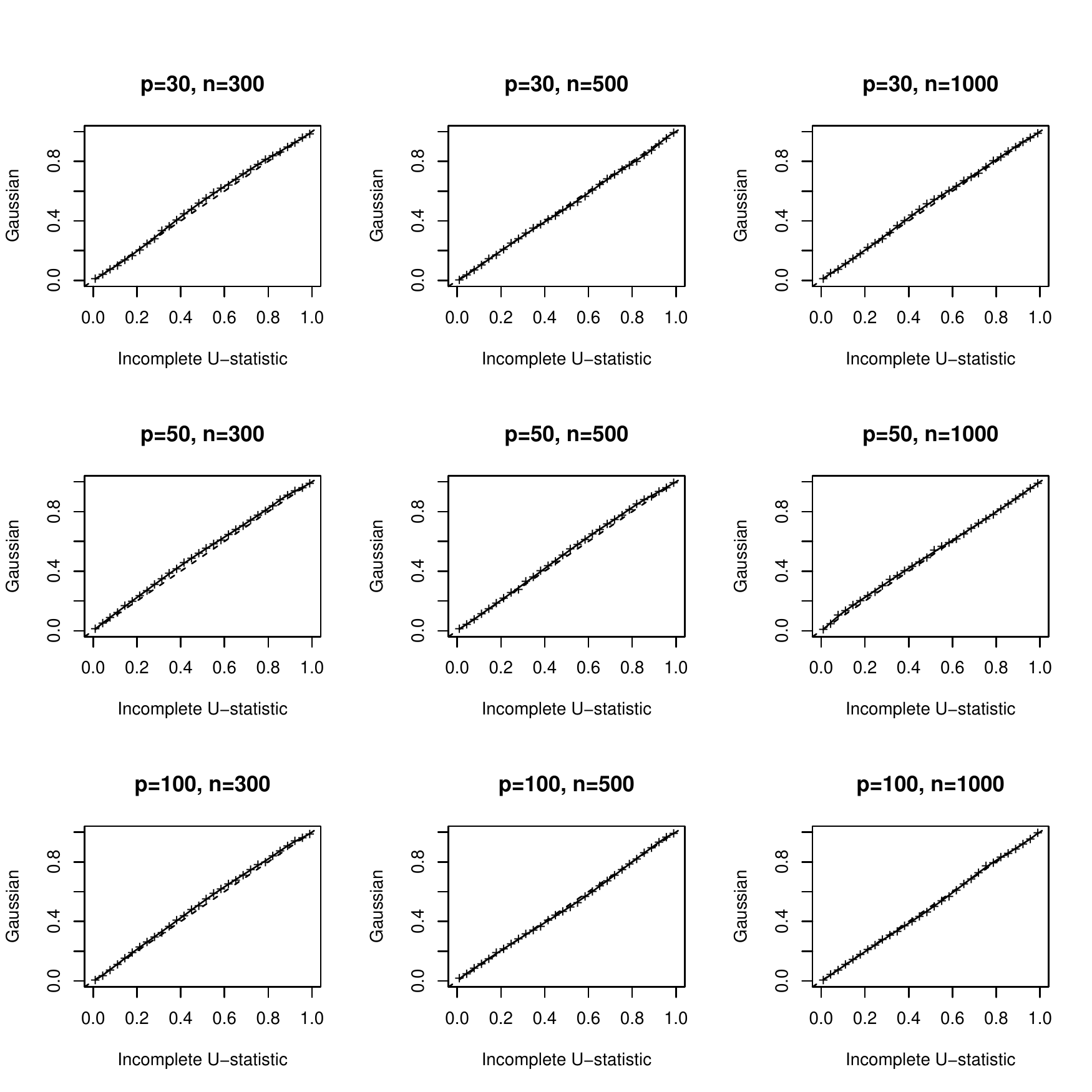}
    \caption{P-P plots for the Gaussian approximation $N(0, r^{2} \Gamma_{g} + \alpha_{n} \Gamma_{h})$ of $\sqrt{n} U_{n,N}'$ for Spearman's $\rho$ test statistic with the Bernoulli sampling.}
   \label{fig:gauss_approx_spearman_rho}
\end{figure}

\begin{figure}[t!] 
   \centering
       \includegraphics[scale=0.74]{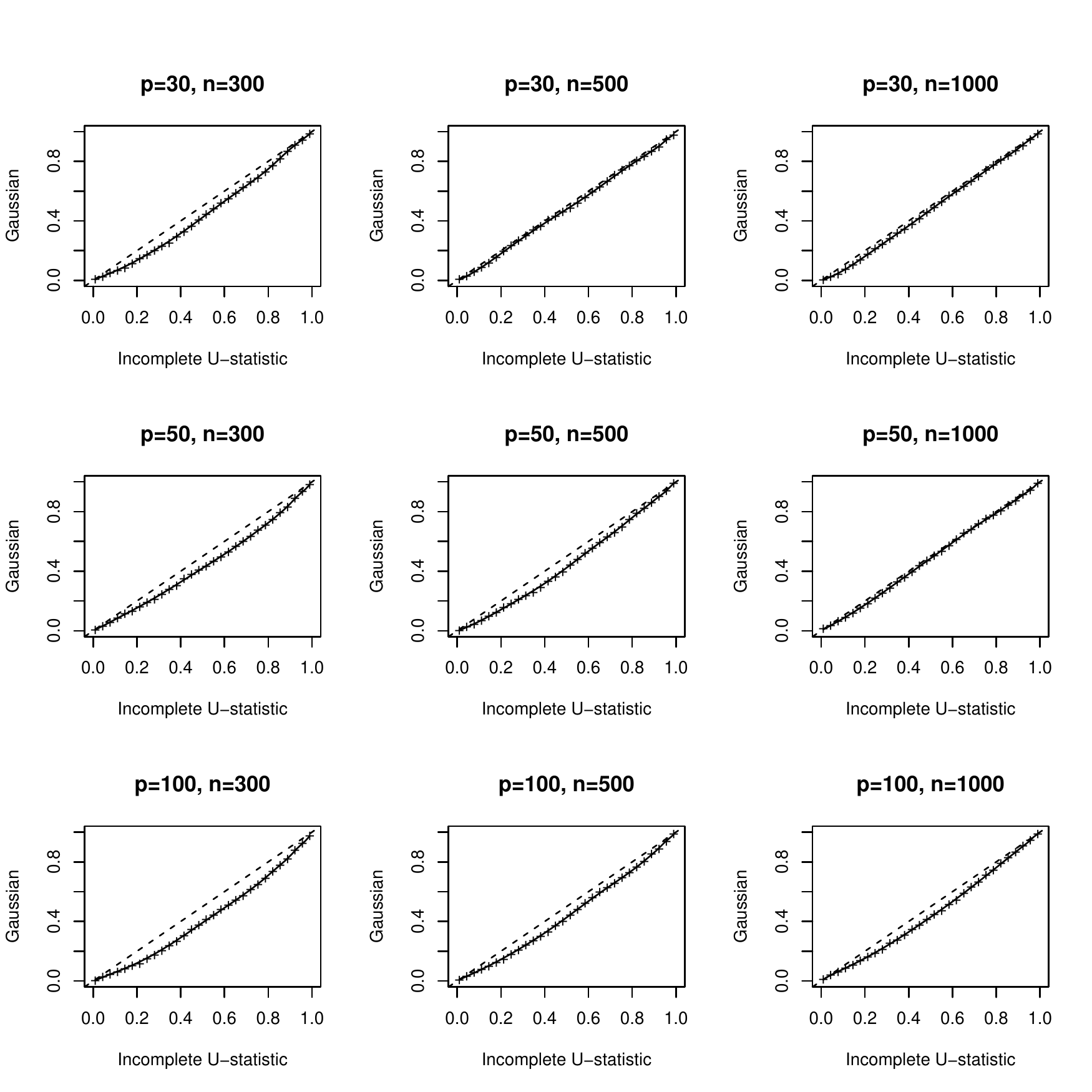}
    \caption{P-P plots for the Gaussian approximation $N(0, \Gamma_{h})$ of $\sqrt{N} U_{n,N}'$ for Bergsma-Dassios' $t^{*}$ test statistic with the Bernoulli sampling.}
   \label{fig:gauss_approx_BD_t}
\end{figure}

Next, we report the computer running time of the bootstrap tests. Figure \ref{fig:running_time_bootstrap} displays the computer running time versus the sample size, both on the log-scale. It is observed that the (log-)running time for the bootstrap methods scales linearly with the (log-)sample size. We further fit a linear model of the (log-)running time against the (log-)sample size (with the intercept term) for each $p$. For Spearman's $\rho$, the slope coefficient for $p=(30,50,100)$ is $(1.820, 1.863, 1.819)$ in the case MB-NDG-DC, and $(1.987, 1.874, 1.918)$ in the case MB-NDG-RS. In both cases, the slope coefficients are close to the theoretic value 2. Recall that the computational complexity for MB-NDG-DC and MB-NDG-RS is the same as $O((n+B)nd)$ for the suggested parameter values. For $n$ larger than $B$, the computational cost is approximately quadratic in $n$ for each $p$. For Bergsma-Dassios' $t^*$, the slope coefficient for $p=(30,50,100)$ is $(1.314 , 1.318, 1.316)$, which matches very well to the exponent $4/3$ of the computational budget parameter value $N = n^{4/3}$. In addition, the running time lines are in parallel with each other. This also makes sense because the computational costs of all the bootstrap methods are linear in $d$ (and thus quadratic in $p$) and the increase of $p$ only affects the intercept on the log-scale. 

\begin{figure}[t!] 
   \centering
       \includegraphics[scale=0.23]{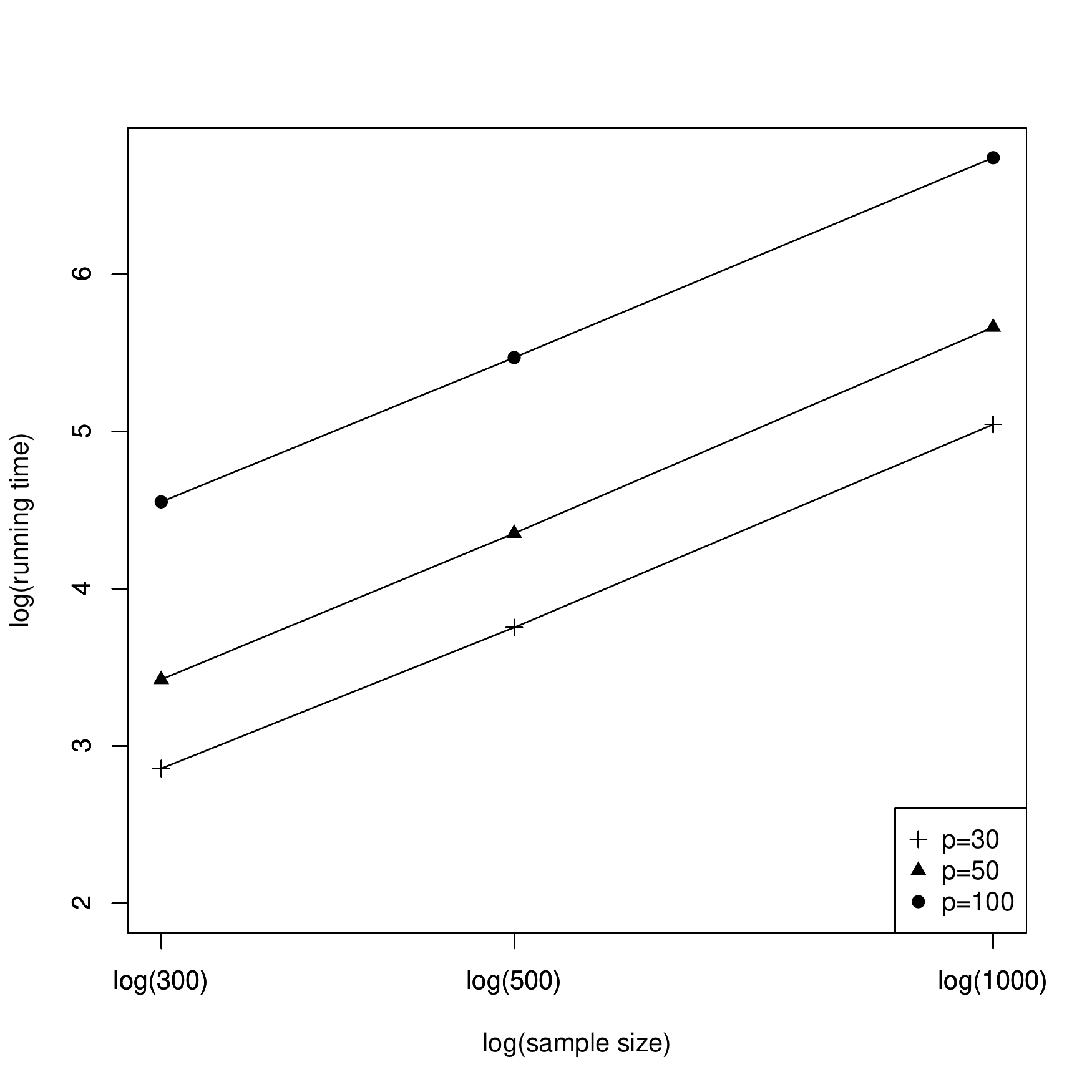}
       \includegraphics[scale=0.23]{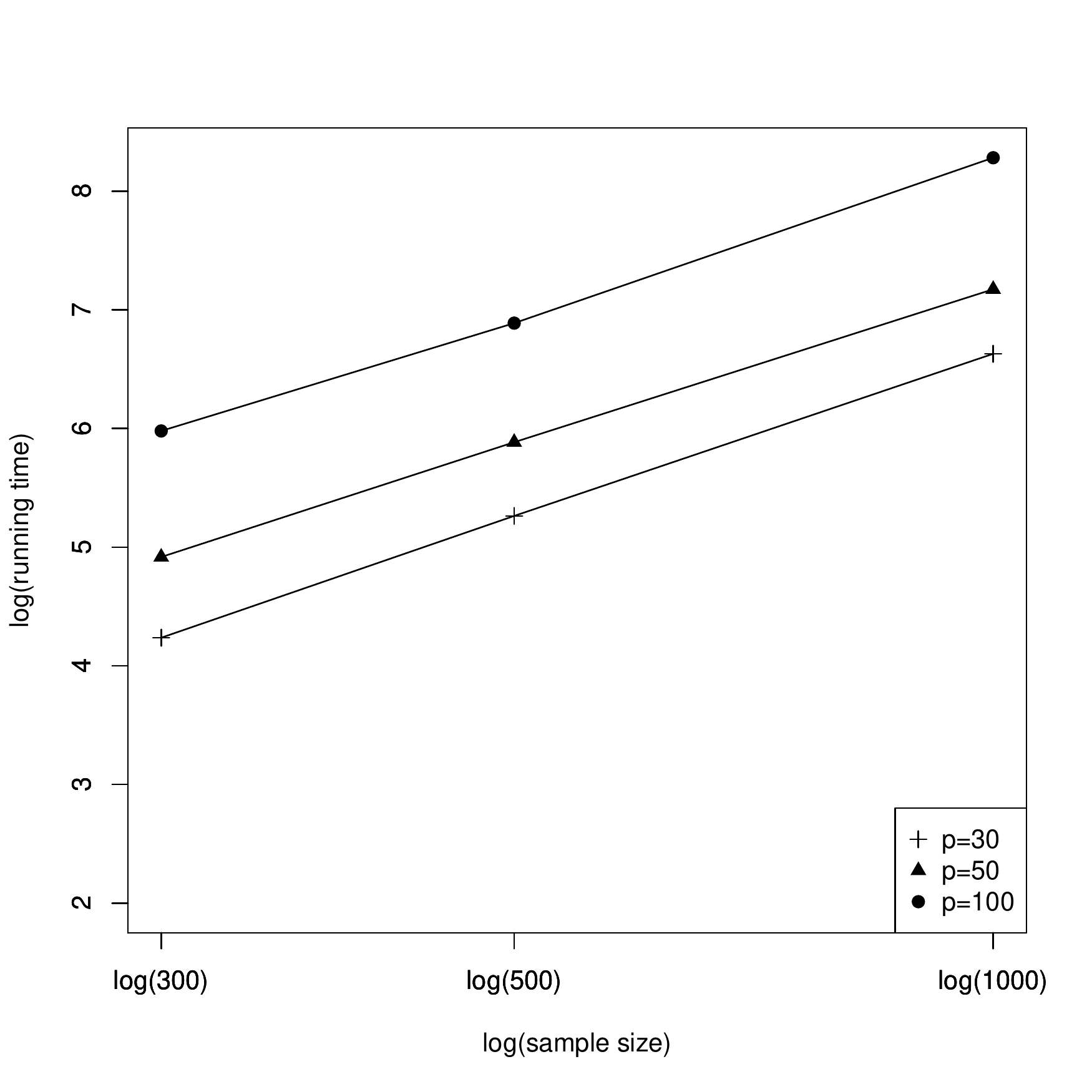}
       \includegraphics[scale=0.23]{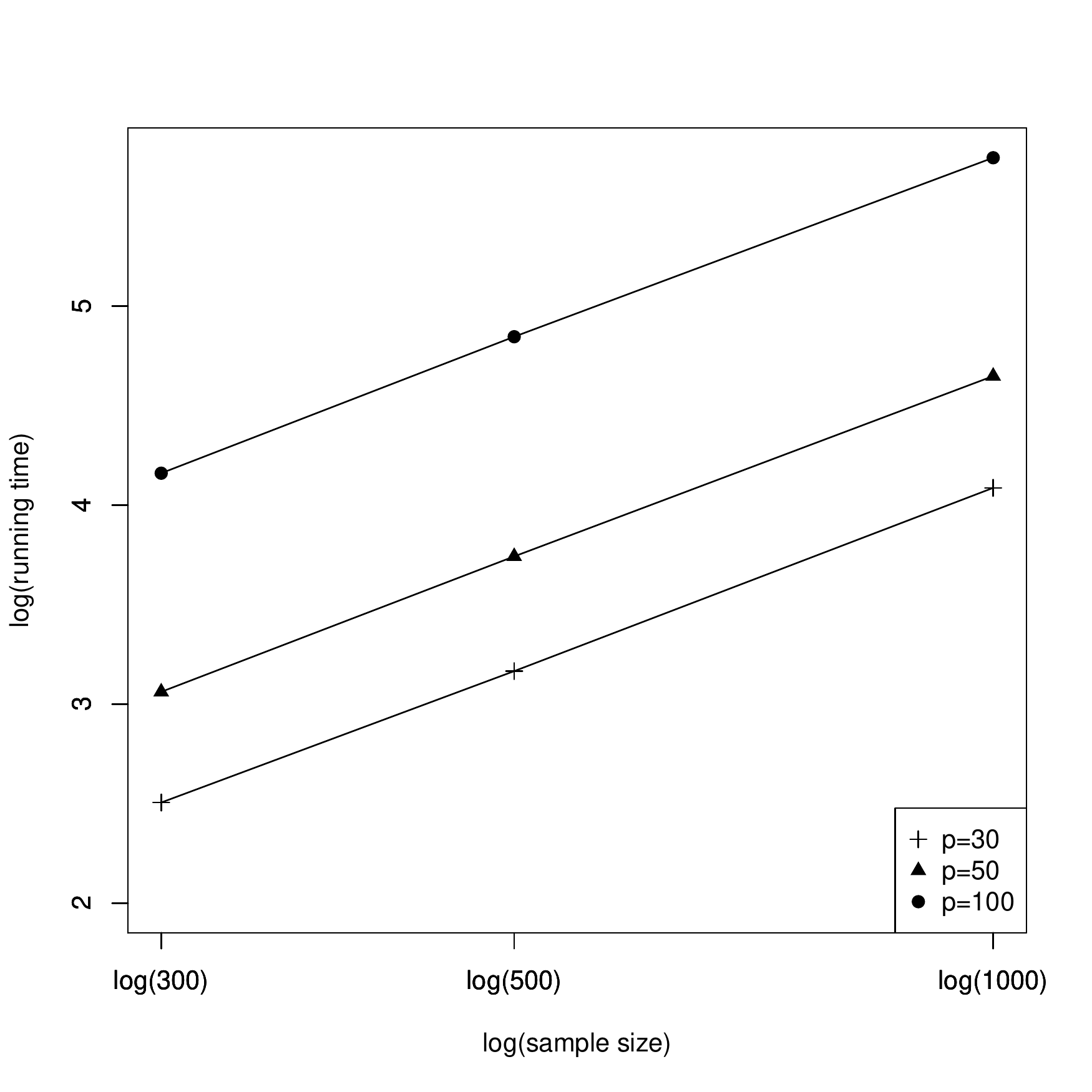}
    \caption{Computer running time of the bootstrap versus the sample size on the log-scale. Left: bootstrap $U_{n}^\sharp$ for Spearman's $\rho$ with the divide and conquer estimation (MB-NDG-DC). Middle: bootstrap $U_{n}^\sharp$ for Spearman's $\rho$ with the random sampling estimation (MB-NDG-RS). Right: bootstrap $U_{n,B}^\sharp$ for Bergsma-Dassios' $t^*$ (MB-DG).}
   \label{fig:running_time_bootstrap}
\end{figure}

\section{Discussions}
\label{sec:discussions}

In this paper, we have derived the Gaussian and bootstrap approximation results for  incomplete $U$-statistics with random and sparse weights in high dimensions. Specifically, we have considered two sampling schemes: Bernoulli sampling and sampling with replacement, both subject to a computational budget parameter to construct the random weights. On one hand, the sparsity in the design makes the computation of the incomplete $U$-statistics tractable. On the other hand, the randomness of the weights opens the possibility for us to obtain  unified Central Limit Theorem (CLT) type behaviors for both non-degenerate and degenerate kernels, thus revealing the fundamental difference between complete and randomized incomplete $U$-statistics. Building upon the Gaussian approximation results, we have developed novel bootstrap methods for incomplete $U$-statistics that take computational considerations into account, and established finite sample error bounds for the proposed bootstrap methods. Additional discussions on two extensions (extensions to normalized $U$-statistics and incomplete $U$-statistics with increasing orders) can be found in Section \ref{sec:additional discussions} of the SM.

\section*{Acknowledgments}
The authors would like to thank the anonymous referees, an Associate Editor, and the Editor for their constructive comments that improve the quality of this paper.

\begin{supplement}
\sname{Supplementary Material}\label{suppA}
\stitle{}
\sdescription{The Supplementary Material contains the proofs and additional discussions, simulation results, and applications of the main paper.}
\end{supplement}

\bibliographystyle{imsart-number}
\bibliography{icp_ustat}

\newpage

\renewcommand\thesection{\Alph{section}}
\setcounter{section}{0}


\begin{frontmatter}
\title{Supplementary Material to ``Randomized incomplete $U$-statistics in high dimensions"\thanksref{T1}}
\runtitle{Randomized incomplete $U$-statistics in high dimensions}
\thankstext{T1}{This version: \today.}

\begin{aug}
\author{\fnms{Xiaohui} \snm{Chen}\thanksref{m1}}
\and
\author{\fnms{Kengo} \snm{Kato}\thanksref{m2}}


\affiliation{University of Illinois at Urbana-Champaign\thanksmark{m1} and Cornell University\thanksmark{m2}}

\address{Department of Statistics\\
University of Illinois at Urbana-Champaign\\
725 S. Wright Street\\
Champaign, Illinois 61874\\
USA \\
\printead{e1}}

\address{Department of Statistical Science\\
Cornell University\\
1194 Comstock Hall \\
Ithaca, New York 14853\\
USA\\
\printead{e2}}
\end{aug}

\begin{abstract}
This Supplementary Material contains the additional discussions, bootstrap validity under the polynomial moment condition, proofs omitted from the main text, and additional simulation results of the paper ``Randomized incomplete $U$-statistics in high dimensions." 
\end{abstract}

%

\end{frontmatter}

\section{Additional discussions}
\label{sec:additional discussions}

\subsection{Comparison of MB-NDG-DC with BLB}
\label{sec:BLB}

Our MB-NDG-DC in Section \ref{subsec:divide_and_conquer} differs from the Bag of Little Bootstraps (BLB) proposed in \cite{KTSJ2014_JRSSB}, which is another generically scalable bootstrap method for large datasets based on the divide and conquer (DC) algorithm. Specifically, tailored to the $U$-statistic $U_{n} := U_{n}^{(r)}(h)$ with kernel $h$, let $Q_{n} := Q_{n}(P)$ be the distribution of $U_n$ and $\lambda(Q_{n}(P)) = \lambda (Q_{n}(P),P)$ be a quality assessment of $U_n$ (cf. Chapter 6.5 in \cite{Lehmann1998}). For instance, $\lambda(Q_{n}(P))$ can be the $95\%$-quantile of the distribution of $\max_{1 \le j \le d} \sqrt{n}(U_{n,j}-\theta_{j})$. A natural estimate of $\lambda(Q_{n}(P))$ is the plug-in estimate $\lambda(Q_{n}(\mathbb{P}_n))$, where $\mathbb{P}_n = n^{-1} \sum_{i=1}^n \delta_{X_i}$ is the empirical distribution of $X_1,\dots,X_n$. Typically, $\lambda(Q_{n}(\mathbb{P}_n))$ is difficult to compute, even for a moderate sample size $n$. The BLB first divides the original sample $\{ X_{1},\dots,X_{n} \}$ into $K$ subsets $\calI_{1},\dots,\calI_{K}$ of size $L$ uniformly at random. Denote by $\mathbb{P}^{(k)}_{n,L} = L^{-1} \sum_{i \in \calI_k} \delta_{X_{i}}$  the empirical distribution of $\{X_{i}\}_{i \in \calI_{k}}$. Then, on each subset $\calI_k, k=1,\dots,K$, the BLB  repeatedly resamples $n$ points i.i.d. from $\mathbb{P}_{n,L}^{(k)}$, computes the $U$-statistic with kernel $h$ for each resample, forms the empirical distribution $\mathbb{Q}_{n,k}^{*}$ of the computed $U$-statistics, 
and approximates $\lambda (Q_{n}(\mathbb{P}_{n,L}^{(k)}))$ by $\lambda(\mathbb{Q}_{n,k}^*)$. Finally the BLB takes the average $K^{-1} \sum_{k=1}^K \lambda(\mathbb{Q}_{n,k}^*)$ as an estimate of $\lambda(Q_{n}(P))$. The computational cost of the BLB is $O(B K L^{r} d) = O(B n L^{r-1} d)$. Note that the asymptotic validity of the BLB requires that $L \to \infty$ (cf. Theorem 1 of \cite{KTSJ2014_JRSSB}), so that $\mathbb{P}^{(k)}_{n,L}$ is close enough to $P$. Therefore, in order for the BLB to approach the population quality assessment value $\lambda(Q_{n}(P))$, its computational complexity has to depend on the order $r$ of the $U$-statistic. On the contrary, our MB-NDG-DC applies the DC algorithm to estimation of the H\'ajek projection and the overall computational cost is $O(n^2 d + B (N+n)d)$, which does not depend on $r$. In particular, the computational cost of the MB-NDG-DC is $O(n^2 d + Bnd)$ if we choose $N$ to be of the same order as $n$.

\subsection{Extension to normalized $U$-statistics}
\label{sec:normalized U-statistics}
In applications to, e.g., testing problems, if the variances of the coordinates of $U_{n,N}'$ are heterogeneous, it would be natural to normalize the incomplete $U$-statistic $U_{n,,N}'$ in such a way that  all the coordinates have approximately unit variance, and use a max-type test statistic of $U_{n,N}'$. Often, the coordinatewise variances are unknown and have to be estimated. 
From Theorems \ref{thm:gaussian_approx_bern_random_design} and \ref{thm:gaussian_approx_bern_random_design_degenerate}, in the non-degenerate case the approximate variance of the $j$-th coordinate of $\sqrt{n}(U_{n,N}'-\theta)$ is $\sigma_{j}^{2} :=\sigma_{A,j}^{2}  + \alpha_{n} \sigma_{B,j}^{2}$, where $\sigma_{A,j}^{2} := r^{2} P(g_{j}-\theta_{j})^{2}$ and $\sigma_{B,j}^{2} := P^{r} (h_{j} - \theta_{j})^{2}$, while in the degenerate case, the approximate variance of the $j$-th coordinate of $\sqrt{N}(U_{n,N}'-\theta)$ is $\sigma_{B,j}^{2}$. So, the problem boils down to estimating $\sigma_{A,j}^{2}$ and $\sigma_{B,j}^{2}$. To this end, we propose the following estimators: recall the setup in Section \ref{sec:generic_bootstrap} and define 
\[
\hat{\sigma}_{A,j}^{2} := \frac{r^{2}}{n_{1}} \sum_{i_{1} \in S_{1}} \{ \hat{g}_{j}^{(i_{1})}(X_{i_{1}}) - \breve{g}_{j} \}^{2} \quad \text{and} \quad \hat{\sigma}_{B,j}^{2} := \frac{1}{\hat{N}} \sum_{\iota \in I_{n,r}} Z_{\iota} \{ h_{j}(X_{\iota}) - U_{n,N,j}' \}^{2},
\]
where $\hat{N}$ is replaced by $N$ in the definition of $\hat{\sigma}_{B,j}^{2}$ for the sampling with replacement case. These estimators are the $(j,j)$-elements of the conditional covariance matrices of $U_{n,A}^{\sharp}$ and $U_{n,B}^{\sharp}$, respectively. Note that the computational cost to construct $\hat{\sigma}_{B,j}^{2},j=1,\dots,d$ is (on average) $O(Nd)$, while that of $\hat{\sigma}_{A,j}^{2},j=1,\dots,d$ is $O(n^{2}d)$ if the DC estimation with the parameter values suggested in Section \ref{subsec:divide_and_conquer} is used for estimation of $g$. Now, let $\Lambda_{A} = \diag \{ \sigma_{A,1}^{2},\dots,\sigma_{A,d}^{2} \}, \Lambda_{B} = \diag \{ \sigma_{B,1}^{2},\dots,\sigma_{B,d}^{2} \}, \hat{\Lambda}_{A} = \diag \{ \hat{\sigma}_{A,1}^{2},\dots,\hat{\sigma}_{A,d}^{2} \}, \Lambda_{B} = \diag \{ \hat{\sigma}_{B,1}^{2},\dots,\hat{\sigma}_{B,d}^{2} \}, \Lambda = \diag \{ \sigma_{1}^{2},\dots,\sigma_{d}^{2} \} = \Lambda_{A}+\alpha_{n}\Lambda_{B}$, and $\hat{\Lambda} = \diag \{ \hat{\sigma}_{1}^{2},\dots,\hat{\sigma}_{d}^{2} \} =  \hat{\Lambda}_{A} + \alpha_{n} \hat{\Lambda}_{B}$. 
We consider to approximate the distributions of $\sqrt{n}\hat{\Lambda}^{-1/2} (U_{n,N}'-\theta)$ in the non-degenerate case and $\sqrt{N}\hat{\Lambda}_{B}^{-1/2} (U_{n,N}'-\theta)$ in the degenerate case. 
Recall the setup in Section \ref{sec:generic_bootstrap}.

\begin{cor}[Gaussian and bootstrap approximations to normalized incomplete $U$-statistics]
\label{cor:normalized_U_statistics}
(i) Suppose that Conditions (C1), (C2), and (C3-ND) hold, and in addition suppose that  Condition (\ref{eqn:growth_condition}) together with $D_{n}^{2}(\log^{7}(dn))/(n \wedge N) \le C_{1}n^{-(\zeta_1 \wedge \zeta_2)}$ hold for some constants $0 < C_{1} < \infty$ and $\zeta_1,\zeta_2 \in (0,1)$. Then there exists a constant $C$ depending only on $\underline{\sigma},r$, and $C_{1}$ such that
\[
\begin{split}
&\sup_{R \in \calR} \left | \Prob (\sqrt{n}\hat{\Lambda}^{-1/2} (U_{n,N}'-\theta) \in R) - \Prob (\Lambda^{-1/2}Y \in R) \right | \le Cn^{-(\zeta_1 \wedge \zeta_2)/6} \quad \text{and} \\
&\Prob \left \{ \sup_{R \in \calR} \left | \Prob_{\mid \calD_{n}} (\hat{\Lambda}^{-1/2} U_{n}^{\sharp} \in R) - \Prob (\Lambda^{-1/2}Y \in R) \right | > Cn^{-(\zeta_1 \wedge \zeta_2)/6} \right \} \le Cn^{-1},
\end{split}
\]
where $Y \sim N(0,r^{2}\Gamma_{g} + \alpha_{n} \Gamma_{h})$. 

(ii) Suppose that Conditions (C1), (C2), and (C3-D) hold, and in addition suppose that Condition (\ref{eqn:growth_condition_B}) holds for some constants $0 < C_{1} < \infty$ and $\zeta \in (0,1)$. Then there exists a constant $C$ depending only on $\underline{\sigma},r$, and $C_{1}$ such that
\[
\sup_{R \in \calR} \left | \Prob_{\mid \calD_{n}} (\hat{\Lambda}_{B}^{-1/2} U_{n,B}^{\sharp} \in R) - \gamma_{B}^{\dagger} (R) \right | \le Cn^{-\zeta/6}
\]
with probability at least $1-Cn^{-1}$, where $\gamma_{B}^{\dagger} = N(0,\Lambda_{B}^{-1/2}\Gamma_{h}\Lambda_{B}^{-1/2})$. If, in addition, the kernel $h$ is degenerate of order $k-1$ for some $k=2,\dots,r$, and if $ND_{n}^{2}(\log^{k+3}d)/n^{k} \le C_{1}n^{-2\zeta/3}$ and $D_{n}^{2}(\log^{7}(dn))/N\le C_{1}n^{-\zeta}$,
then there exists a constant $C'$ depending only on $\underline{\sigma},r$, and $C_{1}$ such that
\[
\sup_{R \in \calR} \left | \Prob (\sqrt{N}\hat{\Lambda}_{B}^{-1/2}(U_{n,N}'-\theta) \in R) - \gamma_{B}^{\dagger}(R) \right | \le C'n^{-\zeta/6}.
\]
\end{cor}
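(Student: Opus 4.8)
The plan is to exploit that normalization by a \emph{diagonal} matrix maps hyperrectangles to hyperrectangles. For any diagonal $\Lambda'=\diag\{\lambda_1',\dots,\lambda_d'\}$ with $\lambda_j'>0$ and any $R=\prod_{j=1}^d[a_j,b_j]\in\calR$ we have $\{\Lambda'^{-1/2}v\in R\}=\{v\in\Lambda'^{1/2}R\}$ with $\Lambda'^{1/2}R=\prod_{j=1}^d[\sqrt{\lambda_j'}\,a_j,\sqrt{\lambda_j'}\,b_j]\in\calR$. Thus, writing e.g. $\Prob(\sqrt{n}\hat\Lambda^{-1/2}(U_{n,N}'-\theta)\in R)=\Prob(\sqrt{n}(U_{n,N}'-\theta)\in\hat\Lambda^{1/2}R)$, each of the four assertions reduces to (a) controlling how close the \emph{random} rectangle $\hat\Lambda^{1/2}R$ (resp.\ $\hat\Lambda_B^{1/2}R$) is to $\Lambda^{1/2}R$ (resp.\ $\Lambda_B^{1/2}R$), and (b) invoking the already established approximations for $\sqrt{n}(U_{n,N}'-\theta)$, $U_n^\sharp$, $\sqrt{N}(U_{n,N}'-\theta)$, and $U_{n,B}^\sharp$ (Theorems \ref{thm:gaussian_approx_bern_random_design}, \ref{thm:bootstrap_validity}, \ref{thm:gaussian_approx_bern_random_design_degenerate}, \ref{thm:bootstrap_validity_A}) together with Gaussian anti-concentration.

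\emph{Step 1 (relative consistency of the variance estimators).} I would first show that, under the stated hypotheses,
\[
\max_{1\le j\le d}\left|\frac{\hat\sigma_{A,j}^2}{\sigma_{A,j}^2}-1\right|\vee\max_{1\le j\le d}\left|\frac{\hat\sigma_{B,j}^2}{\sigma_{B,j}^2}-1\right|\le C\epsilon_n
\]
on an event of probability at least $1-Cn^{-1}$ (with $\calD_n$ replaced by the augmented data in the random-sampling construction), where $\epsilon_n$ is of order $n^{-(\zeta_1\wedge\zeta_2)/2}$ up to logarithmic factors in part (i) and $n^{-\zeta/2}$ up to logs in part (ii). For $\hat\sigma_{B,j}^2$ this is a Bernstein-type concentration bound for the sparsely weighted average $\hat N^{-1}\sum_\iota Z_\iota h_j(X_\iota)^2$ around $P^r h_j^2$ (with the random normalization $\hat N$ handled via \eqref{eqn:concentration} in the Bernoulli case), using (C1)--(C2) for the moment control and $P^r(h_j-\theta_j)^2=P^rh_j^2-\theta_j^2\ge\underline\sigma^2$ under (C3-D); in the non-degenerate case $\sigma_{B,j}^2$ enters only additively so no lower bound is needed. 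For $\hat\sigma_{A,j}^2$ I would bound $|n_1^{-1}\sum_{i_1\in S_1}\{\hat g_j^{(i_1)}(X_{i_1})-\breve g_j\}^2-n_1^{-1}\sum_{i_1\in S_1}\{g_j(X_{i_1})-\bar g_j\}^2|$ by Cauchy--Schwarz in terms of $\hat\Delta_{A,1}$ (controlled by the second part of \eqref{eqn:growth_condition}, hence by Propositions \ref{prop:bootstrap_validity_DC}, \ref{prop:bootstrap_validity_incomplete_ustat}), then approximate $n_1^{-1}\sum_{i_1\in S_1}\{g_j(X_{i_1})-\bar g_j\}^2\approx P(g_j-\theta_j)^2$ by concentration of an empirical variance of sub-exponential summands, using $\sigma_{A,j}^2=r^2P(g_j-\theta_j)^2\ge r^2\underline\sigma^2$ under (C3-ND). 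These estimates are essentially the covariance-consistency statements already proved inside Theorems \ref{thm:bootstrap_validity_A} and \ref{thm:bootstrap_validity} (where the conditional covariances of $U_{n,B}^\sharp$ and $U_{n,A}^\sharp$ are shown close to $\Gamma_h$ and $r^2\Gamma_g$ in $|\cdot|_\infty$), so the work here is mostly extraction and bookkeeping.

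\emph{Step 2 (sandwiching and anti-concentration).} On the good event from Step 1, for each $R=\prod_j[a_j,b_j]\in\calR$ the rectangle $\hat\Lambda^{1/2}R$ lies between $\Lambda^{1/2}R^{-}$ and $\Lambda^{1/2}R^{+}$, where $R^{\pm}$ is obtained from $R$ by replacing each endpoint $c$ by $(1\pm C\epsilon_n)^{\pm1/2}c$; so $R^{\pm}$ differs from $R$, after rescaling by $\Lambda^{-1/2}$, by an additive enlargement/shrinkage of order $\epsilon_n$. Applying Theorem \ref{thm:gaussian_approx_bern_random_design} (resp.\ \ref{thm:bootstrap_validity}, \ref{thm:gaussian_approx_bern_random_design_degenerate}, \ref{thm:bootstrap_validity_A}) to $\Lambda^{1/2}R^{\pm}$ and then rescaling, it remains to bound $\Prob(\Lambda^{-1/2}Y\in R^{+})-\Prob(\Lambda^{-1/2}Y\in R^{-})$ (and the analogue with $\gamma_B^{\dagger}$). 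The crucial point is that the normalized Gaussian $\Lambda^{-1/2}Y$ has \emph{unit} coordinate variances, since the $j$-th diagonal entry of $\Lambda^{-1/2}(r^2\Gamma_g+\alpha_n\Gamma_h)\Lambda^{-1/2}$ equals $\sigma_j^{-2}(\sigma_{A,j}^2+\alpha_n\sigma_{B,j}^2)=1$, and likewise for $\gamma_B^{\dagger}$; Nazarov's anti-concentration inequality then yields $\Prob(\Lambda^{-1/2}Y\in R^{+})-\Prob(\Lambda^{-1/2}Y\in R^{-})\le C\epsilon_n\sqrt{\log d}$, which under the imposed growth conditions is of the stated order $n^{-(\zeta_1\wedge\zeta_2)/6}$ (resp.\ $n^{-\zeta/6}$) after absorbing logarithmic factors. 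Adding the Gaussian/bootstrap approximation errors, which are dominated thanks to the extra hypotheses $D_n^2\log^7(dn)/(n\wedge N)\le C_1 n^{-(\zeta_1\wedge\zeta_2)}$ in (i) and $ND_n^2\log^{k+3}d/n^k\le C_1 n^{-2\zeta/3}$, $D_n^2\log^7(dn)/N\le C_1 n^{-\zeta}$ in (ii), gives all four claims. For the conditional statements one argues on the good event, noting $\hat\Lambda$ (resp.\ $\hat\Lambda_B$) is measurable w.r.t.\ $\calD_n$ (resp.\ the augmented data), so the sandwiching is valid conditionally.

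\emph{Main obstacle.} The substantive work is Step 1: establishing uniform-in-$j$ \emph{relative} consistency of $\hat\sigma_{A,j}^2$ and $\hat\sigma_{B,j}^2$ at the right polynomial rate. For $\hat\sigma_{A,j}^2$ this mixes the H\'ajek-projection estimation error $\hat\Delta_{A,1}$ (whose control differs between the divide-and-conquer and random-sampling constructions) with concentration of an empirical variance of sub-exponential summands; for $\hat\sigma_{B,j}^2$ it is a concentration bound for a randomly sparsified sum of squared sub-exponential variables with the random normalization handled via \eqref{eqn:concentration}. All of these ingredients already appear in the proofs of Theorems \ref{thm:gaussian_approx_bern_random_design}--\ref{thm:bootstrap_validity}, so the corollary amounts to re-assembling those covariance-consistency estimates and routing them through the diagonal-rescaling and anti-concentration argument; the one genuine point of care is checking that the various logarithmic factors balance to the claimed exponents.
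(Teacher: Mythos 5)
Your overall strategy is essentially the paper's: establish relative consistency of the diagonal variance estimators (your Step~1 is exactly Lemma~\ref{lem:variance_estimation}, extracted from the $\hat\Delta_{A},\hat\Delta_{B}$ bounds already proved inside Theorems~\ref{thm:bootstrap_validity_A}--\ref{thm:bootstrap_validity}), and then pass through a rectangle perturbation plus anti-concentration. You also correctly identify the role of the unit coordinate variances of $\Lambda^{-1/2}Y$ and $\gamma_B^{\dagger}$.

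However, your Step~2 has a genuine gap. You assert that the multiplicatively dilated rectangle $R^{\pm}$ differs from $R$ ``by an additive enlargement/shrinkage of order $\epsilon_n$'' and apply Nazarov's inequality directly to conclude $\Prob(\Lambda^{-1/2}Y\in R^{+})-\Prob(\Lambda^{-1/2}Y\in R^{-})\le C\epsilon_n\sqrt{\log d}$. This is not correct as stated: replacing an endpoint $c$ by $(1\pm C\epsilon_n)^{\pm1/2}c$ shifts it by $\Theta(\epsilon_n|c|)$, which is unbounded as $|c|\to\infty$, so the dilation is not an $O(\epsilon_n)$ additive perturbation and Nazarov's inequality (which is phrased for additive enlargements) does not apply directly. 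One must first convert the multiplicative perturbation to an effectively additive one by localizing the endpoints. The paper does this via an explicit truncation step: since $\Lambda^{-1/2}Y$ has unit coordinate variances, the Borell--Sudakov--Tsirel'son inequality gives $\Prob(|\Lambda^{-1/2}Y|_\infty > C\sqrt{\log(dn)})\le 2n^{-1}$, and combined with the already-established Gaussian approximation this yields $\Prob(|\sqrt{n}\Lambda^{-1/2}U_{n,N}'|_\infty > C\sqrt{\log(dn)})\le Cn^{-(\zeta_1\wedge\zeta_2)/6}$. After this localization the relevant perturbation is $|\sqrt{n}(\hat\Lambda^{-1/2}-\Lambda^{-1/2})U_{n,N}'|_\infty\le t_n$ with $t_n\asymp \epsilon_n\sqrt{\log(dn)}$, and it is \emph{this} additive quantity that enters Nazarov's inequality, producing $t_n\sqrt{\log d}\lesssim \epsilon_n\log(dn)$, which is absorbed by your extra $\log^2 d$ in the choice of $\epsilon_n$. (Alternatively, one could handle the multiplicative dilation directly through the Gaussian comparison inequality, Lemma~\ref{lem:Gaussian_comparison}, by noting $\Prob(Z\in(1+\epsilon)^{1/2}R)=\Prob((1+\epsilon)^{-1/2}Z\in R)$, but that also departs from what you wrote.) Without some such truncation or covariance-matching argument, the step ``apply Nazarov to $R^{\pm}$'' does not go through.
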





\subsection{Incomplete $U$-statistics with increasing orders} 
Finally, it is interesting to note a connection of incomplete $U$-statistics with machine learning. 
The recent paper by \cite{mentchhooker2016} studies asymptotic theory for one-dimensional incomplete $U$-statistics with increasing orders (i.e., $r = r_{n} \to \infty$). Specifically, they use sampling with replacement and establish asymptotic normality for the non-degenerate case. Their motivation is coming from uncertainty quantification for subbagging and (subsampled) random forests, which, from a mathematical point of view, are defined as \textit{infinite order} $U$-statistics \citep{frees1989} where the order of the $U$-statistics corresponds to the subsample size for a single tree and so $r=r_{n} \to \infty$. Since exact computation of subbagging and random forests is in most cases intractable, a common practice is to choose a smaller number of subsamples randomly. Building on the asymptotic normality result, \cite{mentchhooker2016} develop pointwise confidence intervals for subbagging and random forests; see also \cite{wagerathey2017} for related results. Extending the results of \cite{mentchhooker2016} to high dimensions enables us to develop methods to construct simultaneous confidence bands for subbagging and random forests and hence would be an interesting venue for future research. Such extension is by no means trivial since the constants appearing in the error bounds developed in the present paper depend on the order $r$ in complicated ways.

\section{Bootstrap validity under the polynomial moment condition}
\label{sec:bootstrap_validity_poly}

In this section, we present the bootstrap validity under the polynomial moment condition (C2'). Recall the assumption  
\begin{enumerate}
\item[(C2')] $(P^{r}|h|_{\infty}^{q})^{1/q} \le D_{n}$ for some $q \in [4,\infty)$,
\end{enumerate}
and the definition $\hat{\Delta}_{A,1}:= \max_{1 \le j \le d} n_{1}^{-1} \sum_{i_{1} \in S_{1}}  \{ \hat{g}_{j}^{(i_{1})}(X_{i_{1}}) - g_{j}(X_{i_{1}}) \}^{2}$.

\begin{thm}[Validity of $U_{n,B}^{\sharp}$]
\label{thm:bootstrap_validity_B_poly}
Suppose that (C1), (C2'), and (C3-D) hold. If  
\begin{equation}
\label{eqn:growth_condition_B_poly}
\frac{D_{n}^{2} \log^{5}(dn)}{n \wedge N} \le C_{1}n^{-\zeta} \quad \text{and} \quad {D_{n}^{2} n^{2\max(r+1,4)/q} \log^{3}(dn) \over n \wedge N} \le C_{1} n^{-\zeta/2}
\end{equation}
for some constants $0 < C_{1} < \infty$ and  $\zeta \in (0,1)$, then there exists a constant $C$ depending only on $\underline{\sigma}, r, q$, and $C_{1}$ such that 
\[
\sup_{R \in \calR} \left | \Prob_{\mid \calD_{n}} (U_{n,B}^{\sharp} \in R) - \gamma_{B}(R) \right | \le Cn^{-\zeta/6}
\]
with probability at least $1-Cn^{-1}$.
\end{thm}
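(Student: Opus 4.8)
The plan is to mirror the proof of Theorem~\ref{thm:bootstrap_validity_A} essentially verbatim, substituting Fuk--Nagaev-type maximal inequalities for the Bernstein-type bounds used there under Condition~(C2), exactly as the proof of Theorem~\ref{thm:gaussian_approx_bern_random_design_polymom} modifies that of Theorem~\ref{thm:gaussian_approx_bern_random_design}. The starting observation is that, conditionally on $\calD_{n}$, $U_{n,B}^{\sharp}$ is centered Gaussian with covariance matrix $\hat{\Gamma}_{h} = \hat{N}^{-1}\sum_{\iota\in I_{n,r}} Z_{\iota}\{h(X_{\iota})-U_{n,N}'\}\{h(X_{\iota})-U_{n,N}'\}^{T}$ (with $\hat{N}$ replaced by $N$ in the sampling-with-replacement case). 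By the Gaussian comparison bound for hyperrectangles of \cite{cck2017_AoP} (see also \cite{cck2013}), together with the lower bound $\min_{1\le j\le d}\Gamma_{h,jj}\ge\underline{\sigma}^{2}$ from (C3-D),
\[
\sup_{R\in\calR}\left|\Prob_{\mid\calD_{n}}(U_{n,B}^{\sharp}\in R)-\gamma_{B}(R)\right|\le C\left\{\Delta_{n}^{1/3}\log^{2/3}(dn)+\Delta_{n}\right\},\qquad \Delta_{n}:=|\hat{\Gamma}_{h}-\Gamma_{h}|_{\infty},
\]
on the event $\{\Delta_{n}\le\underline{\sigma}^{2}/2\}$ (which also secures the matching lower bound on $\min_{j}\hat{\Gamma}_{h,jj}$). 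Hence it suffices to prove that $\Delta_{n}\log^{2}(dn)\le Cn^{-\zeta/2}$ holds with probability at least $1-Cn^{-1}$: the first inequality of (\ref{eqn:growth_condition_B_poly}) is calibrated so that the ``Gaussian part'' of $\Delta_{n}$ contributes $(D_{n}^{2}\log^{5}(dn)/(n\wedge N))^{1/6}\le Cn^{-\zeta/6}$ after the cube root and the $\log^{2}(dn)$ loss, and the second inequality so that the ``heavy-tail part'' contributes $(D_{n}^{2}n^{2\max(r+1,4)/q}\log^{3}(dn)/(n\wedge N))^{1/3}\le Cn^{-\zeta/6}$ likewise.

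To bound $\Delta_{n}$, write $\tilde{h}_{\iota}=h(X_{\iota})-\theta$ and decompose
\[
\hat{\Gamma}_{h}-\Gamma_{h}=\left(\frac{1}{\hat{N}}\sum_{\iota\in I_{n,r}}Z_{\iota}\tilde{h}_{\iota}\tilde{h}_{\iota}^{T}-P^{r}\tilde{h}\tilde{h}^{T}\right)-(U_{n,N}'-\theta)(U_{n,N}'-\theta)^{T}.
\]
The last term has max-norm $|U_{n,N}'-\theta|_{\infty}^{2}$, which by Theorem~\ref{thm:gaussian_approx_bern_random_design_polymom}(i) (applied coordinatewise) or a direct maximal inequality is of strictly smaller order than the first term and hence negligible. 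For the first term I would replace $\hat{N}$ by $N$ using the concentration inequality (\ref{eqn:concentration}), then condition on $X_{1}^{n}$: in the Bernoulli case $\{Z_{\iota}\}$ are i.i.d.\ $\Bern(p_{n})$ given $X_{1}^{n}$, so $N^{-1}\sum_{\iota}Z_{\iota}\tilde{h}_{j,\iota}\tilde{h}_{k,\iota}$ has conditional mean equal to the complete $U$-statistic $|I_{n,r}|^{-1}\sum_{\iota}\tilde{h}_{j,\iota}\tilde{h}_{k,\iota}$ of the product kernel $\tilde{h}_{j}\tilde{h}_{k}$, and the conditional fluctuation around it is a sum of $|I_{n,r}|$ independent terms; the sampling-with-replacement case is identical using the i.i.d.-given-$X_{1}^{n}$ representation behind (\ref{eqn:sampling_with_replacement_2}). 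The product kernel $\tilde{h}_{j}\tilde{h}_{k}$ has $(q/2)$-th moment bounded by $\lesssim D_{n}^{q}$ by Cauchy--Schwarz and (C2'), and finite second moment $\lesssim D_{n}^{2}$ by (C1); I would therefore truncate $\tilde{h}_{j,\iota}\tilde{h}_{k,\iota}$ at a level $M_{n}\asymp D_{n}^{2}n^{2\max(r+1,4)/q}\,\mathrm{polylog}(dn)$, which dominates $\max_{j,k}\max_{\iota}|\tilde{h}_{j,\iota}\tilde{h}_{k,\iota}|$ with probability $\ge 1-Cn^{-1}$ by a union bound over the $\asymp n^{r}$ tuples and $d^{2}$ pairs using the moment bounds, apply Bernstein's inequality (for the conditional fluctuation) and the Hoeffding decomposition plus a Bernstein/Rosenthal maximal inequality (for the complete-$U$-statistic part) to the truncated pieces, and absorb the discarded tails through crude moment bounds. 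This is exactly the Fuk--Nagaev substitution used to pass from Theorem~\ref{thm:gaussian_approx_bern_random_design} to Theorem~\ref{thm:gaussian_approx_bern_random_design_polymom}.

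Collecting the pieces, the truncated-Bernstein bounds yield a ``Gaussian'' contribution to $\Delta_{n}$ of order $D_{n}\sqrt{\log(dn)/(n\wedge N)}$ (the extra logarithmic powers in (\ref{eqn:growth_condition_B_poly}) coming from the iterated union bounds over $d^{2}$ pairs and the $\log(dn)$ factor in the Gaussian comparison), controlled by the first inequality of (\ref{eqn:growth_condition_B_poly}); and a ``truncation-remainder'' contribution of order $M_{n}\log(dn)/(n\wedge N)\asymp D_{n}^{2}n^{2\max(r+1,4)/q}\,\mathrm{polylog}(dn)/(n\wedge N)$, controlled by the second inequality of (\ref{eqn:growth_condition_B_poly}) (the $n^{-\zeta/2}$ there, rather than $n^{-\zeta}$, reflecting that this part enters $\Delta_{n}$ linearly whereas the Gaussian part enters with a square root, so both produce $n^{-\zeta/6}$ after the common cube root). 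The main obstacle is the second of these: uniform control over the $d^{2}$ pairs of the deviation of the product-kernel $U$-statistic under only a polynomial moment assumption, for which one must choose the truncation level so that the discarded mass is $o(n^{-\zeta/2})$ while the truncated-Bernstein bound stays under control — and getting the resulting exponent $\max(r+1,4)/q$ correct is the delicate point, the ``$r+1$'' coming from a union bound over the $\asymp n^{r}$ summands down to failure probability $n^{-1}$ and the floor ``$4$'' reflecting that the products $\tilde{h}_{j}\tilde{h}_{k}$ are only guaranteed square-integrable, i.e.\ $q\ge 4$. Everything else is a routine repetition of the Bernoulli-versus-sampling-with-replacement bookkeeping in the proof of Theorem~\ref{thm:bootstrap_validity_A}.
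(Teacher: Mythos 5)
Your proposal is correct and follows essentially the same route as the paper's proof: the Gaussian comparison on hyperrectangles reduces the problem to showing $|\hat{\Gamma}_h - \Gamma_h|_\infty \log^2 d \le C n^{-\zeta/2}$ with probability $1-Cn^{-1}$; your decomposition (conditional fluctuation given $X_1^n$, complete product-kernel $U$-statistic deviation, $\hat N$-versus-$N$ correction, and $|U_{n,N}'-\theta|_\infty^2$) matches the paper's $\hat\Delta_{B,1}+\hat\Delta_{B,2}+\hat\Delta_{B,3}+\hat\Delta_{B,4}$; and substituting Fuk--Nagaev/truncation bounds for the sub-exponential maximal inequalities is precisely how the paper passes from Theorem~\ref{thm:bootstrap_validity_A} to the polynomial-moment case, via Lemma~\ref{lem:talagrand_ineq_ustat_polymom} and Lemma E.4 of \cite{chen2017a}. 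One caution: bound $|U_{n,N}'-\theta|_\infty$ directly through maximal inequalities on $A_n$ and $B_n$ as the paper does, not by invoking Theorem~\ref{thm:gaussian_approx_bern_random_design_polymom}, since (C3-D) implies neither (C3-ND) nor degeneracy and the error term in that theorem's distributional bound would degrade the exceptional-probability guarantee below $1-Cn^{-1}$ (also, the truncation level need only survive a union bound over the $\asymp n^r$ tuples, not over $d^2$ pairs, because (C2') is already a moment bound on $|h|_\infty$).
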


\begin{thm}[Generic bootstrap validity under non-degeneracy]
\label{thm:bootstrap_validity_poly}
Let $U_{n}^{\sharp} = U_{n,A}^{\sharp} + \alpha_{n}^{1/2} U_{n,B}^{\sharp}$. 
Suppose that Conditions (C1), (C2'), and (C3-ND) hold.  In addition, suppose that 
\begin{equation}
\label{eqn:growth_condition_poly}
\begin{split}
&\frac{D_{n}^{2} \log^{5}(dn)}{n_{1} \wedge N} \le C_{1}n^{-\zeta_{1}}, \quad  {D_{n}^{2} n^{2\max(r+1,4)/q} \log^{3}(dn) \over n_{1} \wedge N} \le C_{1} n^{-\zeta_{1}/2}, \quad \text{and} \\
&\Prob \left ( \overline{\sigma}_{g}^{2} \hat{\Delta}_{A,1}\log^{4} d > C_{1}n^{-\zeta_{2}} \right ) \le C_{1}n^{-1}
\end{split}
\end{equation}
for some constants $0 < C_{1} < \infty$ and $\zeta_{1},\zeta_{2} \in (0,1)$, where $\overline{\sigma}_{g} := \max_{1 \le j \le d} \sqrt{P(g_{j}-\theta_{j})^{2}}$. Then there exists a constant $C$ depending only on $\underline{\sigma}, r, q$, and $C_{1}$ such that 
\begin{equation}
\label{eqn:bootstrap_validity_poly}
\sup_{R \in \calR} \left | \Prob_{\mid \calD_{n}} (U_{n}^{\sharp} \in R) - \Prob (Y \in R) \right | \le Cn^{-(\zeta_{1} \wedge \zeta_{2})/6}
\end{equation}
with probability at least $1-Cn^{-1}$, where $Y \sim N(0,r^{2}\Gamma_{g}+\alpha_{n}\Gamma_{h})$. If the estimates $g^{(i_{1})}, i_{1} \in S_{1}$ depend on an additional randomization independent of $\calD_{n}, \{ \xi_{i_{1}} : i_{1} \in S_{1} \}$, and $\{ \xi_{\iota}': \iota \in I_{n,r} \}$, then the result (\ref{eqn:bootstrap_validity_poly}), with $\calD_{n}$ replaced by the augmentation of $\calD_{n}$ with variables used in the additional randomization,  holds with probability at least $1-Cn^{-1}$. 
\end{thm}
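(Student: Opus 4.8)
\section*{Proof proposal}

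The plan is to mimic the proof of Theorem~\ref{thm:bootstrap_validity}, replacing the sub-exponential maximal inequalities used there by Fuk--Nagaev-type inequalities; this is precisely what produces the additional term $n^{2\max(r+1,4)/q}$ in the growth condition~(\ref{eqn:growth_condition_poly}). The key simplification relative to the Gaussian approximation theorems is that, conditionally on $\calD_n$ (and, in the last statement, on the extra randomization), the standard normal multipliers $\{\xi_{i_1}\}$ and $\{\xi_\iota'\}$ make $U_{n,A}^\sharp$ and $U_{n,B}^\sharp$ \emph{exactly} centered Gaussian and independent, so that $U_n^\sharp \mid \calD_n \sim N(0,\hat{\Sigma})$ with $\hat{\Sigma}=\hat{\Sigma}_A+\alpha_n\hat{\Sigma}_B$, where $\hat{\Sigma}_A = r^2 n_1^{-1}\sum_{i_1\in S_1}\{\hat g^{(i_1)}(X_{i_1})-\breve g\}\{\hat g^{(i_1)}(X_{i_1})-\breve g\}^T$ and $\hat{\Sigma}_B = \hat N^{-1}\sum_{\iota\in I_{n,r}}Z_\iota\{h(X_\iota)-U_{n,N}'\}\{h(X_\iota)-U_{n,N}'\}^T$ (with $\hat N$ replaced by $N$ for sampling with replacement). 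Hence no Berry--Esseen bound is needed on the bootstrap side: it suffices to compare two Gaussian laws.

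First I would invoke a Gaussian comparison inequality over hyperrectangles of Chernozhukov--Chetverikov--Kato type (the same one used in the proof of Theorem~\ref{thm:bootstrap_validity}). Since $Y\sim N(0,\Sigma)$ with $\Sigma = r^2\Gamma_g+\alpha_n\Gamma_h$ and $\Sigma_{jj}\ge r^2\underline{\sigma}^2$ by (C3-ND), this bounds $\sup_{R\in\calR}|\Prob_{\mid\calD_n}(U_n^\sharp\in R)-\Prob(Y\in R)|$ by $C\,(|\hat{\Sigma}-\Sigma|_\infty\log^2 d)^{1/3}$ on the event where $|\hat{\Sigma}-\Sigma|_\infty$ is small, the anti-concentration (Nazarov) input again needing only the lower bound on $\Sigma_{jj}$. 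Thus the theorem reduces to showing
\[
|\hat{\Sigma}-\Sigma|_\infty \le C\,n^{-(\zeta_1\wedge\zeta_2)}/\log^2 d
\]
on an event of probability at least $1-Cn^{-1}$, which gives the stated rate $n^{-(\zeta_1\wedge\zeta_2)/6}$ after taking the cube root.

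Next I would bound $|\hat{\Sigma}_B-\Gamma_h|_\infty$ and $|\hat{\Sigma}_A-r^2\Gamma_g|_\infty$ separately. For the $B$-part, split $\hat{\Sigma}_B-\Gamma_h$ into the fluctuation of the randomized empirical covariance about its conditional mean given $X_1^n$ (driven by $Z$, resp.\ the multinomial weights) and the deviation of a complete $U$-statistic with kernel $(h_j-\theta_j)(h_k-\theta_k)$ from $\Gamma_{h,jk}$; replacing $\theta$ by $U_{n,N}'$ and $\hat N$ by $N$ contributes lower-order terms exactly as in the sub-exponential case. Each piece is a maximum over $d^2$ coordinates of a sum of $U$-statistic-structured terms whose entries, under (C2$'$), have only a finite $q/2$-th moment; truncating the kernel products at level of order $n^{r/q}$ (times a slowly growing factor), applying a Fuk--Nagaev/decoupling bound to the truncated part and Markov's inequality to the tail, yields $|\hat{\Sigma}_B-\Gamma_h|_\infty \lesssim (D_n^2\log^3(dn)/(n\wedge N))^{1/2} + D_n^2 n^{2\max(r+1,4)/q}\log^3(dn)/(n\wedge N)$ with probability at least $1-Cn^{-1}$, which is where the second inequality in~(\ref{eqn:growth_condition_poly}) enters. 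For the $A$-part, insert the oracle covariance $r^2\hat{\Gamma}_g^{\mathrm{or}} = r^2 n_1^{-1}\sum_{i_1\in S_1}(g(X_{i_1})-\bar g)(g(X_{i_1})-\bar g)^T$ with $\bar g = n_1^{-1}\sum_{i_1\in S_1} g(X_{i_1})$; by Cauchy--Schwarz, $|\hat{\Sigma}_A-r^2\hat{\Gamma}_g^{\mathrm{or}}|_\infty \lesssim \hat{\Delta}_{A,1} + \hat{\Delta}_{A,1}^{1/2}\max_j(n_1^{-1}\sum_{i_1}(g_j(X_{i_1})-\bar g_j)^2)^{1/2}$, and since $(P|g|_\infty^q)^{1/q}\le D_n$ by Jensen, the empirical second moments of $g$ concentrate near $\overline{\sigma}_g^2$ by the same Fuk--Nagaev argument (now with the milder exponent $n^{2/q}$), giving $|\hat{\Sigma}_A-r^2\hat{\Gamma}_g^{\mathrm{or}}|_\infty \lesssim \overline{\sigma}_g\,\hat{\Delta}_{A,1}^{1/2}$ w.h.p., controlled by the third condition in~(\ref{eqn:growth_condition_poly}); meanwhile $|r^2\hat{\Gamma}_g^{\mathrm{or}}-r^2\Gamma_g|_\infty$ is controlled by the first two conditions with $n_1$ in place of $n$. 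Combining all pieces yields the required bound on $|\hat{\Sigma}-\Sigma|_\infty$.

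The main obstacle I anticipate is the max-norm concentration of $\hat{\Sigma}_B$ (and of the oracle $g$-covariance) under only polynomial moments: the entries are products of possibly unbounded $U$-statistic kernels evaluated along randomly sampled $r$-tuples, so one must choose the truncation level carefully, control the truncation bias through the iterated conditioning argument of Section~\ref{subsec:proof_sketch_gauss_approx_non-degenerate}, and track the resulting Nagaev term $n^{2\max(r+1,4)/q}$ while keeping every exceptional probability of order $n^{-1}$. Finally, the extension to estimators $\hat g^{(i_1)}$ built from a further independent randomization follows by conditioning on that randomization throughout and invoking the tower property, exactly as in Theorem~\ref{thm:bootstrap_validity}.
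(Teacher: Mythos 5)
Your approach correctly identifies the two key ingredients: the Gaussian multipliers make $U_{n,A}^{\sharp}$ and $U_{n,B}^{\sharp}$ exactly centered Gaussian and conditionally independent given $\calD_n$, and the Fuk--Nagaev-type maximal inequalities replace the sub-exponential ones and are precisely what generate the $n^{2\max(r+1,4)/q}$ term in (\ref{eqn:growth_condition_poly}). However, you package the final comparison differently from the paper, and as written there is a gap. The paper's proof follows Theorem~\ref{thm:bootstrap_validity} verbatim: it proves two \emph{separate} bounds $\rho^{\calR}_{\mid \calD_n}(U_{n,B}^{\sharp},Y_B) \le Cn^{-\zeta_1/6}$ (via Theorem~\ref{thm:bootstrap_validity_B_poly}, noting that (C3-ND) implies (C3-D) and $n_1 \le n$) and $\rho^{\calR}_{\mid \calD_n}(U_{n,A}^{\sharp},Y_A) \le Cn^{-(\zeta_1 \wedge \zeta_2)/6}$ (a polynomial-moment version of Step 2, based on the same decomposition $\hat{\Delta}_A \lesssim \overline{\sigma}_g \hat{\Delta}_{A,1}^{1/2} + \hat{\Delta}_{A,1} + \hat{\Delta}_{A,2} + \hat{\Delta}_{A,3}^2$ as (\ref{eqn:decomp_hat_Delta_A})), and then combines them via the iterated-conditioning argument of Step~3 (freeze $\Xi$, apply the $B$-bound, replace $U_{n,B}^{\sharp}$ by a fresh $\breve{Y}_B \sim \gamma_B$, then freeze $\breve{Y}_B$, apply the $A$-bound, and undo by Fubini). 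The crucial point of that argument is that each Gaussian comparison gets to use its own natural scale --- $(\Gamma_g)_{jj} \ge \underline{\sigma}^2$ for the $A$-piece and $(\Gamma_h)_{jj} \ge \underline{\sigma}^2$ for the $B$-piece --- so no $\alpha_n$-dependent loss is incurred.

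Your proposed single Gaussian comparison of $\hat{\Sigma} = \hat{\Sigma}_A + \alpha_n \hat{\Sigma}_B$ against $\Sigma = r^2 \Gamma_g + \alpha_n \Gamma_h$ is a legitimate shortcut in principle (there is no Berry--Esseen step on the bootstrap side because both conditional laws are exactly Gaussian), but the lower bound $\Sigma_{jj} \ge r^2 \underline{\sigma}^2$ you invoke is not sharp enough. Since $|\hat{\Sigma} - \Sigma|_\infty \le r^2 \hat{\Delta}_A + \alpha_n \hat{\Delta}_B$, the comparison bound as you state it becomes $C\bigl(\{r^2 \hat{\Delta}_A + \alpha_n \hat{\Delta}_B\}\log^2 d\bigr)^{1/3}$ with $C$ depending only on $r\underline{\sigma}$, and the factor $\alpha_n^{1/3}$ is fatal when $N \ll n$ --- (\ref{eqn:growth_condition_poly}) permits $\alpha_n = n/N$ to grow polynomially in $n$. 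To rescue the direct route you would need to use $\Sigma_{jj} \ge (r^2 + \alpha_n)\underline{\sigma}^2$, which follows because (C3-ND) implies (C3-D), and exploit the scale-invariance of the Gaussian comparison inequality (equivalently, first divide each coordinate by $\Sigma_{jj}^{1/2}$ so both laws have unit diagonals); the $\alpha_n$ factors in numerator and denominator then cancel and the stated rate emerges. Once this normalization is tracked your argument and the paper's produce the same bound; the paper's iterated-conditioning version is slightly more robust in that it would extend to non-Gaussian multipliers. The remaining details --- truncation plus Fuk--Nagaev for $\hat{\Delta}_{B,1},\dots,\hat{\Delta}_{B,4}$ and for $\hat{\Delta}_{A,1},\hat{\Delta}_{A,2},\hat{\Delta}_{A,3}$ --- are as you describe, with the oracle-covariance insertion being only a cosmetic variant of (\ref{eqn:decomp_hat_Delta_A}).
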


The following two propositions are concerned with extensions of Propositions \ref{prop:bootstrap_validity_DC} and \ref{prop:bootstrap_validity_incomplete_ustat} under the polynomial moment condition (C2'). 

\begin{prop}[Validity of bootstrap with DC estimation]
\label{prop:bootstrap_validity_DC_poly}
Consider the MB-NDG-DC defined in Section \ref{subsec:divide_and_conquer}. 
Suppose that Conditions (C1), (C2'), and (C3-ND) hold.  In addition, suppose that 
\[
\begin{split}
&\frac{D_{n}^{2}(\log^{2}n) \log^{5}(dn)}{n_{1} \wedge N}  \bigvee \left \{ \frac{\overline{\sigma}_{g}^{2}D_{n}^{2} \log^{5}d}{KL} \left ( 1+\frac{\log^{2} d}{K^{1-2/q}} \right ) \right \}  \le C_{1}n^{-\zeta} \quad \text{and} \\
&{D_{n}^{2} n^{2\max(r+1,4)/q} \log^{3}(dn) \over n_1 \wedge N} \le C_{1} n^{-\zeta/2}
\end{split}
\]
for some constants $0 < C_{1} < \infty$ and $\zeta \in (2/q,1)$. Then there exists a constant $C$ depending only on $\underline{\sigma},r, q$, and $C_{1}$ such that the result (\ref{eqn:bootstrap_validity_poly}) with $(\zeta_1,\zeta_2) = (\zeta, \zeta-2/q)$ holds
with probability at least $1-Cn^{-1}$.
\end{prop}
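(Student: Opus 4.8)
The plan is to deduce Proposition~\ref{prop:bootstrap_validity_DC_poly} from the generic polynomial-moment bootstrap validity result, Theorem~\ref{thm:bootstrap_validity_poly}, so that the only genuinely new work is to control the divide-and-conquer estimation error $\hat\Delta_{A,1}$ under the polynomial moment condition (C2'). Indeed, with the choice $\zeta_1=\zeta$ the first two displayed inequalities of the growth condition \eqref{eqn:growth_condition_poly} (which involve only $n_1\wedge N$, the Nagaev exponent $2\max(r+1,4)/q$, and $D_n$) are literally among the hypotheses of the proposition. Hence it suffices to establish the high-probability estimation-error bound
\[
\Prob\bigl(\overline{\sigma}_g^2\,\hat\Delta_{A,1}\,\log^4 d > C_1' n^{-(\zeta-2/q)}\bigr)\le C_1' n^{-1}
\]
for a suitable constant $C_1'$; plugging this into Theorem~\ref{thm:bootstrap_validity_poly} with $(\zeta_1,\zeta_2)=(\zeta,\zeta-2/q)$ then yields \eqref{eqn:bootstrap_validity_poly} with probability at least $1-Cn^{-1}$, which is the claim (the assumption $\zeta>2/q$ is exactly what makes $\zeta_2>0$). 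This mirrors the structure of the proof of Proposition~\ref{prop:bootstrap_validity_DC}; the difference is only that the sub-exponential (Orlicz) concentration and maximal inequalities used there must be replaced by their $L^q$ (Rosenthal / Fuk--Nagaev type) analogues.

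For the estimation-error bound, fix $i_1$ and condition on $X_{i_1}$. Since the blocks $S_{2,k}^{(i_1)}$, $k=1,\dots,K$, are disjoint and disjoint from $\{i_1\}$, the vector $\hat g^{(i_1)}(X_{i_1})$ is, conditionally on $X_{i_1}$, the average of $K$ i.i.d.\ copies of a $U$-statistic of order $r-1$ built on $L$ points with kernel $(x_2,\dots,x_r)\mapsto h(X_{i_1},x_2,\dots,x_r)$, each with conditional mean $g(X_{i_1})$ because $P^{r-1}h=g$. Writing $W_{j,i_1}:=\hat g_j^{(i_1)}(X_{i_1})-g_j(X_{i_1})$, I would apply Rosenthal's inequality conditionally on $X_{i_1}$ to this average of $K$ centered i.i.d.\ summands, using Hoeffding's variance formula for the order-$(r-1)$ block $U$-statistic (its conditional variance is $\le C_r L^{-1}\E[h_j^2(X_{i_1},X_2^r)\mid X_{i_1}]$) and the elementary fact that the $L^q$ norm of a $U$-statistic is dominated by that of its kernel; after taking expectation over $X_{i_1}$ and bounding the relevant kernel moments by powers of $D_n$ via (C1)--(C2'), this gives a bound of the form
\[
\|W_{j,i_1}\|_q^2 \le C_{r,q}\,\frac{D_n^2}{KL}\bigl(1+K^{-(1-2/q)}\bigr),
\]
where the first factor comes from the variance (leading) term of Rosenthal and the correction $K^{-(1-2/q)}$ from its $q$-th-moment (remainder) term — precisely the role played by $K^{-(1-1/\nu)}$ in the sub-exponential case, with $1/\nu$ replaced by $2/q$.

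To pass from this to a bound on $\hat\Delta_{A,1}=\max_{1\le j\le d} n_1^{-1}\sum_{i_1\in S_1}W_{j,i_1}^2$ with controlled failure probability, I would use $\max_j n_1^{-1}\sum_{i_1}W_{j,i_1}^2\le n_1^{-1}\sum_{i_1}\max_j W_{j,i_1}^2$ — this sidesteps the fact that $\hat g^{(i_1)}$ for distinct $i_1$ reuse observations and hence are not independent — then bound the $(q/2)$-th moment of $n_1^{-1}\sum_{i_1}\max_j W_{j,i_1}^2$ by convexity in $i_1$ together with a moment-based maximal inequality over the $d$ coordinates, and finally apply Markov's inequality at order $q/2$. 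The extra factor $n^{2/q}$ produced by Markov is exactly what degrades the exponent from $\zeta$ to $\zeta-2/q$, while the maximal inequality over coordinates and the subsequent transfer of the $\hat\Delta_{A,1}$-bound through Theorem~\ref{thm:bootstrap_validity_poly} account for the logarithmic powers $\log^5 d$, $\log^7 d$ and the $K^{-(1-2/q)}$ factor in the stated growth condition. The main obstacle is the polynomial-moment bookkeeping: one must track how a finite $q$-th moment of $h$ propagates through the Hoeffding decomposition of the order-$(r-1)$ kernel, the $K$-fold averaging, and the maximum over $d$ coordinates, so that the final rate lands on $\zeta-2/q$ with exactly the stated logarithmic factors; in particular, as in Theorem~\ref{thm:gaussian_approx_bern_random_design_polymom}, one must ensure that the dimension $d$ enters the leading term only through $\log d$, with the genuinely dimension-polynomial contribution confined to the additive Nagaev term $D_n^2 n^{2\max(r+1,4)/q}\log^3(dn)/(n_1\wedge N)$ that is already assumed to be small.
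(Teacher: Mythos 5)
Your overall reduction is the same as the paper's: verify the growth condition of Theorem \ref{thm:bootstrap_validity_poly} with $(\zeta_1,\zeta_2)=(\zeta,\zeta-2/q)$, prove a high-probability bound on $\overline{\sigma}_g^2\hat\Delta_{A,1}\log^4 d$ by bounding a moment of $\hat\Delta_{A,1}$ of order $q/2$ and applying Markov's inequality (this is exactly where the loss $2/q$ comes from; the paper's proof is the proof of Proposition \ref{prop:bootstrap_validity_DC} with $\nu=q/2$). The gap is in how you propose to obtain the moment bound for $\hat\Delta_{A,1}$. You first apply Rosenthal's inequality \emph{coordinatewise} to get $\|W_{j,i_1}\|_q$, and then invoke ``a moment-based maximal inequality over the $d$ coordinates'' to control $\E[\max_{1\le j\le d}W_{j,i_1}^q]$. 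No such inequality exists at that level of generality: from per-coordinate $L^q$ bounds alone, the best one can do for the maximum is the union bound $\E[\max_j|W_{j,i_1}|^q]\le d\max_j\E[|W_{j,i_1}|^q]$, which after rescaling injects a factor $d^{2/q}$ into the bound on $\hat\Delta_{A,1}$. That is polynomial in $d$, whereas the hypothesis of the proposition allows only powers of $\log d$ in the DC term (the sole polynomial allowance, $n^{2\max(r+1,4)/q}$, is in $n$ and sits in the separate Nagaev condition), so the argument as written cannot reach the stated conclusion in the regime $d\gg n$ that the proposition is designed for.

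The way the paper (and the proof of Proposition \ref{prop:bootstrap_validity_DC}) avoids this is to never pass through per-coordinate moments: one bounds $\E_{\mid X_{i_1}}[\max_j|\hat g_j^{(i_1)}(X_{i_1})-g_j(X_{i_1})|^{2\nu}]$ directly, using the Hoffmann--J{\o}rgensen inequality for the average over the $K$ independent blocks, the maximal inequality of Lemma 8 in \cite{cck2015_anticoncentration} (which pays only $\sqrt{\log d}$ on the variance term and pushes the dimension into an envelope term) for the conditional first moment of the maximum, and Hoeffding averaging together with Theorem 2.14.1 in \cite{vandervaartwellner1996} for the block maxima; the dimension then enters only through $\log d$ and through moments of the envelope $H=\max_j|h_j|$, which is precisely where Condition (C2') is used, via $P^rH^q\le D_n^q$ and $\E[\max_{\iota}H^q(X_\iota)]\le n^rD_n^q$, with $\nu=q/2$. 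Your Rosenthal computation per coordinate is not wrong, but it discards the envelope structure that is essential for the $\log d$ dependence, so the key step of your plan would have to be replaced by this max-level Hoffmann--J{\o}rgensen/envelope argument (at which point the coordinatewise Rosenthal bound becomes superfluous). As a minor additional point, your claimed variance contribution $D_n^2/(KL)$ should read $\lesssim D_n/(KL)$ per coordinate under (C1); the $D_n^2$ in the stated growth condition arises from the envelope moments, another sign that the envelope, not the coordinatewise kernel, is the right object to track.
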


\begin{prop}[Validity of bootstrap with Bernoulli sampling estimation]
\label{prop:bootstrap_validity_incomplete_ustat_poly}
Consider the MB-NDG-RS defined in Section \ref{subsec:incomplete_ustat}.
Suppose that Conditions (C1), (C2'), and (C3-ND) hold.  In addition, suppose that 
\[
\begin{split}
&\frac{D_{n}^{2}(\log^{2} n) \log^{5}(dn)}{n_{1} \wedge N} \bigvee \frac{\overline{\sigma}_{g}^{2}D_{n}^{2} \log^{5}d}{n} \bigvee \frac{\overline{\sigma}_{g}^{2}D_{n}^{2}n^{2(r-1)/q}(\log d)^{5+2/q}}{M}   \bigvee \frac{\overline{\sigma}_{g}^{2}D_{n}^{2} n^{2r/q} \log^{6}d}{M^{2}} \le C_{1}n^{-\zeta}  \quad \text{and} \\
&{D_{n}^{2} n^{2\max(r+1,4)/q} \log^{3}(dn) \over n_1 \wedge N} \le C_{1} n^{-\zeta/2}
\end{split}
\]
for some constants $0 < C_{1} < \infty$ and  $\zeta \in (2/q,1)$. Then there exists a constant $C$ depending only on $\underline{\sigma},r,q$, and $C_{1}$ such that result (\ref{eqn:bootstrap_validity_poly}), with $\calD_{n}$ replaced by $\calD_{n}' = \calD_{n} \cup \{ Z_{\iota'}' : \iota' \in I_{n-1,r-1} \}$ and with $(\zeta_1,\zeta_2) = (\zeta,\zeta-2/q)$, holds
with probability at least $1-Cn^{-1}$.
\end{prop}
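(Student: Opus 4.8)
The plan is to deduce Proposition \ref{prop:bootstrap_validity_incomplete_ustat_poly} from the generic polynomial-moment bootstrap validity result, Theorem \ref{thm:bootstrap_validity_poly}, in the same way that Proposition \ref{prop:bootstrap_validity_incomplete_ustat} is obtained from Theorem \ref{thm:bootstrap_validity} in the sub-exponential case; in fact the argument runs parallel to the proof of Proposition \ref{prop:bootstrap_validity_incomplete_ustat}. Taking $\zeta_{1} = \zeta$, the first two displayed inequalities in Theorem \ref{thm:bootstrap_validity_poly}'s hypothesis (\ref{eqn:growth_condition_poly}) are implied, respectively, by the first term of the first line and by the whole second line of the hypothesis of the proposition, so the only remaining point is to verify the third condition in (\ref{eqn:growth_condition_poly}), namely a tail bound $\Prob ( \overline{\sigma}_{g}^{2} \hat{\Delta}_{A,1} \log^{4} d > C_{1}' n^{-(\zeta - 2/q)} ) \le C_{1}' n^{-1}$ for a suitable $C_{1}'$; note that $\zeta > 2/q$ is exactly what makes $\zeta_{2} := \zeta - 2/q$ lie in $(0,1)$. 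Once this is in hand, Theorem \ref{thm:bootstrap_validity_poly} with $(\zeta_{1},\zeta_{2}) = (\zeta, \zeta - 2/q)$ and $\calD_{n}$ replaced by $\calD_{n}' = \calD_{n} \cup \{ Z_{\iota'}' : \iota' \in I_{n-1,r-1} \}$ yields (\ref{eqn:bootstrap_validity_poly}), and Corollary \ref{cor:bootstrap_validity} gives the partial version with $U_{n,A}^{\sharp}$.

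To control $\hat{\Delta}_{A,1}$, for fixed $i_{1} \in S_{1}$ and coordinate $j$ I would split $\hat{g}_{j}^{(i_{1})}(X_{i_{1}}) - g_{j}(X_{i_{1}}) = \{ \hat{g}_{j}^{(i_{1})}(X_{i_{1}}) - \tilde{g}_{j}^{(i_{1})}(X_{i_{1}}) \} + \{ \tilde{g}_{j}^{(i_{1})}(X_{i_{1}}) - g_{j}(X_{i_{1}}) \}$, where $\tilde{g}_{j}^{(i_{1})}(x) := |I_{n-1,r-1}|^{-1} \sum_{\iota' \in I_{n-1,r-1}} h_{j}(x, X_{\sigma_{i_{1}}(\iota')})$ is the complete $U$-statistic of order $r-1$ on $\{ X_{i} : i \neq i_{1} \}$ with kernel $(x_{2},\dots,x_{r}) \mapsto h_{j}(x,x_{2},\dots,x_{r})$, so that $\E[\hat{g}_{j}^{(i_{1})}(X_{i_{1}}) \mid X_{1}^{n}] = \tilde{g}_{j}^{(i_{1})}(X_{i_{1}})$ and $\E[\tilde{g}_{j}^{(i_{1})}(X_{i_{1}}) \mid X_{i_{1}}] = g_{j}(X_{i_{1}})$. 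The ``$U$-statistic fluctuation'' term $\tilde{g}_{j}^{(i_{1})}(X_{i_{1}}) - g_{j}(X_{i_{1}})$ is controlled, uniformly over $i_{1} \in S_{1}$ and $j$, by the polynomial-moment maximal inequality for complete $U$-statistics of order $r-1$ (the one used to handle $\tilde{g}$ in the proof of Theorem \ref{thm:gaussian_approx_bern_random_design_polymom}), which contributes, after scaling by $\overline{\sigma}_{g}^{2}$, a term of order $\overline{\sigma}_{g}^{2}D_{n}^{2}(\log^{5}d)/n$ on an event of probability at least $1 - C n^{-1}$. The ``randomization error'' term, conditionally on $X_{1}^{n}$, equals $M^{-1}\sum_{\iota'}(Z'_{\iota'} - \vartheta_{n}) h_{j}(X_{i_{1}},X_{\sigma_{i_{1}}(\iota')})$, a sum of independent bounded mean-zero summands; after truncating the kernel at a level of order $D_{n} n^{r/q}$ (which dominates $\max_{\iota \in I_{n,r}}|h(X_{\iota})|_{\infty}$ with probability at least $1 - n^{-1}$ by Markov's inequality and a union bound over the $O(n^{r})$ tuples) and applying a Fuk--Nagaev / Bernstein inequality together with maximal inequalities over $i_{1} \in S_{1}$ and $j$, it is bounded, after scaling by $\overline{\sigma}_{g}^{2}$, by a term of order $\overline{\sigma}_{g}^{2}D_{n}^{2}\{ n^{2(r-1)/q}(\log d)^{5+2/q}/M + n^{2r/q}(\log^{6}d)/M^{2} \}$, the variance part of the Bernstein bound producing the $/M$ term (with the fractional log power $5+2/q$ coming from optimizing the truncation level) and the truncation-level part producing the $/M^{2}$ term. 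Bounding the average $\hat{\Delta}_{A,1}$ by the corresponding maximum and combining shows that $\overline{\sigma}_{g}^{2}\hat{\Delta}_{A,1}\log^{4}d$ is dominated, with probability at least $1 - Cn^{-1}$, by the join of $\overline{\sigma}_{g}^{2}D_{n}^{2}(\log^{5}d)/n$, $\overline{\sigma}_{g}^{2}D_{n}^{2}n^{2(r-1)/q}(\log d)^{5+2/q}/M$, and $\overline{\sigma}_{g}^{2}D_{n}^{2}n^{2r/q}(\log^{6}d)/M^{2}$, which by hypothesis is at most $C_{1}n^{-\zeta} \le C_{1}n^{-(\zeta-2/q)}$, giving the required tail bound.

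The step I expect to be the main obstacle is the randomization-error analysis: under the mere polynomial moment condition (C2') the order-$(r-1)$ incomplete $U$-statistic estimate $\hat{g}^{(i_{1})}$ no longer concentrates sub-exponentially, so one must carry out the truncation and Fuk--Nagaev argument carefully and track the exact powers of $\log d$ and the Nagaev factors $n^{2(r-1)/q}$, $n^{2r/q}$ so that they match the growth conditions stated in the proposition. Establishing these polynomial-moment maximal inequalities mirrors the passage from Theorem \ref{thm:gaussian_approx_bern_random_design} to Theorem \ref{thm:gaussian_approx_bern_random_design_polymom} (and the polynomial-moment counterpart of Theorem \ref{thm:bootstrap_validity_A}); once they are available, the remaining steps --- union bounds over $i_{1}$ and $j$, and the appeal to Theorem \ref{thm:bootstrap_validity_poly} --- are routine.
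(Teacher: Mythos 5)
Your reduction to Theorem~\ref{thm:bootstrap_validity_poly} is the right move and your reading of which hypotheses imply which parts of (\ref{eqn:growth_condition_poly}) is correct; the only nontrivial task is indeed the tail bound on $\overline{\sigma}_g^2\hat{\Delta}_{A,1}\log^4 d$ with $\zeta_2=\zeta-2/q$. Where you go astray is in the route you take to that tail bound.

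There are two specific problems with your truncation-plus-concentration plan. First, the claim that $D_n n^{r/q}$ dominates $\max_{\iota\in I_{n,r}}|h(X_\iota)|_\infty$ with probability $1-n^{-1}$ is false: Markov together with a union bound over the $O(n^r)$ tuples gives $\Prob(\max_\iota |h(X_\iota)|_\infty > t)\le n^r D_n^q/t^q$, which is trivially equal to one at $t=D_n n^{r/q}$. The correct high-probability level is $D_n n^{(r+1)/q}$, which would push the ``$/M^2$'' contribution up to $D_n^2 n^{2(r+1)/q}/M^2$; this no longer matches the $n^{2r/q}/M^2$ term in the hypothesis. Second (and more to the point), your asserted intermediate bound $\overline{\sigma}_g^2\hat{\Delta}_{A,1}\log^4 d\lesssim n^{-\zeta}$ with probability $\ge 1-Cn^{-1}$ is simply too strong: under (C2') the quantity $\hat{\Delta}_{A,1}$ admits a $q/2$-th moment bound at the rate $n^{-\zeta}$, but passing from a $q/2$-th moment bound to a $1-n^{-1}$ tail bound via Markov necessarily loses a factor $n^{2/q}$, so what is genuinely available is $\overline{\sigma}_g^2\hat{\Delta}_{A,1}\log^4 d\lesssim n^{-(\zeta-2/q)}$ with probability $\ge 1-Cn^{-1}$. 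That is exactly why the proposition asserts $\zeta_2=\zeta-2/q$ rather than $\zeta_2 = \zeta$.

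The paper avoids this by not doing a high-probability truncation at all. It repeats the proof of Proposition~\ref{prop:bootstrap_validity_incomplete_ustat} with $\nu = q/2$: the Hoffmann--J{\o}rgensen decomposition together with Lemmas~8 and 9 of \cite{cck2015_anticoncentration} and Theorem~2.14.1 of \cite{vandervaartwellner1996} yield a bound on $\E[\hat{\Delta}_{A,1}^{q/2}]$, with the envelope moments $P^rH^{q}\le D_n^q$ and $\E[\max_{\iota}H^q(X_\iota)]\le n^rD_n^q$ replacing the sub-exponential estimates $D_n^{2\nu}(\log d)^{2\nu}$ and $D_n^{2\nu}(\log(dn))^{2\nu}$; taking $(2/q)$-th roots produces precisely the three terms $\overline{\sigma}_g^2D_n^2(\log^5 d)/n$, $\overline{\sigma}_g^2D_n^2 n^{2(r-1)/q}(\log d)^{5+2/q}/M$, and $\overline{\sigma}_g^2D_n^2 n^{2r/q}(\log d)^6/M^2$ that appear in the hypothesis, and Markov then delivers $\Prob(\overline{\sigma}_g^2\hat{\Delta}_{A,1}\log^4 d>n^{-(\zeta-2/q)})\lesssim n^{-1}$. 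Either adopt this moment route directly, or, if you stay with truncation, use the level $D_n n^{(r+1)/q}$ and observe that the resulting extra factor $n^{2/q}$ in the $M^{-2}$ term is exactly absorbed by the slack from $\zeta$ to $\zeta-2/q$.
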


\section{Proofs}
\label{sec:proofs}

\subsection{Preliminary lemmas}

This section collects some useful lemmas that will be used in the subsequent proofs. 
We will freely use the following maximal inequalities for the $\psi_{\beta}$-norms. 

\begin{lem}[Maximal inequalities for the $\psi_{\beta}$-norms]
\label{lem:max_ineq_psi_norms}
Let $\xi_{1},\dots,\xi_{k}$ be real-valued random variables such that $\| \xi_{i} \|_{\psi_{\beta}} < \infty$ for all $i=1,\dots,k$ for some $0 < \beta < \infty$, where $k \ge 2$. Then
\[
\left \| \max_{1 \le i \le k} | \xi_{i} | \right \|_{\psi_{\beta}} \le C_{\beta} (\log k)^{1/\beta} \max_{1 \le i \le k} \| \xi_{i} \|_{\psi_{\beta}},
\]
where $C_{\beta}$ is a constant that depends only on $\beta$. 
\end{lem}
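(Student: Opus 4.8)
The plan is to deduce the estimate from Markov's inequality after a single scaling normalization. First I would set $M:=\max_{1\le i\le k}\|\xi_i\|_{\psi_\beta}$; the case $M=0$ forces every $\xi_i=0$ a.s.\ and is trivial, so I assume $M>0$ and, using the positive homogeneity of $\|\cdot\|_{\psi_\beta}$ (valid since $\psi_\beta$ is increasing with $\psi_\beta(0)=0$, so the defining set of admissible constants scales), I replace each $\xi_i$ by $\xi_i/M$ to arrange $\max_i\|\xi_i\|_{\psi_\beta}\le 1$. This means $\E[\psi_\beta(|\xi_i|)]\le 1$, i.e.\ $\E[e^{|\xi_i|^\beta}]\le 2$, for every $i$ (a short monotone-convergence remark covers the case where the infimum in the definition of the quasi-norm is attained exactly at $1$). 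Writing $Y:=\max_{1\le i\le k}|\xi_i|$ and $C:=C_\beta(\log k)^{1/\beta}$ for a constant $C_\beta>1$ to be chosen, it then suffices to show $\E[\psi_\beta(Y/C)]\le 1$, i.e.\ $\E[e^{(Y/C)^\beta}]\le 2$.

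The key step is the exponent identity $(Y/C)^\beta=\lambda Y^\beta$ with $\lambda:=C^{-\beta}=(C_\beta^\beta\log k)^{-1}$, which turns $e^{(Y/C)^\beta}$ into $W^\lambda$ for $W:=e^{Y^\beta}=\max_i e^{|\xi_i|^\beta}$. Since $k\ge 2$ and $C_\beta>1$ one has $\lambda\le 1$, so $w\mapsto w^\lambda$ is concave on $[0,\infty)$ and Jensen gives $\E[W^\lambda]\le(\E W)^\lambda$; a crude union bound then yields $\E W\le\sum_{i=1}^k\E[e^{|\xi_i|^\beta}]\le 2k$. Hence $\E[e^{(Y/C)^\beta}]\le(2k)^\lambda=\exp\{\lambda\log(2k)\}\le\exp\{2/C_\beta^\beta\}$, using $\log(2k)\le 2\log k$ for $k\ge 2$. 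Taking $C_\beta:=(2/\log 2)^{1/\beta}$ makes the last quantity equal to $2$ and closes the argument, with $C_\beta$ depending only on $\beta$; undoing the normalization restores the factor $\max_i\|\xi_i\|_{\psi_\beta}$.

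I do not expect a genuine obstacle here; the only points needing care are the homogeneity/normalization step (together with the degenerate case $M=0$) and the verification $\lambda\le 1$, which is exactly where the hypothesis $k\ge 2$ is used. An equally quick alternative would be to invoke Lemma~2.2.2 of \cite{vandervaartwellner1996}: for $\beta\ge 1$ the function $\psi_\beta$ is convex, nondecreasing, vanishes at $0$, and satisfies $\psi_\beta(x)\psi_\beta(y)\le\psi_\beta(2^{1/\beta}xy)$ for all large $x,y$, so that lemma gives $\|\max_i|\xi_i|\|_{\psi_\beta}\le K\psi_\beta^{-1}(k)\max_i\|\xi_i\|_{\psi_\beta}$ with $\psi_\beta^{-1}(k)=\{\log(1+k)\}^{1/\beta}\le 2^{1/\beta}(\log k)^{1/\beta}$; for $\beta\in(0,1)$, where $\psi_\beta$ is not convex, one would first pass to the equivalent convex Orlicz function obtained by linearising $\psi_\beta$ near the origin (cf.\ Lemma~\ref{lem: quasi-norm}) and absorb the equivalence constants into $C_\beta$. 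I would present the direct argument above, since it is self-contained and handles all $\beta\in(0,\infty)$ uniformly.
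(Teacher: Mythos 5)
Your direct argument is correct, and it takes a genuinely different and more self-contained route than the paper's. The paper simply cites Lemma 2.2.2 of van der Vaart and Wellner (1996) for $\beta\ge 1$, and for $\beta\in(0,1)$ (where $\psi_\beta$ is not convex near the origin) passes to the equivalent convex Orlicz function $\tilde\psi_\beta$ of Lemma~\ref{lem: quasi-norm} and then applies the same citation, absorbing the equivalence constants into $C_\beta$. Your proof avoids both the citation and the linearization: after normalizing to $\E[e^{|\xi_i|^\beta}]\le 2$, the identity $e^{(Y/C)^\beta}=W^\lambda$ with $W=\max_i e^{|\xi_i|^\beta}$ and $\lambda=C^{-\beta}$, combined with Jensen's inequality for the concave map $w\mapsto w^\lambda$ and the crude bound $\E W\le 2k$, gives the estimate in one stroke and handles all $\beta\in(0,\infty)$ uniformly. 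This is arguably cleaner than the paper's route precisely because it never has to distinguish the non-convex regime.

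One small point you should tighten: the claim ``since $k\ge 2$ and $C_\beta>1$ one has $\lambda\le 1$'' is not valid as stated. We have $\lambda=(C_\beta^\beta\log k)^{-1}$, and for $k=2$ one has $\log 2<1$, so $C_\beta>1$ alone (e.g.\ $C_\beta$ barely above $1$ with $\beta$ small) does not force $C_\beta^\beta\log 2\ge 1$. What you actually need is $C_\beta^\beta\ge 1/\log 2$, which your eventual choice $C_\beta=(2/\log 2)^{1/\beta}$ (so $C_\beta^\beta=2/\log 2>1/\log 2$) certainly satisfies; just state that condition on $C_\beta^\beta$ upfront rather than invoking $C_\beta>1$. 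The normalization step is fine: $\E[\psi_\beta(|\xi_i|/M)]\le 1$ follows from monotone convergence when $\|\xi_i\|_{\psi_\beta}=M$ and from monotonicity of $\psi_\beta$ when $\|\xi_i\|_{\psi_\beta}<M$, and positive homogeneity of the quasi-norm covers the rescaling.
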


\begin{proof}[Proof of Lemma \ref{lem:max_ineq_psi_norms}]
For $\beta \ge 1$, the lemma follows from Lemma 2.2.2 in \cite{vandervaartwellner1996}. For $\beta \in (0,1)$, $\psi_{\beta}$ is not convex and so we can not directly apply Lemma 2.2.2 in \cite{vandervaartwellner1996}, but apply the lemma for the norm equivalent to $\| \cdot \|_{\psi_{\beta}}$ obtained by linearizing $\psi_{\beta}$ in a neighborhood of the origin; see Lemma \ref{lem: quasi-norm} below. 
\end{proof}

\begin{lem}[Norm equivalent to $\| \cdot \|_{\psi_{\beta}}$]
\label{lem: quasi-norm}
Let $\beta \in (0,1)$, and take $x_{\beta} > 0$ large enough so that the function 
\[
\tilde{\psi}_{\beta}(x) = 
\begin{cases}
\psi_{\beta}(x) & \text{if $x \ge x_{\beta}$} \\
\frac{\psi_{\beta}(x_{\beta})}{x_{\beta}} x & \text{if $0 \le x \le x_{\beta}$}
\end{cases}
\]
is convex. Then there exists a constant $1  <C_{\beta} < \infty$ depending only on $\beta$ such that 
\[
C_{\beta}^{-1} \| \xi \|_{\tilde{\psi}_{\beta}} \le \| \xi \|_{\psi_{\beta}} \le C_{\beta} \| \xi \|_{\tilde{\psi}_{\beta}}
\]
 for every real-valued random variable $\xi$. 
\end{lem}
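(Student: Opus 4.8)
The plan is to reduce the equivalence of the two Orlicz (quasi-)norms to a pair of elementary pointwise comparisons plus the homogeneity of the functional. First I would record that, for every $y\ge0$,
\[
\psi_{\beta}(y) \le \tilde{\psi}_{\beta}(y) + \psi_{\beta}(x_{\beta}) \qquad\text{and}\qquad \tilde{\psi}_{\beta}(y) \le \psi_{\beta}(y) + \psi_{\beta}(x_{\beta}).
\]
Both follow by splitting into the cases $y\ge x_{\beta}$ and $0\le y< x_{\beta}$: on $[x_{\beta},\infty)$ the two functions coincide, while on $[0,x_{\beta})$ each of $\psi_{\beta}(y)$ and $\tilde{\psi}_{\beta}(y)$ lies in $[0,\psi_{\beta}(x_{\beta}))$, since $\psi_{\beta}$ and $\tilde{\psi}_{\beta}$ are nondecreasing and both equal $\psi_{\beta}(x_{\beta})$ at $x_{\beta}$. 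Along the way I would note that an admissible $x_{\beta}$ depending only on $\beta$ exists: $\psi_{\beta}''\ge0$ on $[x_{\beta},\infty)$ as soon as $\beta x_{\beta}^{\beta}\ge 1-\beta$, and the matching-slope condition ensuring convexity of $\tilde{\psi}_{\beta}$, namely $\psi_{\beta}(x_{\beta})/x_{\beta}\le\psi_{\beta}'(x_{\beta})$, reduces to $1-e^{-x_{\beta}^{\beta}}\le\beta x_{\beta}^{\beta}$, which holds for all large $x_{\beta}$.

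For the bound $\|\xi\|_{\tilde{\psi}_{\beta}}\le(1+\psi_{\beta}(x_{\beta}))\|\xi\|_{\psi_{\beta}}$ I would take any $c>\|\xi\|_{\psi_{\beta}}$, so that $\E[\psi_{\beta}(|\xi|/c)]\le1$, and use the first pointwise bound to obtain $\E[\tilde{\psi}_{\beta}(|\xi|/c)]\le M_{\beta}$ with $M_{\beta}:=1+\psi_{\beta}(x_{\beta})\ge1$. Since $\tilde{\psi}_{\beta}$ is convex with $\tilde{\psi}_{\beta}(0)=0$, one has $\tilde{\psi}_{\beta}(y/M_{\beta})\le M_{\beta}^{-1}\tilde{\psi}_{\beta}(y)$ for all $y\ge0$, hence $\E[\tilde{\psi}_{\beta}(|\xi|/(M_{\beta}c))]\le1$, i.e. $\|\xi\|_{\tilde{\psi}_{\beta}}\le M_{\beta}c$; letting $c\downarrow\|\xi\|_{\psi_{\beta}}$ gives the claim. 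This direction is routine precisely because $\tilde{\psi}_{\beta}$ is convex.

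The reverse inequality is the crux, and it is \emph{not} amenable to a pointwise multiplicative comparison: near the origin $\psi_{\beta}$ is superlinear ($\psi_{\beta}(y)/y\to\infty$) whereas $\tilde{\psi}_{\beta}$ is linear, so no bound of the form $\psi_{\beta}(ay)\le\tilde{\psi}_{\beta}(y)$ can hold. Instead I would combine the additive comparison with a concavity trick. Fix $c>\|\xi\|_{\tilde{\psi}_{\beta}}$ and set $\eta=\xi/c$, so $\E[\tilde{\psi}_{\beta}(|\eta|)]\le1$ and hence, by the second pointwise bound, $\E[e^{|\eta|^{\beta}}]=1+\E[\psi_{\beta}(|\eta|)]\le2+\psi_{\beta}(x_{\beta})=:A_{\beta}$. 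Applying Jensen's inequality to the concave map $t\mapsto t^{1/m}$ with $m:=\log_{2}A_{\beta}\ (\ge1)$ gives
\[
\E\!\left[e^{|\eta|^{\beta}/m}\right]=\E\!\left[\left(e^{|\eta|^{\beta}}\right)^{1/m}\right]\le\left(\E\!\left[e^{|\eta|^{\beta}}\right]\right)^{1/m}\le A_{\beta}^{1/\log_{2}A_{\beta}}=2,
\]
so $\E[\psi_{\beta}(|\eta|/K_{\beta})]\le1$ with $K_{\beta}:=m^{1/\beta}=(\log_{2}A_{\beta})^{1/\beta}$, whence $\|\xi\|_{\psi_{\beta}}\le K_{\beta}c$ and, letting $c\downarrow\|\xi\|_{\tilde{\psi}_{\beta}}$, $\|\xi\|_{\psi_{\beta}}\le K_{\beta}\|\xi\|_{\tilde{\psi}_{\beta}}$. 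Finally I would set $C_{\beta}:=\max\{1+\psi_{\beta}(x_{\beta}),\,(\log_{2}(2+\psi_{\beta}(x_{\beta})))^{1/\beta}\}$, which depends only on $\beta$, satisfies $C_{\beta}>1$, and delivers both inequalities of the lemma (with the case where one side is infinite being trivial). The only genuine obstacle is the behaviour near the origin in this last direction; the remaining steps are bookkeeping using the homogeneity $\|a\xi\|_{\psi_{\beta}}=|a|\,\|\xi\|_{\psi_{\beta}}$ and the monotonicity of $\psi_{\beta}$ and $\tilde{\psi}_{\beta}$.
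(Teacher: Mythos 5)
Your proof is correct, and it takes a genuinely different route from the paper's for the harder direction. Both arguments work directly with the moment functionals and both pass through the convexity of $\tilde{\psi}_{\beta}$, but on opposite sides: you use $\tilde{\psi}_{\beta}(y/M)\le M^{-1}\tilde{\psi}_{\beta}(y)$ in establishing $\|\xi\|_{\tilde{\psi}_{\beta}}\lesssim\|\xi\|_{\psi_{\beta}}$, whereas the paper uses the analogous inequality in the reverse direction. The real divergence is in how $\|\xi\|_{\psi_{\beta}}\lesssim\|\xi\|_{\tilde{\psi}_{\beta}}$ is proved. The paper relies on the pointwise sub-multiplicative scaling $\psi_{\beta}(y/C)\le C^{-\beta}\psi_{\beta}(y)$ for $C>1$ together with the linear lower bound $\psi_{\beta}(y)\gtrsim y$; you instead combine only the additive comparison $\psi_{\beta}\le\tilde{\psi}_{\beta}+\psi_{\beta}(x_{\beta})$ with Jensen's inequality for the concave map $t\mapsto t^{1/m}$, which yields $\E[e^{|\eta|^{\beta}/m}]\le(\E[e^{|\eta|^{\beta}}])^{1/m}$. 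Your Jensen step is a probabilistic substitute for the paper's deterministic scaling inequality; it exploits the exponential form of $\psi_{\beta}$ explicitly, neatly explains why no pointwise multiplicative domination of $\psi_{\beta}$ by a multiple of $\tilde{\psi}_{\beta}$ can exist near the origin, and has the incidental advantage of producing an explicit constant $C_{\beta}$. The paper's scaling lemma is phrased in a slightly more generic way but amounts to the same quantitative content. One cosmetic slip: the labels on your two pointwise bounds are transposed at the points of use (the step $\E[\tilde{\psi}_{\beta}(|\xi|/c)]\le M_{\beta}$ uses $\tilde{\psi}_{\beta}\le\psi_{\beta}+\psi_{\beta}(x_{\beta})$, not the other inequality, and conversely in the reverse direction); the mathematics is unaffected.
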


\begin{proof}[Proof of Lemma \ref{lem: quasi-norm}]
This  seems to be well-known, but we include a proof of the lemma  since we could not find a right reference. 
In this proof, the notation $\lesssim$ signifies that the left hand side is bounded by the right hand side up to a constant that depends  only on $\beta$. We first show that $\| \xi \|_{\tilde{\psi}_{\beta}} \lesssim \| \xi \|_{\psi_{\beta}}$.  To this end, we may assume that $\| \xi \|_{\psi_{\beta}} = 1$, i.e., $\E[ \psi_{\beta}(|\xi|)] = 1$, and show that $\| \xi \|_{\tilde{\psi}_{\beta}} \lesssim 1$. By Taylor's theorem, we have $\psi_{\beta}(x) \gtrsim x$ and $\psi_{\beta}(x/C) \le C^{-\beta} \psi_{\beta}(x)$ for $C>1$, so that 
\[
\E[\tilde{\psi}_{\beta}(|\xi/C|)] \lesssim \E[|\xi/C|] + \E[\psi_{\beta}(|\xi/C|)] \lesssim C^{-\beta}.
\]
This implies that $\| \xi \|_{\tilde{\psi}_{\beta}} \lesssim 1$. Next, suppose that $\| \xi \|_{\tilde{\psi}_{\beta}} = 1$ and we show that $\| \xi \|_{\psi_{\beta}} \lesssim 1$. By convexity of $\tilde{\psi}_{\beta}$, we have $\E[ \tilde{\psi}_{\beta}(|\xi/C|)] \le C^{-1}$ for $C>1$. Combining the fact that $\psi_{\beta}(x/C) \le C^{-\beta}\psi_{\beta}(x_{\beta})$ for $0 \le x \le x_{\beta}$ and $C> 1$, we have 
\[
\E[\psi_{\beta}(|\xi/C|)] \le C^{-\beta} \psi_{\beta}(x_{\beta}) + \E[\tilde{\psi}_{\beta}(|\xi/C|)] \lesssim C^{-\beta},
\]
which implies that $\| \xi \|_{\psi_{\beta}} \lesssim 1$. This completes the proof.
\end{proof}

\begin{lem}[Useful maximal inequalities for  $U$-statistics: sub-exponential moment]
\label{lem:talagrand_ineq_ustat}
Let $X_{1},\dots,X_{n}$ be i.i.d. random variables taking values in a measurable space $(S,\calS)$ with common distribution $P$, and let $h = (h_{1},\dots,h_{d})^{T}: S^{r} \to \R^{d}$ be a symmetric and jointly measurable function such that $\| h_{j}(X_{1}^{r}) \|_{\psi_{\beta}} < \infty$ for all $j=1,\dots,d$ for some $\beta \in (0,1]$. Consider the associated $U$-statistic $U_{n} (h) = |I_{n,r}|^{-1} \sum_{\iota \in I_{n,r}} h(X_{\iota})$ with kernel $h$, and let $\mathsf{Z} = \max_{1 \le j \le d} | U_{n}(h_{j}) - P^{r}h_{j}|$. In addition, let 
\begin{align*}
\breve{\mathsf{Z}} =& \max_{1 \le j \le d} \left| \sum_{i=1}^{m} \{ h_j(X_{(i-1)r+1}^{ir}) - P^{r}h_{j} \} \right|, \ \text{and} \\
\mathsf{M} = & \max_{1 \le i \le m} \max_{1 \le j \le d} |h_j(X_{(i-1)r+1}^{ir}) - P^{r}h_{j}|,
\end{align*}
where $m=\lfloor n/r \rfloor$ is the integer part of $n/r$. Then, for every $\eta \in (0,1]$ and $\delta > 0$, there exists a constant $C$ depending only on $\beta, \eta$, and $\delta$ such that 
\[
\Prob \left (m \mathsf{Z} \ge (1+\eta) \E[\breve{\mathsf{Z}}] + t \right) \le \exp\left( -{t^2 \over 2(1+\delta) m \sigma^{2}} \right) + 3 \exp\left \{ -\left( {t \over C \|\mathsf{M} \|_{\psi_\beta}} \right)^{\beta} \right \}
\]
for every $t > 0$, where $\sigma^{2} = \max_{1 \le j \le d} P^{r}(h_{j} - P^{r}h_{j})^{2}$. 
\end{lem}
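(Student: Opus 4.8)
The plan is to combine Hoeffding's averaging representation of the $U$-statistic with a truncated Talagrand-type concentration inequality for suprema of empirical processes.

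First I would reduce the $U$-statistic to a sum of i.i.d.\ blocks. By Hoeffding's averaging identity, $U_{n}(h) = (n!)^{-1}\sum_{\pi} m^{-1}\sum_{i=1}^{m} h(X_{\pi((i-1)r+1)}^{\pi(ir)})$, where $\pi$ runs over the permutations of $\{1,\dots,n\}$ and $m = \lfloor n/r\rfloor$; hence $m(U_{n}(h) - P^{r}h) = (n!)^{-1}\sum_{\pi}\sum_{i=1}^{m}\{h(X_{\pi((i-1)r+1)}^{\pi(ir)}) - P^{r}h\}$. Since $v \mapsto e^{\lambda\|v\|_{\infty}}$ is convex for every $\lambda \ge 0$, Jensen's inequality over the average in $\pi$ gives $\E[e^{\lambda m\mathsf{Z}}] \le \E[e^{\lambda\breve{\mathsf{Z}}}]$, and --- using the equivalent convex norm $\tilde{\psi}_{\beta}$ from Lemma~\ref{lem: quasi-norm} when $\beta < 1$ --- also $\|m\mathsf{Z}\|_{\psi_{\beta}} \le C_{\beta}\|\breve{\mathsf{Z}}\|_{\psi_{\beta}}$. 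So it suffices to control the sum $\sum_{i=1}^{m} Y_{i}$ of the $m$ i.i.d.\ centered vectors $Y_{i} = h(X_{(i-1)r+1}^{ir}) - P^{r}h$ in exponential moments and in $\psi_{\beta}$-norm, and then transfer back.

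Next, I would fix a truncation level $\tau$ and split the centered kernel as $h - P^{r}h = \bar{h}_{\tau} + \tilde{h}_{\tau}$, with $\bar{h}_{\tau} = (h - P^{r}h)\,\vone\{\|h - P^{r}h\|_{\infty} \le \tau\}$, so that $m(U_{n}(h) - P^{r}h) = m\,U_{n}(\bar{h}_{\tau}) + m\,U_{n}(\tilde{h}_{\tau})$ (recentering $\bar{h}_{\tau}$ only adds a deterministic shift of size $\lesssim m\,\E[\|h - P^{r}h\|_{\infty}\vone\{\|h - P^{r}h\|_{\infty} > \tau\}]$, negligible for large $\tau$). For the bounded part, the reduction of the first step turns its exponential moments into those of $\|\sum_{i}\bar{Y}_{i}\|_{\infty}$, a supremum over the finite class $\{\pm e_{j}: j \le d\}$ with i.i.d.\ summands bounded by $\lesssim\tau$ and per-coordinate variances $\le\sigma^{2}$; to this I would apply the Bousquet/Klein--Rio form of Talagrand's inequality, using the standard device of trading the mean-inflation term against an $\eta$-fraction of $\E[\breve{\mathsf{Z}}]$ (the empirical-process inequalities employed in \cite{cck2017_AoP} are of exactly this kind), which yields a deviation bound with leading variance $m\sigma^{2}$ --- the Gaussian term $\exp(-t^{2}/(2(1+\delta)m\sigma^{2}))$ after adjusting $\delta$ --- plus a sub-exponential tail of order $\exp(-t/(C\tau))$. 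For the tail part, the first step bounds its $\psi_{\beta}$-norm by that of $\sum_{i}\|\tilde{h}_{\tau}(\mathrm{block}_{i})\|_{\infty}$, which vanishes unless some $\|Y_{i}\|_{\infty} > \tau$ and so is dominated, up to lower-order terms, by $\mathsf{M}$ on the event $\{\mathsf{M} > \tau\}$; Markov's inequality applied to $\psi_{\beta}$, together with Lemma~\ref{lem:max_ineq_psi_norms} to pass from coordinatewise to the maximal $\psi_{\beta}$-norm over $j=1,\dots,d$, then produces sub-Weibull tails governed by $\|\mathsf{M}\|_{\psi_{\beta}}$. Choosing $\tau$ proportional to $\|\mathsf{M}\|_{\psi_{\beta}}$ times a $t$-dependent factor (so that the regimes $t \lesssim m\sigma^{2}/\tau$ and $t \gtrsim m\sigma^{2}/\tau$ dovetail), and then collecting the Gaussian term, the sub-exponential Talagrand term (which is $\le\exp\{-(t/(C\|\mathsf{M}\|_{\psi_{\beta}}))^{\beta}\}$ since $\beta \le 1$), the block-truncation event $\Prob(\mathsf{M} > \tau)$, and the remainder's sub-Weibull bound, gives the asserted inequality with its three exponential terms.

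The main obstacle is the interplay between Hoeffding's averaging and truncation. One cannot truncate \emph{after} averaging over the $n!$ permutations, since that destroys the independent-block structure Talagrand's inequality relies on; and one cannot bound $\Prob(m\mathsf{Z} \ge \cdots)$ by a union bound over the $\binom{n}{r}$ tuples $\iota \in I_{n,r}$, since that would inject a spurious $n^{r}$ factor and destroy the dependence on $\|\mathsf{M}\|_{\psi_{\beta}}$. The resolution is to split the \emph{kernel} into bounded and tail pieces \emph{before} invoking Hoeffding's averaging, and to control the tail piece entirely through the $\psi_{\beta}$-norm of the \emph{block} maximum $\mathsf{M}$, never a tuplewise maximum. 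A related subtlety is that for $\beta \in (0,1)$ the moment generating function of $\sum_{i} Y_{i}$ need not exist, so the passage from the blocked sum back to $m\mathsf{Z}$ must go through the finite-range exponential-moment bound furnished by Talagrand's inequality on the bounded piece and through the quasi-norm equivalence of Lemma~\ref{lem: quasi-norm} on the tail piece, rather than through a single global exponential moment.
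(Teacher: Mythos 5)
Your overall decomposition matches the paper's proof (which relies on Lemma E.1 of \cite{chen2017a}): truncate the kernel \emph{before} Hoeffding averaging, apply a Talagrand/Bousquet-type inequality to the bounded blocked sum, and handle the tail through $\|\mathsf{M}\|_{\psi_\beta}$ rather than a tuplewise maximum; you are also right that for $\beta<1$ one must work through the quasi-norm of Lemma~\ref{lem: quasi-norm} rather than a single global exponential moment. The genuine gap is in how you manage the truncation level and, as a consequence, the tail.

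You propose a $t$-dependent $\tau$ and list $\Prob(\mathsf{M}>\tau)$ among the residual terms, but these two choices conflict. To turn Talagrand's sub-exponential piece $\exp(-t/(C\tau))$ into the target $\exp\{-(t/(C\|\mathsf{M}\|_{\psi_\beta}))^\beta\}$ you must take $\tau$ of order $t^{1-\beta}\|\mathsf{M}\|_{\psi_\beta}^\beta$; but then $\Prob(\mathsf{M}>\tau)\le 2\exp\{-(t/\|\mathsf{M}\|_{\psi_\beta})^{\beta(1-\beta)}\}$, and for $\beta<1$ the exponent $\beta(1-\beta)<\beta$ gives a strictly heavier tail than asserted, so that term cannot be absorbed. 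The paper instead fixes $\tau=8\E[\mathsf{M}]$, assumes without loss of generality that $t\ge C_1\|\mathsf{M}\|_{\psi_\beta}$ (otherwise the stated bound already exceeds one), and on that range uses $\exp(-t/(C\E[\mathsf{M}]))\le\exp\{-(t/(C'\|\mathsf{M}\|_{\psi_\beta}))^\beta\}$ because $\beta\le1$ and $\E[\mathsf{M}]\lesssim\|\mathsf{M}\|_{\psi_\beta}$; there is then no separate ``$\Prob(\mathsf{M}>\tau)$'' contribution to control. Relatedly, your claim that the tail partial sum is ``dominated, up to lower-order terms, by $\mathsf M$'' on $\{\mathsf M>\tau\}$ is precisely the step that needs an argument and is scale-sensitive: the number of blocks exceeding $\tau$ is not a priori bounded, and it is a Hoffmann--J{\o}rgensen argument, valid \emph{because} $\tau=8\E[\mathsf M]$, that yields $\|T_2\|_{\psi_\beta}\lesssim\|\mathsf M\|_{\psi_\beta}$ and hence a Markov bound of the correct Weibull order. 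That comparison would break with a $\tau$ growing in $t$. Finally, the appeal to Lemma~\ref{lem:max_ineq_psi_norms} to ``pass from coordinatewise to the maximal $\psi_\beta$-norm'' is out of place here: $\mathsf M$ is already defined as the maximum over blocks \emph{and} coordinates, and $\|\mathsf M\|_{\psi_\beta}$ is exactly what appears in the statement.
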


\begin{proof}[Proof of Lemma \ref{lem:talagrand_ineq_ustat}]
The proof essentially follows from that of Lemma E.1 in \cite{chen2017a}, and so we only point out required modifications. The difference is that in Lemma E.1 in \cite{chen2017a}, $\breve{\mathsf{Z}}$ is defined as \\
$\max_{1 \le j \le d} | \sum_{i=1}^{m} \{ \overline{h}_j(X_{(i-1)r+1}^{ir}) - P^{r}\overline{h}_{j} \} |$ where $\overline{h}$ is to be defined below. 
Without loss of generality, we may assume that $t \ge C_{1}\| \mathsf{M} \|_{\psi_{\beta}}$ for some sufficiently large constant $C_{1}$ that depends only on $\beta,\eta$ and $\delta$. 
For $\tau = 8\E[ \mathsf{M}]$, let $\overline{h}(x_{1},\dots,x_{r}) = h(x_{1},\dots,x_{r}) \vone (|h (x_{1},\dots,x_{r})|_{\infty} \le \tau)$ and $\underline{h} = h - \overline{h}$. In addition, define $V_{\ell} (x_{1},\dots,x_{n}), T_{\ell}, \ell=1,2$ as in the proof of Lemma E.1 in \cite{chen2017a}. Then, $\mathsf{Z} \le T_{1}+T_{2}$, and since $h = \overline{h} + \underline{h}$ and hence $\overline{h} = h + (-\underline{h})$, we have  $\E[ |W_{1}(X_{1}^{n}) |_{\infty}] \le \E[\breve{\mathsf{Z}}] + \E[| W_{2}(X_{1}^{n}) |_{\infty}]$, so that $\E[\breve{\mathsf{Z}}] \ge \E[|W_{1}(X_{1}^{n})|_{\infty}] - \E[|W_{2}(X_{1}^{n})|_{\infty}]$. Hence, for every $\eta > 0$ and $\varepsilon \in (0,1)$,
\[
\begin{split}
&\Prob \left(\mathsf{Z} \ge (1+\eta) \E[\breve{\mathsf{Z}}] + t \right) \\
&\quad \le \Prob \left ( T_{1} \ge (1+\eta) (\E[|W_{1}(X_{1}^{n})|_{\infty}] - \E[|W_{2}(X_{1}^{n})|_{\infty}]) + (1-\varepsilon)t \right ) + \Prob (T_{2} \ge \varepsilon t). 
\end{split}
\]
Choose $\varepsilon  = \varepsilon (\delta) < 1/2$ small enough so that $(1-2\varepsilon)^{-{2}} (1+\delta/2) \le 1+\delta$. 
From the proof of Lemma E.1 in \cite{chen2017a}, we have $\E[|W_{2}(X_{1}^{n})|_{\infty}] \le C_{2} \| \mathsf{M} \|_{\psi_{\beta}}$ for some constant $C_{2}$ that depends only on $\beta$. 
By choosing $C_{1}$ sufficiently large, we have  $(1+\eta)C_{2} \| \mathsf{M} \|_{\psi_{\beta}} \le \varepsilon t$, so that 
\[
\Prob \left (\mathsf{Z} \ge (1+\eta) \E[\breve{\mathsf{Z}}] + t\right)  \le \Prob \left ( T_{1} \ge (1+\eta) \E[|W_{1}(X_{1}^{n})|_{\infty}]  + (1-2\varepsilon)t \right ) + \Prob (T_{2} \ge \varepsilon t).
\]
The rest of the proof is analogous to the proof of Lemma E.1 in \cite{chen2017a} and hence omitted. 
\end{proof}

\begin{lem}[Useful maximal inequalities for  $U$-statistics: polynomial moment]
\label{lem:talagrand_ineq_ustat_polymom}
Let $X_{1},\dots,X_{n}$ be i.i.d. random variables taking values in a measurable space $(S,\calS)$ with common distribution $P$, and let $h = (h_{1},\dots,h_{d})^{T}: S^{r} \to \R^{d}$ be a symmetric and jointly measurable function such that $P^{r} |h_{j}|^{q} < \infty$ for all $j=1,\dots,d$ for some $q \in [1,\infty)$. Consider the associated $U$-statistic $U_{n} (h) = |I_{n,r}|^{-1} \sum_{\iota \in I_{n,r}} h(X_{\iota})$ with kernel $h$. Let $\mathsf{Z}$, $\breve{\mathsf{Z}}$, and $\mathsf{M}$ have the same definition as in Lemma \ref{lem:talagrand_ineq_ustat}. Then, for every $\eta \in (0,1]$ and $\delta > 0$, there exists a constant $C$ depending only on $q, \eta$, and $\delta$ such that 
\[
\Prob \left (m \mathsf{Z} \ge (1+\eta) \E[\breve{\mathsf{Z}}] + t \right) \le \exp\left( -{t^2 \over 2(1+\delta) m \sigma^{2}} \right) + {C\E[M^{q}] \over t^{q}}
\]
for every $t > 0$, where $\sigma^{2} = \max_{1 \le j \le d} P^{r}(h_{j} - P^{r}h_{j})^{2}$. 
\end{lem}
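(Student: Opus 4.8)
The plan is to follow the proof of Lemma~\ref{lem:talagrand_ineq_ustat} (which itself adapts the proof of Lemma~E.1 in \cite{chen2017a}) essentially line by line, replacing every appeal to sub-exponential ($\psi_{\beta}$-norm) tail control by a $q$-th moment / Markov (Fuk--Nagaev type) estimate. The common starting point is Hoeffding's averaging representation: with $m=\lfloor n/r\rfloor$, one has $U_{n}(h)-P^{r}h=(n!)^{-1}\sum_{\pi}m^{-1}\sum_{i=1}^{m}\{h(X_{\pi((i-1)r+1)},\dots,X_{\pi(ir)})-P^{r}h\}$, the outer sum running over permutations $\pi$ of $\{1,\dots,n\}$, which together with the concentration machinery of \cite{chen2017a} reduces tail bounds for $m\mathsf{Z}$ above $(1+\eta)\E[\breve{\mathsf{Z}}]$ to tail bounds for max-norms of sums of $m$ i.i.d.\ centered $\R^{d}$-valued blocks.

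First I would dispose of small $t$: since $\E[\mathsf{M}^{q}]\ge(\E[\mathsf{M}])^{q}$ for $q\ge1$, the term $C\E[\mathsf{M}^{q}]/t^{q}$ is already $\ge1$ whenever $t\le C_{1}\E[\mathsf{M}]$, provided $C$ is chosen large relative to an auxiliary constant $C_{1}$; so one may assume $t\ge C_{1}\E[\mathsf{M}]$. Then I would truncate the kernel at a level $\tau$ of order $\E[\mathsf{M}]$, writing $\overline{h}=h\,\vone(|h|_{\infty}\le\tau)$ and $\underline{h}=h-\overline{h}$, so that $m\mathsf{Z}\le mT_{1}+mT_{2}$ with $T_{1}$ built from $\overline{h}$ and $T_{2}$ from $\underline{h}$, exactly as in \cite{chen2017a}. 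For the bounded part $T_{1}$ I would apply the same Talagrand--Bousquet-type concentration inequality used there: the weak variance is dominated by $m\sigma^{2}$ (truncation only decreases variances), the envelope is $\lesssim\tau\lesssim\E[\mathsf{M}]$, and the centering term it produces differs from $\E[\breve{\mathsf{Z}}]$ by at most a quantity $\lesssim\E[\mathsf{M}]$ --- the first-moment bound that here plays the role of the $\|\mathsf{M}\|_{\psi_{\beta}}$ bound in Lemma~\ref{lem:talagrand_ineq_ustat} --- so that, since $t\ge C_{1}\E[\mathsf{M}]$, these corrections are absorbed into the $(1+\eta)\E[\breve{\mathsf{Z}}]$ slack and a small fraction of $t$, yielding the term $\exp(-t^{2}/(2(1+\delta)m\sigma^{2}))$ (in the range of very large $t$ where the envelope term in Bousquet's bound dominates, that term decays exponentially in $t$ and is absorbed into the polynomial term below).

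For the unbounded part $T_{2}$, which is where the exponential estimates are swapped for moments, I would bound $mT_{2}$ by $mU_{n}(|\underline{h}|_{\infty})+m\max_{j}P^{r}|\underline{h}_{j}|$, apply Hoeffding's averaging again to the first summand (the deterministic term being handled the same way), and use Markov's inequality together with the tail-truncation bound $\E[|h(X_{1}^{r})|_{\infty}\vone(|h(X_{1}^{r})|_{\infty}>\tau)]\le\E[|h(X_{1}^{r})|_{\infty}^{q}]\tau^{-(q-1)}$ (and a Rosenthal-type bound if a higher moment of the tail sum is needed) to get a bound of the form $\Prob(mT_{2}\ge\varepsilon t)\lesssim m\,\E[|h(X_{1}^{r})|_{\infty}^{q}]/(t\,\tau^{q-1})$; choosing $\tau$ to balance this against the bounded-part estimate and rewriting the moment in terms of $\E[\mathsf{M}^{q}]$ produces the term $C\E[\mathsf{M}^{q}]/t^{q}$. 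Summing the two contributions and adjusting $\varepsilon,\eta,\delta$ and the constant finishes the proof. The main obstacle is precisely this balancing in the last step: the truncation level $\tau$ must be small enough that the envelope term in Bousquet's inequality does not spoil the sharp $(1+\delta)$ constant in the Gaussian exponent, yet the heavy-tail estimate must be organized (via the $q$-th moment rather than a crude per-block second moment) so that it genuinely delivers the $\E[\mathsf{M}^{q}]/t^{q}$ rate; as with Lemma~\ref{lem:talagrand_ineq_ustat}, the bulk of the bookkeeping has already been done in \cite{chen2017a}, and the proof consists mainly of substituting the moment estimates for the exponential ones at the appropriate places.
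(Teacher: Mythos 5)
Your overall plan is the same as the paper's (which is just ``follow Lemma~E.2 of \cite{chen2017a} and apply the same re-centering trick as in Lemma~\ref{lem:talagrand_ineq_ustat}, replacing untruncated for truncated $\breve{\mathsf{Z}}$''), and you identify all the right ingredients: restrict to $t\ge C_{1}\E[\mathsf{M}]$, truncate at $\tau=8\E[\mathsf{M}]$, Bousquet for the bounded part with variance $\le m\sigma^{2}$ and envelope $\lesssim\E[\mathsf{M}]$, and absorb the centering correction $\E[|W_{2}(X_{1}^{n})|_{\infty}]\lesssim\E[\mathsf{M}]$ (which replaces $\|\mathsf{M}\|_{\psi_{\beta}}$ in the sub-exponential argument) into the $\eta$-slack and an $\varepsilon$-fraction of $t$. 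So the proposal is on the same track as the paper.

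One step wavers: for the tail part you write the first-moment Markov estimate $\Prob(mT_{2}\ge\varepsilon t)\lesssim m\,\E[|h(X_{1}^{r})|_{\infty}^{q}]/(t\,\tau^{q-1})$ and then say ``choose $\tau$ to balance.'' That cannot close: with $\tau\asymp\E[\mathsf{M}]$ (which is forced by the Hoffmann--J{\o}rgensen step that keeps $\Prob(\text{some block exceeds }\tau)\le 1/8$) the right-hand side involves $\E[\mathsf{M}]^{q-1}$, not $t^{q-1}$, and since $t\ge C_{1}\E[\mathsf{M}]$ this is \emph{larger} than $\E[\mathsf{M}^{q}]/t^{q}$, not comparable to it; and you cannot push $\tau$ up to $\asymp t$ without the envelope term in Bousquet's inequality losing the sharp $1/(2(1+\delta))$ constant. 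The correct closing move is the $q$-th moment route you mention only parenthetically: apply Hoffmann--J{\o}rgensen (or Rosenthal) to the nonnegative tail sum $\sum_{i} H_{i}\vone(H_{i}>\tau)$ to get $\E[(mT_{2})^{q}]\lesssim_{q}\E[\mathsf{M}^{q}]$ directly (here $\tau=8\E[\mathsf{M}]$ is exactly what makes the Hoffmann--J{\o}rgensen threshold condition hold, via $\Prob(\max_{i}H_{i}>\tau)\le\E[\mathsf{M}]/\tau=1/8$), and then Markov at the $q$-th power gives $\Prob(mT_{2}\ge\varepsilon t)\lesssim_{q,\varepsilon}\E[\mathsf{M}^{q}]/t^{q}$ with no $\tau$-versus-$t$ balancing at all. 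With that substitution your sketch matches \cite{chen2017a}'s Lemma~E.2 argument, adapted exactly as the paper indicates.
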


\begin{proof}[Proof of Lemma \ref{lem:talagrand_ineq_ustat_polymom}]
The proof follows from that of Lemma E.2 in \cite{chen2017a} and a similar modification argument of Lemma \ref{lem:talagrand_ineq_ustat}. Details are omitted. 
\end{proof}

\begin{lem}[Gaussian comparison on hyperrectangles]
\label{lem:Gaussian_comparison}
Let $Y,W$ be centered Gaussian random vectors in $\R^{d}$ with covariance matrices $\Sigma^{Y} = (\Sigma_{j,k}^{Y})_{1 \le j,k \le d}, \Sigma^{W} = (\Sigma_{j,k}^{W})_{1 \le j,k \le d}$, respectively, and let $\Delta = | \Sigma^{Y} - \Sigma^{W} |_{\infty}$. Suppose that $\min_{1 \le j \le d} \Sigma_{j,j}^{Y} \bigvee \min_{1 \le j \le d} \Sigma_{j,j}^{W} \ge \underline{\sigma}^{2}$ for some constant $\underline{\sigma} > 0$. Then
\[
\sup_{R \in \calR} | \Prob (Y \in R) - \Prob (W \in R) | \le C \Delta^{1/3} \log^{2/3} d,
\]
where $C$ is a constant that depends only on $\underline{\sigma}$.
\end{lem}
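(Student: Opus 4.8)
The plan is to adapt the smoothing-plus-interpolation (Slepian-path) argument of Chernozhukov, Chetverikov and Kato for Gaussian comparison on rectangles; alternatively, the statement can be quoted directly from their Gaussian comparison bounds, but I sketch the self-contained argument. First I would make two reductions. We may assume $\Delta < \underline{\sigma}^{2}/2$, since otherwise $C\Delta^{1/3}\log^{2/3}d$ is bounded below by a constant depending only on $\underline{\sigma}$ (recall $d\ge 3$) while both probabilities lie in $[0,1]$, so the claim is trivial after enlarging $C$. In this regime, using $|\Sigma^{Y}_{jj}-\Sigma^{W}_{jj}|\le\Delta$ together with the hypothesis that one of $\min_{j}\Sigma^{Y}_{jj}$, $\min_{j}\Sigma^{W}_{jj}$ is at least $\underline{\sigma}^{2}$, we get $\Sigma^{Y}_{jj}\wedge\Sigma^{W}_{jj}\ge\underline{\sigma}^{2}/2$ for every $j$. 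Next, any hyperrectangle event $\{Z\in R\}$ with $R=\prod_{j=1}^{d}[a_{j},b_{j}]$ can be rewritten as a one-sided event in twice the dimension: with $\widetilde Z=(Z_{1},\dots,Z_{d},-Z_{1},\dots,-Z_{d})$ and $x=(b_{1},\dots,b_{d},-a_{1},\dots,-a_{d})$ we have $\{Z\in R\}=\{\widetilde Z\le x\}$; the covariance of $\widetilde Z$ keeps the diagonal lower bound $\underline{\sigma}^{2}/2$, $|\widetilde\Sigma^{Y}-\widetilde\Sigma^{W}|_{\infty}=\Delta$, and $\log(2d)$ differs from $\log d$ by a bounded factor. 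So it suffices to bound $\sup_{x}|\Prob(\widetilde Y\le x)-\Prob(\widetilde W\le x)|$; I rename $2d$ as $d$ and $\widetilde Y,\widetilde W$ as $Y,W$.

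For $\beta,\phi>0$ set the log-sum-exp $F_{\beta}(z)=\beta^{-1}\log\big(\sum_{j=1}^{d}e^{\beta z_{j}}\big)$, so that $0\le F_{\beta}(z)-\max_{j}z_{j}\le\beta^{-1}\log d$, and fix $g_{0}\in C^{2}(\R)$ with $g_{0}\equiv1$ on $(-\infty,0]$, $g_{0}\equiv0$ on $[1,\infty)$, and $\|g_{0}'\|_{\infty}\vee\|g_{0}''\|_{\infty}\le c_{0}$ for an absolute constant $c_{0}$. Put $m(z)=g_{0}\big(\phi\,F_{\beta}(z-x)\big)$. Then $\vone\{z\le x-\delta_{1}\vone\}\le m(z)\le\vone\{z\le x+\delta_{2}\vone\}$ with $\delta_{1}=\beta^{-1}\log d$ and $\delta_{2}=\phi^{-1}$. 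Writing $\pi_{j}=\partial_{j}F_{\beta}\ge0$, one has $\sum_{j}\pi_{j}=1$ and $\partial_{j}\partial_{k}F_{\beta}=\beta(\pi_{j}\vone\{j=k\}-\pi_{j}\pi_{k})$, so the chain rule gives $\sum_{j,k}\sup_{z}|\partial_{j}\partial_{k}m(z)|\le c_{0}(\phi^{2}+2\phi\beta)=:U_{2}$. Since both vectors are Gaussian, only second derivatives are needed (no Lindeberg third-order term).

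Now take $Y,W$ independent, set $Z(t)=\sqrt{t}\,Y+\sqrt{1-t}\,W$ and $\Psi(t)=\E[m(Z(t))]$ for $t\in[0,1]$. Differentiating and applying Gaussian integration by parts to the jointly Gaussian pairs $(Z(t),Y)$ and $(Z(t),W)$ yields
\[
\Psi'(t)=\tfrac12\sum_{j,k=1}^{d}(\Sigma^{Y}_{jk}-\Sigma^{W}_{jk})\,\E[\partial_{j}\partial_{k}m(Z(t))],
\]
hence $|\Psi(1)-\Psi(0)|\le\tfrac12\Delta\,U_{2}$. Combining with the sandwiching of $m$, for every $x$,
\[
\Prob(Y\le x)-\Prob(W\le x)\le\tfrac12\Delta\,U_{2}+\big[\Prob(W\le x+(\delta_{1}+\delta_{2})\vone)-\Prob(W\le x)\big],
\]
and the last bracket is at most $C_{\underline{\sigma}}(\delta_{1}+\delta_{2})\sqrt{\log d}$ by Nazarov's anti-concentration inequality (using $\Sigma^{W}_{jj}\ge\underline{\sigma}^{2}/2$); interchanging $Y$ and $W$ gives the reverse bound. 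Choosing $\beta=\phi\log d$ makes $\delta_{1}+\delta_{2}\asymp\phi^{-1}$ and $U_{2}\asymp\phi^{2}\log d$, so the total bound is of order $\Delta\,\phi^{2}\log d+\phi^{-1}\log^{3/2}d$; taking $\phi\asymp\Delta^{-1/3}(\log d)^{-1/6}$ balances the two terms and gives $\sup_{x}|\Prob(Y\le x)-\Prob(W\le x)|\le C\,\Delta^{1/3}\log^{2/3}d$, which after undoing the reduction is the claim.

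The main obstacle is the quantitative bookkeeping: verifying the clean derivative estimate $\sum_{j,k}|\partial_{j}\partial_{k}m|\le c_{0}(\phi^{2}+2\phi\beta)$ from the softmax identities, and then pairing it with the correct anti-concentration inequality and the right scaling of the smoothing parameters $(\beta,\phi)$ so that the two error contributions balance to the sharp power $\Delta^{1/3}\log^{2/3}d$ rather than a larger logarithmic factor. The reduction steps and the Gaussian integration-by-parts identity are routine.
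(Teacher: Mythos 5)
Your proposal is correct and follows the same Slepian-interpolation / log-sum-exp smoothing argument that the paper invokes by citing the proof of Theorem 4.1 in \cite{cck2017_AoP}; you have simply unpacked that reference into a self-contained sketch, including the reduction to one-sided rectangles via dimension doubling, the second-derivative bound $c_{0}(\phi^{2}+2\phi\beta)$, the Stein/integration-by-parts identity for $\Psi'(t)$, and the appeal to Nazarov's anti-concentration inequality. One bookkeeping slip: the anti-concentration contribution should read $\phi^{-1}\log^{1/2}d$ rather than $\phi^{-1}\log^{3/2}d$, but your choice $\phi\asymp\Delta^{-1/3}(\log d)^{-1/6}$ and the resulting rate $\Delta^{1/3}\log^{2/3}d$ are exactly what the corrected exponent yields, so the rest of the computation is internally consistent.
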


\begin{proof}[Proof of Lemma \ref{lem:Gaussian_comparison}]
The proof is implicit in the proof of Theorem 4.1 in \cite{cck2017_AoP}. 
\end{proof}

\subsection{Proof sketch of Theorem \ref{thm:gaussian_approx_bern_random_design}}
\label{subsec:proof_sketch_gauss_approx_non-degenerate}

Before we formally prove Theorem \ref{thm:gaussian_approx_bern_random_design}, we first sketch the proof for the Bernoulli sampling case. Similar arguments carry through the sampling with replacement case as well. The rigorous proof of Theorem \ref{thm:gaussian_approx_bern_random_design} is given in Section \ref{subsec:proofs_gaussian_approx}. Let $C$ denote a generic constant that depends only on $\underline{\sigma}$ and $r$. Decompose the difference $U_{n,N}' - \theta$ as 
\[
U_{n,N}' - \theta = \frac{N}{\hat{N}} \cdot \frac{1}{N}\sum_{\iota \in I_{n,r}} Z_{\iota} \{ h(X_{\iota}) - \theta \} = 
\frac{N}{\hat{N}} (A_{n}+\sqrt{1-p_{n}}B_{n}),
\]
where $A_{n}$ and $B_{n}$ are defined by 
\[
A_{n}  = U_{n} - \theta \quad \text{and} \quad B_{n} = \frac{1}{N} \sum_{\iota \in I_{n,r}} \frac{(Z_{\iota} - p_{n})}{\sqrt{1-p_{n}}} \{ h(X_{\iota}) - \theta \}.
\]
For the notational convenience, we write $W_{n} = A_{n}+\sqrt{1-p_{n}}B_{n}$. 
For any hyperrectangle $R \in \calR$, observe that 
\[
\Prob ( \sqrt{n} W_{n} \in R) = \Prob \left (\sqrt{N} B_{n} \in \left[ \frac{1}{\sqrt{\alpha_{n}(1-p_{n})}} R-\sqrt{\frac{N}{1-p_{n}}}A_{n} \right] \right),
\]
where $\alpha_{n} =n/N$. 
Since $A_{n}$ is $\sigma (X_{1}^{n})$-measurable, conditionally on $X_{1}^{n}$, $A_{n}$ can be treated as a constant. 
On the other hand, conditionally on $X_{1}^{n}$, $\sqrt{N}B_{n} = (N(1-p_{n}))^{-1/2}\sum_{\iota \in I_{n,r}} (Z_{\iota} - p_{n}) \{ h(X_{\iota})  - \theta \} = |I_{n,r}|^{-1/2} \sum_{\iota \in I_{n,r}} (p_{n}(1-p_{n}))^{-1/2}(Z_{\iota} - p_{n}) \{ h(X_{\iota}) - \theta  \}$ is the sum of independent random vectors with mean zero whose (conditional) covariance matrix is given by 
\[
\hat{\Gamma}_{h} = |I_{n,r}|^{-1} \sum_{\iota \in I_{n,r}} \{ h(X_{\iota}) - \theta \} \{ h(X_{\iota}) - \theta \}^{T}. 
\]
Subject to the moment conditions (C1) and (C2), Lemma \ref{lem:talagrand_ineq_ustat} in the SM and Lemma 8 in \cite{cck2015_anticoncentration} yield that with probability at least $1-Cn^{-1}$, 
\begin{equation}
\label{eqn:proof_sketch_thm3.1_step1.5}
|\hat{\Gamma}_{h}-\Gamma_{h}|_{\infty} \le C\{ n^{-1/2} D_{n} \log^{1/2}(dn) + n^{-1} D_{n} (\log{n}) \log^{3}(dn) \},
\end{equation}
where $\Gamma_{h} = P^{r} (h-\theta)(h - \theta)^{T}$. Under the non-degeneracy condition (C3-ND), by the Gaussian approximation result (cf. Theorem 2.1 in \cite{cck2017_AoP}) applied conditionally on $X_{1}^{n}$, we can show that with probability at least $1-Cn^{-1}$, 
\begin{equation}
\label{eqn:proof_sketch_thm3.1_step1}
\rho_{\mid X_{1}^{n}}^{\calR}(\sqrt{N}B_{n},\hat{Y}) := \sup_{R \in \calR} \left|\Prob_{\mid X_{1}^{n}}(\sqrt{N}B_{n} \in R) - \Prob_{\mid X_{1}^{n}}(\hat{Y}  \in R) \right| \le C \varpi_{n},
\end{equation}
where $\varpi_{n} = \{D_{n}^{2} \log^{7}(dn) / (n \wedge N)\}^{1/6}$ and $\hat{Y}$ is a random vector in $\R^{d}$ such that $\hat{Y} \mid X_{1}^{n} \sim N(0,\hat{\Gamma}_{h})$. This step is involved and we defer the details of deriving (\ref{eqn:proof_sketch_thm3.1_step1}) to Appendix \ref{sec:proofs} in the SM. Then, in view of (\ref{eqn:proof_sketch_thm3.1_step1.5}), the Gaussian comparison inequality (Lemma \ref{lem:Gaussian_comparison} in the SM) yields that  
\begin{equation}
\label{eqn:proof_sketch_thm3.1_step2}
\sup_{R \in \calR} \left | \Prob_{\mid X_{1}^{n}} (\hat{Y} \in R) - \gamma_{B}(R)\right|  \le C \varpi_{n}
\end{equation}
with probability at least $1-Cn^{-1}$, where $\gamma_{B} =  N(0, \Gamma_{h})$. Since $A_{n}$ is a mean-zero complete $U$-statistic, the high-dimensional CLT for the H\'ajek projection term (cf. Proposition 2.1 in \cite{cck2017_AoP}), the maximal inequality for the higher-order degenerate terms (cf. Corollary 5.6 in \cite{chenkato2017a}), and Nazarov's inequality (cf. Lemma A.1 in \cite{cck2017_AoP}) together yield that 
\begin{equation}
\label{eqn:proof_sketch_thm3.1_step3}
\sup_{R \in \calR} \left | \Prob(\sqrt{n} A_{n} \in R) - \gamma_{A}(R) \right | \le C\varpi_{n},
\end{equation}
where $\gamma_{A}=  N(0,r^{2}\Gamma_{g})$ and  $\Gamma_{g} = P(g-\theta)(g-\theta)^{T}$ with $g = P^{r-1}h$. For any hyperrectangle $R \in \calR$, observe that  
\begin{align*}
\Prob(\sqrt{n} W_n \in R) &=  \E\left[ \Prob_{\mid X_{1}^{n}} \left (\sqrt{N}B_{n} \in \left [\frac{1}{\sqrt{\alpha_{n}(1-p_{n})}}R-\sqrt{\frac{N}{1-p_{n}}}A_{n} \right] \right )  \right] \\
&\le_{(*)}  \E\left [ \gamma_{B} \left (\left[\frac{1}{\sqrt{\alpha_{n}(1-p_{n})}}R-\sqrt{\frac{N}{1-p_{n}}}A_{n}\right]\right  )  \right ] + C\varpi_{n} \\
& =\Prob \left (\sqrt{1-p_{n}}Y_{B} \in [\alpha_{n}^{-1/2}R-\sqrt{N}A_{n} ] \right ) + C \varpi_{n} \\
& =\Prob \left (\sqrt{n}A_n \in [R-\sqrt{\alpha_{n}(1-p_{n})}Y_{B}] \right ) +  C \varpi_{n},
\end{align*}
where $Y_{B} \sim  N(0,\Gamma_{h}) = \gamma_{B}$ independent of $X_{1}^{n}$, and the inequality $(*)$ follows from (\ref{eqn:proof_sketch_thm3.1_step1}) and (\ref{eqn:proof_sketch_thm3.1_step2}). Now we freeze the random variable $Y_{B}$. Since $Y_{B}$ is independent of $X_{1}^{n}$, (\ref{eqn:proof_sketch_thm3.1_step3}) yields that
\[
\Prob_{\mid Y_{B}} \left (\sqrt{n}A_n \in [R-\sqrt{\alpha_{n}(1-p_{n})}Y_{B}] \right ) \le \gamma_{A}\left ([R-\sqrt{\alpha_{n}(1-p_{n})}Y_{B}] \right ) + C \varpi_{n}.
\]
By Fubini, we conclude that 
\[
\begin{split}
\Prob(\sqrt{n} W_n \in R) &\le \E \left [ \gamma_{A}\left ([R-\sqrt{\alpha_{n}(1-p_{n})}Y_{B}] \right ) \right ] + C \varpi_{n} \\
& =\Prob \left (Y_{A} \in [R-\sqrt{\alpha_{n}(1-p_{n})}Y_{B}] \right ) + C\varpi_{n} \\
&=\Prob (Y_{A}+\sqrt{\alpha_{n}(1-p_{n})}Y_{B} \in R) + C\varpi_{n},
\end{split}
\]
where $Y_{A} \sim N(0,r^{2}\Gamma_{g}) = \gamma_{A}$ is independent of $Y_{B}$. Since $\alpha_{n}p_{n} = n/|I_{n,r}| \le C n^{-r+1} \le Cn^{-1}$ and $| \Gamma_{h} |_{\infty} \le CD_{n}$, a second application of the Gaussian comparison inequality (Lemma \ref{lem:Gaussian_comparison}) yields that 
\[
\Prob (Y_{A}+\sqrt{\alpha_{n}(1-p_{n})}Y_{B} \in R) \le \Prob (Y_{A} + \alpha_{n}^{1/2} Y_{B} \in R) + C \left ( \frac{D_{n}\log^{2} d}{n} \right )^{1/3},
\]
and the second term on the right hand side is bounded from above by $C\varpi_{n}$. 
Likewise, we have $\Prob(\sqrt{n} W_n \in R) \ge \Prob (Y_{A}+\alpha_{n}^{1/2}Y_{B} \in R) - C\varpi_{n}$. Hence, for $Y = Y_{A}+\alpha_{n}^{1/2}Y_{B}  \sim N(0,r^{2}\Gamma_{g}+\alpha_{n}\Gamma_{h})$, we have
\begin{equation}
\label{eqn:proof_sketch_thm3.1_step4}
\sup_{R \in \calR} \left | \Prob (\sqrt{n}W_{n} \in R) - \Prob (Y \in R) \right | \le C\varpi_{n}.
\end{equation}
In the last step of the proof, using Bernstein's inequality (cf. Lemma 2.2.9 in \cite{vandervaartwellner1996}), we can verify that the inequality (\ref{eqn:proof_sketch_thm3.1_step4}) holds with $\sqrt{n}W_{n}$ replaced by $\sqrt{n} (U_{n,N}'- \theta)$. This leads to the conclusion of Theorem \ref{thm:gaussian_approx_bern_random_design} in the Bernoulli sampling case. \qed

\subsection{Proofs for Section \ref{sec:gaussian_approximations}}
\label{subsec:proofs_gaussian_approx}

Observe that $P^{r}|h_{j} - \theta_{j}|^{2+k} \le 2^{2+k} D_{n}^{k}$ by Jensen's inequality for all $j$ and $k=1,2$, and $\| h_{j}(X_{1}^{r})  - \theta_{j} \|_{\psi_{1}} \le (1+\log 2) D_{n}$ for all $j$. So, in view of the identity $U_{n,N}' - \theta = \hat{N}^{-1} \sum_{\iota \in I_{n,r}} Z_{\iota}\{ h(X_{\iota}) - \theta \}$ where $\hat{N}$ is replaced by $N$ for the sampling with replacement case, 
it is without loss of generality to assume that
\[
\theta = P^{r}h = 0
\] 
by replacing $h$ with $h-\theta$. 

Throughout this section, the notation $\lesssim$ signifies that the left hand side is bounded by the right hand side up to a constant that depends only on $\underline{\sigma}, r$ under (C2) and only on $\underline{\sigma},r, q$ under (C2'). In addition, let $C$ denote a generic constant that depends only on $\underline{\sigma},r$ under (C2) and only on $\underline{\sigma},r,q$ under (C2'); its value may change from place to place. 

\begin{proof}[Proof of Theorem \ref{thm:gaussian_approx_bern_random_design}]
It is not difficult to see that the equality of the first two terms in (\ref{eqn:gaussian_approx_bern_random_design}) holds since $n = N \alpha_{n}$. So it suffices to prove the second line in (\ref{eqn:gaussian_approx_bern_random_design}). 
In this proof, without loss of generality, we may assume that 
\begin{equation}
\label{eqn:gaussian_approx_reduction}
D_n^2 \log^7(d n) \le c_{1} (n \wedge N)
\end{equation}
for some sufficiently small constant $c_{1} \in (0,1)$ depending only on $\underline{\sigma}$ and $r$, since otherwise the conclusion of the theorem is trivial by taking $C$ in (\ref{eqn:gaussian_approx_bern_random_design}) sufficiently large. In addition, for the notational convenience, let 
\[
\varpi_{n} := \left (\frac{D_{n}^{2} \log^{7}(dn)}{n \wedge N} \right )^{1/6}\le 1.
\]

\uline{Bernoulli sampling case}. First, consider the Bernoulli sampling. The proof is divided into several steps. 

\uline{Step 1}. 
Recall the decomposition $W_{n}= (\hat{N}/N)U_{n,N}' = U_{n} + N^{-1} \sum_{\iota \in I_{n,r}} (Z_{\iota}-p_{n}) h(X_{\iota}) = A_{n} + \sqrt{1-p_{n}}B_{n}$, and observe that $\sqrt{N}B_{n}  = |I_{n,r}|^{-1/2} \sum_{\iota \in I_{n,r}} (p_{n}(1-p_{n}))^{-1/2} (Z_{\iota} - p_{n})h(X_{\iota})$. 
Let $\hat{Y}$ be a random vector in $\R^{d}$ such that $\hat{Y} \mid X_{1}^{n} \sim N(0,\hat{\Gamma}_{h})$ where $\hat{\Gamma}_{h}  =|I_{n,r}|^{-1} \sum_{\iota \in I_{n,r}} h(X_{\iota})h(X_{\iota})^{T}$.
In this step, we shall show that with probability at least $1-Cn^{-1}$,
\begin{align*}
\rho_{\mid X_{1}^{n}}^{\calR}(\sqrt{N}B_{n},\hat{Y}) := & \sup_{R \in \calR} \left|\Prob_{\mid X_{1}^{n}}(\sqrt{N}B_{n} \in R) - \Prob_{\mid X_{1}^{n}}(\hat{Y}  \in R) \right| \le C \varpi_{n}.
\end{align*}
The proof of Step 1 is lengthy and  divided into six sub-steps.

\uline{Step 1.1}. We first derive a generic upper bound on $\rho_{\mid X_{1}^{n}}^{\calR}(\sqrt{N}B_{n}, \hat{Y})$. Let $\hat{Y}_{\iota}, \iota \in I_{n,r}$ be random vectors in $\R^{d}$ independent conditionally on $X_{1}^{n}$ such that $\hat{Y}_{\iota} \mid X_{1}^{n} \sim N(0,h(X_{\iota})h(X_{\iota})^{T})$ for  $\iota \in I_{n,r}$. Observe that conditionally on $X_{1}^{n}$, $\hat{Y} \stackrel{d}{=} |I_{n,r}|^{-1/2} \sum_{\iota \in I_{n,r}} \hat{Y}_{\iota}$. Define
\begin{equation}
\label{eqn:defn_Ln_bern_sampling}
\hat{L}_n = \max_{1 \le j \le d} {1 \over |I_{n,r}|} \sum_{\iota \in I_{n,r}} (p_n(1-p_{n}))^{-3/2} |h_j(X_\iota)|^3 \E[|Z_{\iota}-p_n|^3].
\end{equation}
Further, for $\phi \ge 1$, define
\begin{equation}
\label{eqn:defn_Mn_bern_sampling}
\begin{split}
\hat{M}_{n,X}(\phi) &= {1 \over |I_{n,r}|} \sum_{\iota \in I_{n,r}} \E_{\mid X_{1}^{n}} \left[ \max_{1 \le j \le d} \left| {(Z_{\iota} - p_n) h_j(X_\iota) \over \sqrt{p_n(1-p_{n})}} \right|^3 \vone\left( \max_{1 \le j \le d} \left| {(Z_{\iota} - p_n) h_j(X_\iota) \over \sqrt{p_n(1-p_{n})}} \right| > {\sqrt{|I_{n,r}|} \over 4 \phi \log{d}} \right) \right], \\
\hat{M}_{n,Y}(\phi)&= {1 \over |I_{n,r}|} \sum_{\iota \in I_{n,r}} \E_{\mid X_{1}^{n}} \left[ \max_{1 \le j \le d} |\hat{Y}_{\iota,j}|^3 \vone \left( \max_{1 \le j \le d} |\hat{Y}_{\iota,j}| > {\sqrt{|I_{n,r}|} \over 4 \phi \log{d}} \right)  \right],
\end{split}
\end{equation}
and $\hat{M}_n(\phi) = \hat{M}_{n,X}(\phi) + \hat{M}_{n,Y}(\phi)$.
Let $\overline{L}_n$ and $\overline{M}_n$ be constants whose values will be determined later. 

Then, Theorem 2.1 in \cite{cck2017_AoP} (applied conditionally on $X_{1}^{n}$) yields that there exists a constant $C_{2}$ depending only on $\underline{\sigma}$ such that for 
\[
\phi_n = C_2 \left( {\overline{L}_n^2 \log^4 d \over |I_{n,r}|} \right)^{-1/6},
\]
we have that 
\begin{equation}
\label{eqn:primative_bound_gauss_approx}
\rho_{\mid X_{1}^{n}}^{\calR}(\sqrt{N}B_{n}, \hat{Y}) \le C \left \{  \left( {\overline{L}_n^2 \log^7 d \over |I_{n,r}|} \right)^{1/6} + {\overline{M}_n \over \overline{L}_n} \right \}
\end{equation}
on the event $\mathcal{E}_{n} := \{ \hat{M}_{n}(\phi_{n}) \le \overline{M}_{n} \} \cap \{ \hat{L}_{n} \le \overline{L}_{n} \} \cap \{ \min_{1 \le j \le d}  \hat{\Gamma}_{h,jj} \ge \underline{\sigma}^{2}/2 \}$.
In Steps 1.2--1.4, we will bound $\hat{L}_{n}$ and $\hat{M}_{n}(\phi_{n})$, and in Step 1.5, we will evaluate the probability that $\min_{1 \le j \le d}  \hat{\Gamma}_{h,jj} \ge \underline{\sigma}^{2}/2$. In Step 1.6, we will derive an explicit bound on $\rho_{\mid X_{1}^{n}}^{\calR}(\sqrt{N}B_{n},\hat{Y})$ that holds with probability at least $1-Cn^{-1}$. 

\uline{Step 1.2: Bounding $\hat{L}_n$}. Since $p_n \le 1/2$ and $\E[|Z_{\iota}-p_n|^3] = p_n (1-p_n) \{ p_n^2+(1-p_n)^2 \} \le Cp_{n}$, $\hat{L}_{n}$ is bounded from above by $Cp_{n}^{-1/2}$ times $\max_{1 \le j \le d} |I_{n,r}|^{-1} \sum_{\iota \in I_{n,r}} |h_j(X_{\iota})|^3 =: \mathsf{Z}_{1}$. 
Let $m = \lfloor n/r \rfloor, \breve{\mathsf{Z}}_{1} = \max_{1 \le j \le d} \sum_{i=1}^{m} |h_j(X_{(i-1)r+1}^{ir})|^3$, and $\mathsf{M}_{1} = \max_{1 \le i \le m} \max_{1 \le j \le d} |h_j(X_{(i-1)r+1}^{ir})|$. Then, Lemma E.3 in \cite{chen2017a} yields that 
\[
\Prob \left (m \mathsf{Z}_{1} \ge 2 \E[\breve{\mathsf{Z}}_{1}] + C \| \mathsf{M}_{1}^{3} \|_{\psi_{1/3}} t^{3} \right) \le 3 e^{-t}
\]
for every $t > 0$.
Further,  since the blocks $X_{(i-1)r+1}^{ir}, i=1,\dots,m$ are i.i.d., Lemma 9  in \cite{cck2015_anticoncentration} yields that 
\[
\E[\breve{\mathsf{Z}}_{1}] \lesssim  \max_{1 \le j \le d} \sum_{i=1}^{m} \E \left [ |h_j(X_{(i-1)r+1}^{ir})|^3 \right ] +\E[\mathsf{M}_{1}^{3}] \log d \lesssim m D_{n} + \E[\mathsf{M}_{1}^{3}]\log d.
\]
Since $\E[\mathsf{M}_{1}^3] \lesssim \|\mathsf{M}_{1}^3\|_{\psi_{1/3}} = \|\mathsf{M}_{1}\|_{\psi_1}^3 \lesssim D_n^3 \log^3(d n)$, we have 
\[
\Prob \left (\hat{L}_n \ge C p_n^{-1/2} D_{n} \{ 1 + n^{-1} D_n^2 \log^4(d n) + t^3 n^{-1} D_n^2 \log^3(d n) \} \right) \le 3 e^{-t}.
\]
Since $D_n^2 \log^7(d n) \le c_{1} n$, by choosing $\overline{L}_n =C p_n^{-1/2} D_n$ and $t = \log n$, we conclude that $\Prob(\hat{L}_n \ge \overline{L}_n) \le 3n^{-1}$.

\uline{Step 1.3: Bounding $\hat{M}_{n,X}(\phi_n)$}. We begin with noting that  
\[
\hat{M}_{n,X}(\phi) \le {C \over |I_{n,r}|} \sum_{\iota \in I_{n,r}}  p_{n}^{-1/2}\max_{1 \le j \le d} |h_j(X_{\iota})|^3 \vone\left( \max_{1 \le j \le d} |h_j(X_{\iota})| > {\sqrt{N} \over 4 \phi \log{d}} \right).
\]
Since $\| \max_{\iota \in I_{n,r}} \max_{1 \le j \le d} |h_j(X_{\iota})| \|_{\psi_{1}} \lesssim D_{n}\log (dn)$,
we have that 
\[
\max_{\iota \in I_{n,r}} \max_{1 \le j \le d} |h_j(X_{\iota})| \le CD_{n}\log^{2}(dn)
\]
with probability at least $1-2n^{-1}$. Now, since
\[
{\sqrt{N} \over 4 \phi_n \log{d}} \gtrsim \left ( \frac{D_{n}N}{\log d} \right )^{1/3} \ge c_{1}^{-1/3} D_{n} \log^{2}(dn),
\]
by choosing $c_{1}$ in (\ref{eqn:gaussian_approx_reduction}) sufficiently small. In addition, $\phi_{n}^{-1} = C_{2}^{-1} C^{1/3} (N^{-1} D_{n}^{2} \log^{4}{d})^{1/6} \le C_{2}^{-1} C^{1/3} c_{1}^{1/6} \le 1$ for a sufficiently small $c_{1}$. Hence we have that $\hat{M}_{n,X}(\phi_{n}) = 0$ with probability at least $1-2n^{-1}$.

\uline{Step 1.4: Bounding $\hat{M}_{n,Y}(\phi_n)$}. 
Suppose that 
\[
\max_{\iota \in I_{n,r}} \max_{1 \le j \le d} | h_{j}(X_{\iota})| \le CD_{n}\log^{2}(dn),
\]
which holds with probability at least $1-2n^{-1}$ by Step 1.3. 
Recall that $\| \xi \|_{\psi_{1}} \le (1+e) \| \xi \|_{\psi_{2}}$ for every real-valued random variable. (For completeness, we provide its proof: assume $\| \xi \|_{\psi_{2}} = 1$, and observe that $\E[ e^{|\xi|}] \le e+\E[e^{\xi^{2}}] \le e+2$, so that $\E[\psi_{1}(| \xi |)] \le 1+e$. The desired result follows from the observation that $\psi_{1}(x/C) \le C^{-1} \psi_{1}(x)$ for $C > 1$ and $x \ge 0$.)  
Conditionally on $X_{1}^{n}$, since $\hat{Y}_{\iota,j} \mid X_{1}^{n} \sim N(0,h_j^{2}(X_{\iota}))$ for every $\iota \in I_{n,r}$, we have 
$\| \max_{1 \le j \le d} |\hat{Y}_{\iota,j}| \|_{\psi_{1}} \le (1+e) \| \max_{1 \le j \le d} |\hat{Y}_{\iota,j} | \|_{\psi_{2}} \lesssim \max_{1 \le j \le d} |h_{j}(X_{\iota})| \log^{1/2} d$, so that 
\begin{equation}
\label{eqn:bound_hatY_iota_j_GA}
\Prob_{\mid X_{1}^{n}}\left (\max_{1 \le j \le d} |\hat{Y}_{\iota, j}| \ge t \right) \le 2 \exp\left (  -{t \over C \max_{1 \le j \le d} |h_j(X_{\iota})| \log^{1/2} d} \right)
\end{equation}
for every $t > 0$. 
Hence,  it follows from Lemma C.1 in \cite{cck2017_AoP} that 
\begin{align*}
&\E_{\mid X_{1}^{n}} \left [ \max_{1 \le j \le d} |\hat{Y}_{\iota,j}|^3 \vone \left( \max_{1 \le j \le d} |\hat{Y}_{\iota,j}| > {\sqrt{|I_{n,r}|} \over 4 \phi_{n} \log{d}} \right)  \right] \\
&\lesssim \left( {\sqrt{|I_{n,r}|} \over \phi_n \log d } + \max_{1 \le j \le d} |h_j(X_{\iota})| \log^{1/2} d \right)^3 \exp \left ( -{\sqrt{|I_{n,r}|} \over C \phi_n \max_{1 \le j \le d} |h_j(X_{\iota})| \log^{3/2} d} \right ) \\
&\lesssim (n^{r/2} + D_{n} \log^{5/2} (dn) )^{3} \exp \left ( -{|I_{n,r}|^{1/3} \over C D_{n}^{2/3} \log^{17/6} (dn)} \right ) \\
&\lesssim n^{3r/2} \exp \left ( -{n^{2/3} \over C D_{n}^{2/3} \log^{17/6} (dn)} \right ) \le n^{3r/2} \exp \left ( - \frac{n^{1/3}}{C \log^{1/2}(dn)} \right ) \\
&\le n^{3r/2} \exp (-n^{11/42}/C) \le n^{3r/2} e^{-n^{1/4}/C},
\end{align*}
where we have used the assumption (\ref{eqn:gaussian_approx_reduction}). Therefore, we conclude that 
$\hat{M}_{n,Y}(\phi_{n}) \le C n^{3r/2} e^{-n^{1/4}/C}$ with probability at least $1-2n^{-1}$. 

\uline{Step 1.5: Bounding $|\hat{\Gamma}_{h} - \Gamma_h|_\infty$}.
Let $\mathsf{Z}_{2} = | \hat{\Gamma}_{h} - \Gamma_{h} |_{\infty}$, and 
observe that 
\[
\begin{split}
&\max_{1 \le j,k \le d} \|h_j(X_{\iota}) h_k(X_{\iota})\|_{\psi_{1/2}} \le \max_{1 \le j,k \le d} \| h_{j}^{2}(X_{\iota})/2+ h_{k}^{2}(X_{\iota})/2\|_{\psi_{1/2}} \\
&\quad \lesssim \max_{1 \le j,k \le d} (\| h_{j}^{2}(X_{\iota}) \|_{\psi_{1/2}} + \| h_{k}^{2}(X_{\iota}) \|_{\psi_{1/2}})
\lesssim \max_{1 \le j \le d} \|h_j^{2}(X_{\iota}) \|_{\psi_{1/2}} \\
&\quad = \max_{1 \le j \le d} \|h_j(X_{\iota}) \|_{\psi_{1}}^2 \le D_n^2,
\end{split}
\]
and $\max_{1 \le j,k \le d} P^{r}(h_{j}h_{k})^{2} = \max_{1 \le j \le d}P^{r}h_{j}^{4} \le D_{n}^{2}$. 
Hence,  Lemma \ref{lem:talagrand_ineq_ustat} yields that
\[
\Prob \left(m \mathsf{Z}_{2} \ge 2 \E[\breve{\mathsf{Z}}_{2}] + t \right) \le e^{-t^{2}/(3mD^{2}_{n})}+ 3 \exp\{-(t /( C\|\mathsf{M}_2\|_{\psi_{1/2}}))^{1/2} \},
\]
where $m = \lfloor n/r \rfloor$, and $\breve{\mathsf{Z}}_{2}$ and $\mathsf{M}_{2}$ are defined by 
\begin{align*}
\breve{\mathsf{Z}}_{2} &=  \max_{1 \le j,k \le d}  \left| \sum_{i=1}^{m}\{ h_j(X_{(i-1)r+1}^{ir}) h_k(X_{(i-1)r+1}^{ir})- P^{r}h_{j}h_{k} \} \right| \quad \text{and} \\
\mathsf{M}_{2} &= \max_{1 \le j,k \le d} \max_{1 \le i \le m} \left| h_j(X_{(i-1)r+1}^{ir}) h_k(X_{(i-1)r+1}^{ir}) - P^{r}h_{j}h_{k} \right|.
\end{align*}
Observe that $\|\mathsf{M}_{2}\|_{\psi_{1/2}} \lesssim D_n^2 \log^2(d n)$. In addition, Lemma 8 in \cite{cck2015_anticoncentration} yields that 
\[
\E[\breve{\mathsf{Z}}_{2}] \lesssim   \sqrt{mD_{n}^{2}\log d} + \sqrt{\E[\mathsf{M}_{2}^{2}]} \log  d \lesssim D_{n}\sqrt{n  \log d} + D_n^2 \log^3(d n).
\]
Hence, 
\[
\begin{split}
&\Prob \left(\mathsf{Z}_{2} \ge C \{  n^{-1/2} D_{n} \log^{1/2} d + n^{-1} D_n^2 \log^3(d n) \} + t \right) \\
&\quad \le \exp\left(-{n t^2 \over 3 r D_n^2}\right) + 3 \exp\left(-{(nt)^{1/2} \over C D_n \log(d n)}\right).
\end{split}
\]
Choosing $t=C n^{-1/2} D_{n} (\log{n})^{1/2} \bigvee C n^{-1} D_{n}^{2} (\log{n})^2 \log^2(dn)$ for large enough $C$ leads to 
\begin{equation}
\label{eqn:gaussian_approx_bern_random_design_step1.5}
\Prob \left(\mathsf{Z}_{2} \ge C \{  n^{-1/2} D_{n} \log^{1/2} (dn) + n^{-1} D_n^2 (\log{n}) \log^3(d n) \}  \right) \le Cn^{-1}.
\end{equation}
Choosing $c_{1}$ in (\ref{eqn:gaussian_approx_reduction}) small enough, we conclude that $| \hat{\Gamma}_{h} - \Gamma_{h} |_{\infty} \le \underline{\sigma}^{2}/2$ and hence $\min_{1 \le j \le d} \hat{\Gamma}_{h,jj} \ge \underline{\sigma}^{2}/2$ with probability at least $1-Cn^{-1}$. 

\uline{Step 1.6}. In view of Steps 1.1-1.5, choosing $\overline{L}_{n} = Cp_{n}^{-1/2} D_{n}$ and $\overline{M}_{n} = Cn^{3r/2} e^{-n^{1/4}/C}$, we have  $\Prob (\mathcal{E}_{n}) \ge 1-Cn^{-1}$. 
Hence, 
\[
\rho_{\mid X_{1}^{n}}^{\calR} (\sqrt{N}B_{n},\hat{Y}) \le C \left\{ \left ( {D_n^2 \log^7 d \over N} \right)^{1/6} + {p_n^{1/2} \over D_{n}}n^{3r/2} e^{-n^{1/4}/C} \right\} \lesssim \varpi_{n}
\]
with probability at least $1-Cn^{-1}$.

\uline{Step 2: Gaussian comparison}.  In this step, we shall show that
\[
\sup_{R \in \calR} \left| \Prob_{\mid X_{1}^{n}}(\hat{Y} \in R)- \gamma_{B}(R) \right| \le C \varpi_{n}
\]
with probability at least $1-Cn^{-1}$, where $\gamma_{B}=N(0,\Gamma_{h})$.
First, the Gaussian comparison inequality (Lemma \ref{lem:Gaussian_comparison}) yields that the left hand side is bounded by $C \overline{\Delta}^{1/3} \log^{2/3} d$ on the event $\{|\hat{\Gamma}_{h} - \Gamma_h|_\infty \le \overline{\Delta}\}$. From Step 1.5, $| \hat{\Gamma}_{h} - \Gamma_{h} |_{\infty} \le C \{ n^{-1/2} D_{n} \log^{1/2} (dn) + n^{-1} D_n^2 (\log{n}) \log^3(d n) \}$ with probability at least $1-Cn^{-1}$, so that  
\[
\sup_{R \in \calR} \left | \Prob_{\mid X_{1}^{n}} (\hat{Y} \in R) - \gamma_{B}(R)\right|  \le  C \left \{ \left ( {D_n^2 \log^5(d n) \over n} \right )^{1/6} + \left( \frac{D_{n}^{2} (\log n)  \log^{5}(dn)}{n} \right )^{1/3} \right \}  \lesssim \varpi_{n}
\]
with probability at least $1-Cn^{-1}$. 

\uline{Step 3: Gaussian approximation to $A_n$}.
Recall that $\Gamma_{g} = P gg^{T}$ since $\theta = 0$. In this step, we shall show that
\begin{equation}
\label{eqn:GA_to_A}
\sup_{R \in \calR} \left | \Prob(\sqrt{n} A_{n} \in R) - \gamma_{A}(R) \right | \le C\varpi_{n},
\end{equation}
where $\gamma_{A} =N(0,r^{2}\Gamma_{g})$.
The Hoeffding decomposition yields that
\[
A_n = \sum_{k=1}^r {r \choose k} U_n^{(k)}(\pi_k h) = r U_{n}^{(1)}(\pi_{1}h) +\underbrace{\sum_{k=2}^r {r \choose k} U_n^{(k)}(\pi_k h)}_{=:\mathsf{R}_{n}},
\]
where $(\pi_k h)(x_1,\dots,x_k) = (\delta_{x_1} - P) \cdots (\delta_{x_k} - P) P^{r-k} h$
is the Hoeffding projection at level $k$; see, e.g., \cite{delaPenaGine1999}, p.137.
Since $rU_{n}^{(1)}(\pi_{1}h)  = rn^{-1} \sum_{i=1}^{n} g(X_{i})$ is the average of centered independent random vectors with covariance matrix $r^{2} \Gamma_{g}$, Proposition 2.1 in \cite{cck2017_AoP} yields that 
\[
\sup_{R \in \calR} \left | \Prob (r\sqrt{n}U_{n}^{(1)}(\pi_{1}h) \in R) - \gamma_{A}(R) \right | \leq C \varpi_{n}
\]
under our assumption.
It remains to bound the effect of the remainder term $\mathsf{R}_{n}$. To this end, we make use of Corollary 5.6 in \cite{chenkato2017a}, which yields that 
\[
\E\left [ \max_{1 \le j \le d} |U_{n}^{(k)}(\pi_{k}h_{j})| \right ] \lesssim n^{-k/2} (\log^{k/2} d )\sqrt{P^{r}\left (\max_{1 \le j \le d} h_{j}^{2}\right )} \lesssim n^{-k/2} D_{n} \log^{k/2+1} d
\]
for every $k=2,\dots,r$. Hence, 
\[
\E\left [|\mathsf{R}_{n}|_{\infty}  \right ] \lesssim D_{n}\sum_{k=2}^{r}n^{-k/2}  \log^{k/2+1} d \lesssim n^{-1} D_{n} \log^{2}d.  
\]
Now, for $R = \prod_{j=1}^{d} [a_{j},b_{j}]$, let $a=(a_{1},\dots,a_{d})^{T}$ and $b=(b_{1},\dots,b_{d})^{T}$, and for $t > 0$, we use the convention that $b+t = (b_{1}+t,\dots,b_{d}+t)^{T}$. Observe that
\begin{align*}
&\Prob (\sqrt{n}A_{n} \in R)  = \Prob (\{ -\sqrt{n}A_{n} \le -a \} \cap \{ \sqrt{n} A_{n} \le b \}) \\
&\quad \le  \Prob \left (\{ -\sqrt{n}A_{n} \le -a \} \cap \{ \sqrt{n} A_{n} \le b \} \cap \left \{  |\sqrt{n}\mathsf{R}_{n}|_{\infty}   \le t \right \} \right) +\Prob \left( |\sqrt{n}\mathsf{R}_{n}|_{\infty}  > t \right) \\
&\quad \le \Prob (\{ -r\sqrt{n}U_{n}^{(1)}(\pi_{1}h) \le -a+t \} \cap \{ r\sqrt{n}U_{n}^{(1)}(\pi_{1}h)  \le b+t \}) + Ct^{-1}n^{-1/2}D_{n} \log^{2}d \\
&\quad \le \gamma_{A}( \{ y \in \R^{d}: -y \le -a+t, y \le b+t \})+ C\varpi_{n} +  Ct^{-1}n^{-1/2}D_{n} \log^{2}d \\
&\quad \le  \gamma_{A}(R)+ C t \sqrt{\log d} + C\varpi_{n} + Ct^{-1}n^{-1/2}D_{n} \log^{2}d
\end{align*}
for every $t > 0$, where the last inequality follows from Nazarov's inequality stated in Lemma A.1 in \cite{cck2017_AoP}. Choosing $t=(n^{-1}D_{n}^{2}\log^{3}d)^{1/4}$, we conclude that 
\[
\Prob (\sqrt{n}A_{n} \in R) - \gamma_{A}(R) \le C\left ( \frac{D_{n}^{2}\log^{5}d}{n} \right )^{1/4} + C \varpi_{n} \le C \varpi_{n}
\]
because of the assumption (\ref{eqn:gaussian_approx_reduction}). Likewise, we have  $\Prob (\sqrt{n}A_{n} \in R) \ge \gamma_{A}(R) - C\varpi_{n}$. Therefore, we obtain the conclusion (\ref{eqn:GA_to_A}).

\uline{Step 4: Gaussian approximation to $W_{n}$}.
Pick any hyperrectangle $R \in \calR$. Recall that $\alpha_n = n / N$, and observe that
\[
\Prob(\sqrt{n} W_n \in R) =  \E\left[ \Prob_{\mid X_{1}^{n}} \left (\sqrt{N}B_{n} \in \left [\frac{1}{\sqrt{\alpha_{n}(1-p_{n})}}R-\sqrt{\frac{N}{1-p_{n}}}A_{n} \right] \right )  \right].
\]
Now, we freeze the random variables $X_1^n$. From  Steps 1 and 2, the conditional probability inside the expectation is bounded from above by $\gamma_{B} \left (\left[\frac{1}{\sqrt{\alpha_{n}(1-p_{n})}}R-\sqrt{\frac{N}{1-p_{n}}}A_{n}\right]\right  ) + C \varpi_{n}$
with probability at least $1-Cn^{-1}$. Since the probability is bounded by $1$ and $n^{-1} \lesssim \varpi_{n}$, we have 
\begin{align*}
&\Prob(\sqrt{n} W_n \in R) \le  \E\left [ \gamma_{B} \left (\left[\frac{1}{\sqrt{\alpha_{n}(1-p_{n})}}R-\sqrt{\frac{N}{1-p_{n}}}A_{n}\right]\right  )  \right ] + C\varpi_{n} \\
&\quad =\Prob \left (\sqrt{1-p_{n}}Y_{B} \in [\alpha_{n}^{-1/2}R-\sqrt{N}A_{n} ] \right ) + C \varpi_{n} =\Prob \left (\sqrt{n}A_n \in [R-\sqrt{\alpha_{n}(1-p_{n})}Y_{B}] \right ) +  C \varpi_{n},
\end{align*}
where $Y_{B} \sim  N(0,\Gamma_{h}) = \gamma_{B}$ independent of $X_{1}^{n}$. 
Next, we freeze the random variable $Y_{B}$. Since $Y_{B}$ is independent of $X_{1}^{n}$, Step 3 yields that
\[
\Prob_{\mid Y_{B}} \left (\sqrt{n}A_n \in [R-\sqrt{\alpha_{n}(1-p_{n})}Y_{B}] \right ) \\
\le \gamma_{A}\left ([R-\sqrt{\alpha_{n}(1-p_{n})}Y_{B}] \right ) + C \varpi_{n}.
\]
By Fubini, we conclude that 
\[
\begin{split}
&\Prob(\sqrt{n} W_n \in R) \le \E \left [ \gamma_{A}\left ([R-\sqrt{\alpha_{n}(1-p_{n})}Y_{B}] \right ) \right ] + C \varpi_{n} \\
&\quad =\Prob \left (Y_{A} \in [R-\sqrt{\alpha_{n}(1-p_{n})}Y_{B}] \right ) + C\varpi_{n} =\Prob (Y_{A}+\sqrt{\alpha_{n}(1-p_{n})}Y_{B} \in R) + C\varpi_{n},
\end{split}
\]
where $Y_{A} \sim N(0,r^{2}\Gamma_{g}) = \gamma_{A}$ is independent of $Y_{B}$.
Since $\alpha_{n}p_{n} = n/|I_{n,r}| \lesssim n^{-r+1} \le n^{-1}$ and $| \Gamma_{h} |_{\infty} \lesssim D_{n}$, using the Gaussian comparison inequality (Lemma \ref{lem:Gaussian_comparison}), we have 
\[
\Prob (Y_{A}+\sqrt{\alpha_{n}(1-p_{n})}Y_{B} \in R) \le \Prob (Y_{A} + \alpha_{n}^{1/2} Y_{B} \in R) + C \left ( \frac{D_{n}\log^{2} d}{n} \right )^{1/3},
\]
and the second term on the right hand side is bounded from above by $C\varpi_{n}$. 
Likewise, we have $\Prob(\sqrt{n} W_n \in R) \ge \Prob (Y_{A}+\alpha_{n}^{1/2}Y_{B} \in R) - C\varpi_{n}$. Hence, for $Y = Y_{A}+\alpha_{n}^{1/2}Y_{B}  \sim N(0,r^{2}\Gamma_{g}+\alpha_{n}\Gamma_{h})$, we have
\begin{equation}
\label{eqn:GA1}
\sup_{R \in \calR} \left | \Prob (\sqrt{n}W_{n} \in R) - \Prob (Y \in R) \right | \le C\varpi_{n}.
\end{equation}

\uline{Step 5: Gaussian approximation to $U_{n,N}'$}. We shall verify that the inequality (\ref{eqn:GA1}) holds with $\sqrt{n}W_{n}$ replaced by $\sqrt{n} U_{n,N}'$. 
Since $Y$ is centered Gaussian and $\max_{1 \le j \le d} \Var (Y_{j}) \lesssim D_{n}(1+\alpha_{n})$, we have 
$\E[ | Y |_{\infty}]  \lesssim \sqrt{D_{n}(1+\alpha_{n}) \log d}$. By the Borell-Sudakov-Tsirel'son inequality (cf. Theorem 2.5.8 in \cite{ginenickl2016}), we have 
\[
\Prob \left ( |\alpha_{n}^{-1/2} Y|_{\infty}  >  C\sqrt{D_{n}(1+\alpha_{n}^{-1})\log (dn)} \right ) \le 2n^{-1}.
\]
Combining this estimate with (\ref{eqn:GA1}), we have 
\[
\Prob \left ( |\sqrt{N}W_{n}|_{\infty} > C\sqrt{D_{n}(1+\alpha_{n}^{-1}) \log (dn)} \right ) \le C\varpi_{n}.
\]
Next, since $\hat{N} = \sum_{\iota \in I_{n,r}} Z_{\iota}$ and $Z_{\iota}, \iota \in I_{n,r}$ are i.i.d. $\Bern (p_{n})$ with $p_{n}=N/|I_{n,r}|$,  by Bernstein's inequality (cf. Lemma 2.2.9 in \cite{vandervaartwellner1996}), we have 
\[
\Prob \left ( |\hat{N} - N| >  \sqrt{2 N t}  + 2 t / 3 \right ) \le 2e^{-t}
\]
for every $t > 0$. Choosing $t=\log n$ and choosing $c_{1}$ sufficiently small in (\ref{eqn:gaussian_approx_reduction}) such that $\sqrt{(\log  n)/N} \le 1/4$, we have 
\[
\Prob \left ( |\hat{N} / N - 1| > 2 \sqrt{(\log n)/N}  \right ) \le 2n^{-1}.
\]
Since $|z^{-1}-1| \le 2|z-1|$ for $|z-1| \le 1/2$, we have that
\[
|N/\hat{N} -1| \le 2 | \hat{N}/N - 1| \le 4 \sqrt{(\log n)/N} 
\]
with probability at least $1-2n^{-1}$.

Now, observe that $\sqrt{N}U_{n,N}' = \sqrt{N}W_{n} + (N/\hat{N}-1)\sqrt{N}W_{n}$,
and with probability at least $1-C\varpi_{n}$,
\[
|(N/\hat{N}-1)\sqrt{N}W_{n}|_{\infty} \le C \sqrt{\frac{D_{n}(\log n)\log (dn)}{n \wedge N}}.
\]
Arguing as in Step 3 and noting that $\min_{1 \le j \le d} \Var (\alpha_{n}^{-1/2}Y_{j}) \ge  \min_{1 \le j \le d}P^{r}h_{j}^{2} \gtrsim 1$, we conclude that for every $R \in \calR$,
\[
\begin{split}
\Prob ( \sqrt{N}U_{n,N}' \in R) &\le \Prob (\alpha_{n}^{-1/2} Y \in R) + C\varpi_{n} + C\sqrt{\frac{D_{n}(\log n)\log^{2} (dn)}{n \wedge N}} \\
&\le \Prob (\alpha_{n}^{-1/2} Y \in R)+C \varpi_{n}.
\end{split}
\]
Likewise, we have $\Prob (\sqrt{N}U_{n,N}' \in R) \ge \Prob (\alpha_{n}^{-1/2} Y \in R) - C\varpi_{n}$. This leads to the conclusion of the theorem in the Bernoulli sampling case.

\uline{Sampling with replacement case}. 
Next, consider sampling with replacement. The proof is similar to the Bernoulli sampling case, so we only point out the differences. Recall that we assume $\theta = 0$. Observe that $U_{n,N}' =  U_{n} + N^{-1}\sum_{j=1}^{N} \{ h(X_{\iota_{j}}^{*}) - U_{n} \} =: A_{n}  + B_{n}$. Since $X_{\iota_1}^*,\dots,X_{\iota_N}^*$ are i.i.d. draws  from the empirical distribution $|I_{n,r}|^{-1} \sum_{\iota \in I_{n,r}} \delta_{X_{\iota}}$ conditionally on $X_{1}^{n}$, $\sqrt{N} B_{n}$ is $\sqrt{N}$ times the average of i.i.d. random vectors with mean zero and covariance matrix $\hat{\Gamma}_{h} - U_{n}U_{n}^{T}$ conditionally on $X_{1}^{n}$, where $\hat{\Gamma}_{h} = |I_{n,r}|^{-1} \sum_{\iota \in I_{n,r}} h(X_{\iota}) h(X_{\iota})^T$. Let $\hat{Y}$ be a random vector in $\R^d$ such that $\hat{Y} \mid X_1^n \sim N(0, \hat{\Gamma}_{h} - U_{n} U_{n}^T)$. We first verify that 
\[
\rho_{\mid X_{1}^{n}}^{\calR}(\sqrt{N}B_{n},\hat{Y}) \le C \varpi_{n}
\]
with probability at least $1-Cn^{-1}$. Define 
\begin{equation}
\label{eqn:defn_Ln_sampling_with_replacement}
\hat{L}_n := \max_{1 \le j \le d} {1 \over |I_{n,r}|} \sum_{\iota \in I_{n,r}} |h_{j}(X_{\iota}) - U_{n,j}|^3.
\end{equation}
By Jensen's inequality, $\hat{L}_n \le 8 {\mathsf{Z}}_{1}$, where ${\mathsf{Z}}_{1}$ is defined in Step 1.2 for the Bernoulli sampling case. By Step 1.2, we have $\Prob(\hat{L}_n \ge C D_n) \le 3 n^{-1}$ under the assumption (\ref{eqn:gaussian_approx_reduction}). So we can take $\overline{L}_n = C D_n$ and $\phi_n = C_2 (N^{-1} \overline{L}_n^2 \log^4 d)^{-1/6} \ge 1$ by choosing $c_{1}$ in (\ref{eqn:gaussian_approx_reduction}) sufficiently small. For $\phi \ge 1$, define 
\begin{equation}
\label{eqn:defn_Mn_sampling_with_replacement}
\begin{split}
\hat{M}_{n,X}(\phi) &= {1 \over |I_{n,r}|} \sum_{\iota \in I_{n,r}} \left[ \max_{1 \le j \le d} \left| h_j(X_\iota) -U_{n,j} \right|^3 \vone\left( \max_{1 \le j \le d} \left| h_j(X_\iota) -U_{n,j} \right| > {\sqrt{N} \over 4 \phi \log{d}} \right) \right], \\
\hat{M}_{n,Y}(\phi)&=\E_{\mid X_{1}^{n}} \left[ \max_{1 \le j \le d} |\hat{Y}_{j}|^3 \vone \left( \max_{1 \le j \le d} |\hat{Y}_{j}| > {\sqrt{N} \over 4 \phi \log{d}} \right)  \right],
\end{split}
\end{equation}
and $\hat{M}_n(\phi) = \hat{M}_{n,X}(\phi) + \hat{M}_{n,Y}(\phi)$. Observe that 
\begin{align*}
\left \| \max_{\iota \in I_{n,r}} \max_{1 \le j \le d} |h_j(X_{\iota}) - U_{n,j}| \right \|_{\psi_1} & \lesssim \max_{\iota \in I_{n,r}} \max_{1 \le j \le d}  \| h_j(X_{\iota}) - U_{n,j} \|_{\psi_1} \log (dn) \\
& \lesssim \max_{\iota \in I_{n,r}} \max_{1 \le j \le d}  \| h_j(X_{\iota}) \|_{\psi_1}\log (dn)  \le D_n \log(dn),
\end{align*}
and hence 
\[
\max_{\iota \in I_{n,r}} \max_{1 \le j \le d} |h_j(X_{\iota}) - U_{n,j}| \le C D_n \log^2(dn)
\]
 with probability at least $1 - 2n^{-1}$. Using similar calculations to those in Step 1.3, we have that $\hat{M}_{n,X}(\phi_n) = 0$ with probability at least $1-2n^{-1}$. Step 1.4 needs a modification. Since $\hat{Y}_j \mid X_1^n \sim N(0, |I_{n,r}|^{-1} \sum_{\iota \in I_{n,r}} (h_j(X_{\iota}) - U_{n,j})^2)$, we have $\| \max_{1 \le j \le d} |\hat{Y}_{j}| \|_{\psi_{1}} \lesssim \| \max_{1 \le j \le d} |\hat{Y}_{j}| \|_{\psi_{2}} \lesssim \sqrt{V_{n} \log  d}$ conditionally on $X_{1}^{n}$ where $V_{n} = \max_{1 \le j \le d} |I_{n,r}|^{-1} \sum_{\iota \in I_{n,r}} h_j^2(X_{\iota})$, from which we have
\begin{equation}
\label{eqn:bound_hatY_j_GA_replacement}
\Prob_{\mid X_1^n} \left( \max_{1 \le j \le d} |\hat{Y}_j| \ge t \right) \le 2 \exp\left( -{t \over C \sqrt{V_{n} \log d}} \right).
\end{equation}
Let $m = \lfloor n/r \rfloor$ and  $\breve{V}_{n} = \max_{1 \le j \le d} \sum_{i=1}^{m} h_j^2(X_{(i-1)r+1}^{ir})$. Then, Lemma E.3 in \cite{chen2017a} yields that 
\[
\Prob \left (m V_{n} \ge 2 \E[\breve{V}_{n}] + C \| \mathsf{M}_{1}^{2} \|_{\psi_{1/2}} t^{2} \right) \le 3 e^{-t}
\]
for every $t > 0$, where $\mathsf{M}_{1} = \max_{1 \le i \le m} \max_{1 \le j \le d} |h_j(X_{(i-1)r+1}^{ir})|$. 
Further, Lemma 9 in   \cite{cck2015_anticoncentration} yields that 
\[
\E[\breve{V}_{n}] \lesssim  \max_{1 \le j \le d} \sum_{i=1}^{m} \E \left [ h_j^2(X_{(i-1)r+1}^{ir}) \right ] +\E[\mathsf{M}_{1}^{2}] \log d \lesssim m D_{n} + \E[\mathsf{M}_{1}^{2}]\log d.
\]
Since $\E[\mathsf{M}_{1}^2] \lesssim \|\mathsf{M}_{1}^2\|_{\psi_{1/2}} = \|\mathsf{M}_{1}\|_{\psi_1}^2 \lesssim D_n^2 \log^2(d n)$, we have 
\[
\Prob \left (V_{n} \ge C D_{n} \{ 1 + n^{-1} D_n \log^3(d n) + t^2 n^{-1} D_n \log^2(d n) \} \right) \le 3 e^{-t}.
\]
Since $D_n \ge 1$ and $D_n^2 \log^7(d n) \le c_{1} n$, by choosing $t = \log n$, we conclude that $\Prob(V_{n} \ge C D_n) \le 3n^{-1}$. Now, suppose that $V_{n} \le CD_{n}$, which holds with probability at least $1-3n^{-1}$. Then, since
\[
\Prob_{\mid X_1^n} \left( \max_{1 \le j \le d} |\hat{Y}_j| \ge t \right) \le 2 \exp\left( -{t \over C \sqrt{D_n \log d}} \right),
\]
 it follows from Lemma C.1 in \cite{cck2017_AoP} that 
\begin{align*}
& \E_{\mid X_1^n} \left[ \max_{1\le j \le d} |\hat{Y}_j|^3 \vone\left( \max_{1\le j \le d} |\hat{Y}_j| > {\sqrt{N} \over 4 \phi_n \log d} \right) \right] \\
& \quad \lesssim \left[ {\sqrt{N} \over 4 \phi_n \log d} + \sqrt{D_n \log d} \right]^3 \exp\left( -{\sqrt{N} \over C \phi_n D_n^{1/2} \log^{3/2} d} \right) \\
& \quad \lesssim N^{3/2} \exp \left( -{N^{1/3} \over C D_n^{1/6} \log^{5/6} d} \right) \le N^{3/2} \exp(-N^{1/6} / C),
\end{align*}
where we have used the assumption (\ref{eqn:gaussian_approx_reduction}). Therefore, we conclude that 
$\hat{M}_{n,Y}(\phi_{n}) \le C N^{3/2} \exp(-N^{1/6} / C)$ with probability at least $1-3n^{-1}$. 

Step 1.5 also needs a modification. Note that $|\hat{\Gamma}_{h} - U_n U_n^T - \Gamma_{h}|_\infty \le |\hat{\Gamma}_{h} - \Gamma_{h}|_\infty + |U_n|_\infty^2$. In the Bernoulli sampling case, we have shown in Step 1.5 that $|\hat{\Gamma}_{h} - \Gamma_{h}|_\infty \le \underline{\sigma}^2 / 4$ with probability at least $1-Cn^{-1}$ (changing the constant from $1/2$ to $1/4$ does not affect the proof). So we only need to show that $|U_{n}|_\infty^2 \le \underline{\sigma}^2 / 4$ with probability at least $1-Cn^{-1}$. By Lemma \ref{lem:talagrand_ineq_ustat}, 
\[
\Prob \left(m |U_{n}|_\infty \ge 2 \E[\breve{\mathsf{Z}}_{3}] + t \right) \le e^{-t^{2}/(3mD_{n})}+ 3 \exp \{ -t / (C\|\mathsf{M}_1\|_{\psi_{1}}) \},
\]
where $\breve{\mathsf{Z}}_{3} =  \max_{1 \le j \le d}  |\sum_{i=1}^{m} h_j(X_{(i-1)r+1}^{ir})|$. Observe that $\|\mathsf{M}_{1}\|_{\psi_{1}} \lesssim D_n \log(d n)$. In addition, Lemma 8 in \cite{cck2015_anticoncentration} yields that 
\[
\E[\breve{\mathsf{Z}}_{3}] \lesssim   \sqrt{mD_{n}\log d} + \sqrt{\E[\mathsf{M}_{1}^{2}]} \log  d \lesssim \sqrt{n  D_{n} \log d} + D_n \log^2(d n).
\]
Hence, 
\[
\begin{split}
&\Prob \left( |U_{n}|_\infty \ge C \{  n^{-1/2} D_{n}^{1/2} \log^{1/2} d + n^{-1} D_n \log^2(d n) \} + t \right) \\
&\quad \le \exp\left(-{n t^2 \over 3 r D_n}\right) + 3 \exp\left(-{nt \over C D_n \log(d n)}\right).
\end{split}
\]
Choosing $t=C n^{-1/2} D_{n}^{1/2} (\log{n})^{1/2} \bigvee C n^{-1} D_{n} (\log{n}) \log(dn)$ for large enough $C$ leads to 
\[
\Prob \left( |U_{n}|_\infty \ge C \{  n^{-1/2} D_{n}^{1/2} \log^{1/2} (dn) + n^{-1} D_n \log^2(d n) \}  \right) \le Cn^{-1}.
\]
Choosing $c_{1}$ in (\ref{eqn:gaussian_approx_reduction}) small enough, we conclude that $|U_{n}|_\infty^2 \le \underline{\sigma}^{2}/4$ and hence $\min_{1 \le j \le d} \{ \hat{\Gamma}_{h,jj} - U_{n,j}^2 \} \ge \underline{\sigma}^{2}/2$ with probability at least $1-Cn^{-1}$. Therefore, the overall bound in Step 1 for the sampling with replacement case is given by 
\[
\rho_{\mid X_{1}^{n}}^{\calR}(\sqrt{N}B_{n},\hat{Y}) \le C \left \{ \left ( \frac{D_{n}^{2}\log^{7}d}{N} \right )^{1/6} +\frac{N^{3/2} e^{-N^{1/6}/C}}{D_{n}} \right \} \lesssim \varpi_{n}
\]
with probability at least $1-Cn^{-1}$.

Step 2 in the Bernoulli sampling case goes through under the assumption (\ref{eqn:gaussian_approx_reduction}). Step 3 remains exactly the same as the Bernoulli sampling case. Step 4 follows similarly as the Bernoulli sampling case with $p_n = 0$. Step 5 is not needed in the sampling with replacement case. This completes the proof.
\end{proof}

\begin{proof}[Proof of Corollary \ref{cor:Gaussian_approximation}]
In view of Theorem \ref{thm:gaussian_approx_bern_random_design}, the corollary follows from the Gaussian comparison inequality (Lemma \ref{lem:Gaussian_comparison}) and the fact that $| \Gamma_{g} |_{\infty} \le | \Gamma_{h} |_{\infty} \le CD_{n}$. 
\end{proof}

\begin{proof}[Proof of Theorem \ref{thm:gaussian_approx_bern_random_design_degenerate}]
We shall follow the notation used in the proof of Theorem \ref{thm:gaussian_approx_bern_random_design}.
 In this proof, without loss generality, we may assume that 
\begin{equation}
\label{eqn:gaussian_approx_reduction_degenerate}
ND_{n}^{2} \log^{k+3} d \le c_{2} n^{k}, \quad D_{n}^{2} (\log n) \log^{5}(dn) \le c_{2} n, \quad \text{and} \quad  D_{n}^{2} \log^{7}(dn) \le c_{2}N
\end{equation}
for some sufficiently small constant $c_{2}$ depending only on $\underline{\sigma}$ and $r$, since otherwise the conclusion of the theorem is trivial by taking $C$ in (\ref{eqn:gaussian_approx_bern_random_design_degenerate}) sufficiently large. 

\uline{Bernoulli sampling case}. We first verify that 
\begin{equation}
\label{eqn:step1_degenerate}
\rho_{\mid X_{1}^{n}}^{\calR} (\sqrt{N}B_{n},\hat{Y}) \le C \left \{ \left ( \frac{D_{n}^{2}\log^{7}d}{N} \right )^{1/6} +\frac{p_{n}^{1/2}}{D_{n}} n^{3r/2} e^{-n^{-1/10}/C} \right \}
\end{equation}
with probability at least $1-Cn^{-1}$.

It is not difficult to verify from Step 1.2 in the proof of Theorem \ref{thm:gaussian_approx_bern_random_design} that $\Prob (\hat{L}_{n} \ge C p_{n}^{-1/2}D_{n}) \le 3n^{-1}$ under the assumption that $D_{n}^{2} (\log n) \log^{5}(dn) \le c_{2} n$, and so take $\overline{L}_{n}=Cp_{n}^{-1/2} D_{n}$. Step 1.3 goes through as it is. Step 1.4 needs a modification. From Step 1.4, we have that on the event $\max_{\iota \in I_{n,r}}\max_{1 \le j \le d} | h_{j} (X_{\iota}) | \le CD_{n}\log^{2}(dn)$, 
\[
\E_{\mid X_{1}^{n}} \left [ \max_{1 \le j \le d} |\hat{Y}_{\iota,j}|^3 \vone \left( \max_{1 \le j \le d} |\hat{Y}_{\iota,j}| > {\sqrt{|I_{n,r}|} \over 4 \phi_{n} \log{d}} \right)  \right] \\
\le C n^{3r/2} \exp \left ( -{n^{2/3} \over C D_{n}^{2/3} \log^{17/6} (dn)} \right ),
\]
and the assumption that $D_{n}^{2} (\log n) \log^{5}(dn) \le c_{2} n$ yields that the right hand side is bounded from above by
\[
n^{3r/2} \exp \left ( -{(n\log n)^{1/3} \over C   \log^{7/6} (dn)} \right ) \le n^{3r/2} e^{-n^{1/10}/C}. 
\]
Since $\max_{\iota \in I_{n,r}}\max_{1 \le j \le d} | h_{j} (X_{\iota}) | \le CD_{n}\log^{2}(dn)$  with probability at least $1-2n^{-1}$, we have that $\hat{M}_{n,Y} (\phi_{n}) \le C n^{3r/2} e^{-n^{1/10}/C}$ with probability at least $1-2n^{-1}$. 

Step 1.5 holds under the present assumption. Hence, the inequality (\ref{eqn:step1_degenerate}) holds with probability at least $1-Cn^{-1}$. 
In addition, Step 2 in the proof of Theorem \ref{thm:gaussian_approx_bern_random_design} goes through under the present assumption (\ref{eqn:gaussian_approx_reduction_degenerate}), so that 
\[
\begin{split}
\sup_{R \in \calR} \left | \Prob_{\mid X_{1}^{n}}  (\hat{Y} \in R) - \gamma_{B} (R) \right | &\le C \left \{ \left ( {D_n^2 \log^5(d n) \over n} \right )^{1/6} + \left( \frac{D_{n}^{2} (\log n)  \log^{5}(dn)}{n} \right )^{1/3} \right \} \\
&\le C  \left ({D_n^2 (\log n)\log^{5}(dn) \over n} \right )^{1/6}
\end{split}
\]
with probability at least $1-Cn^{-1}$. 
Therefore, we have that
\[
\sup_{R \in \calR} \left | \Prob_{\mid X_{1}^{n}} (\sqrt{N}B_{n} \in R) - \gamma_{B} (R) \right | \le C \left \{ \left({D_n^2 (\log n) \log^5(nd) \over n}\right)^{1/6}+ \left({D_n^2 \log^7 (dn) \over N}\right)^{1/6} \right\} =: C \breve{\varpi}_{n}
\]
with probability at least $1-Cn^{-1}$, and  in view of the fact that $n^{-1} \lesssim \breve{\varpi}_{n}$, we conclude that 
\[
\sup_{R \in \calR} \left | \Prob (\sqrt{N}B_{n} \in R) - \gamma_{B} (R) \right | \le C \breve{\varpi}_{n}.
\]
Since $h$ is degenerate of order $k-1$, we have
\[
A_n = \sum_{\ell=k}^r {r \choose \ell} U_n^{(\ell)}(\pi_\ell h),
\]
and Step 3  in the proof of Theorem \ref{thm:gaussian_approx_bern_random_design} yields that 
\begin{equation}
\label{eqn:degenerate_An}
\E\left [ |A_{n}|_{\infty} \right ] \le C D_{n}\sum_{\ell=k}^{r} n^{-\ell/2} \log^{\ell/2+1}d \le C D_{n} n^{-k/2} \log^{k/2+1}d.
\end{equation}
Hence, for $R=\prod_{j=1}^{d} [a_{j},b_{j}], a = (a_{1},\dots,a_{d})^{T}, b=(b_{1},\dots,b_{d})^{T}$, and $t > 0$, we have 
\[
\begin{split}
&\Prob (\sqrt{N} W_{n} \in R) = \Prob (\{ -\sqrt{N} W_{n} \le -a \} \cap \{ \sqrt{N}W_{n} \le b \} ) \\
&\quad \le \Prob \left (\{ -\sqrt{N} W_{n} \le -a \} \cap \{ \sqrt{N}W_{n} \le b \} \cap \left \{  |\sqrt{N}A_{n}|_{\infty} \le t \right \} \right ) + \Prob \left ( |\sqrt{N}A_{n}|_{\infty} > t \right )  \\
&\quad \le \Prob \left (\{ -\sqrt{N(1-p_{n})} B_{n} \le -a + t \} \cap \{ \sqrt{N(1-p_{n})}B_{n} \le b+ t \} \right ) + C t^{-1} \sqrt{N}n^{-k/2}D_{n} \log^{k/2+1}d \\
&\quad \le \gamma_{B}(\{ y \in \R^{d} : -\sqrt{1-p_{n}}y \le -a+t, \sqrt{1-p_{n}}y \le b+t\} ) + C \breve{\varpi}_{n} + Ct^{-1} \sqrt{N}n^{-k/2} D_{n} \log^{k/2+1}d \\
&\quad \le \gamma_{B}([(1-p_{n})^{-1/2}R]) + Ct \sqrt{\log d} + C \breve{\varpi}_{n} + Ct^{-1} \sqrt{N}n^{-k/2} D_{n} \log^{k/2+1}d,
\end{split}
\]
where the last inequality follows from Nazarov's inequality (\cite{cck2017_AoP}, Lemma A.1). Choosing $t=(Nn^{-k} D_{n}^{2} \log^{k+1} d)^{1/4}$, we conclude that 
\[
\Prob (\sqrt{N} W_{n} \in R) \le \gamma_{B}([(1-p_{n})^{-1/2}R]) + C \left (\frac{ND_{n}^{2}\log^{k+3}d}{n^{k}} \right)^{1/4} + C \breve{\varpi}_{n}.
\]
Finally, since $p_{n} \lesssim N/n^{r}$, the Gaussian comparison inequality (Lemma \ref{lem:Gaussian_comparison}) yields that 
\[
\gamma_{B}([(1-p_{n})^{-1/2}R]) \le \gamma_{B}(R) + C\left ( \frac{ND_{n} \log^{2}d}{n^{r}} \right )^{1/3},
\]
and the second term on the right hand side is bounded from above by $C\left  (\frac{ND_{n}^{2}\log^{k+3}d}{n^{k}} \right)^{1/4}$. Hence,
\[
\Prob (\sqrt{N} W_{n} \in R)  \le \gamma_{B}(R) + C \left (\frac{ND_{n}^{2}\log^{k+3}d}{n^{k}} \right)^{1/4}+ C \breve{\varpi}_{n}.
\]
Likewise, we have 
\[
\Prob (\sqrt{N} W_{n} \in R) \ge  \gamma_{B}(R) - C \left (\frac{ND_{n}^{2}\log^{k+3}d}{n^{k}} \right)^{1/4} - C \breve{\varpi}_{n}.
\]
Finally, arguing as in Step 5 in the proof of Theorem \ref{thm:gaussian_approx_bern_random_design},
we obtain the conclusion of Theorem \ref{thm:gaussian_approx_bern_random_design_degenerate} for the Bernoulli sampling case.

\uline{Sampling with replacement case}. This case is similar to but easier than the Bernoulli sampling case under degeneracy. Recall that $U_{n,N}' =  A_{n}  + B_{n}$, where $A_{n} = U_{n}$ and $B_{n} = N^{-1}\sum_{j=1}^{N} \{ h(X_{\iota_{j}}^{*}) - U_{n} \}$. Under the assumptions that $D_n^2 (\log n) \log^5(dn) \le c_{2} n$ and $D_{n}^{2}\log^{7}(dn) \le c_{2} N$, all the sub-steps of Step 1 in the proof of Theorem \ref{thm:gaussian_approx_bern_random_design} carry over to the degenerate case, i.e., we have that
\[
\rho_{\mid X_{1}^{n}}^{\calR}(\sqrt{N}B_{n},\hat{Y}) \le C \left \{ \left ( \frac{D_{n}^{2}\log^{7}d}{N} \right )^{1/6} +\frac{N^{3/2} e^{-N^{1/6}/C}}{D_{n}} \right \}
\]
with probability at least $1-Cn^{-1}$. In addition, the error bound in Step 2 remains the same as the Bernoulli sampling case under degeneracy. Hence, we have that 
\[
\sup_{R \in \calR} \left | \Prob_{\mid X_{1}^{n}} (\sqrt{N}B_{n} \in R) - \gamma_{B} (R) \right | \le C \breve{\varpi}_{n}
\]
with probability at least $1-Cn^{-1}$. Now, using the estimate (\ref{eqn:degenerate_An}), for $R=\prod_{j=1}^{d} [a_{j},b_{j}], a = (a_{1},\dots,a_{d})^{T}, b=(b_{1},\dots,b_{d})^{T}$, and $t > 0$, we have 
\[
\begin{split}
&\Prob (\sqrt{N} U_{n,N}' \in R) = \Prob (\{ -\sqrt{N} U_{n,N}' \le -a \} \cap \{ \sqrt{N}U_{n,N}' \le b \} ) \\
&\quad \le \Prob \left (\{ -\sqrt{N} U_{n,N}' \le -a \} \cap \{ \sqrt{N}U_{n,N}' \le b \} \cap \left \{  |\sqrt{N}A_{n}|_{\infty} \le t \right \} \right ) + \Prob \left ( |\sqrt{N}A_{n}|_{\infty} > t \right )  \\
&\quad \le \Prob \left (\{ -\sqrt{N} B_{n} \le -a + t \} \cap \{ \sqrt{N}B_{n} \le b+ t \} \right ) + C t^{-1} \sqrt{N}n^{-k/2}D_{n} \log^{k/2+1}d \\
&\quad \le \gamma_{B}(\{ y \in \R^{d} : -y \le -a+t, y \le b+t\} ) + C \breve{\varpi}_{n} + Ct^{-1} \sqrt{N}n^{-k/2} D_{n} \log^{k/2+1}d \\
&\quad \le \gamma_{B}(R) + Ct \sqrt{\log d} + C \breve{\varpi}_{n} + Ct^{-1} \sqrt{N}n^{-k/2} D_{n} \log^{k/2+1}d,
\end{split}
\]
where the last inequality follows from Nazarov's inequality (\cite{cck2017_AoP}, Lemma A.1). Choosing $t=(Nn^{-k} D_{n}^{2} \log^{k+1} d)^{1/4}$, we conclude that 
\[
\Prob (\sqrt{N} U_{n,N}' \in R) \le \gamma_{B}(R) + C \left (\frac{ND_{n}^{2}\log^{k+3}d}{n^{k}} \right)^{1/4} + C \breve{\varpi}_{n}.
\]
Likewise, we have the reverse inequality and the conclusion of Theorem \ref{thm:gaussian_approx_bern_random_design_degenerate} for the sampling with replacement case follows.
\end{proof}

\begin{proof}[Proof of Theorem \ref{thm:gaussian_approx_bern_random_design_polymom}]
We first prove Part (i). We shall follow the notation used in the proof of Theorem \ref{thm:gaussian_approx_bern_random_design}. In addition to assuming (\ref{eqn:gaussian_approx_reduction}), we may also assume that 
\begin{equation}
\label{eqn:gaussian_approx_reduction_2}
D_{n}^{2} n^{2r/q} \log^{3}(dn) \le c_{1} (n \wedge N)^{1-2/q},
\end{equation}
for some sufficiently small constant $c_{1} \in (0,1)$ depending only on $\underline{\sigma},r$, and $q$. In this proof, the constant $c_{1}$ in (\ref{eqn:gaussian_approx_reduction})  may also depend on $\underline{\sigma},r$, and $q$. Let 
\[
\varphi_{n} := \left( {D_{n}^{2} n^{2r/q} \log^{3}(dn) \over (n \wedge N)^{1-2/q} } \right)^{1/3} \le 1.
\]
Without loss of generality, we may assume $q > 2(r+1)$ since otherwise $\varphi_{n} > 1$ and the bound (\ref{eqn:gaussian_approx_bern_random_design_polymom}) holds trivially. 

\uline{Bernoulli sampling case}. Recall that $\hat{Y} \mid X_{1}^{n} \sim N(0, \hat{\Gamma}_{h})$, where $\hat{\Gamma}_{h} = |I_{n,r}|^{-1} \sum_{\iota \in I_{n,r}} h(X_{\iota}) h(X_{\iota})^{T}$. We first verify that 
\[
\rho_{\mid X_{1}^{n}}^{\calR}(\sqrt{N}B_{n},\hat{Y}) \le C (\varpi_{n} + \varphi_{n})
\]
with probability at least $1-C(n \wedge N)^{-1}$. Step 1.1 in the proof of Theorem \ref{thm:gaussian_approx_bern_random_design} goes through, namely that there exists a constant $C_{2}$ depending only on $\underline{\sigma}$ and $q$ such that the bound (\ref{eqn:primative_bound_gauss_approx}) holds for 
\[
\phi_n = C_2 \left( {\overline{L}_n^2 \log^4 d \over |I_{n,r}|} \right)^{-1/6}
\]
on the event $\mathcal{E}_{n} := \{ \hat{M}_{n}(\phi_{n}) \le \overline{M}_{n} \} \cap \{ \hat{L}_{n} \le \overline{L}_{n} \} \cap \{ \min_{1 \le j \le d}  \hat{\Gamma}_{h,jj} \ge \underline{\sigma}^{2}/2 \}$, where $\hat{L}_{n}$ and $\hat{M}_{n}(\phi)$ are defined in (\ref{eqn:defn_Ln_bern_sampling}) and (\ref{eqn:defn_Mn_bern_sampling}), respectively. Step 1.2--Step 1.5 need modifications due to the polynomial moment condition (C2'). Observe that $(\E[\mathsf{M}_{1}^{q}])^{3/q} \le m^{3/q} D_{n}^{3}$, and by Lemma 9 in \cite{cck2015_anticoncentration}, we have
\[
\E[\breve{\mathsf{Z}}_{1}] \lesssim  \max_{1 \le j \le d} \sum_{i=1}^{m} \E \left [ |h_j(X_{(i-1)r+1}^{ir})|^3 \right ] +\E[\mathsf{M}_{1}^{3}] \log d \lesssim m D_{n} + m^{3/q} D_{n}^{3} \log d.
\]
Applying Lemma E.4 in \cite{chen2017a} and recalling that $\hat{L}_{n} \le C p_{n}^{-1/2} \mathsf{Z}_{1}$, we have 
\[
\Prob \left (\hat{L}_{n} \ge C p_{n}^{-1/2} D_{n} \{1 + D_{n}^{2} n^{-1+6/q} \log{d} \}  \right) \le n^{-1}.
\]
Choose 
\[
\overline{L}_{n} = C p_{n}^{-1/2} D_{n} \left\{1 + { D_{n}^{2} n^{3r/q} \log{d} \over (n \wedge N)^{1-3/q} } \right\}.
\]
Then $\Prob(\hat{L}_{n} \ge \overline{L}_{n}) \le n^{-1}$ since $r \ge 2$. Now for our choice of $\overline{L}_{n}$ and $\phi_{n}$, we observe that 
\begin{align*}
{\sqrt{N} \over 4 D_{n} \phi_{n} \log{d}} =& {C^{1/3} \over 4 C_{2}} \left( { N \over D_{n}^{2} \log{d}} \right)^{1/3} \left( 1 + {D_{n}^{2} n^{3r/q} \log{d} \over (n \wedge N)^{1-3/q} } \right)^{1/3} \\
\ge& {C^{1/3} \over 4 C_{2}} {N^{1/3} n^{r/q} \over (n \wedge N)^{1/3-1/q}} \ge {C^{1/3} \over 4 C_{2}} n^{r/q} N^{1/q}.
\end{align*}
Then Markov's inequality and Condition (C2') yield that 
\[
\Prob \left(\max_{\iota \in I_{n,r}} \max_{1 \le j \le d} |h_{j}(X_{\iota})| > {\sqrt{N} \over 4 \phi_{n} \log{d}} \right) \le {n^{r} (4 D_{n} \phi_{n} \log{d})^{q} \over N^{q/2}} \le {(4 C_{2})^{q} \over C^{q/3}  N}.
\]
In addition, similar calculations under the assumptions (\ref{eqn:gaussian_approx_reduction}) and (\ref{eqn:gaussian_approx_reduction_2}) show that $\phi_{n}^{-1} \le 1$ for a sufficiently small $c_{1}$. Indeed, since $\varpi_{n} \le c_{1}^{1/6} \le 1$ and $\varphi_{n} \le c_{1}^{1/3} \le 1$ under (\ref{eqn:gaussian_approx_reduction_2}), we have 
\begin{align*}
\phi_{n}^{-1} &= {C^{1/3} \over C_{2}} \left( D_{n}^{2} \log^{4} {d} \over N \right)^{1/6} \left( 1 + {D_{n}^{2} n^{3r/q} \log{d} \over (n \wedge N)^{1-3/q}} \right)^{1/3} \\
&\le  {C^{1/3} \over C_{2}} \left\{ \left( D_{n}^{2} \log^{4} {d} \over N \right)^{1/6} + \left( D_{n}^{2} n^{2r/q} \log^{2} {d} \over (n \wedge N)^{1-2/q} \right)^{1/2}  \right\} \\
&\le C( \varpi_{n} + \varphi_{n}^{3/2} ) \le C(\varpi_{n} + \varphi_{n}) \le C c_{1}^{1/6} \le 1.
\end{align*}
Hence we have $\Prob(\hat{M}_{n,X}(\phi_{n}) = 0) \ge 1 - C N^{-1}$. For $\hat{M}_{n,Y}(\phi)$, suppose that 
\[
\max_{\iota \in I_{n,r}} \max_{1 \le j \le d} |h_{j}(X_{\iota})| \le D_{n} n^{(r+1)/q},
\]
which holds with probability at least $1-n^{-1}$ under (C2'). Observe that 
\[
\phi_{n}^{-1} \ge {C^{1/3} \over C_{2}} {D_{n} n^{r/q} \log{d} \over |I_{n,r}|^{1/6} (n \wedge N)^{1/3-1/q}}.
\]
Then in view of (\ref{eqn:bound_hatY_iota_j_GA}) and Lemma C.1 in \cite{cck2017_AoP}, we have  
\begin{align*}
&\E_{\mid X_{1}^{n}} \left [ \max_{1 \le j \le d} |\hat{Y}_{\iota,j}|^3 \vone \left( \max_{1 \le j \le d} |\hat{Y}_{\iota,j}| > {\sqrt{|I_{n,r}|} \over 4 \phi_{n} \log{d}} \right)  \right] \\
&\lesssim \left( {\sqrt{|I_{n,r}|} \over \phi_n \log d } + \max_{1 \le j \le d} |h_j(X_{\iota})| \log^{1/2} d \right)^3 \exp \left ( -{\sqrt{|I_{n,r}|} \over C \phi_n \max_{1 \le j \le d} |h_j(X_{\iota})| \log^{3/2} d} \right ) \\
&\lesssim (n^{r/2} + D_{n} n^{(r+1)/q} \log^{1/2} {d} )^{3} \exp \left ( -{n^{r/3} \over C  (\log^{1/2} {d}) n^{1/3} } \right ) \\
&\lesssim n^{3r/2} \exp \left ( - \frac{n^{1/3}}{C \log^{1/2}{d}} \right ) \le n^{3r/2} \exp (-n^{11/42}/C) \le n^{3r/2} e^{-n^{1/4}/C},
\end{align*}
where we have used the assumption (\ref{eqn:gaussian_approx_reduction}) and $q > 2(r+1)$. Therefore, we conclude that 
$\hat{M}_{n,Y}(\phi_{n}) \le C n^{3r/2} e^{-n^{1/4}/C}$ with probability at least $1-n^{-1}$. Now we can choose $\overline{M}_{n} = Cn^{3r/2} e^{-n^{1/4}/C}$ such that $\Prob(\hat{M}_{n}(\phi_{n}) > \overline{M}_{n}) \le C (n \wedge N)^{-1}$. For Step 1.5, we note that 
\[
\max_{1 \le i \le m} \max_{1 \le j,k \le d} |h_{j}(X_{(i-1)r+1}^{ir}) h_{k}(X_{(i-1)r+1}^{ir})| \le \max_{1 \le i \le m} \max_{1 \le j \le d} h_{j}^{2}(X_{(i-1)r+1}^{ir}),
\]
which implies that 
\[
\E[\mathsf{M}_{2}^{q/2}] \lesssim \E[ \max_{1 \le i \le m} \max_{1 \le j \le d} |h_{j}(X_{(i-1)r+1}^{ir})|^{q} ] \le m D_{n}^{q};
\]
i.e., $(\E[ \mathsf{M}_{2}^{q/2}])^{2/q} \lesssim m^{2/q} D_{n}^{2}$. Lemma 9 in \cite{cck2015_anticoncentration} yields that 
\[
\E[\breve{\mathsf{Z}}_{2}] \lesssim \sqrt{m D_{n}^{2} \log{d}} + \sqrt{\E[\mathsf{M}_{2}]} \log{d} \lesssim D_{n} \sqrt{m \log{d}} + D_{n}^{2} m^{2/q} \log d.
\]
Now Lemma \ref{lem:talagrand_ineq_ustat_polymom} yields that 
\begin{equation}
\label{eqn:step1.5_polymom_bern_sampling}
\Prob \left( \mathsf{Z}_{2} \ge C \{n^{-1/2} D_{n} \log^{1/2}(dn) + n^{-1+4/q} D_{n}^{2} \log{d} \} \right) \le Cn^{-1}.
\end{equation}
Then under the assumptions (\ref{eqn:gaussian_approx_reduction}) and (\ref{eqn:gaussian_approx_reduction_2}), we conclude that $|\hat{\Gamma}_{h} - \Gamma_{h}|_{\infty} \le \underline{\sigma}^{2} / 2$ and hence $\min_{1 \le j \le d} \hat{\Gamma}_{h,jj} \ge \underline{\sigma}^{2} / 2$ with probability at least $1-Cn^{-1}$.  Step 1.6 is similar to that step in Theorem \ref{thm:gaussian_approx_bern_random_design}, namely we have with probability at least $1-C(n \wedge N)^{-1}$ that 
\begin{align*}
\rho_{\mid X_{1}^{n}}^{\calR} (\sqrt{N}B_{n},\hat{Y}) \le& C \left\{ \left ( {D_n^2 \log^7 d \over N} \right)^{1/6} + \left( {D_{n}^{6} n^{6r/q} \log^{9}{d} \over N (n \wedge N)^{2-6/q} } \right)^{1/6} + {p_n^{1/2} \over D_{n}}n^{3r/2} e^{-n^{1/4}/C} \right\} \\
\le& C \left\{ \varpi_{n} + \varphi_{n}^{3/2} + {p_n^{1/2} \over D_{n}}n^{3r/2} e^{-n^{1/4}/C} \right\} \le C (\varpi_{n} + \varphi_{n}).
\end{align*}
Step 2 is also similar to that step in Theorem \ref{thm:gaussian_approx_bern_random_design}. Indeed, by the Gaussian comparison inequality (Lemma \ref{lem:Gaussian_comparison}) and (\ref{eqn:step1.5_polymom_bern_sampling}), we have 
\[
\sup_{R \in \calR} \left| \Prob_{\mid X_{1}^{n}}(\hat{Y} \in R)- \gamma_{B}(R) \right| \le  C \left \{ \left ( {D_n^2 \log^5(d n) \over n} \right )^{1/6} + \left( \frac{D_{n}^{2} \log^{3}{d} }{n^{1-4/q}} \right )^{1/3} \right \}  \lesssim \varpi_{n} + \varphi_{n}
\]
with probability at least $1-Cn^{-1}$. Next, Proposition 2.1 in \cite{cck2017_AoP} yields that 
\[
\sup_{R \in \calR} \left | \Prob (r\sqrt{n}U_{n}^{(1)}(\pi_{1}h) \in R) - \gamma_{A}(R) \right | \leq C (\varpi_{n} + \varphi_{n})
\]
under our assumptions. Following the argument in Step 3 of Theorem \ref{thm:gaussian_approx_bern_random_design}, we have  
\[
\Prob (\sqrt{n}A_{n} \in R) - \gamma_{A}(R) \le C\left ( \frac{D_{n}^{2}\log^{5}d}{n} \right )^{1/4} + C (\varpi_{n} + \varphi_{n}) \le C (\varpi_{n} + \varphi_{n}).
\]
Likewise, we have the reverse inequality and therefore $\sup_{R \in \calR} | \Prob (\sqrt{n}A_{n} \in R) - \gamma_{A}(R)| \le C (\varpi_{n} + \varphi_{n})$. Since $(n \wedge N)^{-1} \lesssim \varpi_{n}$, Steps 4 and 5 follow similar lines with $\varpi_{n}$ being replaced by $\varpi_{n} + \varphi_{n}$ verbatim. This leads to the conclusion of the theorem in the Bernoulli sampling case. 

\uline{Sampling with replacement case}. 
Next, consider sampling with replacement. The proof is similar to the Bernoulli sampling case, so we only point out the differences. In this case, recall that $\hat{Y} \mid X_{1}^{n} \sim N(0, \hat{\Gamma}_{h} - U_{n} U_{n}^{T})$. We first verify that 
\[
\rho_{\mid X_{1}^{n}}^{\calR}(\sqrt{N}B_{n},\hat{Y}) \le C (\varpi_{n} + \varphi_{n})
\]
with probability at least $1-C(n \wedge N)^{-1}$. Let $\hat{L}_{n}$ and $\hat{M}_{n}(\phi)$ be defined in (\ref{eqn:defn_Ln_sampling_with_replacement}) and (\ref{eqn:defn_Mn_sampling_with_replacement}), respectively. By Jensen's inequality, $\hat{L}_{n} \le 8 \mathsf{Z}_{1}$ and by the calculations of Step 1 in the Bernoulli sampling case, we have $\Prob(\hat{L}_{n} \ge \overline{L}_{n}) \le n^{-1}$, where 
\[
\overline{L}_{n} = C D_{n} \left\{1 + { D_{n}^{2} n^{3r/q} \log{d} \over (n \wedge N)^{1-3/q} } \right\}.
\]
Let $\phi_{n} = C_{2} (N^{-1} \overline{L}_{n}^{2} \log^{4}{d})^{-1/6}$. Then $\phi_{n} \ge 1$ by choosing $c_{1}$ in (\ref{eqn:gaussian_approx_reduction}) and (\ref{eqn:gaussian_approx_reduction_2}) sufficiently small. By Markov's and Jensen's inequalities together with Condition (C2'), we have
\begin{align*}
&\Prob \left( \max_{\iota \in I_{n,r}} \max_{1 \le j \le d} |h_{j}(X_{\iota}) - U_{n,j}| > {\sqrt{N} \over 4 \phi_{n} \log{d}} \right) \le \E\left[ \max_{\iota \in I_{n,r}} \max_{1 \le j \le d} |h_{j}(X_{\iota}) - U_{n,j}|^{q} \right] { (4\phi_{n}\log{d})^{q} \over N^{q/2} } \\
&\le { (4\phi_{n}\log{d})^{q} \over N^{q/2} } 2^{q-1} \left (  n^{r} D_{n}^{q} + \E[|U_{n}|_{\infty}^{q}] \right ) \le {n^{r} (8\phi_{n}\log{d})^{q} \over N^{q/2} }.
\end{align*}
By the calculations in Step 1.3, we have 
\[
\Prob \left( \max_{\iota \in I_{n,r}} \max_{1 \le j \le d} |h_{j}(X_{\iota}) - U_{n,j}| > {\sqrt{N} \over 4 \phi_{n} \log{d}} \right) \le {(8C_{2})^{q} \over C^{q/3} N}.
\]
For Step 1.4, recall that $V_{n} = \max_{1 \le j \le d} |I_{n,r}|^{-1} \sum_{\iota \in I_{n,r}} h_j^2(X_{\iota})$. Then Lemma E.4 in \cite{chen2017a} and Lemma 9 in \cite{cck2015_anticoncentration} yield that 
\[
\Prob \left ( V_{n} \ge C \{D_{n} + n^{-1+4/q} D_{n}^{2} \log{d} \} \right) \le C n^{-1}.
\]
Under (\ref{eqn:gaussian_approx_reduction_2}) and using $D_{n} \ge 1$, we have that $V_{n} \le C D_{n}$ with probability at least $1-Cn^{-1}$. Now in view of (\ref{eqn:bound_hatY_j_GA_replacement}) and following the lines of Step 1.4 in the proof of Theorem \ref{thm:gaussian_approx_bern_random_design}, we conclude that 
$\hat{M}_{n,Y}(\phi_{n}) \le C N^{3/2} \exp(-N^{1/6} / C)$ with probability at least $1-Cn^{-1}$. Hence we have $\Prob(\hat{M}_{n}(\phi_{n}) > \overline{M}_{n}) \le C (n \wedge N)^{-1}$, where $\overline{M}_{n} = C N^{3/2} e^{-N^{1/6}/C}$. Since $\E[\mathsf{M}_{1}^{q}] \le m D_{n}^{q} \le n D_{n}^{q}$, Lemma 8 in \cite{cck2015_anticoncentration} yields that  
\[
\E[\breve{\mathsf{Z}}_{3}] \lesssim \sqrt{m D_{n} \log{d}} + \sqrt{\E[\mathsf{M}_{1}^{2}]} \log{d} \lesssim \sqrt{n D_{n} \log{d}} + n^{1/q} D_{n} \log{d}.
\]
Then using Lemma \ref{lem:talagrand_ineq_ustat_polymom}, we have 
\[
\Prob \left( |U_{n}|_{\infty} \ge C \left\{ \sqrt{D_{n} \log(dn) \over n} + {D_{n} \log{d} \over n^{1-2/q}} \right\} \right) \le Cn^{-1},
\]
so that $|U_{n}|_{\infty}^{2} \le \underline{\sigma}^{2} / 4$ with probability at least $1-Cn^{-1}$ by choosing $c_{1}$ sufficiently small in (\ref{eqn:gaussian_approx_reduction}) and (\ref{eqn:gaussian_approx_reduction_2}). As in Step 1.5 in the Bernoulli sampling case, we have shown that $|\hat{\Gamma}_{h} - \Gamma_{h}|_{\infty} \le \underline{\sigma}^{2} / 4$ with probability at least $1-Cn^{-1}$. Hence $|\hat{\Gamma}_{h} - U_{n} U_{n}^{T} - \Gamma_{h}|_{\infty} \le \underline{\sigma}^{2} / 2$ and $\min_{1 \le j \le d} \{ \hat{\Gamma}_{h,jj} - U_{n,j}^{2} \} \ge \underline{\sigma}^{2} / 2$ with probability at least $1-Cn^{-1}$. Then we have with probability at least $1-C(n \wedge N)^{-1}$ that 
\begin{align*}
\rho_{\mid X_{1}^{n}}^{\calR} (\sqrt{N}B_{n},\hat{Y}) \le& C \left\{ \left ( {D_n^2 \log^7 d \over N} \right)^{1/6} + \left( {D_{n}^{6} n^{6r/q} \log^{9}{d} \over N (n \wedge N)^{2-6/q} } \right)^{1/6} + {N^{3/2} e^{-N^{1/6}/C} \over D_{n}} \right\} \\
\le& C \left\{ \varpi_{n} + \varphi_{n}^{3/2} + {N^{3/2} e^{-N^{1/6}/C} \over D_{n}} \right\} \le C (\varpi_{n} + \varphi_{n}).
\end{align*}
Step 2 goes through in view of the current Step 1.5 and Step 3 is exactly the same as in the Bernoulli sampling case. Step 4 follows similarly as the Bernoulli sampling case with $p_{n}=0$ and Step 5 is not needed in the sampling with replacement case. This completes the proof of Part (i). 

We next prove Part (ii). We may assume that 
\begin{equation}
\label{eqn:gaussian_approx_reduction_2_degenerate}
\begin{split}
& ND_{n}^{2} \log^{k+3} d \le c_{2} n^{k}, \quad D_{n}^{2} \log^{5}(dn) \le c_{2} n, \\
& D_{n}^{2} \log^{7}{d} \le c_{2}N, \quad D_{n}^{2} n^{2r/q} \log^{3}(dn) \le c_{2} (N \wedge n)^{1-2/q},
\end{split}
\end{equation}
for some sufficiently small constant $c_{2}$ depending only on $\underline{\sigma},r$, and $q$. 

\uline{Bernoulli sampling case}. We first verify that 
\begin{equation}
\label{eqn:step1_degenerate_poly}
\rho_{\mid X_{1}^{n}}^{\calR} (\sqrt{N}B_{n},\hat{Y}) \le C \left \{ \left ( \frac{D_{n}^{2}\log^{7}d}{N} \right )^{1/6} +  \left( {D_{n}^{6} n^{6r/q} \log^{9}{d} \over N (n \wedge N)^{2-6/q} } \right)^{1/6} + \frac{p_{n}^{1/2}}{D_{n}} n^{3r/2} e^{-n^{-1/10}/C} \right \}
\end{equation}
with probability at least $1-Cn^{-1}$.  It is not difficult to verify from Step 1.2 in the proof of Theorem \ref{thm:gaussian_approx_bern_random_design} that under the present assumptions $\Prob (\hat{L}_{n} \ge \overline{L}_{n}) \le n^{-1}$, where
\[
\overline{L}_{n} = C p_{n}^{-1/2} D_{n} \left\{1 + { D_{n}^{2} n^{3r/q} \log{d} \over (n \wedge N)^{1-3/q} } \right\}.
\]
Step 1.3 goes through as in Part (i), namely $\Prob(\hat{M}_{n,X}(\phi_{n})=0) \ge 1 - CN^{-1}$. Step 1.4 needs a modification. From Step 1.4, we have on the event $\max_{\iota \in I_{n,r}}\max_{1 \le j \le d} | h_{j} (X_{\iota}) | \le CD_{n} n^{(r+1)/q}$ that, 
\begin{align*}
\E_{\mid X_{1}^{n}} \left [ \max_{1 \le j \le d} |\hat{Y}_{\iota,j}|^3 \vone \left( \max_{1 \le j \le d} |\hat{Y}_{\iota,j}| > {\sqrt{|I_{n,r}|} \over 4 \phi_{n} \log{d}} \right)  \right] &\le C n^{3r/2} \exp \left ( - \frac{n^{1/3}}{C \log^{1/2}{d}} \right ) \\
&\le n^{3r/2} \exp (-n^{7/30}/C),
\end{align*}
under the assumption that $D_{n}^{2} \log^{5}(dn) \le c_{2} n$. Thus $\hat{M}_{n,Y} (\phi_{n}) \le C n^{3r/2} e^{-n^{7/30}/C}$ with probability at least $1-n^{-1}$, so that 
\begin{align*}
\rho_{\mid X_{1}^{n}}^{\calR} (\sqrt{N}B_{n},\hat{Y}) \le& C \left\{ \left ( {D_n^2 \log^7 d \over N} \right)^{1/6} + \left( {D_{n}^{6} n^{6r/q} \log^{9}{d} \over N (n \wedge N)^{2-6/q} } \right)^{1/6} + {p_{n}^{1/2} n^{3r/2} e^{-n^{7/30}/C} \over D_{n}} \right\} \\
\le& C \left\{ \left ( {D_n^2 \log^7 d \over N} \right)^{1/6} +  {D_{n} n^{r/q} \log^{3/2}{d} \over (n \wedge N)^{1/2-1/q}} + {p_{n}^{1/2} n^{3r/2} e^{-n^{7/30}/C} \over D_{n}} \right\} 
\end{align*}
with probability at least $1-C(n \wedge N)^{-1}$. Step 2 remains exactly the same as in Part (i). Under (\ref{eqn:gaussian_approx_reduction_2_degenerate}), we have
\begin{align*}
\sup_{R \in \calR} \left | \Prob (\sqrt{N} B_{n} \in R) - \gamma_{B} (R) \right | \le C \left \{ \left( {D_{n}^{2} \log^{7}{d} \over N} \right)^{1/6}  +  \left ( {D_n^{2} \log^{5}(d n) \over n} \right )^{1/6} + \left( {D_{n}^{2} n^{2r/q} \log^{3}{d} \over (n \wedge N)^{1-2/q}} \right)^{1/3} \right\}.
\end{align*}
The rest of the proof follows similar lines as in the proof of Theorem \ref{thm:gaussian_approx_bern_random_design_degenerate}. This leads to the conclusion (\ref{eqn:gaussian_approx_degenerate_bern_random_design_polymom}) in the Bernoulli sampling case. 

\uline{Sampling with replacement case}. The proof is similar to the Bernoulli sampling case under degeneracy. Recall that $U_{n,N}' =  A_{n}  + B_{n}$, where $A_{n} = U_{n}$ and $B_{n} = N^{-1}\sum_{j=1}^{N} \{ h(X_{\iota_{j}}^{*}) - U_{n} \}$. Under the assumptions in (\ref{eqn:gaussian_approx_reduction_2_degenerate}), the overall bound for Step 1 in the sampling with replacement case becomes
\[
\rho_{\mid X_{1}^{n}}^{\calR}(\sqrt{N}B_{n},\hat{Y}) \le C \left \{ \left ( \frac{D_{n}^{2}\log^{7}d}{N} \right )^{1/6} +  {D_{n} n^{r/q} \log^{3/2}{d} \over (n \wedge N)^{1/2-1/q}} + \frac{N^{3/2} e^{-N^{1/6}/C}}{D_{n}} \right \}
\]
with probability at least $1-C(n \wedge N)^{-1}$. The Gaussian comparison inequality of Step 2 in the Bernouli sampling case remains exactly the same under degeneracy. The rest of the proof goes through as in the proof of Theorem \ref{thm:gaussian_approx_bern_random_design_degenerate}. 
\end{proof}

\subsection{Proofs of Theorems \ref{thm:bootstrap_validity_A} and \ref{thm:bootstrap_validity}}
\label{subsec:proofs_bootstrap_validity_subexp}

As before, we will assume that $\theta = P^{r}h=0$.
Throughout this section, the notation $\lesssim$ signifies that the left hand side is bounded by the right hand side up to a constant that depends only on $\underline{\sigma}, r$, and $C_{1}$. Let $C$ denote a generic constant that depends only on $\underline{\sigma}, r$, and $C_{1}$; its value may change from place to place.
Recall that $Y_{A} \sim N(0,r^{2}\Gamma_{r}) = \gamma_{A}$ and $Y_{B} \sim N(0, \Gamma_{h}) = \gamma_{B}$, and $Y_{A}$ and $Y_{B}$ are independent. 
Define
\[
\rho_{\mid \calD_{n}}^{\calR} (U_{n,\star}^{\sharp},Y_{\star}) := \sup_{R \in \calR} \left | \Prob_{\mid \calD_{n}} (U_{n,\star}^{\sharp} \in R) - \Prob (Y_{\star} \in R) \right |, \ \star = A,B.
\]

\begin{proof}[Proof of Theorem \ref{thm:bootstrap_validity_A}]
\uline{Bernoulli sampling case}. Conditionally on $\calD_{n}$, the vector $U_{n,B}^{\sharp}$ is Gaussian with mean zero and covariance matrix 
\[
\frac{1}{\hat{N}}  \sum_{\iota \in I_{n,r}} Z_{\iota} \{ h(X_{\iota}) - U_{n,N}' \}\{ h(X_{\iota}) - U_{n,N}' \}^{T}.
\]
On the other hand, $Y_{B} \sim N(0,\Gamma_{h})$ and $\min_{1 \le j \le d}  P^{r}h_{j}^{2} \ge \underline{\sigma}^{2}$. Hence, the Gaussian comparison inequality (Lemma \ref{lem:Gaussian_comparison}) yields that 
\begin{equation}
\label{eqn:boot_GC}
\rho_{\mid \calD_{n}}^{\calR} (U_{n,B}^{\sharp},Y_{B}) \lesssim (\hat{\Delta}_{B} \log^{2} d)^{1/3},
\end{equation}
where $\hat{\Delta}_{B}$ is defined by
\[
\hat{\Delta}_{B} =  \left |  \hat{N}^{-1} {\textstyle \sum}_{\iota \in I_{n,r}} Z_{\iota} \{ h(X_{\iota}) - U_{n,N}' \}\{ h(X_{\iota}) - U_{n,N}' \}^{T} -\Gamma_{h}  \right|_{\infty}.
\]
Observe that 
\begin{equation}
\label{eqn:decomp_hat_Delta_B}
\begin{split}
\hat{\Delta}_{B} &\le |N /\hat{N} | \cdot \left (  \left |  N^{-1} {\textstyle \sum}_{\iota \in I_{n,r}} (Z_{\iota} - p_{n}) h(X_{\iota})h(X_{\iota})^{T}  \right|_{\infty} +| \hat{\Gamma}_{h} - \Gamma_{h} |_{\infty} \right ) \\
&\quad + |N/\hat{N} - 1| \cdot | \Gamma_{h} |_{\infty} +|U_{n,N}'|_{\infty}^{2} \\
&=: |N /\hat{N} |(\hat{\Delta}_{B,1} + \hat{\Delta}_{B,2}) + \hat{\Delta}_{B,3} + \hat{\Delta}_{B,4},
\end{split}
\end{equation}
where $\hat{\Gamma}_{h} = |I_{n,r}|^{-1} \sum_{\iota \in I_{n,r}} h(X_{\iota})h(X_{\iota})^{T}$. 

From Step 5 in the proof of Theorem \ref{thm:gaussian_approx_bern_random_design}, $|\hat{N}/N -1| \le C (N^{-1/2}\log^{1/2} n + N^{-1} \log n) \le C N^{-1/2} \log^{1/2}n \le Cn^{-\zeta/2}$ with probability at least $1-2n^{-1}$.
Choose the smallest $n_{0}$ such that $Cn^{-\zeta/2} \le 1/2$ for all $n \ge n_{0}$. Clearly, $n_{0}$ depends only on $\underline{\sigma}, C_{1}$, and $\zeta$, and since for $n < n_{0}$, the conclusion of the theorem is trivial by taking the constant $C$ in (\ref{eqn:growth_condition_B}) sufficiently large (the constant $C$ in (\ref{eqn:growth_condition_B}) can be taken independent of $\zeta$), we may assume in what follows that $n \ge n_{0}$. Then, $|\hat{N}/N -1| \le C N^{-1/2} \log^{1/2}n \le 1/2$ with probability at least $1-2n^{-1}$, and hence using the inequality $|z^{-1}-1| \le 2|z-1|$ for $|z-1| \le 1/2$, we have that $|N/\hat{N} - 1| \le CN^{-1/2}\log^{1/2}n$ with probability at least $1-2n^{-1}$.  In particular, $|N/\hat{N}| \le C$ with probability at least $1-2n^{-1}$.
In addition, since $| \Gamma_{h} |_{\infty} \lesssim D_{n}$, we have that 
\[
\hat{\Delta}_{B,3} \log^{2} d \le CD_{n} N^{-1/2} (\log{n})^{1/2} \log^{2} d \le C n^{-\zeta/2}
\]
with probability at least $1-2n^{-1}$.  For $\hat{\Delta}_{B,2}$, (\ref{eqn:gaussian_approx_bern_random_design_step1.5}) in Step 1.5 in the proof of Theorem \ref{thm:gaussian_approx_bern_random_design} yields that 
\[
\Prob \left(\hat{\Delta}_{B,2} \ge C \{  n^{-1/2} D_{n} \log^{1/2} (dn) + n^{-1} D_n^2 (\log{n}) \log^3(d n) \}  \right) \le Cn^{-1}.
\]
Then under Condition (\ref{eqn:growth_condition_B}), we have that 
\[
\hat{\Delta}_{B,2} \log^{2}{d} \le C  n^{-1/2} D_{n} \log^{5/2} (dn) \le C n^{-\zeta/2}
\]
with probability at least $1-Cn^{-1}$. Next we deal with $\hat{\Delta}_{B,1}$. Let  
\[
\mathsf{V}_{n} = \max_{1 \le j, k \le d}  |I_{n,r}|^{-1}\sum_{\iota \in I_{n,r}} h_{j}^{2}(X_{\iota}) h_{k}^{2}(X_{\iota}) \quad \text{and} \quad \mathsf{M}_{1} = \max_{\iota \in I_{n,r}} \max_{1 \le j \le d} h_{j}^{2}(X_{\iota}).
\] 
Since $|Z_{\iota}-p_{n}| \le 1$ and $\Var (Z_{\iota}-p_n) \le p_{n} = N/|I_{n,r}|$, by Lemma E.2 in \cite{cck2017_AoP} (applied conditionally on $X_{1}^{n}$), we have
\begin{equation}
\label{eqn:hat_Delta_{B1}_conditional_bound_primative}
\Prob_{\mid X_{1}^{n}} \left( N\hat{\Delta}_{B,1} \ge 2 \E_{\mid X_{1}^{n}} [N\hat{\Delta}_{B,1}] + t \right) \le \exp\left( -{t^{2} \over 3 N \mathsf{V}_{n} } \right) + 3 \exp \left(- {t \over C \mathsf{M}_{1}} \right).
\end{equation}
By Lemma 8 in \cite{cck2015_anticoncentration} (applied again conditionally on $X_{1}^{n}$), we have  
\[
\E_{\mid X_{1}^{n}} [N\hat{\Delta}_{B,1}] \lesssim \sqrt{N \mathsf{V}_{n} \log{d}} + \mathsf{M}_{1} \log{d}.
\]
Then we have 
\begin{equation}
\label{eqn:hat_Delta_{B1}_conditional_bound}
\Prob_{\mid X_{1}^{n}} \left( N\hat{\Delta}_{B,1} \ge C \{ \sqrt{N \mathsf{V}_{n} \log(dn)} + \mathsf{M}_{1} \log(dn) \} \right) \le Cn^{-1}.
\end{equation}
Next we shall find an upper bound for $\mathsf{V}_{n}$ and $\mathsf{M}_{1}$ with probability at least $1-Cn^{-1}$. Since $\|\mathsf{M}_{1}\|_{\psi_{1/2}} \lesssim D_{n}^{2} \log^{2}(dn)$, we have $\mathsf{M}_{1} \le C D_{n}^{2} (\log^{2}n) \log^{2}(dn)$ with probability at least $1-2n^{-1}$. Let $m=\lfloor n/r \rfloor$. By Lemma E.3 in \cite{chen2017a}, we have for all $t > 0$ that 
\[
\Prob\left(m \mathsf{V}_{n} \ge 2 \E\left[ \max_{1 \le j,k \le d} \sum_{i=1}^{m} h_{j}^{2}(X_{(i-1)r+1}^{ir}) h_{k}^{2}(X_{(i-1)r+1}^{ir})  \right] + t \right) \le 3 \exp \left( - {t^{1/4} \over C \|\mathsf{M}_{1}'\|_{\psi_{1}}} \right),
\]
where $\mathsf{M}_{1}' = \max_{1 \le i \le m} \max_{1 \le j \le d} |h_{j}(X_{(i-1)r+1}^{ir})|$. Clearly, $\|\mathsf{M}_{1}'\|_{\psi_{1}} \lesssim D_{n} \log(dn)$. In addition,  Lemma 9 in \cite{cck2015_anticoncentration} yields that 
\[
\begin{split}
&\E\left[ \max_{1 \le j,k \le d} \sum_{i=1}^{m} h_{j}^{2}(X_{(i-1)r+1}^{ir}) h_{k}^{2}(X_{(i-1)r+1}^{ir}) \right] \\
&\lesssim \max_{1 \le j,k \le d} \sum_{i=1}^{m} \E[  h_{j}^{2}(X_{(i-1)r+1}^{ir})h_{k}^{2}(X_{(i-1)r+1}^{ir}) ] + \E\left [\max_{1 \le i \le m}\max_{1 \le j \le d} h_{j}^{4}(X_{(i-1)r+1}^{ir})\right] \log d \\
&\lesssim n D_{n}^{2} + D_{n}^{4} \log^{5} (dn).
\end{split}
\]
Hence we have $\Prob(\mathsf{V}_{n} \ge C D_{n}^{2} \{ 1 + n^{-1} D_{n}^{2} (\log^{3}{n}) \log^{5}(dn) \}  ) \le 3 n^{-1}$, so that $\mathsf{V}_{n} \le C D_{n}^{2}$ with probability at least $1-3n^{-1}$ under Condition (\ref{eqn:growth_condition_B}). Then it follows from (\ref{eqn:hat_Delta_{B1}_conditional_bound}) that 
\[
\Prob_{\mid X_{1}^{n}} \left( N\hat{\Delta}_{B,1} \ge C \{ \sqrt{N D_{n}^{2} \log(dn)} + D_{n}^{2} (\log^{2}n) \log^{3}(dn) \} \right) \le Cn^{-1}
\]
with probability at least $1-5n^{-1}$. Then Fubini yields that 
\[
\Prob \left( \hat{\Delta}_{B,1} \ge C \{ N^{-1/2} D_{n} \log^{1/2}(dn) + N^{-1} D_{n}^{2} (\log^{2}n) \log^{3}(dn) \} \right) \le C n^{-1}.
\]
Under Condition (\ref{eqn:growth_condition_B}), we have that
\[
\hat{\Delta}_{B,1} \log^{2}{d} \le C \{N^{-1} D_{n}^{2} (\log^{2}{n}) \log^{5}(dn) \}^{1/2} \le C n^{-\zeta/2}
\]
with probability at least $1-Cn^{-1}$. Finally, for $\hat{\Delta}_{B,4}$, observe that 
\[
\hat{\Delta}_{B,4} = |N/\hat{N}|^{2} |W_{n}|_{\infty}^{2} \le 2|N/\hat{N}|^{2} (|A_{n}|_{\infty}^{2}+|B_{n}|_{\infty}^{2}),
\]
where $A_{n} = U_{n} = |I_{n,r}|^{-1} \sum_{\iota \in I_{n,r}} h(X_{\iota})$ and $B_{n} = |I_{n,r}|^{-1}\sum_{\iota \in I_{n,r}} p_{n}^{-1}(Z_{\iota}-p_{n}) h(X_{\iota}) = N^{-1} \sum_{\iota \in I_{n,r}} (Z_{\iota}-p_{n}) h(X_{\iota})$. Note that bounding $|B_{n}|_{\infty}$ is similar to bounding $\hat{\Delta}_{B,1}$. Here, for completeness and later usage of similar argument, we give the proof. Let $\tilde{\mathsf{V}}_{n} = \max_{1 \le k \le d} |I_{n,r}|^{-1} \sum_{\iota \in I_{n,r}} h_{k}^{2}(X_{\iota})$ and $\tilde{\mathsf{M}}_{1} = \max_{\iota \in I_{n,r}} \max_{1 \le k \le d}| h_{k}(X_{\iota})|$. By Lemma E.2 in \cite{cck2017_AoP} (applied conditionally on $X_{1}^{n}$), we have
\begin{equation}
\label{eqn:hat_Delta_{B4}_Bn_conditional_bound_primative}
\Prob_{\mid X_{1}^{n}} \left( |N B_{n}|_{\infty} \ge 2 \E_{\mid X_{1}^{n}} [|N B_{n}|_{\infty}] + t \right) \le \exp\left( -{t^{2} \over 3 N \tilde{\mathsf{V}}_{n} } \right) + 3 \exp \left(- {t \over C \tilde{\mathsf{M}}_{1}} \right).
\end{equation}
By Lemma 8 in \cite{cck2015_anticoncentration} (applied again conditionally on $X_{1}^{n}$), we have $\E_{\mid X_{1}^{n}} [|N B_{n}|_{\infty}] \lesssim \sqrt{N \tilde{\mathsf{V}}_{n} \log{d}} +\tilde{\mathsf{M}}_{1} \log{d}$, which together with (\ref{eqn:hat_Delta_{B4}_Bn_conditional_bound_primative}) implies that 
\begin{equation}
\label{eqn:hat_Delta_{B4}_Bn_conditional_bound}
\Prob_{\mid X_{1}^{n}} \left( |N B_{n}|_{\infty} \ge C \{ \sqrt{N \tilde{\mathsf{V}}_{n} \log(dn)} + \tilde{\mathsf{M}}_{1} \log(dn) \} \right) \le Cn^{-1}.
\end{equation}
Since $\|\tilde{\mathsf{M}}_{1}\|_{\psi_{1}} \lesssim D_{n} \log(dn)$ so that $\tilde{\mathsf{M}}_{1} \lesssim D_{n} (\log{n}) \log(dn)$ with probability at least $1-2n^{-1}$. By Lemma E.3 in \cite{chen2017a}, we have for all $t > 0$ that 
\[
\Prob\left(m \tilde{\mathsf{V}}_{n} \ge 2 \E\left[ \max_{1 \le j \le d} \sum_{i=1}^{m} h_{j}^{2}(X_{(i-1)r+1}^{ir})  \right] + t \right) \le 3 \exp \left( - {t^{1/2} \over C \|\mathsf{M}_{1}'\|_{\psi_{1}}} \right),
\]
where we recall that $\mathsf{M}_{1}' = \max_{1 \le i \le m} \max_{1 \le j \le d} |h_{j}(X_{(i-1)r+1}^{ir})|$. Clearly, $\|\mathsf{M}_{1}'\|_{\psi_{1}} \lesssim D_{n} \log(dn)$. In addition,  Lemma 9 in \cite{cck2015_anticoncentration} yields that 
\[
\begin{split}
\E\left[ \max_{1 \le j \le d} \sum_{i=1}^{m} h_{j}^{2}(X_{(i-1)r+1}^{ir}) \right] &\lesssim \max_{1 \le j \le d} \sum_{i=1}^{m} \E[  h_{j}^{2}(X_{(i-1)r+1}^{ir}) ] + \E\left [\max_{1 \le i \le m}\max_{1 \le j \le d} h_{j}^{2}(X_{(i-1)r+1}^{ir})\right] \log d \\
&\lesssim n D_{n} + D_{n}^{2} \log^{3} (dn).
\end{split}
\]
Hence we have $\Prob(\tilde{\mathsf{V}}_{n} \ge C D_{n} \{ 1 + n^{-1} D_{n} \log^{3}(dn) \}  ) \le 3 n^{-1}$, so that $\tilde{\mathsf{V}}_{n} \le C D_{n}$ with probability at least $1-3n^{-1}$ under Condition (\ref{eqn:growth_condition_B}). Then it follows from (\ref{eqn:hat_Delta_{B4}_Bn_conditional_bound}) that 
\[
\Prob_{\mid X_{1}^{n}} \left( |N B_{n}|_{\infty} \ge C \{ \sqrt{N D_{n} \log(dn)} + D_{n} (\log{n}) \log^{2}(dn) \} \right) \le Cn^{-1}
\]
with probability at least $1-5n^{-1}$. Then Fubini yields that 
\[
\Prob \left( |B_{n}|_{\infty} \ge C \{ (N^{-1} D_{n} \log(dn))^{1/2} + N^{-1} D_{n} (\log{n}) \log^{2}(dn) \} \right) \le C n^{-1},
\]
which implies that $|B_{n}|_{\infty} \le C \{ N^{-1} D_{n} \log(dn) \}^{1/2}$ with probability at least $1-C n^{-1}$ under Condition (\ref{eqn:growth_condition_B}). In addition, Lemma \ref{lem:talagrand_ineq_ustat} and Lemma 8 in \cite{cck2015_anticoncentration} yield that $|A_{n}|_{\infty} \le C \{n^{-1} D_{n} \log(dn) \}^{1/2}$ with probability at least $1-Cn^{-1}$. Hence we have that
\[
(|A_{n}|_{\infty}^{2} + |B_{n}|_{\infty}^{2})\log^{2} d \le C n^{-\zeta/2}
\]
with probability at least $1-Cn^{-1}$.

In conclusion, we have shown that $\hat{\Delta}_{B} \log^{2} d \le C n^{-\zeta/2}$ with probability at least $1-Cn^{-1}$, and in view of (\ref{eqn:boot_GC}), this leads to the desired conclusion. 

\uline{Sampling with replacement case}. 
Conditionally on $\mathcal{D}_{n}$, the vector  $U_{n,B}^{\sharp}$ is Gaussian with mean zero and covariance matrix 
\[
\frac{1}{N} \sum_{j=1}^{N} \{ h(X_{\iota_{j}}^{*}) - U_{n,N}' \} \{ h(X_{\iota_{j}}^{*}) - U_{n,N}' \}^{T}.
\]
In view of the previous proof, it suffices to prove that $\hat{\Delta}_{B} \log^{2}{d} \le C n^{-\zeta/2}$ with probability at least $1-Cn^{-1}$, where $\hat{\Delta}_{B}$ is now defined by 
\[
\hat{\Delta}_{B} = \left | N^{-1} {\textstyle \sum}_{j=1}^{N} \{ h(X_{\iota_{j}}^{*})- U_{n,N}' \} \{ h(X_{\iota_{j}}^{*}) - U_{n,N}' \}^{T}- \Gamma_{h} \right |_{\infty}.
\]
Observe that 
\begin{equation}
\label{eqn:decomp_hat_Delta_B_replacement}
\begin{split}
\hat{\Delta}_{B} &\le \left | N^{-1} {\textstyle \sum}_{j=1}^{N} h(X_{\iota_{j}}^{*}) h(X_{\iota_{j}}^{*})^{T} - \hat{\Gamma}_{h} \right |_{\infty} + | \hat{\Gamma}_{h} -  \Gamma_{h} |_{\infty}  + |U_{n,N}'|_{\infty}^{2} \\
&=: \hat{\Delta}_{B,1} + \hat{\Delta}_{B,2} + \hat{\Delta}_{B,3}. 
\end{split}
\end{equation}
We have shown that $\hat{\Delta}_{B,2} \log^{2}{d} \le C  n^{-1/2} D_{n} \log^{5/2} (dn) \le C n^{-\zeta/2}$ with probability at least $1-Cn^{-1}$. Since $X_{\iota_{j}}^{*}, j=1,\dots,N$ are (conditionally on $X_{1}^{n}$) i.i.d. draws from the empirical distribution $|I_{n,r}|^{-1} \sum_{\iota \in I_{n,r}} \delta_{X_{\iota}}$, we have $\E_{\mid X_{1}^{n}}[h(X_{\iota_{j}}^{*}) h(X_{\iota_{j}}^{*})^{T}] = \hat{\Gamma}_{h}$. In addition, by Jensen's inequality, we have 
\[
\begin{split}
& \max_{1 \le k,\ell \le d}\sum_{j=1}^{N} \E_{\mid X_{1}^{n}} \left[ \left ( h_{k}(X_{\iota_{j}}^{*}) h_{\ell}(X_{\iota_{j}}^{*}) - \hat{\Gamma}_{h,k\ell} \right)^{2} \right] \le \max_{1 \le k,\ell \le d} \sum_{j=1}^{N} \E_{\mid X_{1}^{n}} \left[ h_{k}^{2}(X_{\iota_{j}}^{*}) h_{\ell}^{2}(X_{\iota_{j}}^{*}) \right] \\
& \qquad = N \max_{1 \le k,\ell \le d} |I_{n,r}|^{-1} \sum_{\iota \in I_{n,r}} h_{k}^{2}(X_{\iota}) h_{\ell}^{2}(X_{\iota}) = N \mathsf{V}_{n},
\end{split}
\]
\[
\begin{split}
\max_{1 \le j \le N} \max_{1 \le k,\ell \le d}| h_{k}(X_{\iota_{j}}^{*}) h_{\ell}(X_{\iota_{j}}^{*})| \le \max_{\iota \in I_{n,r}} \max_{1 \le k,\ell \le d}| h_{k}(X_{\iota}) h_{\ell}(X_{\iota})| \le \max_{\iota \in I_{n,r}} \max_{1 \le k \le d} h_{k}^{2}(X_{\iota}) = \mathsf{M}_{1}, 
\end{split}
\]
and $\max_{1 \le k,\ell \le d} |\hat{\Gamma}_{h,k\ell}| \le \max_{\iota \in I_{n,r}} \max_{1 \le k,\ell \le d}| h_{k}(X_{\iota}) h_{\ell}(X_{\iota})| \le \mathsf{M}_{1}$. By Lemma E.2 in \cite{cck2017_AoP} (applied conditionally on $X_{1}^{n}$), we deduce that (\ref{eqn:hat_Delta_{B1}_conditional_bound_primative}) continues to hold for every $t > 0$ for the sampling with replacement case. Then following the argument in the previous proof, we have 
\[
\hat{\Delta}_{B,1} \log^{2}{d} \le C \{N^{-1} D_{n}^{2} (\log^{2}{n}) \log^{5}(dn) \}^{1/2} \le C n^{-\zeta/2}
\]
with probability at least $1-Cn^{-1}$. Finally, $\hat{\Delta}_{B,3} \le 2(|A_{n}|_{\infty}^{2} + |B_{n}|_{\infty}^{2})$ where $A_{n}  =U_{n}$ and $B_{n} = N^{-1} \sum_{j=1}^{N} \{ h(X_{\iota_{j}}^{*}) - U_{n} \}$. We have shown that $|A_{n}|_{\infty}^{2} \le C n^{-1} D_{n} \log(dn)$ with probability at least $1-Cn^{-1}$. Next, since $h(X_{\iota_{j}}^{*}), j=1,\dots,N$ are i.i.d. with mean $U_{n}$ conditionally on $X_{1}^{n}$, we have 
\[
\begin{split}
& \max_{1 \le k \le d}\sum_{j=1}^{N} \E_{\mid X_{1}^{n}} \left[ \left (h_{k}(X_{\iota_{j}}^{*}) - U_{n,k} \right )^{2} \right] \le \max_{1 \le k \le d} \sum_{j=1}^{N} \E_{\mid X_{1}^{n}} \left[ h_{k}^{2}(X_{\iota_{j}}^{*}) \right] \\
& \qquad = N \max_{1 \le k \le d} |I_{n,r}|^{-1} \sum_{\iota \in I_{n,r}} h_{k}^{2}(X_{\iota}) = N \tilde{\mathsf{V}}_{n}.
\end{split}
\]
In addition, $\max_{1 \le j \le N} \max_{1 \le k \le d}| h_{k}(X_{\iota_{j}}^{*}) | \le \max_{\iota \in I_{n,r}} \max_{1 \le k \le d}| h_{k}(X_{\iota})| = \tilde{\mathsf{M}}_{1}$ and $\max_{1 \le k \le d}|U_{n,k}| \le \tilde{\mathsf{M}}_{1}$. By Lemma E.2 in \cite{cck2017_AoP} (applied conditionally on $X_{1}^{n}$), we deduce that (\ref{eqn:hat_Delta_{B4}_Bn_conditional_bound_primative}) continues to hold for every $t > 0$ for the sampling with replacement case. Then following the argument in the previous proof, we have $|B_{n}|_{\infty} \le C \{ N^{-1} D_{n} \log(dn) \}^{1/2}$ with probability at least $1-C n^{-1}$ under Condition (\ref{eqn:growth_condition_B}). Now we conclude that $(|A_{n}|_{\infty}^{2} + |B_{n}|_{\infty}^{2})\log^{2} d \le C n^{-\zeta/2}$ and hence $\hat{\Delta}_{B} \log^{2}{d} \le C n^{-\zeta/2}$ with probability at least $1-Cn^{-1}$.
\end{proof}

\begin{proof}[Proof of Theorem \ref{thm:bootstrap_validity}]
The proof is divided into three steps.

\uline{Step 1: Bounding $\rho_{\mid \calD_{n}}^{\calR}(U_{n,B}^{\sharp},Y_{B})$}. Since Condition (C3-ND) implies Condition (C3-D), and $n_{1} \le n$ by definition, by Theorem \ref{thm:bootstrap_validity_A}, we have that 
\[
\rho_{\mid \calD_{n}}^{\calR} (U_{n,B}^{\sharp},Y_{B}) \le Cn^{-\zeta_1/6}
\]
with probability at least $1-Cn^{-1}$. 

\uline{Step 2: Bounding $\rho_{\mid \calD_{n}}^{\calR}(U_{n,A}^{\sharp},Y_{A})$}. In this step, we shall show that 
\[
\rho_{\mid \calD_{n}}^{\calR} (U_{n,A}^{\sharp},Y_{A}) \le Cn^{-(\zeta_1 \wedge \zeta_2)/6}
\]
with probability at least $1-Cn^{-1}$. 

 Without loss of generality, we may  assume $S_{1} = \{1,\dots,n_{1}\}$. 
Conditionally on $\calD_{n}$, the vector $U_{n,A}^{\sharp}$ is Gaussian with mean zero and covariance matrix
\[
\frac{r^{2}}{n_{1}} \sum_{i_{1}=1}^{n_{1}} \{ \hat{g}^{(i_{1})}(X_{i_{1}}) -\breve{g} \} \{ \hat{g}^{(i_{1})}(X_{i_{1}}) -\breve{g}  \}^{T}.
\]
On the other hand, $Y_{A} \sim N(0,r^{2}\Gamma_{g})$ and $\min_{1 \le j \le d}  Pg_{j}^{2} \ge \underline{\sigma}^{2}$.
Hence, the Gaussian comparison inequality (Lemma \ref{lem:Gaussian_comparison}) yields that 
\[
\rho_{\mid \calD_{n}}^{\calR} (U_{n,A}^{\sharp},Y_{A}) \lesssim (\hat{\Delta}_{A}\log^{2} d)^{1/3},
\]
where 
\[
\hat{\Delta}_{A} = \max_{1 \le j,\ell \le d} \left | n_{1}^{-1} \sum_{i_{1}=1}^{n_{1}} \{ \hat{g}_{j}^{(i_{1})}(X_{i_{1}}) -\breve{g}_{j} \} \{ \hat{g}_{\ell}^{(i_{1})}(X_{i_{1}}) -\breve{g}_{\ell} \} - Pg_{j}g_{\ell} \right |.
\]
Observe that for every $1 \le j,\ell \le d$,
\[
\begin{split}
&n_{1}^{-1} \sum_{i_{1}=1}^{n_{1}} \{ \hat{g}_{j}^{(i_{1})}(X_{i_{1}}) -\breve{g}_{j} \} \{ \hat{g}_{\ell}^{(i_{1})}(X_{i_{1}}) -\breve{g}_{\ell} \} \\
&=n_{1}^{-1} \sum_{i_{1}=1}^{n_{1}}  \hat{g}_{j}^{(i_{1})}(X_{i_{1}}) \hat{g}_{\ell}^{(i_{1})}(X_{i_{1}}) -\breve{g}_{j} \breve{g}_{\ell} \\
&=n_{1}^{-1} \sum_{i_{1}=1}^{n_{1}}  \{ \hat{g}_{j}^{(i_{1})}(X_{i_{1}}) - g_{j}(X_{i_{1}}) \} \{ \hat{g}_{\ell}^{(i_{1})}(X_{i_{1}}) -  g_{\ell}(X_{i_{1}})\}  \\
&\quad + n_{1}^{-1} \sum_{i_{1}=1}^{n_{1}}  \{ \hat{g}_{j}^{(i_{1})}(X_{i_{1}}) - g_{j}(X_{i_{1}}) \}  g_{\ell}(X_{i_{1}}) + n_{1}^{-1} \sum_{i_{1}=1}^{n_{1}}  \{ \hat{g}_{\ell}^{(i_{1})}(X_{i_{1}}) - g_{\ell}(X_{i_{1}}) \}  g_{j}(X_{i_{1}}) \\
&\quad + n_{1}^{-1} \sum_{i_{1}=1}^{n_{1}} g_{j}(X_{i_{1}}) g_{\ell} (X_{i_{1}}) - \breve{g}_{j} \breve{g}_{\ell},
\end{split}
\]
so that by the Cauchy-Schwarz inequality, 
\begin{equation*}
\begin{split}
\hat{\Delta}_{A}
&\le \underbrace{\max_{1 \le j \le d} n_{1}^{-1} \sum_{i_{1}=1}^{n_{1}}  \{ \hat{g}_{j}^{(i_{1})}(X_{i_{1}}) - g_{j}(X_{i_{1}}) \}^{2}}_{=\hat{\Delta}_{A,1}}   + 2 \hat{\Delta}_{A,1}^{1/2} \sqrt{ \max_{1 \le j \le d} n_{1}^{-1}\sum_{i_{1}=1}^{n_{1}} g_{j}^{2}(X_{i_{1}})} \\
&\quad + \max_{1 \le j,\ell \le d} \left |  n_{1}^{-1} \sum_{i_{1}=1}^{n_{1}} \{ g_{j}(X_{i_{1}}) g_{\ell} (X_{i_{1}}) - Pg_{j}g_{\ell} \} \right | + \max_{1 \le j \le d} | \breve{g}_{j} |^{2}. 
\end{split}
\end{equation*}
For the notational convenience, define 
\[
\hat{\Delta}_{A,2} := \max_{1 \le j,\ell \le d} \left |  n_{1}^{-1} \sum_{i_{1}=1}^{n_{1}} \{ g_{j}(X_{i_{1}}) g_{\ell} (X_{i_{1}}) - Pg_{j}g_{\ell} \}\right |. 
\]
Then, since $n_{1}^{-1}\sum_{i_{1}=1}^{n_{1}} g_{j}^{2}(X_{i_{1}}) \le Pg_{j}^{2} + |n_{1}^{-1}\sum_{i_{1}=1}^{n_{1}} \{ g^{2}_{j}(X_{i_{1}}) - Pg_{j}^{2} \} | \le \overline{\sigma}_{g}^{2} + \hat{\Delta}_{A,2}$, and $\breve{g}_{j} = n_{1}^{-1} \sum_{i_{1}=1}^{n} \{ \hat{g}_{j}^{(i_{1})}(X_{i_{1}}) - g_{j}(X_{i_{1}}) \} + n_{1}^{-1} \sum_{i_{1}=1}^{n_{1}} g_{j}(X_{i_{1}})$, so that 
\[
\max_{1 \le j \le d} | \breve{g}_{j} |^{2} \lesssim \hat{\Delta}_{A,1} + \hat{\Delta}_{A,3}^{2}, \quad \text{with} \  \hat{\Delta}_{A,3} := \max_{1 \le j \le d} \left |n_{1}^{-1} \sum_{i_{1}=1}^{n_{1}} g_{j}(X_{i_{1}})\right |,
\]
we have 
\begin{equation}
\label{eqn:decomp_hat_Delta_A}
\hat{\Delta}_A \lesssim \overline{\sigma}_{g} \hat{\Delta}_{A,1}^{1/2} + \hat{\Delta}_{A,1}  + \hat{\Delta}_{A,2} + \hat{\Delta}_{A,3}^{2}, 
\end{equation}
where we have used the inequality $2ab \le a^{2}+b^{2}$ for $a,b \in \R$. 

Now, by assumption, $\overline{\sigma}_{g} \hat{\Delta}_{A,1}^{1/2} \log^{2} d \le C n^{-\zeta_2/2}$ with probability at least $1-Cn^{-1}$. For $\hat{\Delta}_{A,2}$, Lemma 8 in \cite{cck2015_anticoncentration} yields that 
\begin{equation*}
\begin{split}
\E[\hat{\Delta}_{A,2}] &\lesssim n_{1}^{-1} \sqrt{(\log d)\max_{1 \le j,\ell \le d} \sum_{i_{1}=1}^{n_{1}} \E[g_{j}^{2}(X_{i_{1}}) g_{\ell}^{2}(X_{i_{1}})]} + n_{1}^{-1} \sqrt{\E\left[\max_{1 \le i_{1} \le n_{1}} \max_{1 \le j \le d} g_{j}^{4}(X_{i_{1}}) \right]} \log d \\
&\lesssim n_{1}^{-1/2} D_{n}\log^{1/2} d + n_{1}^{-1} D_{n}^{2} \log^{3}(dn).
\end{split}
\end{equation*}
By Lemma E.2 in \cite{cck2017_AoP}, we have for every $t > 0$
\[
\Prob( \hat{\Delta}_{A,2} \ge 2 \E[\hat{\Delta}_{A,2}] + t ) \le \exp\left(-{n_{1} t^{2} \over 3 D_{n}^{2} }\right) + 3 \exp\left\{ -\left( {n_{1} t \over C D_{n}^{2} \log^{2}(dn)} \right)^{1/2} \right\}.
\]
Choosing $t = \{C n_{1}^{-1} D_{n}^{2} \log{n} \}^{1/2} \bigvee \{ C n_{1}^{-1} D_{n}^{2} (\log^{2}n)\log^{3}(dn) \}$, we have
\[
\Prob \left( \hat{\Delta}_{A,2} \ge C \{ (n_{1}^{-1} D_{n}^{2} \log(dn))^{1/2} + n_{1}^{-1} D_{n}^{2} (\log^{2}n) \log^{3}(dn)  \} \right) \le C n^{-1}.
\]
Now, using the first part of Condition (\ref{eqn:growth_condition}), we deduce that 
\[
\hat{\Delta}_{A,2} \log^{2}{d} \le C \{n_{1}^{-1} D_{n}^{2} (\log{n})^{2} \log^{5}(dn)\}^{1/2} \le C n^{-\zeta_1/2}
\]
with probability at least $1-Cn^{-1}$. The term $\hat{\Delta}_{A,3}^{2}$ can be similarly dealt with. In particular, using $\overline{\sigma}_{g}^{2} \le 1+\max_{j} P|g_{j}|^{3} \lesssim D_{n}$, we have
\begin{equation}
\label{eqn:delta_A3}
\E[\hat{\Delta}_{A,3}] \lesssim (n_{1}^{-1}D_{n} \log d)^{1/2} + n_{1}^{-1} D_{n} \log^{2}(dn).
\end{equation}
Then using Lemma 8 in \cite{cck2015_anticoncentration} and Lemma E.2 in \cite{cck2017_AoP}, we can show that 
\[
\hat{\Delta}_{A,3}^{2} \log^{2}{d} \le C \{ n_{1}^{-1} D_{n} \log^{3}(dn) + n_{1}^{-2} D_{n}^{2} \log^{6}(dn) \} \le C n^{-\zeta_1} \le C n^{-\zeta_1/2}
\]
with probability at least $1-Cn^{-1}$. Hence $\hat{\Delta}_{A} \log^{2}{d} \le C n^{-(\zeta_1 \wedge \zeta_2)/2}$ with probability at least $1-Cn^{-1}$, which leads to the conclusion of this step.

\uline{Step 3: Conclusion}. 
Let $\Xi = \{ \xi_{i_{1}} : i_{1} \in S_{1} \}$ and $\Xi' = \{ \xi_{\iota}' : \iota \in I_{n,r} \}$. Recall that $\Xi,\Xi'$, and $\calD_{n}$ are mutually independent. 
Suppose that 
\[
\rho_{\mid \calD_{n}}^{\calR} (U_{n,A}^{\sharp},Y_{A}) \bigvee \rho_{\mid \calD_{n}}^{\calR} (U_{n,B}^{\sharp},Y_{B}) \le Cn^{-(\zeta_1 \wedge \zeta_2)/6},
\]
which holds with probability at least $1-Cn^{-1}$. Pick any hyperrectangle $R \in \calR$. 
Observe that 
\[
\Prob_{\mid \calD_{n}} (U_{n}^{\sharp} \in R) =  \E_{\mid \calD_{n}} \left[ \Prob_{\mid (\calD_{n},\Xi)} \left (U_{n,B}^{\sharp} \in [\alpha_{n}^{-1/2} R - \alpha_{n}^{-1/2} U_{n,A}^{\sharp}]\right )  \right].
\]
The conditional probability on the right hand side is bounded from above by
$\gamma_{B} ([\alpha_{n}^{-1/2} R - \alpha_{n}^{-1/2} U_{n,A}^{\sharp}]) + Cn^{-(\zeta_1 \wedge \zeta_2)/6}$,
and hence 
\[
\begin{split}
&\Prob_{\mid \calD_{n}} (U_{n}^{\sharp} \in R) \le \E_{\mid \calD_{n}} \left [ \gamma_{B}\left ([\alpha_{n}^{-1/2} R - \alpha_{n}^{-1/2} U_{n,A}^{\sharp}]\right ) \right ] + Cn^{-(\zeta_1 \wedge \zeta_2)/6} \\
&=\Prob_{\mid \calD_{n}} \left ( \breve{Y}_{B} \in [\alpha_{n}^{-1/2} R - \alpha_{n}^{-1/2} U_{n,A}^{\sharp}] \right )  + Cn^{-(\zeta_1 \wedge \zeta_2)/6} =\Prob_{\mid \calD_{n}} \left ( U_{n,A}^{\sharp}  \in [R-\alpha_{n}^{1/2}\breve{Y}_{B}] \right )  + Cn^{-(\zeta_1 \wedge \zeta_2)/6},
\end{split}
\]
where $\breve{Y}_{B} \sim N(0,\Gamma_{h})$ independent of $\calD_{n}$ and $\Xi$. 
The first term on the far right hand side can be written as $\E_{\mid \calD_{n}}  [ \Prob_{\mid (\calD_{n},\breve{Y}_{B})} ( U_{n,A}^{\sharp}  \in [R-\alpha_{n}^{1/2}\breve{Y}_{B}]   )  ]$,
and the inner conditional probability is bounded from above by $\gamma_{A} ( [R-\alpha_{n}^{1/2}\breve{Y}_{B}]  ) + Cn^{-(\zeta_1 \wedge \zeta_2)/6}$.
Hence, 
\[
\Prob_{\mid \calD_{n}} (U_{n}^{\sharp} \in R) \le \E_{\mid \calD_{n}} \left [ \gamma_{A}\left ( [R-\alpha_{n}^{1/2}\breve{Y}_{B}]  \right ) \right ] + Cn^{-(\zeta_1 \wedge \zeta_2)/6} = \Prob \left ( Y \in R \right ) + Cn^{-(\zeta_1 \wedge \zeta_2)/6}.
\]
Likewise, we have  $\Prob_{\mid \calD_{n}} (U_{n}^{\sharp} \in R) \ge \Prob (Y \in R) - Cn^{-(\zeta_1 \wedge \zeta_2)/6}$. 

Finally, the last statement of the theorem is trivial since the bootstrap distribution is taken only with respect to $\{ \xi_{i_{1}} : i_{1} \in S_{1} \}$ and $\{ \xi_{\iota}': \iota \in I_{n,r} \}$. This completes the proof. 
\end{proof}

\begin{proof}[Proof of Corollary \ref{cor:bootstrap_validity}]
This follows from Step 2 in the proof of Theorem \ref{thm:bootstrap_validity}.
\end{proof}

\subsection{Proofs of Theorem \ref{thm:bootstrap_validity_B_poly} and \ref{thm:bootstrap_validity_poly}}
\label{subsec:proofs_bootstrap_validity_poly}

As before, we will assume that $\theta = P^{r}h=0$.
Throughout this section, the notation $\lesssim$ signifies that the left hand side is bounded by the right hand side up to a constant that depends only on $\underline{\sigma}, r, q$, and $C_{1}$. Let $C$ denote a generic constant that depends only on $\underline{\sigma}, r, q$, and $C_{1}$; its value may change from place to place.
Recall that $\rho_{\mid \calD_{n}}^{\calR} (U_{n,\star}^{\sharp},Y_{\star}) := \sup_{R \in \calR} | \Prob_{\mid \calD_{n}} (U_{n,\star}^{\sharp} \in R) - \Prob (Y_{\star} \in R) |$ for $\star = A,B$.

\begin{proof}[Proof of Theorem \ref{thm:bootstrap_validity_B_poly}]
We shall follow the notation used in the proof of Theorem \ref{thm:bootstrap_validity_A}. 

\uline{Bernoulli sampling case}. We consider the decomposition of $\hat{\Delta}_{B}$ in (\ref{eqn:decomp_hat_Delta_B}). First note that $\hat{\Delta}_{B,3} \log^{2}{d} \le C n^{-\zeta/2}$ with probability at least $1-2n^{-1}$ since the randomness of $\hat{\Delta}_{B,3}$ only comes from $\hat{N}$ which is independent of $X_{1}^{n}$. For $\hat{\Delta}_{B,2}$, by (\ref{eqn:step1.5_polymom_bern_sampling}), we have 
\[
\Prob \left( \hat{\Delta}_{B,2} \ge C \{n^{-1/2} D_{n} \log^{1/2}(dn) + n^{-1+4/q} D_{n}^{2} \log{d} \} \right) \le C n^{-1}.
\]
Since $r \ge 2$, under Condition (\ref{eqn:growth_condition_B_poly}), we have $\hat{\Delta}_{B,2} \log^{2}{d} \le C n^{-\zeta/2}$ with probability at least $1-Cn^{-1}$. For $\hat{\Delta}_{B,1}$, note that, under (C2'), (\ref{eqn:hat_Delta_{B1}_conditional_bound}) continues to hold and we only need to find an upper bound for $\mathsf{V}_{n}$ and $\mathsf{M}_{1}$ with probability at least $1-Cn^{-1}$. Note that $(\E[\mathsf{M}_{1}^{q/2}])^{2/q} \le n^{2r/q} D_{n}^{2}$, so that by Markov's inequality we have $\Prob(\mathsf{M}_{1} \ge D_{n}^{2} n^{2(r+1)/q}) \le n^{-1}$.  Let $m=\lfloor n/r \rfloor$. By Lemma E.4 in \cite{chen2017a}, we have for all $t > 0$ that 
\[
\Prob\left(m \mathsf{V}_{n} \ge 2 \E\left[ \max_{1 \le j,k \le d} \sum_{i=1}^{m} h_{j}^{2}(X_{(i-1)r+1}^{ir}) h_{k}^{2}(X_{(i-1)r+1}^{ir})  \right] + t \right) \le  {C\E[{\mathsf{M}'_{1}}^{q}] \over t^{q/4}}.
\]
Now $(\E[{\mathsf{M}_{1}'}^{q}])^{1/q} \lesssim n^{1/q} D_{n}$ and Lemma 9 in \cite{cck2015_anticoncentration} yields that 
\[
\begin{split}
&\E\left[ \max_{1 \le j,k \le d} \sum_{i=1}^{m} h_{j}^{2}(X_{(i-1)r+1}^{ir}) h_{k}^{2}(X_{(i-1)r+1}^{ir}) \right] \\
&\lesssim \max_{1 \le j,k \le d} \sum_{i=1}^{m} \E[  h_{j}^{2}(X_{(i-1)r+1}^{ir})h_{k}^{2}(X_{(i-1)r+1}^{ir}) ] + \E\left [\max_{1 \le i \le m}\max_{1 \le j \le d} h_{j}^{4}(X_{(i-1)r+1}^{ir})\right] \log d \\
&\lesssim n D_{n}^{2} + n^{4/q} D_{n}^{4} \log{d}.
\end{split}
\]
Hence we have $\Prob(\mathsf{V}_{n} \ge C D_{n}^{2} \{ 1 + n^{-1+4/q} D_{n}^{2} \log{d} + n^{-1+8/q} D_{n}^{2} \}  ) \le C n^{-1}$, so that $\mathsf{V}_{n} \le C D_{n}^{2}$ with probability at least $1-Cn^{-1}$ under Condition (\ref{eqn:growth_condition_B_poly}) because $r \ge 2$. Then it follows from (\ref{eqn:hat_Delta_{B1}_conditional_bound}) and Fubini that 
\begin{equation}
\label{eqn:bound_hat_Delta_{B1}_poly}
\Prob \left( \hat{\Delta}_{B,1} \ge C \{ N^{-1/2} D_{n} \log^{1/2}(dn) + N^{-1} D_{n}^{2} n^{2(r+1)/q} \log(dn) \} \right) \le C n^{-1},
\end{equation}
which in turn implies that
\[
\hat{\Delta}_{B,1} \log^{2}{d} \le C \{ N^{-1/2} D_{n} \log^{5/2}(dn) + N^{-1} D_{n}^{2} n^{2(r+1)/q} \log^{3}(dn) \} \le C n^{-\zeta/2}
\]
with probability at least $1-Cn^{-1}$. For $\hat{\Delta}_{B,4}$, by similar calculations to those in bounding $\hat{\Delta}_{B,1}$ (cf. the details for bouding the term $\hat{\Delta}_{B,4}$ in the proof of Theorem \ref{thm:bootstrap_validity_A}), we can show that $|B_{n}|_{\infty} \le C\{ (N^{-1} D_{n} \log(dn))^{1/2} + N^{-1} D_{n} n^{(r+1)/q} \log(dn) \}$ with probability at least $1-Cn^{-1}$. In addition, Lemma \ref{lem:talagrand_ineq_ustat_polymom} and Lemma 8 in \cite{cck2015_anticoncentration} yield that $|A_{n}|_{\infty} \le C \{ (n^{-1} D_{n} \log(dn))^{1/2} + n^{-1+2/q} D_{n} \log{d} \}$ with probability at least $1-Cn^{-1}$. Hence we have that
\[
(|A_{n}|_{\infty}^{2} + |B_{n}|_{\infty}^{2})\log^{2} d \le n^{-\zeta/2}
\]
with probability at least $1-Cn^{-1}$.

Combining the above bounds, we have $\hat{\Delta}_{B} \log^{2} d \le C n^{-\zeta/2}$ with probability at least $1-Cn^{-1}$, and in view of (\ref{eqn:boot_GC}), this leads to the desired conclusion. 

\uline{Sampling with replacement case}. We consider the decomposition of $\hat{\Delta}_{B}$ in (\ref{eqn:decomp_hat_Delta_B_replacement}). We have shown that $\hat{\Delta}_{B,2} \log^{2}{d} \le C \{n^{-1/2} D_{n} \log^{5/2}(dn) + n^{-1+4/q} D_{n}^{2} \log^{3}{d} \} \le C n^{-\zeta/2}$ with probability at least $1-Cn^{-1}$. Note that the bound (\ref{eqn:bound_hat_Delta_{B1}_poly}) on $\hat{\Delta}_{B,1}$ holds exactly as in the Bernoulli sampling case in view of (\ref{eqn:hat_Delta_{B1}_conditional_bound}) and the proof of sampling with replacement case in Theorem \ref{thm:bootstrap_validity_A}; that is, we have $\hat{\Delta}_{B,1} \log^{2}{d} \le C \{ N^{-1/2} D_{n} \log^{5/2}(dn) + N^{-1} D_{n}^{2} n^{2(r+1)/q} \log^{3}(dn) \} \le C n^{-\zeta/2}$ with probability at least $1-Cn^{-1}$. Finally, $\hat{\Delta}_{B,3} \le 2(|A_{n}|_{\infty}^{2} + |B_{n}|_{\infty}^{2})$ where $A_{n}  =U_{n}$ and $B_{n} = N^{-1} \sum_{j=1}^{N} \{ h(X_{\iota_{j}}^{*}) - U_{n} \}$. We have shown that $|A_{n}|_{\infty} \le C \{ (n^{-1} D_{n} \log(dn))^{1/2} + n^{-1+2/q} D_{n} \log{d} \}$ with probability at least $1-Cn^{-1}$. Similarly, $|B_{n}|_{\infty} \le C\{ (N^{-1} D_{n} \log(dn))^{1/2} + N^{-1} D_{n} n^{(r+1)/q} \log(dn) \}$ with probability at least $1-Cn^{-1}$ in view of (\ref{eqn:hat_Delta_{B4}_Bn_conditional_bound}) and the proof of sampling with replacement case in Theorem \ref{thm:bootstrap_validity_A}.
\end{proof}

\begin{proof}[Proof of Theorem \ref{thm:bootstrap_validity_poly}]
We shall follow the notation and the proof structure in the proof of Theorem \ref{thm:bootstrap_validity}. In Step 1, it follows from Theorem \ref{thm:bootstrap_validity_B_poly} that $\rho_{\mid \calD_{n}}^{\calR}(U_{n,B}^{\sharp},Y_{B}) \le Cn^{-\zeta_{1}/6}$ with probability at least $1-Cn^{-1}$ in view that (C3-ND) implies Condition (C3-D) and $n_{1} \le n$. Step 2 in Theorem \ref{thm:bootstrap_validity} needs modifications. In particular, we shall show in this step that $\rho_{\mid \calD_{n}}^{\calR} (U_{n,A}^{\sharp},Y_{A}) \le Cn^{-(\zeta_{1} \wedge \zeta_{2})/6}$ holds with probability at least $1-Cn^{-1}$ under (C2') and (\ref{eqn:growth_condition_poly}). Consider the decomposition of $\hat{\Delta}_{A}$ in (\ref{eqn:decomp_hat_Delta_A}). For $\hat{\Delta}_{A,2}$, Lemma 8 in \cite{cck2015_anticoncentration} yields that 
\begin{equation*}
\begin{split}
\E[\hat{\Delta}_{A,2}] &\lesssim n_{1}^{-1} \sqrt{(\log d)\max_{1 \le j,\ell \le d} \sum_{i_{1}=1}^{n_{1}} \E[g_{j}^{2}(X_{i_{1}}) g_{\ell}^{2}(X_{i_{1}})]} + n_{1}^{-1} \sqrt{\E\left[\max_{1 \le i_{1} \le n_{1}} \max_{1 \le j \le d} g_{j}^{4}(X_{i_{1}}) \right]} \log d \\
&\lesssim n_{1}^{-1/2} D_{n}\log^{1/2} d + n_{1}^{-1+2/q} D_{n}^{2} \log{d}.
\end{split}
\end{equation*}
By Lemma E.2 in \cite{cck2017_AoP}, we have for every $t > 0$
\[
\Prob( \hat{\Delta}_{A,2} \ge 2 \E[\hat{\Delta}_{A,2}] + t ) \le \exp\left(-{n_{1} t^{2} \over 3 D_{n}^{2} }\right) +  {CD_{n}^{q} \over n_{1}^{q/2-1} t^{q/2}}.
\]
Choosing $t = \{C n_{1}^{-1} D_{n}^{2} \log{n} \}^{1/2} \bigvee \{ C n_{1}^{-1+2/q} n^{2/q} D_{n}^{2} \}$, we have
\[
\Prob \left( \hat{\Delta}_{A,2} \ge C \{ (n_{1}^{-1} D_{n}^{2} \log(dn))^{1/2} + n_{1}^{-1+2/q} n^{2/q} D_{n}^{2} \log{d}  \} \right) \le C n^{-1}.
\]
Now, using Condition (\ref{eqn:growth_condition_poly}), we deduce that 
\[
\hat{\Delta}_{A,2} \log^{2}{d} \le C \{ (n_{1}^{-1} D_{n}^{2} \log^{5}(dn))^{1/2} +n_{1}^{-1+2/q} n^{2/q} D_{n}^{2} \log^{3}{d}  \}\le C n^{-\zeta_1/2}
\]
with probability at least $1-Cn^{-1}$. The term $\hat{\Delta}_{A,3}^{2}$ can be similarly dealt with. In particular, using $\overline{\sigma}_{g}^{2} \le 1+\max_{j} P|g_{j}|^{3} \lesssim D_{n}$, we have
\begin{equation}
\label{eqn:delta_A3}
\E[\hat{\Delta}_{A,3}] \lesssim (n_{1}^{-1}D_{n} \log d)^{1/2} + n_{1}^{-1+1/q} D_{n} \log{d}.
\end{equation}
Then using Lemma 8 in \cite{cck2015_anticoncentration} and Lemma E.2 in \cite{cck2017_AoP}, we can show that 
\[
\hat{\Delta}_{A,3}^{2} \log^{2}{d} \le C \{ n_{1}^{-1} D_{n} \log^{3}(dn) + n_{1}^{-2+2/q} n^{2/q} D_{n}^{2} \log^{4}{d} \} \le C n^{-\zeta_1} \le C n^{-\zeta_1/2}
\]
with probability at least $1-Cn^{-1}$. Hence $\hat{\Delta}_{A} \log^{2}{d} \le C n^{-(\zeta_1 \wedge \zeta_2)/2}$ with probability at least $1-Cn^{-1}$, which leads to the conclusion of this step.
\end{proof}

\subsection{Proofs of Proposition \ref{prop:bootstrap_validity_DC} and \ref{prop:bootstrap_validity_incomplete_ustat}}
For the notational convenience, let $H=\max_{1 \le j \le d} |h_{j}|$. For each fixed $x \in S$, denote by $\delta_{x} h$ the function of $(r-1)$ variables, $(\delta_{x}h)(x_{2},\dots,x_{r}) = h(x,x_{2},\dots,x_{r})$.

\begin{proof}[Proof of Proposition \ref{prop:bootstrap_validity_DC}]
In this proof, the notation $\lesssim$ signifies that the left hand side is bounded by the right hand side up to a constant that depends only on $r, \nu$, and $C_1$. We will bound $\E[ \hat{\Delta}_{A,1}^{\nu}]$. To this end, we begin with observing that 
\[
\hat{\Delta}_{A,1} \le \frac{1}{n_{1}} \sum_{i_{1} \in S_{1}} \max_{1 \le j \le d} \{ \hat{g}_{j}^{(i_{1})}(X_{i_{1}}) - g_{j}(X_{i_{1}}) \}^{2},
\]
and by Jensen's inequality, we have 
\[
\E[ \hat{\Delta}_{A,1}^{\nu} ] \le \frac{1}{n_{1}} \sum_{i_{1} \in S_{1}} \E \left [ \max_{1 \le j \le d} \left | \hat{g}_{j}^{(i_{1})}(X_{i_{1}}) - g_{j}(X_{i_{1}}) \right |^{2\nu} \right ].
\]

For each $i_{1} \in S_{1}$ and $k=1,\dots,K$, let 
\[
g^{(i_{1},k)}(x) = \frac{1}{|I_{L,r-1}|} \sum_{\substack{i_{2},\dots,i_{r} \in S_{2,k}^{(i_{1})} \\ i_{2} < \cdots < i_{r}}} (\delta_{x}h)(X_{i_{2}},\dots,X_{i_{r}}),
\]
which is the $U$-statistic with kernel $\delta_{x}h$ for the sample $\{ X_{i} : i \in S_{2,k}^{(i_{1})} \}$. Recall that the size of each block $S_{2,k}^{(i_{1})}$ is $L$, $|S_{2,k}^{(i_{1})} | = L$. Then, $\hat{g}^{(i_{1})}(x)$ is the average of $ g^{(i_{1},k)}(x), k=1,\dots,K$, 
\[
\hat{g}^{(i_{1})}(x) = \frac{1}{K} \sum_{k=1}^{K} g^{(i_{1},k)}(x).
\]
For each $i_{1} \in S_{1}$, since the blocks $S_{2,k}^{(i_{1})}, k=1,\dots,K$ are disjoint and do not contain $i_{1}$, the vectors $g^{(i_{1},k)}(X_{i_{1}}), k=1,\dots,K$ are independent with mean $g(X_{i_{1}})$ conditionally on $X_{i_{1}}$. Hence, applying first the Hoffmann-J{\o}rgensen inequality \cite[Proposition A.1.6]{vandervaartwellner1996} conditionally on $X_{i_{1}}$, we have 
\[
\begin{split}
&\E_{\mid X_{i_{1}}} \left [  \max_{1 \le j \le d} \left | \hat{g}_{j}^{(i_{1})}(X_{i_{1}})  - g_{j}(X_{i_{1}}) \right |^{2\nu} \right ]  \\
&\quad \lesssim \left (\E_{\mid X_{i_{1}}} \left [  \max_{1 \le j \le d}\left | \hat{g}_{j}^{(i_{1})}(X_{i_{1}})  - g_{j}(X_{i_{1}}) \right|  \right ] \right )^{2\nu}  + K^{-2\nu} \E_{\mid X_{i_{1}}}\left [ \max_{1 \le k \le K} \max_{1 \le j \le d} |g_{j}^{(i_{1},k)}(X_{i_{1}})-g_{j}(X_{i_{1}})|^{2\nu} \right].
\end{split}
\]
Applying Hoeffiding's averaging and Theorem 2.14.1 in \cite{vandervaartwellner1996}, we have
\[
\E_{\mid X_{i_{1}}}\left [ \max_{1 \le j \le d} |g_{j}^{(i_{1},k)}(X_{i_{1}}) - g_{j}(X_{i_{1}})|^{2\nu} \right ] \lesssim L^{-\nu} (\log d)^{\nu} P^{r-1} |\delta_{X_{i_{1}}}H|^{2\nu}. 
\]
Further, applying Lemma 8 in \cite{cck2015_anticoncentration} conditionally on $X_{i_{1}}$, we have  
\[
\begin{split}
&\E_{\mid X_{i_{1}}} \left [  \max_{1 \le j \le d} \left| \hat{g}_{j}^{(i_{1})}(X_{i_{1}})  - g_{j}(X_{i_{1}}) \right|  \right ] \\
&\lesssim  K^{-1} \sqrt{(\log d) \max_{1 \le j \le d}\sum_{k=1}^{K}\E_{\mid X_{i_{1}}}\left [ \left \{\hat{g}_{j}^{(i_{1},k)}(X_{i_{1}})  - g_{j}(X_{i_{1}}) \right \}^{2} \right]}\\
&\quad  + K^{-1} \sqrt{\E_{\mid X_{i_{1}}} \left [\max_{1 \le k \le K} \max_{1 \le j \le d} |g_{j}^{(i_{1},k)}(X_{i_{1}}) - g_{j}(X_{i_{1}})|^{2} \right]} \log d.
\end{split}
\] 
From the variance formula for $U$-statistics (cf. \cite{lee1990}, Theorem 3), we have
\[
\begin{split}
\E_{\mid X_{i_{1}}}\left [ \left \{\hat{g}_{j}^{(i_{1},k)}(X_{i_{1}})  - g_{j}(X_{i_{1}}) \right \}^{2} \right] &\le \binom{L}{r-1}^{-1} \sum_{\ell=1}^{r-1} \binom{r-1}{\ell} \binom{L-r+1}{r-1-\ell} P^{r-1} (\delta_{X_{i_{1}}}h_{j})^{2}\\
&\lesssim L^{-1} P^{r-1} (\delta_{X_{i_{1}}}h_{j})^{2}. 
\end{split}
\]
It remains to bound 
\begin{equation}
\label{eqn:remainder}
\E_{\mid X_{i_{1}}} \left [\max_{1 \le k \le K} \max_{1 \le j \le d} |g_{j}^{(i_{1},k)}(X_{i_{1}}) - g_{j}(X_{i_{1}})|^{2} \right].
\end{equation}
Observe that the term (\ref{eqn:remainder}) is bounded from above by
\[
\left ( \sum_{k=1}^{K} \E_{\mid X_{i_{1}}}\left [ \max_{1 \le j \le d} |g_{j}^{(i_{1},k)}(X_{i_{1}}) - g_{j}(X_{i_{1}})|^{2\nu} \right ] \right )^{1/\nu},
\]
which  is bounded from above by $K^{1/\nu} L^{-1} (\log d) (P^{r-1} |\delta_{X_{i_{1}}}H|^{2\nu})^{1/\nu}$
up to a constant that depends only on $r$ and $\nu$. 

By Fubini and Jensen's inequality, we have 
\[
\begin{split}
&\E \left [ \max_{1 \le j \le d} \left | \hat{g}_{j}^{(i_{1})}(X_{i_{1}})  - g_{j}(X_{i_{1}}) \right |^{2\nu} \right ]  \\
&\lesssim (KL)^{-\nu}  (\log d)^{\nu}  \E\left [ (P^{r-1}|\delta_{X_{i_{1}}}H|^{2})^{\nu}  \right ] + K^{-2\nu+1}L^{-\nu} (\log d)^{3\nu} \E\left[ P^{r-1} |\delta_{X_{i_{1}}}H|^{2\nu} \right ] \\
&\lesssim (KL)^{-\nu} (\log d)^{\nu} P^{r}H^{2\nu} + K^{-2\nu+1}L^{-\nu}(\log d)^{3\nu} P^{r}H^{2\nu} \\
&\lesssim (KL)^{-\nu} D_{n}^{2\nu} (\log d)^{3\nu} + K^{-2\nu+1}L^{-\nu}D_{n}^{2\nu} (\log d)^{5\nu} \\
&\le \left \{ (KL)^{-1} D_{n}^{2} (\log^{3} d) (1+K^{-1+1/\nu} \log^{2} d) \right \}^{\nu}, 
\end{split}
\]
from which we conclude that $\E[(\overline{\sigma}_{g}^{2}\hat{\Delta}_{A,1} \log^{4} d)^{\nu}] \lesssim n^{-\zeta \nu}$, 
so that by Markov's inequality,
\[ 
\Prob \left (\overline{\sigma}_{g}^{2} \hat{\Delta}_{A,1} \log^{4} d > n^{-\zeta +1/\nu} \right) \lesssim n^{-1}.
\]
In view of Theorem \ref{thm:bootstrap_validity} and Corollary \ref{cor:bootstrap_validity}, this leads to the conclusion of the proposition.
\end{proof}

\begin{proof}[Proof of Proposition \ref{prop:bootstrap_validity_incomplete_ustat}]
In this proof, the notation $\lesssim$ signifies that the left hand side is bounded by the right hand side up to a constant that depends only on $r,\nu$, and $C_{1}$. Observe that 
\[
\begin{split}
\hat{g}^{(i_{1})}(X_{i_{1}})  - g(X_{i_{1}}) &= M^{-1} \sum_{\iota' \in I_{n-1,r-1}} (Z_{\iota'}'-\vartheta_{n}) (\delta_{X_{i_{1}}}h)(X_{\sigma_{i_{1}}(\iota')}) \\
&\quad + |I_{n-1,r-1}|^{-1} \sum_{\iota'  \in I_{n-1,r-1}}\{ (\delta_{X_{i_{1}}}h)(X_{\sigma_{i_{1}}(\iota')})- g(X_{i_{1}}) \}.
\end{split}
\]
Conditionally on $X_{1}^{n}$, the first term is the sum of centered independent random vectors, and hence the Hoffmann-J{\o}rgensen inequality yields that 
\[
\begin{split}
&\E_{\mid X_{1}^{n}} \left [ \max_{1 \le j \le d} \left | \sum_{\iota' \in I_{n-1,r-1}} (Z_{\iota'}'-\vartheta_{n}) (\delta_{X_{i_{1}}}h)(X_{\sigma_{i_{1}}(\iota')}) \right |^{2\nu} \right ] \\
&\lesssim \left ( \E_{\mid X_{1}^{n}} \left [ \max_{1 \le j \le d} \left | \sum_{\iota'\in I_{n-1,r-1}} (Z_{\iota'}'-\vartheta_{n}) (\delta_{X_{i_{1}}}h)(X_{\sigma_{i_{1}}(\iota')}) \right | \right ] \right )^{2\nu} + \max_{\iota' \in I_{n-1,r-1}} \max_{1 \le j \le d} | (\delta_{X_{i_{1}}}h_{j})(X_{\sigma_{i_{1}}(\iota')})|^{2\nu}. 
\end{split}
\]
By Lemma 8 in \cite{cck2015_anticoncentration}, 
\[
\begin{split}
&\E_{\mid X_{1}^{n}} \left [ \max_{1 \le j \le d} \left | \sum_{\iota'\in I_{n-1,r-1}} (Z_{\iota'}'-\vartheta_{n}) (\delta_{X_{i_{1}}}h)(X_{\sigma_{i_{1}}(\iota')}) \right | \right ]   \\
&\lesssim \sqrt{ M (\log d) \max_{1 \le j \le d} |I_{n-1,r-1}|^{-1} \sum_{\iota' \in I_{n-1,r-1}} (\delta_{X_{i_{1}}}h_{j})^{2}(X_{\sigma_{i_{1}}(\iota')})} \\
&\quad + (\log d)\max_{\iota' \in I_{n-1,r-1}} \max_{1 \le j \le d} | (\delta_{X_{i_{1}}}h_{j})(X_{\sigma_{i_{1}}(\iota')})|. 
\end{split}
\]
Observe that 
\[
\max_{\iota' \in I_{n-1,r-1}} \max_{1 \le j \le d} | (\delta_{X_{i_{1}}}h_{j})(X_{\sigma_{i_{1}}(\iota')})| \le \max_{\iota \in I_{n,r}} \max_{1 \le j \le d} |h_{j}(X_{\iota})| = \max_{\iota \in I_{n,r}} H(X_{\iota}).
\]
Hence, using Jensen's inequality, we have
\[
\begin{split}
&\E_{\mid X_{1}^{n}} \left [ \max_{1 \le j \le d} \left | \sum_{\iota' \in I_{n-1,r-1}} (Z_{\iota'}'-\vartheta_{n}) (\delta_{X_{i_{1}}}h)(X_{\sigma_{i_{1}}(\iota')}) \right |^{2\nu} \right ] \\
&\lesssim M^{\nu} (\log d)^{\nu}\max_{1 \le j \le d} |I_{n-1,r-1}|^{-1} \sum_{\iota' \in I_{n-1,r-1}} (\delta_{X_{i_{1}}}h_{j})^{2\nu}(X_{\sigma_{i_{1}}(\iota')}) + (\log d)^{2\nu} \max_{\iota \in I_{n,r}} H^{2\nu}(X_{\iota}).
\end{split}
\]
In addition, applying the Hoeffding averaging and Lemma 9 in \cite{cck2015_anticoncentration} conditionally on $X_{i_{1}}$, we have 
\[
\begin{split}
&\E_{\mid X_{i_{1}}} \left [ \max_{1 \le j \le d} |I_{n-1,r-1}|^{-1} \sum_{\iota' \in I_{n-1,r-1}} (\delta_{X_{i_{1}}}h_{j})^{2\nu}(X_{\sigma_{i_{1}}(\iota')}) \right ] \\
&\lesssim \max_{1 \le j \le d} P^{r-1} (\delta_{X_{i_{1}}} h_{j})^{2\nu}+ n^{-1} (\log d) \E_{\mid X_{i_{1}}} \left [ \max_{\iota' \in I_{n-1,r-1}} \max_{1 \le j \le d} | (\delta_{X_{i_{1}}}h_{j})(X_{\sigma_{i_{1}}(\iota')})|^{2\nu} \right ].
\end{split}
\]
Hence,
\[
\begin{split}
&\E \left [ \max_{1 \le j \le d} \left | M^{-1}\sum_{\iota'\in I_{n-1,r-1}} (Z_{\iota'}'-\vartheta_{n}) (\delta_{X_{i_{1}}}h)(X_{\sigma_{i_{1}}(\iota')}) \right |^{2\nu} \right ] \\
&\lesssim M^{-\nu} (\log d)^{\nu} \left \{ P^{r} H^{2\nu} + n^{-1}(\log d) \E\left [ \max_{\iota \in I_{n,r}} H^{2\nu}(X_{\iota}) \right ] \right\} + M^{-2\nu}(\log d)^{2\nu}\E\left [ \max_{\iota \in I_{n,r}} H^{2\nu}(X_{\iota}) \right ] \\
&\lesssim M^{-\nu}(\log d)^{\nu} \{ D_{n}^{2\nu} (\log d)^{2\nu} + n^{-1}D_{n}^{2\nu} (\log (dn))^{2\nu+1}  \} + M^{-2\nu}D_{n}^{2\nu}( \log (dn) )^{4\nu} \\
&\lesssim M^{-\nu} D_{n}^{2\nu} (\log (dn))^{3\nu} 
\end{split}
\]
where we have used the fact that $(n \wedge M)^{-1} \log (dn) \lesssim 1$. 

On the other hand, applying Hoeffding averaging and Theorem 2.14.1 in \cite{vandervaartwellner1996} conditionally on $X_{i_{1}}$, we have 
\[
\begin{split}
&\E_{\mid X_{i_{1}}} \left [ \left | \max_{1 \le j \le d} |I_{n-1,r-1}|^{-1} \sum_{\iota'  \in I_{n-1,r-1}}\{ (\delta_{X_{i_{1}}}h)(X_{\sigma_{i_{1}}(\iota')})- g(X_{i_{1}}) \} \right |^{2\nu} \right ] \\
&\lesssim n^{-\nu} (\log d)^{\nu} P^{r-1}|\delta_{X_{i_{1}}} H|^{2\nu}. 
\end{split}
\]
The expectation of the left hand side is bounded by $\lesssim n^{-\nu} (\log d)^{\nu} P^{r}H^{2\nu} \lesssim n^{-\nu} D_{n}^{2\nu}(\log d)^{3\nu}$. 

Therefore, using Condition (\ref{eqn:growth_condition_incomplete_ustat}), we conclude that 
\[
\left( \E[ (\overline{\sigma}_{g}^{2} \hat{\Delta}_{A,1} \log^{4}d)^{\nu}] \right)^{1/\nu} \lesssim \overline{\sigma}_{g}^{2} \left \{ M^{-1}D_{n}^{2} \log^{7} (dn) +  n^{-1} D_{n}^{2} \log^{7}d \right \} \lesssim n^{-\zeta}.
\]
By Markov's inequality, we conclude that 
\[
\Prob \left ( \overline{\sigma}_{g}^{2} \hat{\Delta}_{A,1} \log^{4}d > n^{-(\zeta-1/\nu)} \right ) \lesssim n^{-1}.
\]
In view of Theorem \ref{thm:bootstrap_validity} and Corollary \ref{cor:bootstrap_validity}, this leads to the conclusion of the proposition.
\end{proof}

\subsection{Proofs of Propositions \ref{prop:bootstrap_validity_DC_poly} and \ref{prop:bootstrap_validity_incomplete_ustat_poly}}

The proofs are almost the same as those of Propositions \ref{prop:bootstrap_validity_DC} and \ref{prop:bootstrap_validity_incomplete_ustat}. The required modifications are to take $\nu = q/2$, and to bound $P^{r} H^{q}$ and $\E[ \max_{\iota \in I_{n,r}} H^{q}(X_{\iota})]$ by $D_{n}^{q}$ and $n^{r}D_{n}^{q}$, respectively. We omit the detail for brevity. \qed 

\subsection{Proofs for Section \ref{sec:normalized U-statistics}}

\begin{lem}[Variance estimation]
\label{lem:variance_estimation}
(i) Suppose that Conditions (C1), (C2), and (C3-ND) hold, and in addition suppose that Condition (\ref{eqn:growth_condition}) holds for some constants $0 < C_{1} < \infty$ and $\zeta_1,\zeta_2 \in (0,1)$. 
Then there exists a constant $C$ depending only on $\underline{\sigma},r$, and $C_{1}$ such that $\max_{1 \le j \le d} | \hat{\sigma}_{A,j}^{2}/\sigma_{A,j}^{2} - 1 | \le Cn^{-(\zeta_1 \wedge \zeta_2)/2} \log^{2} d$ with probability at least $1-Cn^{-1}$. 

(ii) Suppose that Conditions (C1), (C2), and (C3-D) hold, and in addition suppose that Condition (\ref{eqn:growth_condition_B}) holds for some constants $0 < C_{1} < \infty$ and $\zeta \in (0,1)$. Then there exists a constant $C$ depending only on $\underline{\sigma},r$, and $C_{1}$ such that $\max_{1 \le j \le d} | \hat{\sigma}_{B,j}^{2}/\sigma_{B,j}^{2} - 1 | \le Cn^{-\zeta/2}/\log^{2} d$ with probability at least $1-Cn^{-1}$.
\end{lem}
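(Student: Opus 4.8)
The plan is to recognize the coordinatewise variance estimators $\hat{\sigma}_{A,j}^{2}$ and $\hat{\sigma}_{B,j}^{2}$ as the diagonal entries of the conditional covariance matrices of $U_{n,A}^{\sharp}$ and $U_{n,B}^{\sharp}$, and then to read off the conclusion from the matrix-perturbation bounds already proved in the course of establishing Theorems \ref{thm:bootstrap_validity} and \ref{thm:bootstrap_validity_A}. As in the other proofs of Section \ref{sec:proofs}, I would first reduce to the case $\theta = P^{r}h = 0$ by replacing $h$ with $h-\theta$; then $\sigma_{A,j}^{2} = r^{2}Pg_{j}^{2} = r^{2}\Gamma_{g,jj}$ and $\sigma_{B,j}^{2} = P^{r}h_{j}^{2} = \Gamma_{h,jj}$, and Conditions (C3-ND), (C3-D) give the uniform lower bounds $\sigma_{A,j}^{2} \ge r^{2}\underline{\sigma}^{2}$ and $\sigma_{B,j}^{2} \ge \underline{\sigma}^{2}$ for all $j$.

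For part (ii): by construction $\hat{\sigma}_{B,j}^{2}$ is the $(j,j)$-element of the conditional covariance matrix of $U_{n,B}^{\sharp}$ given $\calD_{n}$, while $\sigma_{B,j}^{2} = \Gamma_{h,jj}$; hence $|\hat{\sigma}_{B,j}^{2} - \sigma_{B,j}^{2}| \le \hat{\Delta}_{B}$ for every $j$, where $\hat{\Delta}_{B}$ is the max-norm distance between that conditional covariance matrix and $\Gamma_{h}$ used in the proof of Theorem \ref{thm:bootstrap_validity_A} (in either sampling scheme). Dividing by $\underline{\sigma}^{2}$ gives $\max_{1 \le j \le d}|\hat{\sigma}_{B,j}^{2}/\sigma_{B,j}^{2} - 1| \le \hat{\Delta}_{B}/\underline{\sigma}^{2}$. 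The proof of Theorem \ref{thm:bootstrap_validity_A} shows, under (C1), (C2), (C3-D), and (\ref{eqn:growth_condition_B}), that $\hat{\Delta}_{B}\log^{2}d \le Cn^{-\zeta/2}$ with probability at least $1-Cn^{-1}$, and combining the two displays yields the stated bound.

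For part (i): taking $\ell = j$ in the definition of $\hat{\Delta}_{A}$ from Step 2 of the proof of Theorem \ref{thm:bootstrap_validity}, one gets $|\hat{\sigma}_{A,j}^{2} - \sigma_{A,j}^{2}| = r^{2}\bigl| n_{1}^{-1}\sum_{i_{1}\in S_{1}}\{\hat{g}_{j}^{(i_{1})}(X_{i_{1}}) - \breve{g}_{j}\}^{2} - Pg_{j}^{2}\bigr| \le r^{2}\hat{\Delta}_{A}$ for every $j$, whence $\max_{1 \le j \le d}|\hat{\sigma}_{A,j}^{2}/\sigma_{A,j}^{2} - 1| \le \hat{\Delta}_{A}/\underline{\sigma}^{2}$ using $\sigma_{A,j}^{2} \ge r^{2}\underline{\sigma}^{2}$. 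Step 2 of that proof establishes, under (C1), (C2), (C3-ND), and (\ref{eqn:growth_condition}), that $\hat{\Delta}_{A}\log^{2}d \le Cn^{-(\zeta_{1}\wedge\zeta_{2})/2}$ with probability at least $1-Cn^{-1}$; since $d \ge 3$ so $\log d > 1$, this already implies the (weaker) claimed bound $\max_{1 \le j \le d}|\hat{\sigma}_{A,j}^{2}/\sigma_{A,j}^{2} - 1| \le Cn^{-(\zeta_{1}\wedge\zeta_{2})/2}\log^{2}d$.

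There is no substantial new obstacle: the probabilistic work was done inside the two bootstrap-validity proofs, and the lemma is obtained by isolating the diagonal terms. The only care needed is bookkeeping — verifying that the variance estimators are indeed the diagonal entries of the covariance matrices treated there after the reduction $\theta = 0$, and checking that the $\log d$ exponents requested in the statement are no more demanding than those already delivered (for part (i) the requested bound is in fact weaker, with $\log^{2}d$ in the numerator rather than the denominator). If one preferred a self-contained argument, one could instead re-derive the tail bounds for the diagonal quantities directly by applying Lemma \ref{lem:talagrand_ineq_ustat} and Lemma 8 in \cite{cck2015_anticoncentration} coordinatewise, but routing through $\hat{\Delta}_{A}$ and $\hat{\Delta}_{B}$ is the shortest path.
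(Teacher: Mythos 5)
Your proposal is correct and follows exactly the same route as the paper: reduce to $\theta=0$, observe that $\hat\sigma_{A,j}^{2}$ and $\hat\sigma_{B,j}^{2}$ are diagonal entries of the conditional covariance matrices of $U_{n,A}^{\sharp}$ and $U_{n,B}^{\sharp}$, bound $|\hat\sigma_{A,j}^{2}-\sigma_{A,j}^{2}|\le r^{2}\hat\Delta_{A}$ and $|\hat\sigma_{B,j}^{2}-\sigma_{B,j}^{2}|\le\hat\Delta_{B}$, divide by $\sigma_{A,j}^{2}\ge r^{2}\underline\sigma^{2}$ or $\sigma_{B,j}^{2}\ge\underline\sigma^{2}$, and invoke the bounds on $\hat\Delta_{A}$ and $\hat\Delta_{B}$ already derived inside the proofs of Theorems \ref{thm:bootstrap_validity} and \ref{thm:bootstrap_validity_A}. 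Your parenthetical observation that for part (i) this argument in fact yields the sharper bound $Cn^{-(\zeta_{1}\wedge\zeta_{2})/2}/\log^{2}d$ (with $\log^{2}d$ in the denominator, parallel to part (ii)) is also well taken — the proof of Corollary \ref{cor:normalized_U_statistics} cites the lemma in exactly that stronger form, so the $\log^{2}d$ in the numerator of the statement of part (i) appears to be a typographical slip.
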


\begin{proof}[Proof of Lemma \ref{lem:variance_estimation}]
The proofs of Theorems \ref{thm:bootstrap_validity_A} and \ref{thm:bootstrap_validity} immediately imply the lemma. Recall the notation used in the proofs of Theorems \ref{thm:bootstrap_validity_A} and \ref{thm:bootstrap_validity}. Then we have $\max_{1 \le j \le d} | \hat{\sigma}_{A,j}^{2} - \sigma_{A,j}^{2} | \le r^{2} \hat{\Delta}_{A}$ and $\max_{1 \le j \le d} | \hat{\sigma}_{B,j}^{2} - \sigma_{B,j}^{2} | \le \hat{\Delta}_{B}$.
Since $\min_{1 \le j \le d} \sigma_{A,j}^{2} \ge r^{2}\underline{\sigma}^{2}$ in Case (i) and $\min_{1 \le j \le d}\sigma_{B,j}^{2} \ge \underline{\sigma}^{2}$ in Case (ii), the conclusion of the lemma follows from the bounds on $\hat{\Delta}_{B}$ and $\hat{\Delta}_{A}$ established in the proofs of Theorems \ref{thm:bootstrap_validity_A} and \ref{thm:bootstrap_validity}, respectively. 
\end{proof}

\begin{proof}[Proof of Corollary \ref{cor:normalized_U_statistics}]
We only prove Case (i) since the proof for Case (ii) is analogous. As before, we will assume that $\theta = P^{r} h = 0$. In this proof, let $C$ denote a generic constant that depends only on $\underline{\sigma},r$, and $C_{1}$; its value may change from place to place. In addition, without loss of generality, we may assume that $n^{-(\zeta_1 \wedge \zeta_2)/6} \le c_{1}$ for some sufficiently small constant $c_{1}$ depending only on $\underline{\sigma},r$ and $C_{1}$, since otherwise the conclusion of Case (i) is trivial by taking $C$ in the bounds sufficiently large (say, $C \ge 1/c_{1}$).
  We begin with noting that 
\[
\left| \frac{\hat{\sigma}_{j}^{2}}{\sigma_{j}^{2}} -1 \right | = \left | \frac{\hat{\sigma}_{A,j}^{2} - \sigma_{A,j}^{2}+\alpha_{n}(\hat{\sigma}_{B,j}^{2}-\sigma_{B,j}^{2})}{\sigma_{A,j}^{2}+\alpha_{n}\sigma_{B,j}^{2}}  \right | \le \left | \frac{\hat{\sigma}_{A,j}^{2}}{\sigma_{A,j}^{2}} -1 \right | + \left | \frac{\hat{\sigma}_{B,j}^{2}}{\sigma_{B,j}^{2}} - 1 \right |,
\]
so that by Lemma \ref{lem:variance_estimation}, we have that $\max_{1 \le j \le d}|\hat{\sigma}_{j}^{2}/\sigma_{j}^{2} -1| \le Cn^{-(\zeta_1 \wedge \zeta_2)/2}/\log^{2}d$
with probability at least $1-Cn^{-1}$. Choosing $c_{1}$ sufficiently small so that $Cn^{-(\zeta_1 \wedge \zeta_2)/2}/\log^{2}d \le 1/2$, and using the inequalities that 
$|z-1| \le |z^{2}-1|$ for $z \ge 0$ and $|z^{-1}-1| \le 2|z-1|$ for $|z-1| \le 1/2$, we have that 
\[
\max_{1 \le j \le d}|\sigma_{j}/\hat{\sigma}_{j}-1| \le Cn^{-(\zeta_1 \wedge \zeta_2)/2}/\log^{2}d
\]
with probability at least $1-Cn^{-1}$. Now, by Theorem \ref{thm:gaussian_approx_bern_random_design}, we have
\[
\sup_{R \in \calR} \left | \Prob (\sqrt{n} \Lambda^{-1/2} U_{n,N}' \in R) - \Prob (\Lambda^{-1/2}Y \in R) \right | \le  Cn^{-(\zeta_1 \wedge \zeta_2)/6}.
\]
Since $\Lambda^{-1/2} Y$ is Gaussian with mean zero and covariance matrix whose diagonal elements are $1$, by the Borell-Sudakov-Tsirel'son inequality together with the bound $\E[| \Lambda^{-1/2} Y|_{\infty}] \le C\sqrt{\log d}$, we have $\Prob (| \Lambda^{-1/2} Y|_{\infty} > C\sqrt{\log (dn)} ) \le 2n^{-1}$.
Hence, $\Prob (|\sqrt{n} \Lambda^{-1/2} U_{n,N}'|_{\infty}> C\sqrt{\log (dn)}) \le Cn^{-(\zeta_1 \wedge \zeta_2)/6}$.
Since $\frac{n^{-(\zeta_1 \wedge \zeta_2)/2}}{\log^{2}d} \times \sqrt{\log (dn)} \le \frac{C n^{-(\zeta_1 \wedge \zeta_2)/6}}{\log^{3/2}d}$,
we have 
\[
\Prob (|\sqrt{n} (\hat{\Lambda}^{-1/2}-\Lambda^{-1/2}) U_{n,N}'|_{\infty} > t_{n}) \le Cn^{-(\zeta_1 \wedge \zeta_2)/6},
\]
where $t_{n} = Cn^{-(\zeta_1 \wedge \zeta_2)/6}/\log^{3/2}d$. 

Now, for $R=\prod_{j=1}^{d}[a_{j},b_{j}], a=(a_{1},\dots,a_{d})^{T}$, and $b=(b_{1},\dots,b_{d})^{T}$, we have 
\[
\begin{split}
\Prob (\sqrt{n}\hat{\Lambda}^{-1/2} U_{n,N}' \in R)  &\le \Prob (\{ -\sqrt{n}\Lambda^{-1/2} U_{n,N}' \le -a+t_{n} \} \cap \{ \sqrt{n}\Lambda^{-1/2} U_{n,N}' \le b + t_{n} \}) \\
&\quad + \Prob (|\sqrt{n} (\hat{\Lambda}^{-1/2}-\Lambda^{-1/2}) U_{n,N}'|_{\infty} > t_{n}) \\
&\le \Prob (\{ -\Lambda^{-1/2}Y \le -a+t_{n} \} \cap \{ \Lambda^{-1/2}Y \le b+t_{n} \}) + Cn^{-(\zeta_1 \wedge \zeta_2)/8} \\
&\le \Prob (\Lambda^{-1/2} Y \in R) + Ct_{n}\sqrt{\log d} + Cn^{-(\zeta_1 \wedge \zeta_2)/6},
\end{split}
\]
where the last inequality follows from Nazarov's inequality. Since $t_{n}\sqrt{\log d} \le Cn^{-(\zeta_1 \wedge \zeta_2)/6}/\log d \le Cn^{-(\zeta_1 \wedge \zeta_2)/6}$, we conclude that $\Prob (\sqrt{n}\hat{\Lambda}^{-1/2} U_{n,N}' \in R) \le \Prob(\Lambda^{-1/2} Y \in R) +Cn^{-(\zeta_1 \wedge \zeta_2)/6}$. Likewise, we have $\Prob (\sqrt{n}\hat{\Lambda}^{-1/2} U_{n,N}' \in R) \ge \Prob(\Lambda^{-1/2} Y \in R) -Cn^{-(\zeta_1 \wedge \zeta_2)/6}$. Hence we have shown that 
\[
\sup_{R \in \calR} \left| \Prob (\sqrt{n}\hat{\Lambda}^{-1/2} U_{n,N}' \in R) - \Prob (\Lambda^{-1/2}Y \in R) \right| \le Cn^{-(\zeta_1 \wedge \zeta_2)/6}.
\]

Similarly, using Theorem \ref{thm:bootstrap_validity}, we have that 
\[
\Prob_{\mid \calD_{n}} (|(\hat{\Lambda}^{-1/2}-\Lambda^{-1/2})U_{n}^{\sharp}|_{\infty} > t_{n}) \le Cn^{-(\zeta_1 \wedge \zeta_2)/6}
\]
with probability at least $1-Cn^{-1}$. Following arguments similar to those above, we conclude that 
\[
\sup_{R \in \calR} \left | \Prob_{\mid \calD_{n}} (\hat{\Lambda}^{-1/2}U_{n}^{\sharp} \in R) - \Prob (\Lambda^{-1/2}Y \in R) \right | \le Cn^{-(\zeta_1 \wedge \zeta_2)/6}
\]
with probability at least $1-Cn^{-1}$. This completes the proof. 
\end{proof}

\section{Additional simulation results}
\label{sec:additional_simulation}

In Section \ref{subsec:simulation_bootstrap_plots}, we first examine the statistical accuracy of the bootstrap test statistic $U_{n}^{\sharp}$ in terms of the size for MB-NDG-DC (Spearman's $\rho$), MB-NDG-RS (Spearman's $\rho$), and the bootstrap test statistic $U_{n,B}^{\sharp}$ for MB-DG (Bergsma-Dassios' $t^*$). In Section \ref{subsec:simulation_partial_bootstrap}, we report the simulation results of the partial bootstrap $U_{n,A}^{\sharp}$ for Spearman's $\rho$. 

\subsection{Empirical performance of the bootstrap tests}
\label{subsec:simulation_bootstrap_plots}

Recall that, for each nominal size $\alpha \in (0,1)$, $\hat{R}(\alpha)$ is the empirical rejection probability of the null hypothesis, where the critical values are calibrated by our bootstrap methods. Figures \ref{fig:statistical_performance_bootstrap_spearman_rho_dc}, \ref{fig:statistical_performance_bootstrap_spearman_rho_random_sampling}, and \ref{fig:statistical_performance_bootstrap_bergsma_dassios} display the plots of the empirical size graphs $\{(\alpha, \hat{R}(\alpha)) : \alpha \in (0,1)\}$ for MB-NDG-DC (Spearman's $\rho$), MB-NDG-RS (Spearman's $\rho$), and MB-DG (Bergsma-Dassios' $t^*$), respectively. Clearly, the bootstrap approximations becomes more accurate as $n$ increases. Qualitatively, it is worth noting that the bootstrap approximations work quite well on the lower (left) tail, which is relevant in the testing application for small values of $\alpha$ (say $\alpha \le 0.10$). Quantitatively, the uniform errors-in-size on $\alpha \in [0.01, 0.10]$ of our bootstrap tests are summarized in Table \ref{tab:bootstrap_size_error_right_tail} in the main paper. 

\begin{figure}[t!] 
   \centering
       \includegraphics[scale=0.74]{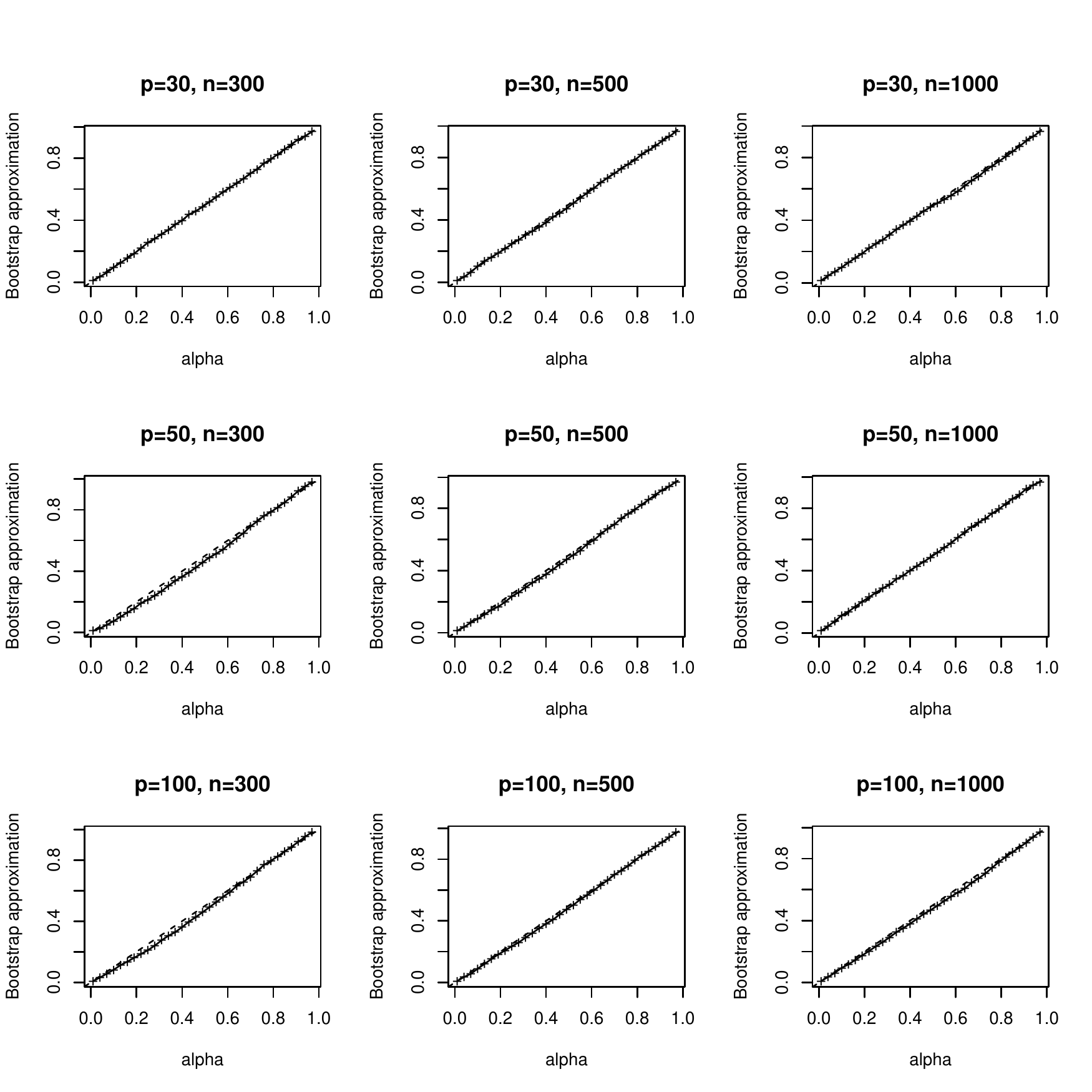}
    \caption{Bootstrap approximation $U_{n}^{\sharp}$ for Spearman's $\rho$ test statistic with the divide and conquer estimation (MB-NDG-DC). Plot of the nominal size $\alpha$ versus the empirical rejection probability $\hat{R}(\alpha)$.}
   \label{fig:statistical_performance_bootstrap_spearman_rho_dc}
\end{figure}

\begin{figure}[t!] 
   \centering
       \includegraphics[scale=0.74]{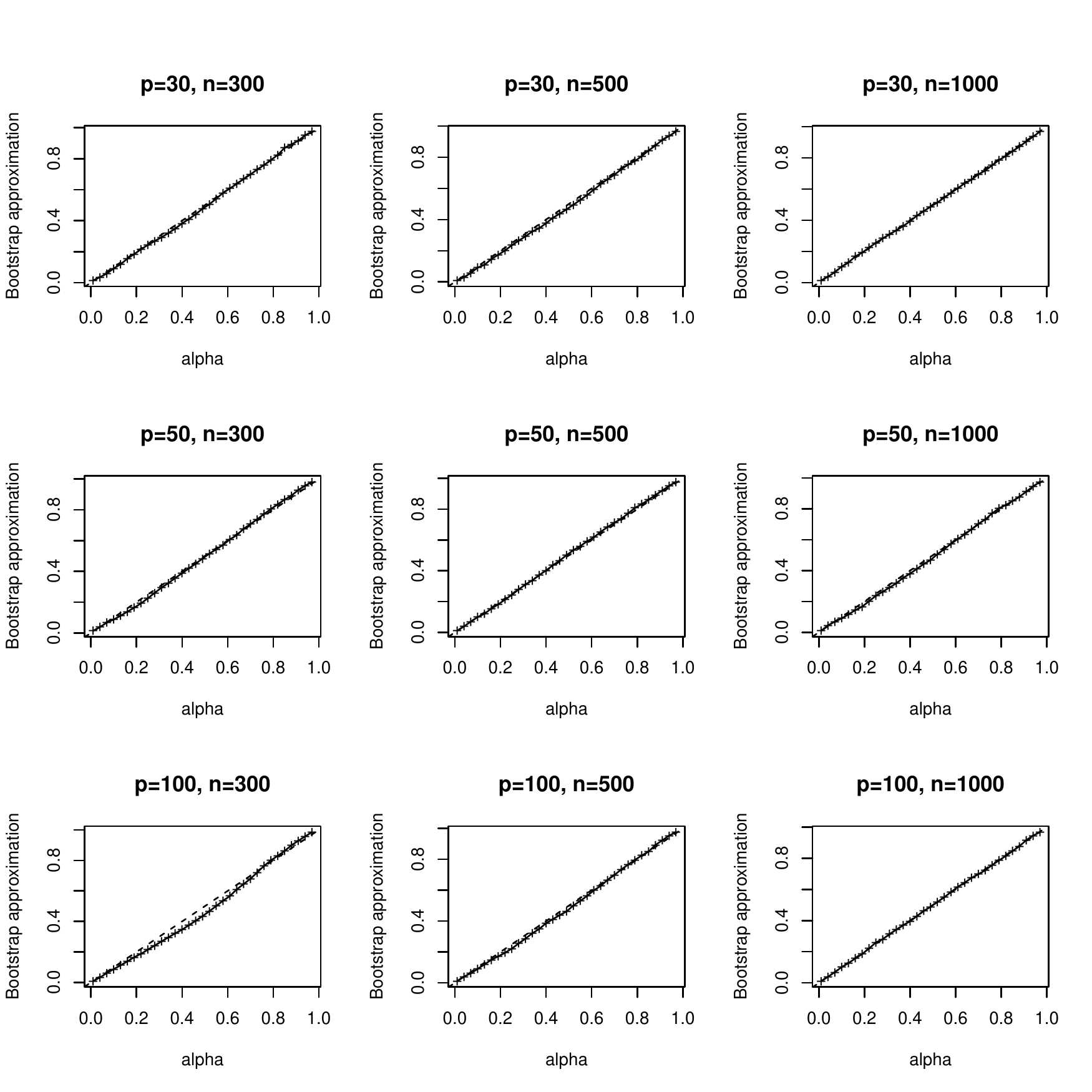}
    \caption{Bootstrap approximation $U_{n}^{\sharp}$ for Spearman's $\rho$ test statistic with the random sampling estimation (MB-NDG-RS). Plot of the nominal size $\alpha$ versus the empirical rejection probability $\hat{R}(\alpha)$.}
   \label{fig:statistical_performance_bootstrap_spearman_rho_random_sampling}
\end{figure}

\begin{figure}[t!] 
   \centering
       \includegraphics[scale=0.74]{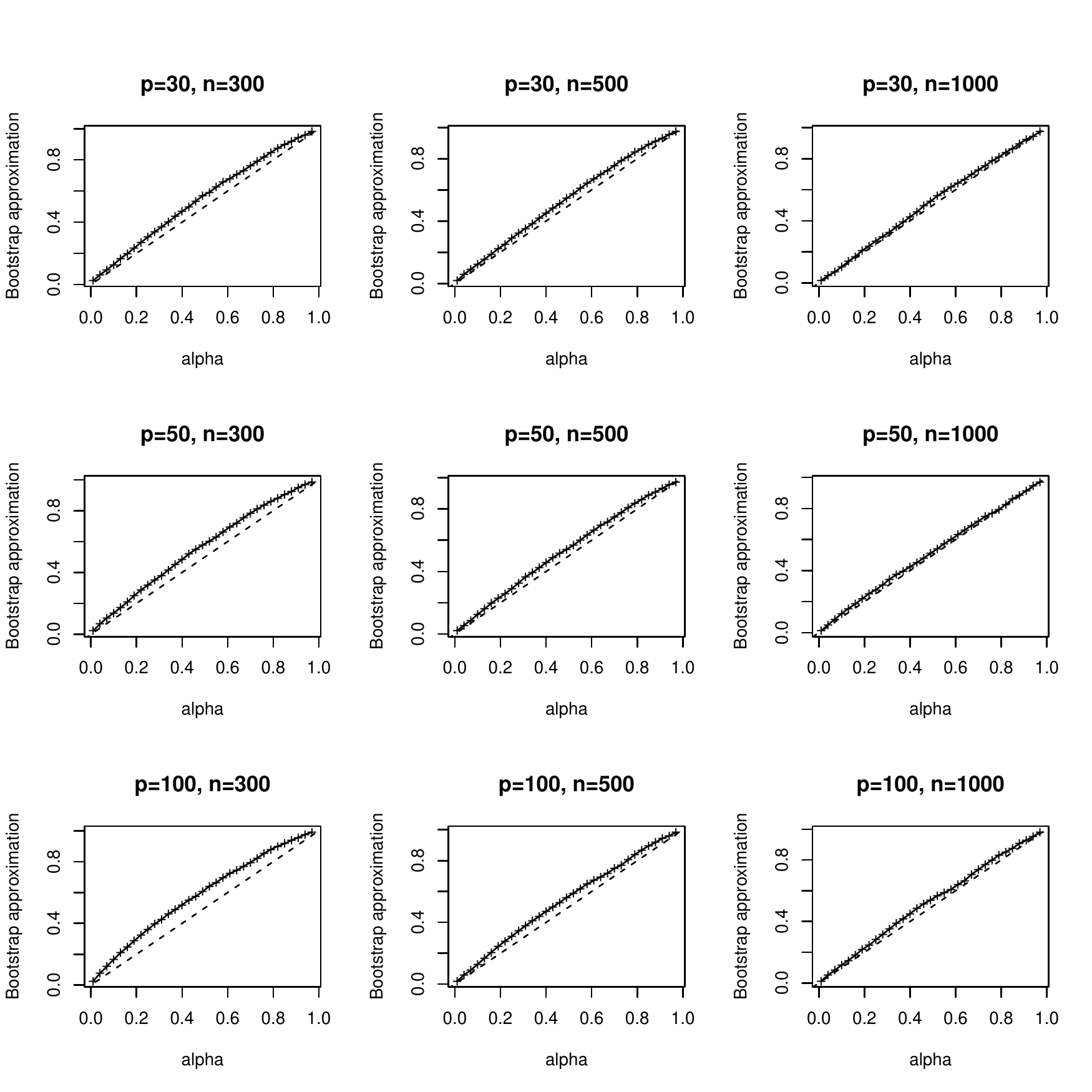}
    \caption{Bootstrap approximation $U_{n,B}^{\sharp}$ for Bergsma-Dassios' $t^*$ test statistic (MB-DG). Plot of the nominal size $\alpha$ versus the empirical rejection probability $\hat{R}(\alpha)$.}
   \label{fig:statistical_performance_bootstrap_bergsma_dassios}
\end{figure}

\subsection{Partial bootstrap $U_{n,A}^{\sharp}$}
\label{subsec:simulation_partial_bootstrap}

We also provide additional results of the partial bootstrap $U_{n,A}^\sharp$ for the non-degenerate Spearman's $\rho$ statistic. As in Section \ref{sec:numerics}, we test the performance of MB-NDG-DC and MB-NDG-RS. The computational budget parameter value is set as $N = 4 n^{3/2}$ and other parameter values remain the same as the simulation examples in Section \ref{sec:numerics}. The exponent of $n^{3/2}$ in $N$ is chosen by minimizing the rate in the error bound of the Gaussian approximation (cf. Corollary \ref{cor:Gaussian_approximation}). We empirically observe that the bootstrap approximation is sensitive to small constant values in $N = C n^{3/2}$ and we find that $C \ge 4$ can produce reasonably accurate bootstrap approximation quality (cf. Figure \ref{fig:statistical_performance_bootstrap_spearman_rho_drop_B_dc} and \ref{fig:statistical_performance_bootstrap_spearman_rho_drop_B_rs}). Table \ref{tab:partial_bootstrap_size_error_right_tail} shows the uniform errors-in-size on $\alpha \in [0.01, 0.10]$ of the partial bootstrap tests. We also show the P-P plots of the (simplified) Gaussian approximation for Spearman's $\rho$ (i.e., $\sqrt{n} U_{n,N}'$ versus $N(0, r^{2} \Gamma_{g})$) in Figure \ref{fig:gauss_approx_spearman_rho_drop_B}. 

\begin{figure}[t!] 
   \centering
       \includegraphics[scale=0.74]{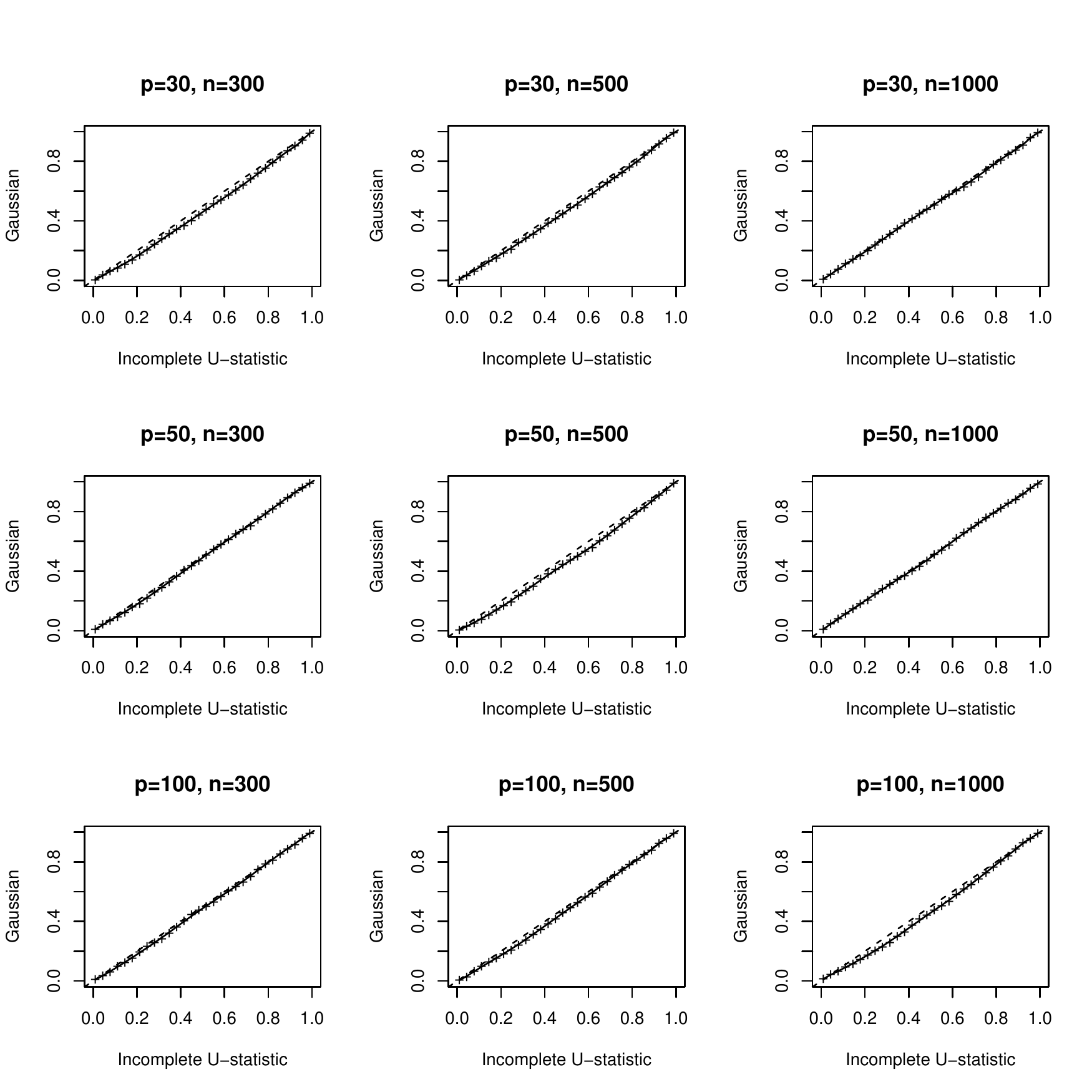}
    \caption{P-P plots for the Gaussian approximation $N(0, r^{2} \Gamma_{g})$ of $\sqrt{n} U_{n,N}'$ for Spearman's $\rho$ test statistic with the Bernoulli sampling.}
   \label{fig:gauss_approx_spearman_rho_drop_B}
\end{figure}

\begin{figure}[t!] 
   \centering
       \includegraphics[scale=0.74]{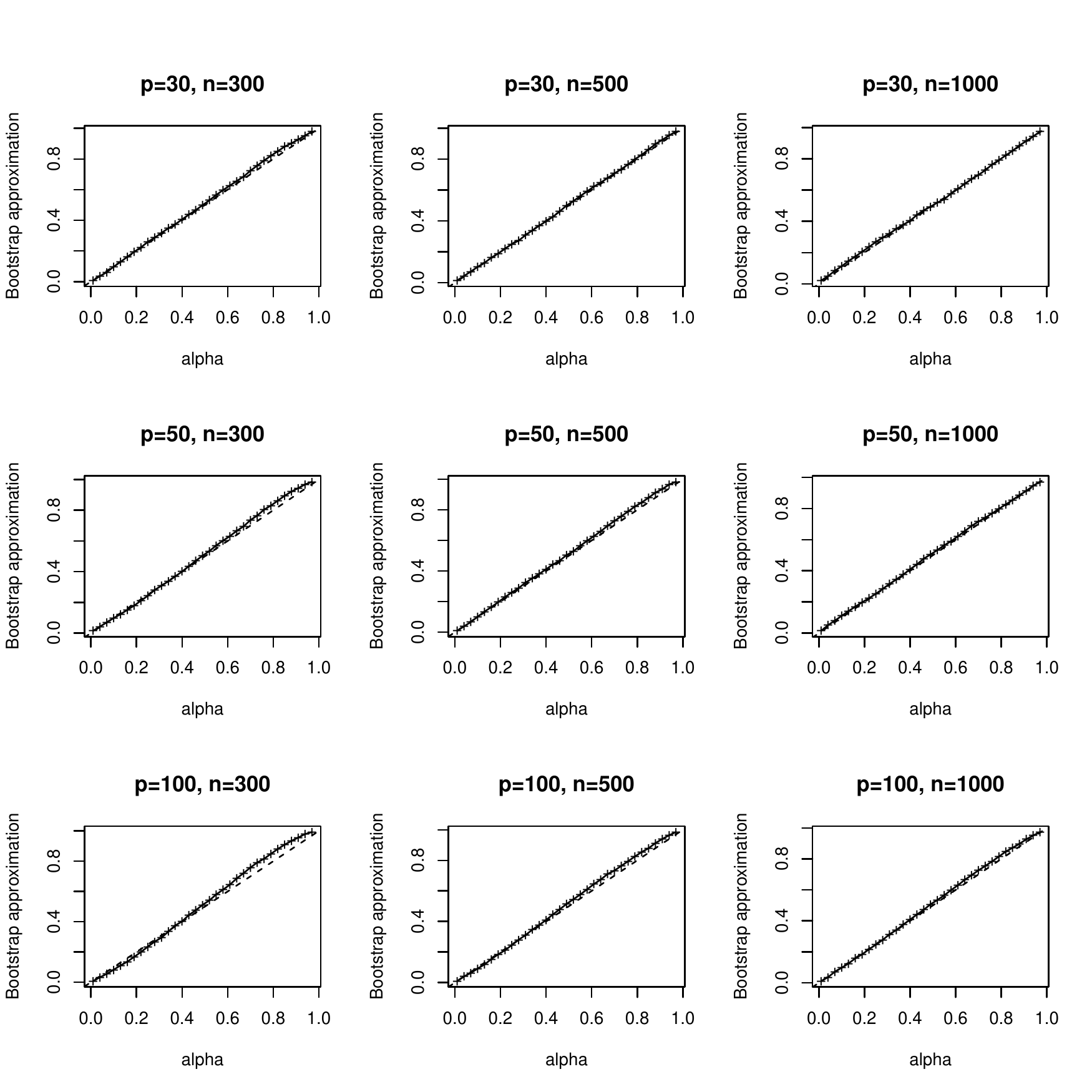}
    \caption{Bootstrap approximation $U_{n,A}^{\sharp}$ for Spearman's $\rho$ test statistic with the divide and conquer estimation (MB-NDG-DC). Plot of the nominal size $\alpha$ versus the empirical rejection probability $\hat{R}(\alpha)$.}
   \label{fig:statistical_performance_bootstrap_spearman_rho_drop_B_dc}
\end{figure}

\begin{figure}[t!] 
   \centering
       \includegraphics[scale=0.74]{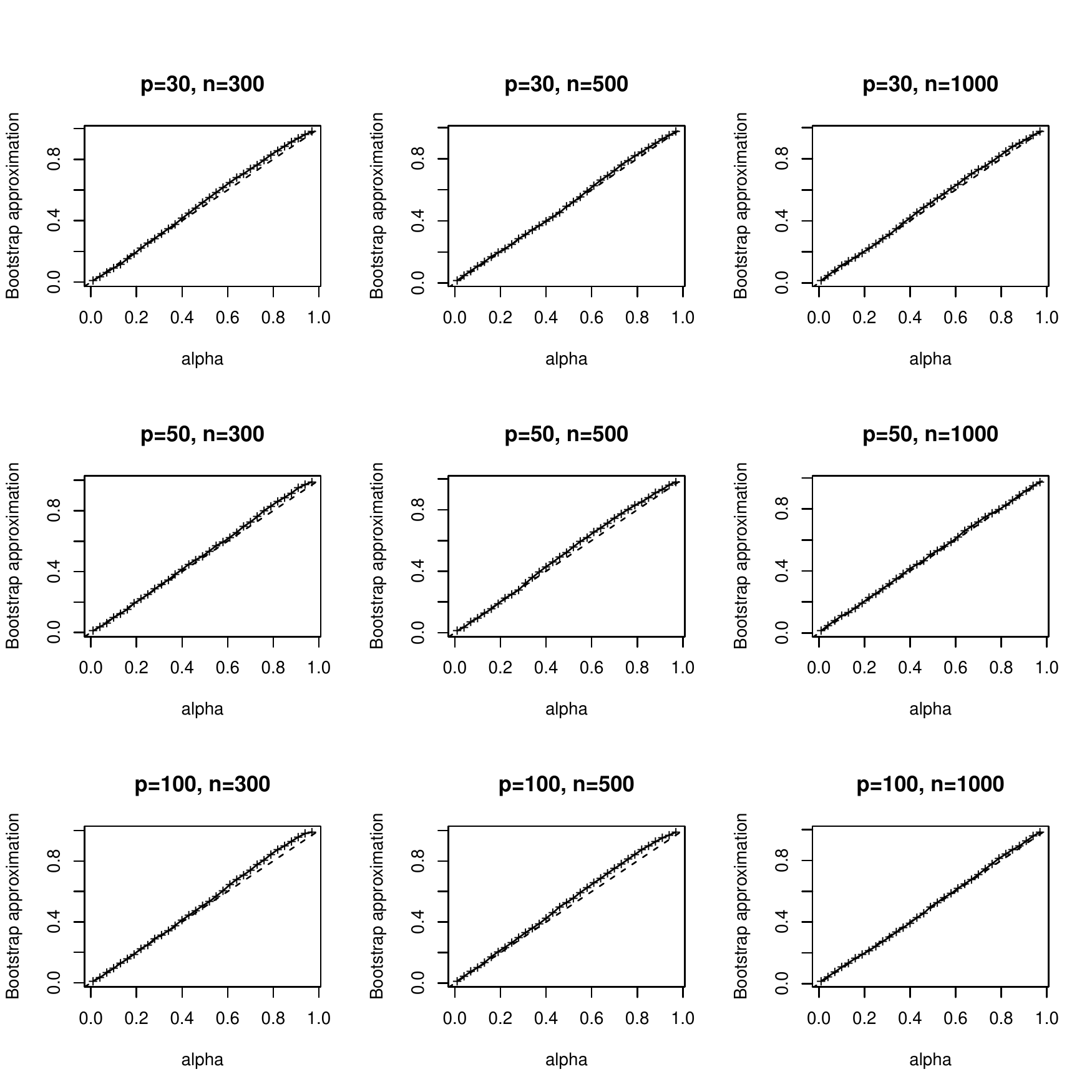}
    \caption{Bootstrap approximation $U_{n,A}^{\sharp}$ for Spearman's $\rho$ test statistic with the random sampling estimation (MB-NDG-RS). Plot of the nominal size $\alpha$ versus the empirical rejection probability $\hat{R}(\alpha)$.}
   \label{fig:statistical_performance_bootstrap_spearman_rho_drop_B_rs}
\end{figure}

\begin{table}[h]
\begin{center}
\begin{tabular}{ c | c c }
\hline
\multirow{2}{*}{Setup} & Spearman's $\rho$  & Spearman's $\rho$  \\
& (MB-NDG-DC) &  (MB-NDG-RS) \\
\hline
$p = 30, n = 300$ & 0.0085 & 0.0080 \\
$p = 30, n = 500$ & 0.0045 & 0.0095 \\
$p = 30, n = 1000$ & 0.0160 & 0.0135 \\
$p = 50, n = 300$ & 0.0070 & 0.0070  \\
$p = 50, n = 500$ & 0.0050 &  0.0045 \\
$p = 50, n = 1000$ & 0.0120 & 0.0110 \\
$p = 100, n = 300$ & 0.0175 & 0.0085 \\
$p = 100, n = 500$ & 0.0080 & 0.0065 \\
$p = 100, n = 1000$ & 0.0040 & 0.0070 \\
\hline
\end{tabular}
\end{center}
\caption{Uniform error-in-size $\sup_{\alpha \in [0.01.0.10]} |\hat{R}(\alpha)-\alpha|$ of the partial bootstrap tests, where $\alpha$ is the nominal size.}
\label{tab:partial_bootstrap_size_error_right_tail}
\end{table}

Plots of the computer running time of the partial bootstraps are shown in Figure \ref{fig:running_time_bootstrap_drop_B}. Fitting a linear model with the (log-)running time for the bootstrap methods as the response variable and the (log-)sample size as the covariate (with the intercept term), we find that the slope coefficient for $p=(30,50,100)$ is $(1.830, 1.829, 1.810)$ in the case MB-NDG-DC, and $(1.955, 1.961, 1.950)$ in the case MB-NDG-RS. In both cases, the slope coefficients again match very well to the theoretic value 2.

\begin{figure}[t!] 
   \centering
       \includegraphics[scale=0.23]{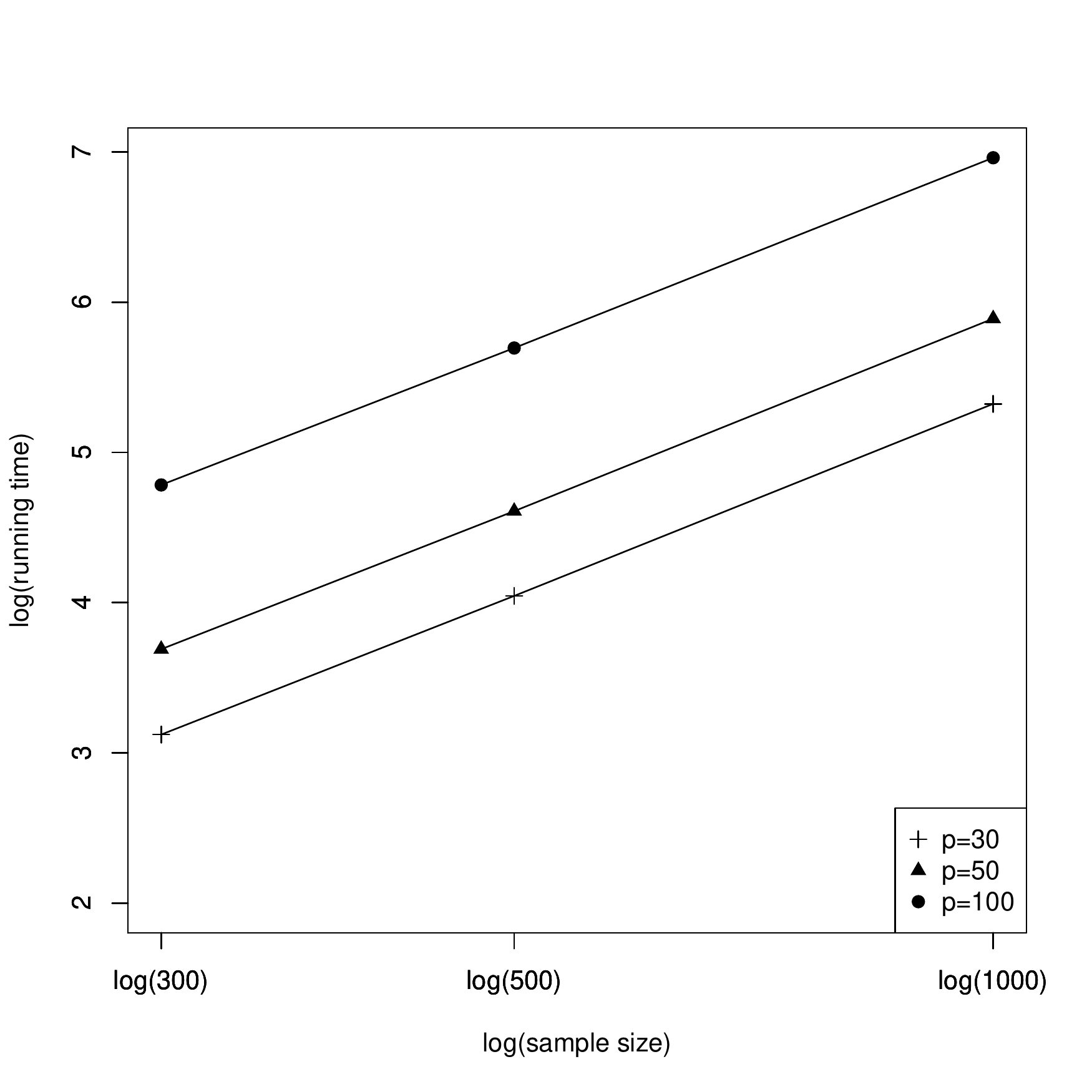}
       \includegraphics[scale=0.23]{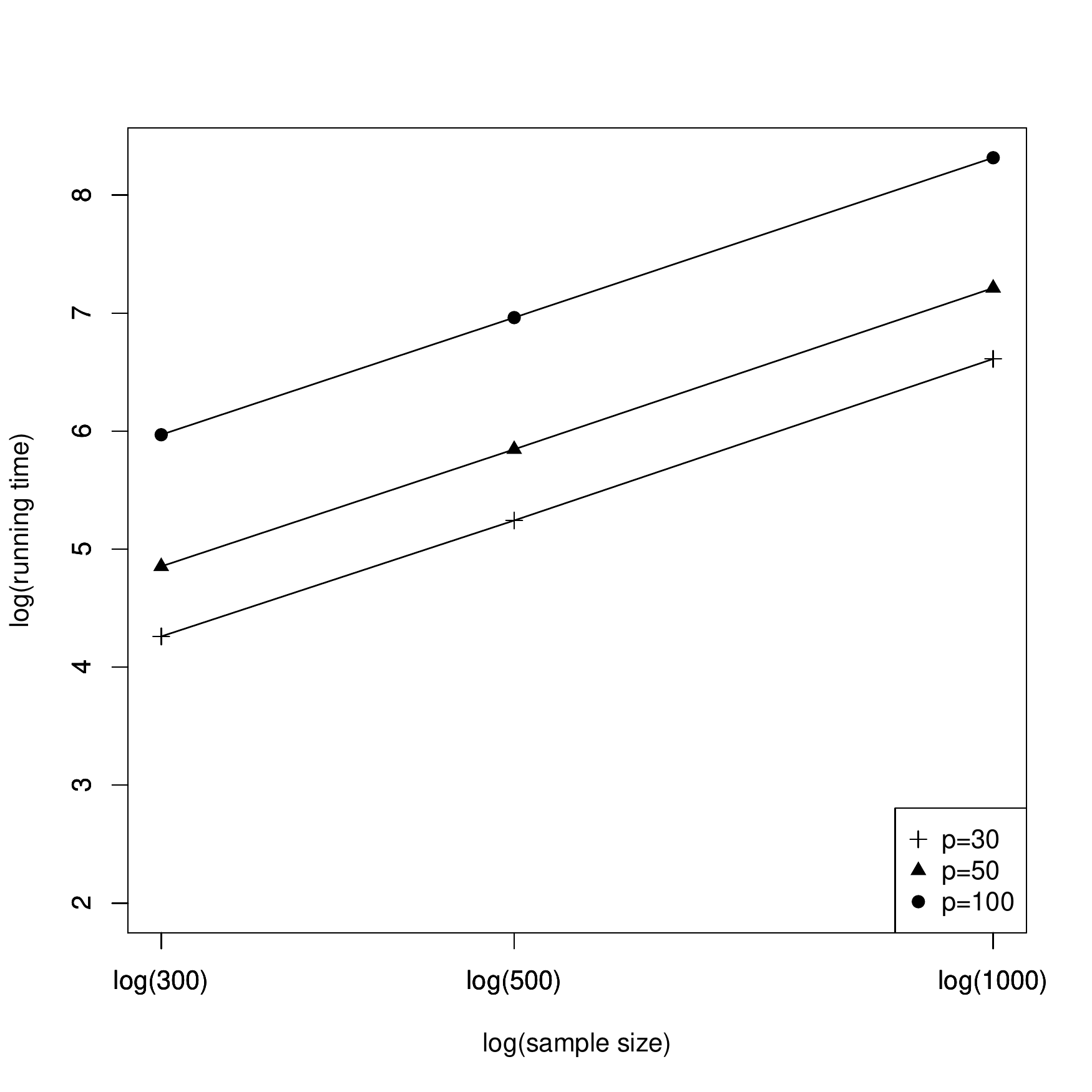}
    \caption{Computer running time of the bootstrap versus the sample size on the log-scale. Left: bootstrap $U_{n,A}^\sharp$ for Spearman's $\rho$ with the divide and conquer estimation (MB-NDG-DC). Right: bootstrap $U_{n,A}^\sharp$ for Spearman's $\rho$ with the random sampling estimation (MB-NDG-RS).}
   \label{fig:running_time_bootstrap_drop_B}
\end{figure}

\section{Random versus deterministic normalization in the Bernoulli sampling case}
\label{sec:normalization_effect}

In this section, we report the empirical effect of the random and deterministic normalization in the Bernoulli sample case on the inference of the copula dependence structures \cite[Chapter 8]{EmbrechtsLindskogMcneil2003}. Let $F$ be the joint distribution of $X = (X^{(1)}, \dots, X^{(p)})$ and $F_{j}$ be the marginal distribution of $X^{(j)}, j = 1,\dots,p$. Then it is easy to see that 
\[
\theta := \E[\hat{\rho}_{jk}] = 12 \Cov(F_{j}(X^{(j)}), F_{k}(X^{(k)})) = \text{Corr}(F_{j}(X^{(j)}), F_{k}(X^{(k)})),
\]
where $\hat{\rho}_{jk} = U_{n}^{(3)}(h^{S}_{jk}), j,k=1,\dots,p$, is defined in Example \ref{exmp:spearman_rho}. Thus Spearman's rank correlation coefficient $\rho$ is equivalent to the copula correlation, and a natural statistic to estimate the copula correlation is the $U$-statistic associated with the kernel $h^{S}$. To compare the empirical effect of normalization of the incomplete $U$-statistic in the Bernoulli sampling, we simulate $n=300$ i.i.d. copies $X_{1},\dots,X_{n}$ of $X$ from the bivariate (i.e., $p = 2$) Gaussian distribution with the following parameters: 
\[
\left(
\begin{array}{c}
X^{(1)}_{i} \\
X^{(2)}_{i} \\
\end{array} \right) \stackrel{\text{i.i.d.}}{\sim} N\left( 
\left(
\begin{array}{c}
0 \\
0 \\
\end{array} \right), \left(
\begin{array}{cc}
1 & a \\
a & 1\\
\end{array} \right) \right), \quad i =1,\dots,n,
\] 
and $a \in (-1, 1)$. In our simulation example, we choose $a = 0.9$ so that the effect of the centering term in the Gaussian approximations can be clearly seen. Histograms of $\sqrt{n}(U_{n,N}'-\theta)$ for the above Gaussian copula model under the Bernoulli sampling are shown in Figure \ref{fig:gauss_approx_copula}. Two empirical observations can be drawn from Figure \ref{fig:gauss_approx_copula}. First, the Gaussian approximation is quite accurate for $\sqrt{n}(U_{n,N}'-\theta)$ under both random and deterministic normalizations. Second, the variance of the approximating Gaussian distribution in the random normalization case is smaller than that in the deterministic case. Both observations are consistent with our theory; cf. Theorem \ref{thm:gaussian_approx_bern_random_design}, Remark \ref{rmk:random_normalization_bernoulli_sampling}, and Remark \ref{rem:deterministic_normalization} in the main paper. 

\begin{figure}[t!] 
   \centering
       \includegraphics[scale=0.45]{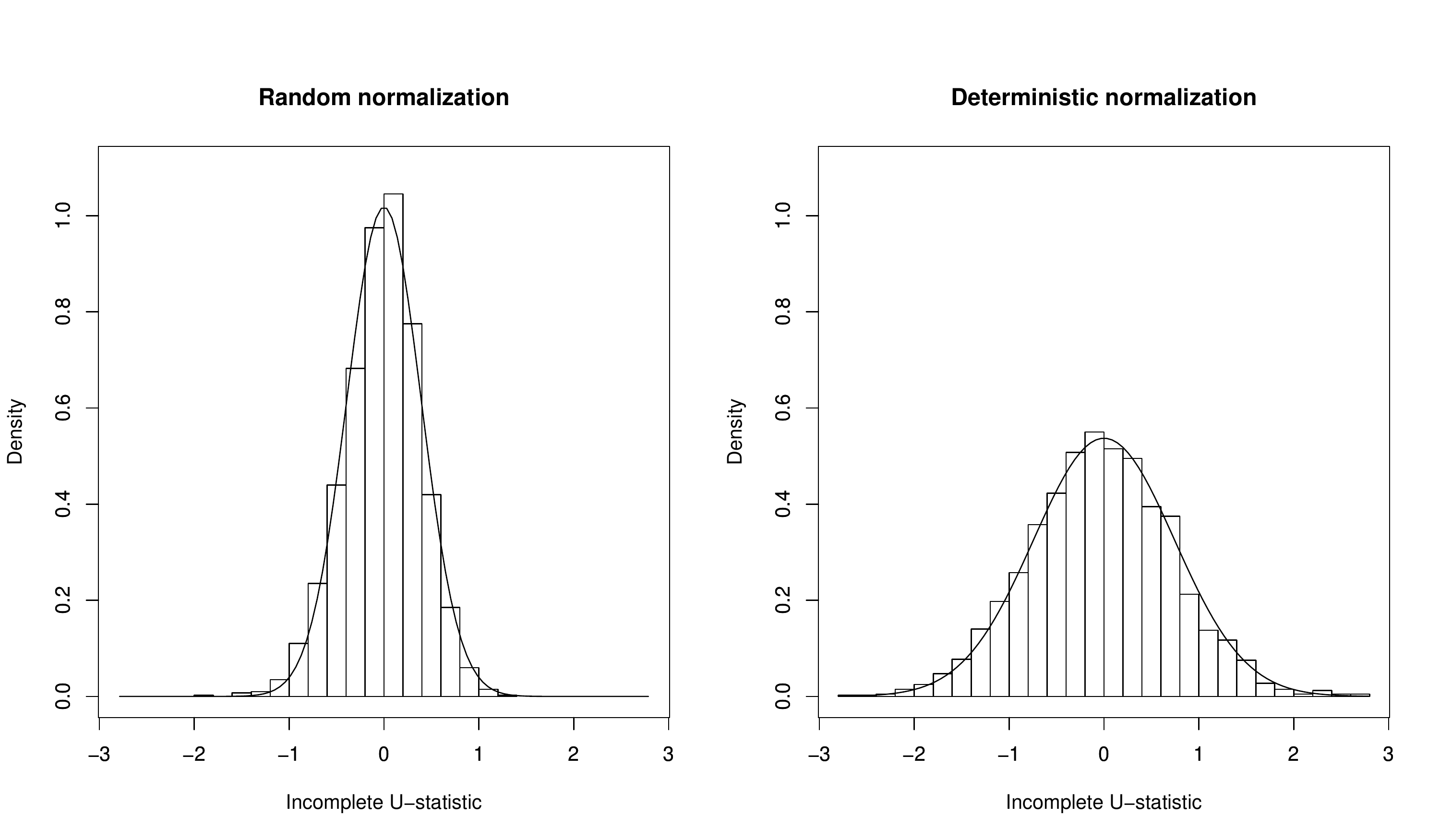}
    \caption{Histogram of $\sqrt{n}(U_{n,N}'-\theta)$ for the copula model in the Bernoulli sampling, where $\theta$ is the copula correlation. Data are generate from a bivariate Gaussian distribution with mean zero, unit variance, and covariance $0.9$. The curve is the density of the approximating Gaussian distribution for $\sqrt{n}(U_{n,N}'-\theta)$.}
   \label{fig:gauss_approx_copula}
\end{figure}


\end{document}